\definecolor{ghcolor}{RGB}{0, 150, 200} 
\definecolor{winestain}{rgb}{0.5,0,0}
 \newtheorem{thm}{Theorem}[section]
\newtheorem{theorem}[thm]{Theorem}
\newtheorem{prop}[thm]{Proposition}
\newtheorem{lem}[thm]{Lemma}
\newtheorem{lemma}[thm]{Lemma}
\newtheorem{cor}[thm]{Corollary}
\theoremstyle{definition}
\newtheorem{exam}[thm]{Example}
\newtheorem{rmk}[thm]{Remark}
\newtheorem{rem}[thm]{Remark}
\newtheorem{remark}[thm]{Remark}
\newtheorem{dfn}[thm]{Definition}
\newtheorem{convention}[thm]{Convention}
\newtheorem{example}[thm]{Example}
\newtheorem{defn}[thm]{Definition}
\newtheorem{notation}[thm]{Notation}
\newtheorem{Notation}[thm]{Notation}
\newtheorem{construction}[thm]{Construction}
\numberwithin{equation}{section}
\newcommand{\frakA}{{\mathfrak A}}
\newcommand{\frakS}{{\mathfrak S}}
\newcommand{\bA}{{\mathbb A}}
\newcommand{\bB}{{\mathbb B}}
\newcommand{\bM}{{\mathbb M}}
\newcommand{\bN}{{\mathbb N}}
\newcommand{\bQ}{{\mathbb Q}}
\newcommand{\bZ}{{\mathbb Z}}
\newcommand{\calA}{{\mathcal A}}
\newcommand{\calI}{{\mathcal I}}
\newcommand{\calJ}{{\mathcal J}}
\newcommand{\calK}{{\mathcal K}}
\newcommand{\calO}{{\mathcal O}}
\newcommand{\calS}{{\mathcal S}}
\newcommand{\rA}{{\mathrm A}}
\newcommand{\rC}{{\mathrm C}}
\newcommand{\rH}{{\mathrm H}}
\newcommand{\rW}{{\mathrm W}}
\newcommand{\Zp}{{\bZ_p}}
\newcommand{\Qp}{{\bQ_p}}
\newcommand{\Ainf}{{\mathrm{A_{inf}}}}
\newcommand{\BdRp}{{\mathrm{B_{dR}^+}}}
\newcommand{\BBdRp}{{\bB_{\mathrm{dR}}^+}}           
\newcommand{\BBdR}{{\bB_{\mathrm{dR}}}}              
\newcommand{\Ext}{{\mathrm{Ext}}}           
\newcommand{\Hom}{{\mathrm{Hom}}}           
\newcommand{\id}{{\mathrm{id}}}             
\newcommand{\Ker}{{\mathrm{Ker}}}           
\newcommand{\Rep}{{\mathrm{Rep}}}           
\newcommand{\RGamma}{{\mathrm{R\Gamma}}}    
\newcommand{\Sh}{{\mathrm{Sh}}}             
\newcommand{\Tot}{{\mathrm{Tot}}}           
\newcommand{\Vect}{{\mathrm{Vect}}}          
\newcommand{\GL}{{\mathrm{GL}}}             
\newcommand{\an}{{\mathrm{an}}}             
\newcommand{\cris}{{\mathrm{cris}}}         
\newcommand{\dR}{{\mathrm{dR}}}             
\newcommand{\pd} {{\mathrm{pd}}}                           
\newcommand{\perf}{\mathrm{perf}}           
\newcommand{\st}{{\mathrm{st}}}             
\DeclareSymbolFontAlphabet{\mathbb}{AMSb} 
\DeclareSymbolFontAlphabet{\mathbbl}{bbold}
\newcommand{\Prism}{{\mathlarger{\mathbbl{\Delta}}}} 
\newcommand{\okprism}{{(\ok)_\prism}}
\newcommand{\okpris}{{(\mathcal{O}_K)_\prism}}
\newcommand{\okprislog}{{(\mathcal{O}_K)_{\prism, \mathrm{log}}}}
\newcommand{\okprisast}{{(\mathcal{O}_K)_{\prism, \ast}}}
\newcommand{\okprisperf}{(\calO_K)^{\perf}_{\Prism}}
\newcommand{\opris}{{\mathcal{O}_\prism}}
\newcommand{\baroprism}{{\overline{\O}_\prism}}
 \renewcommand{\O}{{\mathcal{O}}}
\newcommand \into {\hookrightarrow }
\renewcommand \to {\rightarrow}
\newcommand \onto {\twoheadrightarrow}
\renewcommand{\projlim}{\varprojlim}
\newcommand{\vect}{\mathrm{Vect}}
\def\Mat{\mathrm{Mat}}
\newcommand{\rep}{{\mathrm{Rep}}}
\def\inf{{\mathrm{inf}}}
\def\sup{\mathrm{sup}}
\newcommand{\gal}{{\mathrm{Gal}}}
\newcommand{\spf}{{\mathrm{Spf}}}
\def\an{\mathrm{an}}
\def\perf{\mathrm{perf}}
\newcommand{\Lie}{\mathrm{Lie}}
\newcommand{\dan}{\text{$\mbox{-}\mathrm{an}$}}
\newcommand{\dla}{\text{$\mbox{-}\mathrm{la}$}}
\newcommand{\dpa}{\text{$\mbox{-}\mathrm{pa}$}}
\renewcommand{\log}{\mathrm{log}}
\newcommand{\bdrplusm}{{\bfb_{\dR, m}^+}}
\newcommand{\Kpinfty}{{K_{p^\infty}}}
\newcommand{\kpinfty}{{K_{p^\infty}}}
\newcommand{\Kinfty}{{K_{\infty}}}
\newcommand{\kinfty}{{K_{\infty}}}
\newcommand{\gammak}{{\Gamma_K}}
\newcommand{\gk}{{G_K}}
\newcommand\dif{{\mathrm{dif}}}
\newcommand\rig{{\mathrm{rig}}}
\newcommand{\ainf}{{\mathbf{A}_{\mathrm{inf}}}}
\newcommand{\bcrisplus}{{\mathbf{B}^+_{\mathrm{cris}}}}
\newcommand{\bdrplus}{{\mathbf{B}^+_{\mathrm{dR}}}}
\newcommand{\bdr}{{\mathbf{B}_{\mathrm{dR}}}}
\newcommand{\bdrpluskpinfty}{{\B_{\dR, \kpinfty}^+}} 
\newcommand{\ndR}{{\mathrm{ndR}}}
  \newcommand{\kpinftyt}{{\kpinfty[[t]]}}
 \newcommand{\kinftylambda}{{\kinfty[[\lambda]]}}
\newcommand{\bdrplusl}{{\mathbf{B}^+_{\dR,  L}}}
 \newcommand{\bdrpluslhatgpa}{{(\B_{\dR, L}^+)^{\hat{G}\dpa}}}
\newcommand{\mic}{\mathrm{MIC}} 
\newcommand{\bm}{\mathbb{M}}
\newcommand{\dacc}[1]{\{\!\{ #1 \}\!\}}
 \newcommand{\fg}{{\mathrm{fg}}}
\newcommand{\bbdr}{{\mathbb{B}_{\mathrm{dR}}}}
\newcommand{\bbdrplus}{{\mathbb{B}_{\mathrm{dR}}^{+}}}
\newcommand*{\wt}[1]{\widetilde{#1}}
\newcommand{\B}{  {\mathbf{B}}  }
\newcommand{\wtb}{   {\widetilde{{\mathbf{B}}}}  }
\def \ok {{\mathcal{O}_K}}
\def \oc {{\mathcal{O}_C}}
\newcommand{\ocflat}{{\mathcal{O}_C^\flat}}
\newcommand{\rgamma}{\mathrm{R}\Gamma}
 \newcommand{\Shv}{\mathrm{Shv}}
\DeclareSymbolFontAlphabet{\mathbb}{AMSb}
\DeclareSymbolFontAlphabet{\mathbbl}{bbold}
\newcommand{\prism}{{\mathlarger{\mathbbl{\Delta}}}}
\newcommand{\baropris}{{\overline{\O}_\prism}}
\newcommand{\barK}{{\overline{K}}}
\newcommand{\zp}{{\mathbb{Z}_p}}
\newcommand{\qp}{{\mathbb{Q}_p}}
\newcommand{\gs}{{\mathfrak{S}}}
\newcommand{\fkc}{{\mathfrak{c}}}
\newcommand{\fkt}{{\mathfrak{t}}}
\newcommand{\cbf}{\mathbf{c}}
\newcommand{\kbf}{\mathbf{k}}
\newcommand{\bfa}{\mathbf{A}}
\newcommand{\bfb}{\mathbf{B}}
\newcommand{\bbdrplusm}{{\mathbb{B}_{\mathrm{dR}, m}^{+}}}
\newcommand{\bdrplusml}{{\mathbf{B}^+_{\dR, m, L}}}
 \newcommand{\MIC}{{\mathrm{MIC}}} 
\newcommand{\bfB}{\mathbf{B}}
\author[]{Hui Gao}   \address{Department of Mathematics, Southern University of Science and Technology, Shenzhen 518055, China}   \email{gaoh@sustech.edu.cn}
\author[]{Yu Min}
\address{Department of Mathematics, Imperial College London, London SW7 2RH}
\email{y.min@imperial.ac.uk}
\author[]{Yupeng Wang}
\address{Beijing International Center of Mathematics research, Peking University, Yiheyuan 5, Beijing, 100190, China.}
\email{2306393435@pku.edu.cn}  
\begin{document}

\title[]{Prismatic crystals over the de Rham period sheaf}

\subjclass[2010]{Primary  11F80, 11S20}

\keywords{prismatic site, prismatic crystals, Sen theory}

\begin{abstract} \normalsize{
Let $\mathcal{O}_K$ be a mixed characteristic complete discrete valuation ring with perfect residue field.
We study $\mathbb{B}_\mathrm{dR}^+$-crystals on the  (log-) prismatic site of $\mathcal{O}_K$, which are crystals defined over the de Rham period sheaf.
We  first classify these  crystals   using certain log connections.
By constructing a Sen--Fontaine theory for $\mathbf{B}_{\mathrm{dR}}^+$-representations over a Kummer tower, we further classify these crystals   by (log-) nearly de Rham representations.
In addition, we compare  (log-)  prismatic  cohomology of these crystals with the corresponding Sen--Fontaine cohomology  and  Galois cohomology.
}
\end{abstract}

\date{\today}

\maketitle
\setcounter{tocdepth}{1}
\tableofcontents

\section{Introduction}

\subsection{Crystals and Galois representations}\label{subsec1.1}
In \cite{BS22}, Bhatt--Scholze introduce  the  prismatic site and use  it to recover all known \emph{integral} $p$-adic cohomology theories, which revolutionized the subject. In there, Bhatt--Scholze only consider cohomology with trivial coefficients. Since then, there has been much progress in understanding coefficients---that is, prismatic crystals---in these cohomology theories.
Let $K$ be a mixed characteristic complete discrete valuation field with perfect residue field, and let $\ok$ be its ring of integers.
In \cite{BS23},  Bhatt--Scholze show that $F$-crystals over $(\ok)_\prism$ (the absolute prismatic site of $\ok$) classify  integral crystalline representations; shortly after, Du--Liu \cite{DL23} show that $F$-crystals over the log-prismatic site classify integral semi-stable representations.
In some sense, these results show that  the classical study of \emph{algebraic} integral $p$-adic Hodge theory admit \emph{prismatic (hence geometric) interpretations}.

In another direction, recently, \emph{Hodge--Tate prismatic crystals} are shown to be closely related with classical Sen theory and $p$-adic non-abelian Hodge theory, cf. \cite{BL-a, BL-b, AHLB1, AHLB2, GMWHT, MW22, MW22log}. 
These studies not only reveal that Sen theory admits a prismatic interpretation, they also bring  forth interesting new categories of Galois representations which await further studies. In this paper, we continue this line of investigation and study the \emph{$\mathbb{B}_\mathrm{dR}^+$-crystals} over the prismatic (resp. log-prismatic) site.  Let us now quickly set up some notations to facilitate our discussions.
Since this paper is a natural continuation of \cite{GMWHT}, we shall be brief when we introduce notations that are common and standard in literatures, particularly those used repeatedly in \cite{GMWHT}.

 Let $(\calO_K)_{\Prism}$ (resp. $(\calO_K)_{\Prism,\log}$) be the absolute prismatic site (resp. absolute log-prismatic site) of $\ok$; let  $(\calO_K)_{\Prism}^{\perf}$ (resp. $(\calO_K)_{\Prism,\log}^{\perf}$) be the subsite consisting of perfect prisms (resp. perfect log prisms). Let $\mathcal{O}_\Prism$ and $\mathcal{I}_\prism$ be the structure sheaf and the structure ideal sheaf on these sites. For example, on  $\okprism$,  we have
\[ \mathcal{O}_\Prism((A, I))=A, \quad \text{resp. }\mathcal{I}_\prism((A, I)) =  I.\]
Define $\baroprism: =\mathcal{O}_\Prism/\mathcal{I}_\prism  $ and its rational version $\baroprism[1/p]$; they are called the integral (resp. rational) Hodge--Tate structure sheaf.
 We introduce the sheaf that will play a central role in this paper. 

\begin{defn}\label{nota intro}
  For each $m \geq 1$, define the \emph{de Rham period sheaf of level $m$} as
\[ \bbdrplusm := (\mathcal{O}_\Prism[1/p])/\mathcal{I}^m_\prism,  \]
so $\mathbb{B}_{\dR, 1}=  \baroprism[1/p]$.
Define the \emph{(positive) de Rham period sheaf} as
\[\bbdrplus:= \projlim_{m \geq 1}  (\mathcal{O}_\Prism[1/p])/\mathcal{I}^m_\prism.\]  
\end{defn}

  We introduce   notations for categories of crystals resp.  representations.
 
 \begin{dfn}\label{Dfn-A crystal}
   Let $\calS$ be one of $\{(\calO_K)_{\Prism},(\calO_K)_{\Prism,\log},(\calO_K)_{\Prism}^{\perf},(\calO_K)_{\Prism,\log}^{\perf}\}$.
    Let $\bA$ be a sheaf of rings on $\calS$.
   Define the category of $\bA$-crystals on $\calS$ by 
   \[ \Vect(\calS,\bA):= \projlim_{\calA\in \calS} \vect(\bA(\calA))\]
 \end{dfn}

Recall in \cite{GMWHT}, objects in $\vect(\okprisast, \baropris)$ resp. $\vect(\okprisast, \baropris[1/p])$ (where $\ast \in \{\emptyset, \log \}$) are also called integral (resp. rational) Hodge--Tate crystals. But we shall simply call objects in $\vect(\okprisast, \bbdrplus)$ as $\bbdrplus$-crystals. See Rem. \ref {remnamebdrcrystal} for some comments on these terminologies.

 \begin{defn}\label{defsemilinrep}
 Suppose $\mathcal G$ is a topological group that acts continuously on a topological ring $R$. We use $\rep_{\mathcal G}(R)$ to denote the category where an object is a finite free $R$-module $M$ (topologized via the topology on $R$) with a continuous and \emph{semi-linear} $\mathcal G$-action in the usual sense that
$$g(rx)=g(r)g(x), \forall g\in \mathcal G, r \in R, x\in M.$$
(The only case in this paper where the action is \emph{linear} is when $R=\zp$).
 \end{defn}

We use the following theorem to set up our stage and motivate the readers. 
Note the sheaves $\mathcal{O}_\Prism$ and $ \mathcal{O}_\Prism[1/\mathcal{I}_\Prism]^{\wedge}_p$ carry Frobenius structures; thus we can define $F$-crystals over these sheaves of rings: they are crystals eqiupped with certain ``$\varphi$-isogeny".
Recall associated to the perfectoid field $C$ one can define the Fontaine prism and the Fontaine log prism, cf. Notation \ref{notaainflog}:
\[ (\ainf, (\xi)), \text{ resp. } (\ainf, (\xi), M_\ainf) \]
Fix an algebraic closure $\overline{K}$ and denote $\gk=\gal(\overline{K}/K)$. 
Let $\rep_\gk(\zp)$ be the category of integral $p$-adic representations. 
Let  $\rep^{\cris}_{\gk}(\zp)$ (resp. $\rep^{\st}_{\gk}(\zp)$) be the sub-category of integral crystalline (resp. semi-stable) representations.
(In the following theorem and throughout the paper, we use $\into$ resp. $\simeq$ in commutative diagrams to signify a functor is fully faithful  resp. an equivalence.)

\begin{theorem} \label{thmintroBSDL}
\cite{BS23, DL23} We have a commutative diagram of tensor functors:
\begin{footnotesize}
\begin{equation*}
\begin{tikzcd}
{\mathrm{Vect}^{\varphi}((\calO_K)_{\Prism}, \mathcal{O}_\Prism)} \arrow[r, hook] \arrow[d, "\simeq"] & {\mathrm{Vect}^{\varphi}((\calO_K)_{\Prism, \log}, \mathcal{O}_\Prism)} \arrow[r, hook] \arrow[d, "\simeq"] & {\mathrm{Vect}^{\varphi}(\okprislog, \mathcal{O}_\Prism[\frac{1}{\mathcal{I}_\Prism}]^{\wedge}_p)} \arrow[d, "\simeq"] & {\mathrm{Vect}^{\varphi}(\okprism, \mathcal{O}_\Prism[\frac{1}{\mathcal{I}_\Prism}]^{\wedge}_p)} \arrow[l, "\simeq"'] \arrow[d, "\simeq"] \\
\rep^{\cris}_{\gk}(\zp) \arrow[r, hook]                                                       & \mathrm{Rep}^{\st}_{\gk}(\zp) \arrow[r, hook]                                                               & \rep_\gk(\zp)                                                                                                  & \rep_\gk(\zp) \arrow[l, "="']
\end{tikzcd}
\end{equation*}
\end{footnotesize}
Here,   the first and fourth (resp. second and third) vertical equivalences are induced by evaluating the crystals on the Fontaine prism (resp. Fontaine log prism).
\end{theorem}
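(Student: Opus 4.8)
\emph{Strategy.} The plan is to prove each of the four vertical arrows by evaluating crystals on a single distinguished (log) prism and matching the output to a known dictionary in $p$-adic Hodge theory, and then to deduce the horizontal arrows and the commutativity of the squares from formal properties of the relevant morphisms of sites. Concretely, the two rightmost columns will be treated via the \emph{Fontaine (log) prism} $(\ainf,\ker\theta)$ together with classical \'etale $\varphi$-module theory, while the two leftmost columns will be treated via the \emph{Breuil--Kisin (log) prism} $\gs = W(k)[[u]]$ together with Kisin-type classifications of crystalline and semi-stable representations; these two equivalences are the theorems of \cite{BS21} and \cite{DL21}, respectively.

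\emph{The two perfect columns.} First I would reduce from the whole (log-)prismatic site to its perfect subsite: for any prism $(A,I)$, the map to its perfection $(A_{\perf}, IA_{\perf})$ induces a faithfully flat map $A[1/I]^{\wedge}_p \to A_{\perf}[1/I]^{\wedge}_p$ along which $\varphi$-modules descend, so restriction gives
\[
\mathrm{Vect}^{\varphi}(\okprism, \mathcal{O}_\Prism[1/\mathcal{I}_\Prism]^{\wedge}_p)\;\simeq\;\mathrm{Vect}^{\varphi}(\okprisperf, \mathcal{O}_\Prism[1/\mathcal{I}_\Prism]^{\wedge}_p),
\]
and similarly in the logarithmic case; since the Laurent sheaf is unaffected by the log structure on perfect prisms, the two perfect categories agree via the forgetful functor, which is the bottom ``$=$''. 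Next, the Fontaine (log) prism is weakly final in the perfect (log-)prismatic site, and its iterated self-products, via perfectoid theory, compute the \v{C}ech nerve of the pro-\'etale cover $\Spa(C)\to\Spa(K)$ with Galois group $\gk$. Hence evaluation on the Fontaine prism identifies $\mathrm{Vect}^{\varphi}(\okprisperf, \mathcal{O}_\Prism[1/\mathcal{I}_\Prism]^{\wedge}_p)$ with the category of $\varphi$-modules over $\ainf[1/\ker\theta]^{\wedge}_p$ carrying a compatible semilinear $\gk$-action; Galois descent turns this into an \'etale $\varphi$-module over the period ring attached to $K$, and classical \'etale $\varphi$-module theory (with $T\mapsto(T\otimes(\text{period ring}))^{\varphi=1}$ as quasi-inverse) identifies it with $\rep_\gk(\zp)$.

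\emph{The two integral columns.} Evaluation on the Breuil--Kisin prism $\gs$ (with Frobenius $u\mapsto u^p$, with $\mathcal{I}_\Prism=(E(u))$ for an Eisenstein polynomial of a chosen uniformizer, and using a compatible system $\pi^{1/p^n}$) sends a prismatic $F$-crystal to a Breuil--Kisin module $\gm$ (finite projective over $\gs$ with $\varphi^*\gm[1/E]\simeq\gm[1/E]$); this functor is faithful because $\gs$ is weakly final in $\okprism$. The crystal condition imposes a stratification along the self-product $\gstwo$ in $\okprism$, whose perfection is described in terms of $\ainf$ and the Kummer tower $K_\infty$; unwinding it endows $\gm\otimes_{\gs}\ainf$ with a $\gkinfty$-action compatible with $\varphi$, and the full descent datum promotes this to a $\gk$-stable $\zp$-lattice inside the associated Galois representation, which one then shows is crystalline because $\gm$ has finite height (Kisin's theorem). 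Conversely, Kisin's construction attaches to a crystalline $T$ a Breuil--Kisin module $\gm$, and the crystallinity hypothesis is exactly what lets one verify the cocycle conditions on all higher self-products and thereby reconstruct a crystal on $\okprism$; this yields essential surjectivity. The second column is the logarithmic analogue: evaluation on the Breuil--Kisin log prism produces a Breuil--Kisin module with a monodromy operator, and \cite{DL21} (via Kisin's and Breuil's semi-stable theory) matches this with $\rep^{\st}_\gk(\zp)$.

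\emph{Horizontal arrows, commutativity, and the main obstacle.} The bottom row consists of the tautological inclusions $\rep^{\cris}_\gk(\zp)\hookrightarrow\rep^{\st}_\gk(\zp)\hookrightarrow\rep_\gk(\zp)$ together with the identity, all obviously fully faithful. The top maps are pullbacks along the morphisms of sites relating log to non-log and integral to perfect; each such morphism is covering (every prism refines to a log prism, and every object becomes perfect after inverting $\mathcal{I}_\Prism$), so the pullbacks are faithful, and fullness follows from the evaluation descriptions above. Each square commutes because every vertical functor is ``evaluate on the Fontaine (log) prism,'' and Kisin's recipe for the Galois representation attached to $\gm$ factors as ``base change $\gm$ along $\gs\to\ainf$, then take $\varphi$-invariants,'' which is precisely the comparison isomorphism behind \cite{BS21, DL21}. \textbf{I expect the hard part to be the integral columns}: extracting a $\gk$-action and the crystalline (resp.\ semi-stable) property out of the bare (log) Breuil--Kisin module of an $F$-crystal---which rests on the delicate description of $\gstwo$ (and its log variant) inside the absolute (log-)prismatic site and its relation to $\ainf$---and, dually, proving that the crystalline/semi-stable condition suffices to glue Kisin's module into a crystal on the whole site. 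Everything in the perfect columns and in the bookkeeping of sites is formal by comparison.
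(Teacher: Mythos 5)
This theorem is stated in the paper as a cited result of Bhatt--Scholze \cite{BS21} and Du--Liu \cite{DL21}, with the rightmost column also due to Wu \cite{Wu21}; the paper itself gives no proof (it is introduced to ``set up our stage and motivate the readers''). Your sketch is therefore being measured against those references rather than against any argument in this paper. As a strategy outline it is a reasonable reconstruction: the rightmost columns are indeed established by restricting to the perfect (log-)prismatic site, evaluating on the Fontaine (log) prism, and invoking Galois descent plus classical \'etale $\varphi$-module theory (the iterated self-products of the Fontaine prism are computed by continuous functions on $G_K^{\bullet}$, cf. Lemma \ref{Lem-Galois descent} of this paper for the analogous de Rham statement); the leftmost columns go through the Breuil--Kisin (log) prism and Kisin's theory, and you are right that this is where the real work is.

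Two caveats if the sketch were to be elaborated. First, your reduction to the perfect subsite relies on the claim that $A[1/I]^{\wedge}_p \to A_{\perf}[1/I]^{\wedge}_p$ is faithfully flat and that $\varphi$-modules descend along it; this is not how \cite{BS21} argues and is not obviously true --- the standard route is to show directly that the Laurent structure sheaf factors through perfectoidization, so that restriction to the perfect subsite is an equivalence for formal reasons, without invoking flatness. Second, ``the Fontaine prism is weakly final in the perfect site'' should be replaced by ``the Fontaine prism is a cover of the final object of the topos''; it does not receive a map from every perfect prism over $\mathcal{O}_K$. Neither point alters the overall shape of your outline, and the paragraph on the integral columns correctly identifies where the delicate content of \cite{BS21,DL21} resides.
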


We now recall our previous results on  rational   Hodge--Tate   crystals. Note that these crystals, in contrast to those in Thm. \ref{thmintroBSDL},  do not have $\varphi$-structures.
 
 Let $C$ be the $p$-adic completion of $\overline K$. 
Objects in $\rep_\gk(C)$ (cf. Def. \ref{defsemilinrep}) are called $C$-representations. For $W \in \rep_\gk(C)$, Sen  \cite{Sen80} canonically associates a \emph{linear operator} $\phi: W \to W$ which nowadays is called the Sen operator; its eigenvalues are in $\barK$ and are called the (Hodge--Tate-)Sen weights of $W$.
(In our set-up, the Hodge--Tate-Sen weight  of the the cyclotomic character is $1$.)

\begin{notation}\label{notaeu}
Let $W(k)$ be the ring of Witt vectors, and let $K_0=W(k)[1/p]$.
Let $\pi \in K$ be a \emph{fixed} uniformizer, and let $E(u)=\mathrm{Irr}(\pi, K_0) \in W(k)[u]$ be the minimal polynomial over $K_0$. Let $E'(u) =\frac{d}{du}E(u)$. 
\end{notation}

\begin{defn}\label{defnnht}
Say $W\in \rep_\gk(C)$ is \emph{nearly Hodge--Tate} (resp. \emph{log-nearly Hodge--Tate})  if all of its  Sen weights are in the subset
\[\mathbb{Z} + (E'(\pi))^{-1}\cdot \mathfrak{m}_{\O_{\overline{K}}}, \quad \text{ resp. } \mathbb{Z} +  (\pi\cdot E'(\pi))^{-1}\cdot \mathfrak{m}_{\O_{\overline{K}}}.  \]
 where $\O_{\overline{K}}$ is the ring of integers of $\overline{K}$ with $\mathfrak{m}_{\O_{\overline{K}}}$  its maximal ideal.
 That is: the Sen weights are near to being an integer up to a bounded distance.  
Write
\[\rep_\gk^{\mathrm{nHT}}(C), \quad \text{resp. }\rep_C^{\mathrm{lnHT}}(\gk)\]
 for the (tensor) subcategory of $\rep_\gk(C)$ consisting of these objects.
\end{defn}

\begin{theorem} \label{thmintroht}
\cite{GMWHT}
We have a commutative diagram of tensor functors:
\begin{footnotesize}
\begin{equation*}
\begin{tikzcd}
{\Vect( \okpris,\overline \calO_{\Prism}[\frac{1}{p}])} \arrow[d, "\simeq"] \arrow[r, hook] & {\Vect( \okprislog,\overline \calO_{\Prism}[\frac{1}{p}])} \arrow[d, "\simeq"] \arrow[r, hook] & {\Vect((\calO_K)^{\perf}_{\Prism, \log},\overline \calO_{\Prism}[\frac{1}{p}])} \arrow[d, "\simeq"] & {\Vect((\calO_K)^{\perf}_{\Prism},\overline \calO_{\Prism}[\frac{1}{p}])} \arrow[d, "\simeq"] \arrow[l, "\simeq"'] \\
\rep_\gk^{\mathrm{nHT}}(C) \arrow[r, hook]                                                  & \rep_\gk^{\mathrm{lnHT}}(C) \arrow[r, hook]                                                    & \rep_\gk(C)                                                                                         & \rep_\gk(C) \arrow[l, "="']
\end{tikzcd}
\end{equation*}
\end{footnotesize}
Here,   the first and fourth (resp. second and third) vertical equivalences are induced by evaluating the crystals on the Fontaine prism (resp. Fontaine log prism).
\end{theorem}

Our main result in this paper classifies $\bbdrplus$-crystals, and 
is a ``$\bdrplus$-lift" of  Thm. \ref{thmintroht}. Recall $\bdrplus$ is precisely the evaluation of the sheaf $\bbdrplus$ on the Fotaine (log-) prism.


\begin{defn}\label{defnndR}
Say $W\in \rep_\gk(\bdrplus)$ is \emph{nearly de Rham} (resp. \emph{log-nearly de Rham}) if the $C$-representation $W/tW$ is nearly Hodge--Tate (resp. log-nearly Hodge--Tate). Use
$$\rep_\gk^{\mathrm{ndR}}(\bdrplus) \quad \text{  resp. }  \rep_\gk^{\mathrm{lndR}}(\bdrplus) $$
 to  denote the subcategory of $\rep_\gk(\bdrplus)$ consisting of these objects.
\end{defn}

\begin{theorem}[cf. Thm. \ref{Prop-perfectisproet} and Thm. \ref{thm_ndR_rep}]\label{thmintrondr} 
We have a commutative diagram of tensor functors:
\begin{footnotesize}
\begin{equation*}
\begin{tikzcd}
{\Vect( \okpris, \bbdrplus)} \arrow[d, "\simeq"] \arrow[r, hook] & {\Vect( \okprislog, \bbdrplus)} \arrow[r, hook] \arrow[d, "\simeq"] & {\Vect((\calO_K)^{\perf}_{\Prism, \log}, \bbdrplus)} \arrow[d, "\simeq"] & {\Vect(\okprisperf, \bbdrplus)} \arrow[l, "\simeq"'] \arrow[d, "\simeq"] \\
\rep_\gk^{\mathrm{ndR}}(\bdrplus) \arrow[r, hook]                & \rep_\gk^{\mathrm{lndR}}(\bdrplus) \arrow[r, hook]                  & \rep_\gk(\bdrplus)                                                       & \rep_\gk(\bdrplus) \arrow[l, "="']
\end{tikzcd}
\end{equation*}
\end{footnotesize}
Here,   the first and fourth (resp. second and third) vertical equivalences are induced by evaluating the crystals on the Fontaine prism (resp. Fontaine log prism). All equivalences are bi-exact, i.e., the equivalences and their   quasi-inverses are  exact.
\end{theorem}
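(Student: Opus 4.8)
The plan is to bootstrap from the already-established Hodge–Tate picture (Theorem \ref{thmintroMWGao}) by a ``$t$-adic dévissage'', running the argument one power of $\mathcal{I}_\Prism$ at a time and then passing to the limit. First, I would set up the analogous diagram with $\bbdrplus$ replaced by $\bbdrplusm := (\mathcal{O}_\Prism[1/p])/\mathcal{I}^m_\prism$ on the crystal side and by $\bdrplusm := \bdrplus/t^m$ on the representation side (so $\rep_\gk^{\mathrm{ndR}}(\bdrplusm)$ means those with $W/tW$ nearly Hodge–Tate). For $m=1$ this is exactly Theorem \ref{thmintroMWGao}. The key point for the inductive step is that a $\bbdrplusm$-crystal sits in an exact sequence whose sub is a $\bbdrplus_{m-1}$-crystal and whose quotient is $\baroprism[1/p]$-crystal twisted by $\mathcal{I}_\prism^{m-1}/\mathcal{I}_\prism^m$; since $\mathcal{I}_\prism$ is (pro-étale locally, on the perfect site) generated by a non-zero-divisor $\xi$ with $\xi \mapsto t$ under the Fontaine prism, this graded piece is a Hodge–Tate crystal, and the corresponding statement on the Galois side is the parallel extension of $\bdrplusm$-representations by $C$-representations. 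So I would prove each vertical arrow is an equivalence by induction on $m$, using the five-lemma-type argument on $\Ext$-groups: fully faithfulness and essential surjectivity for $\bbdrplusm$-crystals follow from the case $m-1$ together with the Hodge–Tate case, once one knows the relevant $\Ext^1$ and $\Hom$ groups match up on both sides. This matching of $\Ext$-groups is where the cohomology comparison results (the ``in addition'' part of the abstract, i.e. comparison of prismatic cohomology of these crystals with Galois cohomology) get used; I would invoke the perfection/pro-étale comparison of Thm.~\ref{Prop-perfectisproet} to identify cohomology on $\okprisperf$ with continuous Galois cohomology, reducing the $\Ext$-computation to a statement about $\rep_\gk(C)$ that is already available (or follows from Sen theory).

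Second, once all four verticals are equivalences at each finite level $m$ and the horizontal fully-faithful/equivalence assertions hold at each level (the top-row horizontals are restriction-of-site functors, which are clearly compatible with the $\mathcal{I}_\prism$-filtration; the bottom-row ones are the evident inclusions/equalities of representation categories, and one checks that ``nearly de Rham mod $t^m$'' is detected after reduction mod $t$, so the square commutes), I would pass to the inverse limit over $m$. On the crystal side $\Vect(\mathcal{S}, \bbdrplus) = \projlim_m \Vect(\mathcal{S}, \bbdrplusm)$ because $\bbdrplus = \projlim_m \bbdrplusm$ and finite projectivity is preserved under the limit (a finite projective $\bdrplus$-module is the same as a compatible system of finite projective $\bdrplusm$-modules — this is essentially Nakayama plus completeness, $\bdrplus$ being $t$-adically complete with residue field $C$); on the representation side $\rep_\gk(\bdrplus) = \projlim_m \rep_\gk(\bdrplusm)$ for the same reason, and ``nearly de Rham'' is by definition a condition on $W/tW$, hence survives the limit verbatim. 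The four equivalences and the commutativity are then inherited from the finite levels. The right-hand square (the ``='' on the bottom, ``$\simeq$'' on the top) is just Theorem \ref{thmintroMWGao}'s perfect-site equivalence lifted the same way, together with the fact that on the perfect (log-)prismatic site every $\bbdrplus$-crystal is automatically defined with no integrality/filtration constraint, matching the unrestricted $\rep_\gk(\bdrplus)$.

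Third, I would need to check the outer squares — that evaluation on the Fontaine (log) prism actually lands in the correct subcategory and is essentially surjective onto it. For the Fontaine prism this uses that $(\ainf,\ker\theta)$ is a \emph{weakly final} object (a cover) of $\okprisperf$, so a crystal is the same as its value on $\ainf$ together with descent data along $\ainf \hotimes_{\okprisperf} \ainf$; the descent datum over the (two- and three-fold) self-products, which are again perfectoid, is what encodes the semilinear $\gk$-action after passing to the perfectoid cover — this is the standard mechanism already exploited in Theorem \ref{thmintroMWGao}, and lifting it from $\baroprism[1/p]$ to $\bbdrplus$ is formal once the $t$-adic dévissage is in place. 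For the non-perfect sites $\okpris$ and $\okprislog$, the left two verticals, I would use the Breuil–Kisin/Kummer prism $\gs = \mathfrak{S} = W(k)[[u]]$ (resp.\ its log version) as a cover and translate ``$\bbdrplus$-crystal on $\okpris$'' into a module over $\mathfrak{S}[1/p]^\wedge$ with a connection/stratification; the nearly-de-Rham condition on the associated $\bdrplus$-representation should correspond exactly to the log connection having the prescribed residue eigenvalues, which is the ``link'' advertised in \S\ref{subsec1.2} — I expect the detailed form of this correspondence to be supplied by Thm.~\ref{thm_ndR_rep}.

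The main obstacle, I expect, is the $\Ext^1$-matching in the inductive step: one must show that the group of extensions of a $\bbdrplusm$-crystal by a Hodge–Tate crystal (computed as prismatic cohomology, i.e.\ $H^1$ of an internal Hom crystal) coincides with the group of extensions of the corresponding $\bdrplusm$-representation by the corresponding $C$-representation (Galois $H^1$), \emph{compatibly with the equivalences at level $m-1$ and at level $1$}. On the perfect site this is the cohomology comparison of Thm.~\ref{Prop-perfectisproet}; the subtlety is bookkeeping the bounded/``nearly'' condition — one has to verify that the extension data produced on the Galois side automatically satisfies the Sen-weight constraint defining the nearly-de-Rham subcategory, equivalently that the map $\Ext^1$ in the small (nearly) category injects into $\Ext^1$ in the big category with the expected image, so that fully faithfulness and essential surjectivity both go through. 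This amounts to a careful analysis of the Sen operator on an extension and of the corresponding residue of the log connection, and it is the one place where a genuine computation (rather than formal dévissage) seems unavoidable.
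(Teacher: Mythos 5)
Your proposal has a genuine gap that the paper itself flags explicitly. You propose running an $m$-by-$m$ dévissage directly on \emph{crystals}, reducing the inductive step to an $\Ext^1$-matching ``once one knows the relevant $\Ext^1$ and $\Hom$ groups match up on both sides.'' But objects of $\Vect(\okpris, \bbdrplusm)$ are torsion \emph{sheaves} on a site, not modules over a ring, and the category is not obviously an exact category in which ``extensions'' of crystals behave well enough for a five-lemma argument to close: it is not even clear that a ``finitely generated crystal'' (which would be needed to make your filtration argument internal to the crystal world) is a sensible notion. Remark~\ref{rem_intro_extension} in the paper says precisely that this naive idea ``does not always work out,'' and Section~\ref{sec: ndR rep} is organized around avoiding it. More concretely, your step ``fully faithfulness and essential surjectivity\ldots follow from the case $m-1$ together with the Hodge--Tate case'' presupposes that the forgetful/evaluation functor from crystals to representations is compatible with the $\calI_\Prism$-filtration at the level of $\Ext^1$, which is exactly the hard unproved claim, not a consequence of the cohomology comparison you cite.

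The paper's route is inverted from yours, and the inversion is the point. It first proves an equivalence
$\Vect(\okprisast, \bbdrplusm) \simeq \MIC^a_{A(u)}(\frakS_{\dR,m}^+)$ (Theorem~\ref{Thm-dRasLogconnection}), converting crystals into honest finitely generated log connections over $K[[u-\pi]]/(u-\pi)^m$. That conversion is \emph{not} obtained by dévissage from the Hodge--Tate case: it requires solving the combinatorial recurrence in Lemma~\ref{Key lemma} (the cocycle condition forces $\phi_n = \prod_{i=0}^{n-1}(\phi_1 - ia)$), an identity that must be proved directly in $\frakS^{1,+}_{\ast,\dR,m}$ and does not reduce to its mod-$E$ shadow. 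Only \emph{after} crystals have been replaced by log connections does the paper perform the dévissage you describe (\S\ref{susec12.1}, especially the essential-surjectivity argument lifting a connection from level $m-1$ to level $m$), and there it works cleanly because $\MIC^{\fg,a}_{u-\pi}$ is a module category where $\Ext^1_0$ is well-behaved. Your hint at the end, that one should ``translate a $\bbdrplus$-crystal on $\okpris$ into a module over $\frakS[1/p]^\wedge$ with a connection/stratification,'' is the right idea, but in your outline it appears as an afterthought rather than as the load-bearing step that must come \emph{before} any dévissage; as written, the logical order of your proposal would have to be reversed, and the key non-dévissage input (the recurrence lemma) supplied. The perfect-site half of your argument (Galois descent via $\Ainf^\bullet \cong C(G_K^\bullet, \Ainf)$ and $t$-adic dévissage to reduce to $m=1$) is fine and matches Lemma~\ref{Lem-Galois descent} and Theorem~\ref{Prop-perfectisproet}.
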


We also obtain comparison theorem between (log-) prismatic cohomology  and Galois cohomology.

\begin{theorem}[cf. Thm. \ref{thm-pris coho and Galois coho}] \label{thmintrocoho}  
Let $\bm \in \Vect( \okprisast, \bbdrplus)$ where $\ast \in \{\emptyset, \log \}$, and let $W$ be the associated object in $\rep_\gk(\bdrplus)$ via Thm. \ref{thmintrondr}.
There exists a $K$-linear quasi-isomorphism
\[
  \RGamma((\calO_K)_{\Prism, \ast},\bM) \simeq
    \RGamma(G_K, W).
\]
    \end{theorem}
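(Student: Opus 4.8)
The plan is to reduce the prismatic-to-Galois cohomology comparison to a cohomological comparison on the perfect (log-)prismatic site, where the Fontaine (log) prism already lives, and then to a known comparison between $\bbdrplus$-local systems on the pro-\'etale site (or the perfectoid site) and Galois cohomology. First I would use the equivalences of Thm. \ref{thmintrondr}: the crystal $\bM$ on $(\calO_K)_{\Prism,\ast}$ pulls back to a crystal $\bM^{\perf}$ on $(\calO_K)^{\perf}_{\Prism,\ast}$, and because the restriction functor $\Vect((\calO_K)_{\Prism,\ast},\bbdrplus)\to\Vect((\calO_K)^{\perf}_{\Prism,\ast},\bbdrplus)$ is an equivalence (top row of the diagram), one expects an analogous statement for cohomology: $\RGamma((\calO_K)_{\Prism,\ast},\bM)\simeq\RGamma((\calO_K)^{\perf}_{\Prism,\ast},\bM^{\perf})$. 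This is the prismatic analogue of the fact that, for coefficients that are already "perfect" (here $\bbdrplus$ is built from $\mathcal{O}_\Prism[1/p]$ completed along $\mathcal{I}_\Prism$, and the relevant period sheaf computations only see perfectoid rings), the cohomology is computed on the perfectoid subsite; I would prove it by a base-change / cofinality argument, showing that the natural map of derived global sections is an isomorphism after checking it on a basis of the topos, or by invoking the general descent mechanism already used in \cite{MW22, MW22log} to pass between $\okpris$ and $\okprisperf$ for Hodge-Tate crystals and noting it is compatible with the $\mathcal{I}_\Prism$-adic filtration.

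Next, on the perfect site, I would identify $\RGamma((\calO_K)^{\perf}_{\Prism,\ast},\bM^{\perf})$ with continuous Galois cohomology. The perfect (log-)prismatic site of $\calO_K$ is, up to the relevant cohomological information, governed by the Fontaine (log) prism $(\ainf,\ker\theta,(M_\ainf))$ together with its $\gk$-action: covers of the final object are pro-\'etale-locally split, and $\ainf$ with its $\gk$-action gives a \v Cech-type resolution. Concretely, evaluating $\bM^{\perf}$ on the Fontaine (log) prism yields a finite free $\ainf[1/p]^{\wedge}_{\ker\theta}=\bdrplus$-module with semilinear $\gk$-action, namely the representation $W$ of Thm. \ref{thmintrondr}; and the cohomology of the perfect site with coefficients in $\bbdrplus$ is computed by the complex $\RGamma_{\cts}(\gk, W)$. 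I would make this precise via the \v Cech–Alexander (or \v Cech–Herr) complex attached to the cover by the Fontaine prism: the self-products $\ainf^{\otimes(n+1)}$ in the category of perfect (log) prisms over $\calO_K$ are, after $p$-completion, the perfectoid rings $W(\calO_{C}^{\flat}\hotimes\cdots)$ whose associated $\bbdrplus$-values compute group cohomology of $\gk$, exactly as in the proof of Thm. \ref{thmintroMWGao} for Hodge-Tate crystals, but now applied to the $\mathcal{I}_\Prism$-completed structure sheaf rather than its first graded piece. Functoriality in $\bM$ is automatic since every step is a functor.

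The main obstacle I expect is the first reduction — controlling cohomology under the passage from $\okpris$ (or $\okprislog$) to the perfect subsite — because $\bbdrplus=\projlim_m \bbdrplusm$ is a limit of torsion \emph{sheaves}, and, as flagged in Remark \ref{rem_intro_extension}, cohomology of such torsion sheaves on the imperfect site is delicate (derived limits, $\mathrm{R}^1\projlim$ terms, and the failure of naive d\'evissage). The clean way to handle this is to work one layer $\bbdrplusm$ at a time: establish the comparison $\RGamma((\calO_K)_{\Prism,\ast},\bM_m)\simeq\RGamma(\gk, W/t^mW)$ for each $m$ by the \v Cech–Alexander argument above applied to the Artinian coefficient sheaf $\bbdrplusm$ (here one uses that $\bbdrplusm((A,I))=(A[1/p])/I^m$ is a well-behaved coherent sheaf, and that the relevant perfectoid \v Cech nerve computes $\gk$-cohomology with such coefficients, which is the $\bbdrplusm$-analogue of the Hodge-Tate case), then pass to the limit over $m$, checking that the transition maps are surjective on cohomology in each degree (or that the $\mathrm{R}^1\projlim$ vanishes) so that $\RGamma$ commutes with the inverse limit on both sides. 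The surjectivity/Mittag-Leffler input is exactly the kind of statement that, as the authors note, may require the explicit log-connection description and recurrence analysis of Thm. \ref{thm-intro-logconn} rather than abstract nonsense; I would cite that description to kill the obstruction classes. Once the per-$m$ comparison and the compatibility with $\projlim_m$ are in place, assembling the four cases $\ast\in\{\emptyset,\log\}$ and the perfect/imperfect variants into the single stated quasi-isomorphism is formal, using the commutativity of the diagram in Thm. \ref{thmintrondr} to match up the coefficient objects.
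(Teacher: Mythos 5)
Your proposal takes a genuinely different route from the paper, and the route has a concrete gap at its first (and essential) step. You propose to show that restriction to the perfect subsite is a cohomology equivalence, i.e.\ $\RGamma((\calO_K)_{\Prism,\ast},\bM)\simeq\RGamma((\calO_K)^{\perf}_{\Prism,\ast},\bM^{\perf})$, and then to identify the perfect-site cohomology with Galois cohomology via a \v Cech nerve on the Fontaine prism. The second half is essentially what Lemma \ref{Lem-Galois descent} and Prop.~\ref{Prop-perfectisproet} supply (and that part is fine). But the first half is not something the equivalence of \emph{categories} in Thm.~\ref{thmintrondr} gives you for free, and it is not proved anywhere in the paper as an input; it is a \emph{consequence} of the theorem, not a stepping stone to it. The ``base-change / cofinality'' argument you gesture at does not obviously work: the Breuil--Kisin prism is the natural cover on the imperfect site, the Fontaine prism the natural cover on the perfect site, and comparing the two \v Cech--Alexander complexes (which a priori compute different things) is exactly the content one needs to establish. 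Your fallback — citing the explicit log-connection description of Thm.~\ref{thm-intro-logconn} to ``kill obstruction classes'' — is essentially a promise to redo the paper's actual argument, so as written the proposal is circular at that point.

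The paper never passes through the perfect site for the cohomology comparison. Its route is: (i) compute $\RGamma(\okprisast,\bM)$ by \v Cech--Alexander on the \emph{imperfect} site using the Breuil--Kisin cover (Prop.~\ref{propCechcompa}); (ii) identify this with the two-term de Rham complex $\mathrm{DR}(M,\nabla_M)$ of the associated log-$\lambda$-connection (Prop.~\ref{Prop-rho}, Thm.~\ref{Thm-dRCohomology}); (iii) compare $\mathrm{DR}(M,\nabla_M)$ with $\RGamma(G_K,W)$ by showing that, after base change to $K_\infty[[\lambda]]$, the log connection $\nabla_{M,\lambda}$ agrees with the Sen--Fontaine operator on $D^+_{\dif,K_\infty}(W)$ (Thm.~\ref{thm-compare-M-Ddif}), and then invoking the Lie-algebra/locally-analytic cohomology comparison $\RGamma(G_K,W)\otimes_K K_\infty\simeq[D^+_{\dif,K_\infty}(W)\xrightarrow{\nabla_\lambda}D^+_{\dif,K_\infty}(W)]$ (Thm.~\ref{thm-SenFon coho Galois coho}). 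Your dévissage-plus-Mittag--Leffler worry is genuine, but the paper sidesteps it by working with the concrete de Rham complex (already an inverse limit of bounded complexes of finite-dimensional $K$-spaces) rather than with limits of torsion sheaves on the imperfect site. If you wanted to rescue your approach, the missing ingredient is precisely a proof that the Breuil--Kisin \v Cech--Alexander complex is quasi-isomorphic to the Fontaine \v Cech--Alexander complex for $\bbdrplusm$-crystals — and the paper's Theorems \ref{Thm-dRCohomology} plus \ref{thm-pris coho and Galois coho} together establish this indirectly, not by a cofinality argument.
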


\begin{remark}\label{rem_intro_extension}
We quickly discuss genesis/inspiration of Theorem \ref{thmintrondr}, and point out some technical difficulties.
\begin{enumerate}
    \item Recall in  \cite{Fon04}, an object $U\in \rep_\gk(C)$ is called \emph{almost Hodge--Tate}   if its Sen weights are integers; thus, the (log-) nearly Hodge--Tate representations in Thm. \ref{thmintroht} are those \emph{near} to an almost Hodge--Tate representation.
In addition, in \cite{Fon04}, an object
      $W\in \rep_\gk(\bdrplus)$ is called \emph{almost de Rham} if $W/tW$ is almost Hodge--Tate.
Indeed, the form of Def. \ref{defnndR} and Thm. \ref{thmintrondr} are inspired by the treatment in \cite{Fon04}.
In particular, (log-) nearly de Rham representations are those \emph{``near"} to an almost de Rham representation.

\item Although the statement of Thm. \ref{thmintrondr}  is inspired by some classical study of Fontaine, there has been \emph{substantial difficulty}   to obtain Thm. \ref{thmintrondr}.
Indeed, in \cite{Fon04}, practically all results about $ \rep_\gk(\bdrplus)$ are obtained via \emph{d\'evissage} techniques ---that is, by studying objects in $ \rep_\gk(\bdrplus/t^m)$---  starting from results on $\rep_\gk(C)$.
In our paper, we certainly also make use of {d\'evissage} arguments; however, it turns out such naive idea does not always work out.
This is particularly the case when we study  the category $\Vect( \okpris, \bbdrplusm)$, whose objects are torsion \emph{sheaves} (rather than just modules) and hence are much more subtle when considering questions such as \emph{extensions}.
In our paper, a most technical and difficult result, concerning classifying $\bbdrplus$-crystals as log connections (cf. Thm. \ref{thm-intro-logconn}), involve solving certain recurrence  relations, which we \emph{cannot} obtain via any  {d\'evissage argument}.

\item As discussed extensively in the introduction of  \cite{GMWHT} (cf. \S 1.5 there), there is also a \emph{stacky} approach (contrary to the site-theoretic approach in \cite{GMWHT}) to study Hodge--Tate crystals, as carried out in \cite{Dri20, BL-a, BL-b, AHLB1, AHLB2}. Currently, it is not clear if there could be a stacky approach to the study of $\bbdrplus$-crystals. We also regard this   as an indication of difficulties in this paper; conversely, we expect some of the explicit computations in this paper could contribute to the development of a stacky approach.
 \end{enumerate}
\end{remark}

\begin{rem} We discuss   relation between $\bbdrplus$-crystals and   $p$-adic Riemann--Hilbert correspondence.
\begin{enumerate}
    \item 
First, we would like to point out that even though $\bbdrplus$-crystals are studied using \emph{d\'{e}vissage} arguments, starting from results on Hodge--Tate crystals, they   contain genuinely richer information.
One similar scenario is in the paper of Liu--Zhu \cite{LZ17}, where they construct  a $p$-adic Riemann--Hilbert functor for $p$-adic local systems, by  \emph{d\'{e}vissage} starting  from a $p$-adic Simpson functor. However, it is the Riemann--Hilbert functor that plays the key role in establishing \emph{de Rham rigidity} property in \emph{loc. cit.}, which in turn has important applications to Shimura varieties.

 \item In the   upcoming work \cite{GMWdRrel}, we will  discuss 
$\bbdrplus$-crystals in the relative setting, i.e., on the (log-) prismatic site of a smooth formal scheme over $\ok$. These results will be related to the work of \cite{LZ17}. 
\end{enumerate}
\end{rem}

\subsection{Crystals and log connections}\label{subsec1.2}
The key tool in establishing Theorems \ref{thmintrondr} and \ref{thmintrocoho} is the category of \emph{log connections}, which \emph{bridges} together $\bbdrplus$-crystals and Galois representations.
Let us first introduce some related notations.

\begin{notation}\label{notaBKprism}
\begin{enumerate}
\item  Recall in Notation \ref{notaeu}, we defined $\pi$ and $E(u)$.
Let $\gs=W(k)[[u]]$, and equip it with a Frobenius $\varphi$ extending  the absolute Frobenius on $W(k)$ and such that $\varphi(u)=u^p$. Then $(\gs, (E(u)) \in \okprism$, and is called the Breuil--Kisin prism (associated to $\pi$).
One can equip it with a log structure and obtain a log prism $(\gs, (E), M_\gs) \in \okprislog$, cf. Notation \ref{notaBKlogprism}.

\item Let $\pi_0=\pi$, and for each $n \geq 1$, inductive fix some $\pi_n$ so that $\pi_n^p=\pi_{n-1}$. This compatible sequence defines an element $\pi^\flat \in \ocflat$. We can define a morphism of prisms (resp. log prisms)
\[(\gs, (E)) \to (\ainf, (\xi)), \quad \text{ resp. } (\gs, (E), M_\gs) \to (\ainf, (\xi), M_\ainf)\]
which is a $W(k)$-linear map and sends $u$   to the Teichm\"uller lift $[\pi^\flat]$.
\end{enumerate}
\end{notation}

\begin{notation} \label{notafields}
 We   introduce some field notations. Let $\mu_1$ be a primitive $p$-root of unity, and inductively, for each $n \geq 2$, choose $\mu_n$ a $p$-th root of $\mu_{n-1}$. Define the fields
$$K_{\infty}   = \cup _{n = 1} ^{\infty} K(\pi_n), \quad K_{p^\infty}=  \cup _{n=1}^\infty
K(\mu_{n}), \quad L =  \cup_{n = 1} ^{\infty} K(\pi_n, \mu_n).$$
Let $$G_{\kinfty}:= \gal (\overline K / K_{\infty}), \quad G_{\kpinfty}:= \gal (\overline K / K_{p^\infty}), \quad G_L: =\gal(\overline K/L).$$
Further define $\Gamma_K, \hat{G}$ as in the following diagram, where we let $\tau$ be a topological generator of $\gal(L/\kpinfty) \simeq \zp$, cf. Notation \ref{nota hatG} for more details.
\[
\begin{tikzcd}
                                       & L                                                                                             &                             \\
\kpinfty \arrow[ru, "<\tau>", no head] &                                                                                               & \kinfty \arrow[lu, no head] \\
                                       & K \arrow[lu, "\Gamma_K", no head] \arrow[ru, no head] \arrow[uu, "\hat{G}"', no head, dashed] &
\end{tikzcd}
\]
\end{notation}

 Let $\lambda :=\prod_{n \geq 0} (\varphi^n(\frac{E(u)}{E(0)})) $ be an element in  $\bcrisplus$ and hence in $\bdrplus$. For now, let us mention that we regard $\lambda$ as an ``\emph{analogue in the Kummer tower $\kinfty$}"  of Fontaine's element $t$   in the cyclotomic tower $\kpinfty$, cf. Rem. \ref{remuselambda}.

\begin{defn}
\begin{enumerate}
\item
A (finite free) \emph{log-$\lambda$-connection}  over $K[[\lambda]]$  is a finite  free $K[[\lambda]]$-module $M$ together a $K$-linear map
$\nabla_M: M \to M$ satisfying $\lambda$-Leibniz law; namely,
$$\nabla_M(fm) =f\nabla_M(m)+{\lambda}\frac{d}{d{\lambda}}(f)m, \quad \forall f \in K[[{\lambda}]], m \in M.$$
Write $\mathrm{MIC}_{\lambda}(K[[{\lambda}]])$ for the category of such objects.

\item
For each $m \geq 1$, equip $K[[\lambda]]/\lambda^m \simeq \oplus_{i=0}^{m-1} K\cdot \lambda^i$ with induced topology, and equip $K[[\lambda]]=\varprojlim_{m \geq 1}K[[\lambda]]/\lambda^m$ with inverse limit topology.
Let $a \in K$.
Say a log-$\lambda$-connection $(M, \nabla_M)$ in is \emph{$a$-nilpotent} if
  \[
       \lim_{n\to+\infty}a^n\prod_{i=0}^{n-1}(\nabla_M-i) = 0
   \]
   with respect to the induced topology on $M$.
   Write $\mathrm{MIC}_{\lambda}^a(K[[{\lambda}]])$ for the category of such objects.
\end{enumerate}
\end{defn}

The goal of this subsection is to discuss the following (all-encompassing) commutative diagram, where we see several categories of log connections sitting in between categories in Thm. \ref{thmintrondr}.
\begin{equation} \label{eqintrodiag}
\begin{tikzcd}
{\Vect( \okpris, \bbdrplus)} \arrow[d, "\simeq"] \arrow[rr, hook]            &  & {\Vect( \okprislog, \bbdrplus)} \arrow[d, "\simeq"]                        &  &                                      \\
{\mic_\lambda^{-E'(\pi)}(K[[\lambda]]) } \arrow[d, "\simeq"] \arrow[rr, hook] &  & {\mic_\lambda^{-\pi E'(\pi)}(K[[\lambda]]) } \arrow[d, "\simeq"] \arrow[rr] &  & {\mic_\lambda(\kinfty[[\lambda]])  } \\
\rep_\gk^{\mathrm{ndR}}(\bdrplus) \arrow[rr, hook]                           &  & \rep_\gk^{\mathrm{lndR}}(\bdrplus) \arrow[rr, hook]                        &  & \rep_\gk(\bdrplus) \arrow[u]
\end{tikzcd}
\end{equation}
Here, the    category  $\mathrm{MIC}_{\lambda}(\kinfty[[{\lambda}]])$ is the analogously defined category; also notice the direction of the right most vertical arrow (which, unlike other vertical arrows, is not an equivalence).

  We first discuss the top left square in Diagram \eqref{eqintrodiag}.

\begin{theorem}\label{thm-intro-logconn}
 (cf. \S\ref{secdRlogconn}.)
We have a commutative diagram of tensor functors:
\[
\begin{tikzcd}
{\Vect( \okpris, \bbdrplus)} \arrow[d, "\simeq"] \arrow[rr, hook] &  & {\Vect( \okprislog, \bbdrplus)} \arrow[d, "\simeq"] \\
{\mic_\lambda^{-E'(\pi)}(K[[\lambda]]) } \arrow[rr, hook]          &  & {\mic_\lambda^{-\pi E'(\pi)}(K[[\lambda]]) }
\end{tikzcd}
\]
here the left resp. right vertical arrow is induced by  evaluation at Breuil--Kisin prism resp. Breuil--Kisin log prism. Both equivalences are bi-exact.
\end{theorem}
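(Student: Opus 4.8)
The plan is to prove both equivalences uniformly by \v{C}ech descent along the Breuil--Kisin (log) prism. Recall that $(\gs,(E))$, resp.\ $(\gs,(E),M_\gs)$, covers the final object of the topos on $\okpris$, resp.\ $\okprislog$ (as already used in \cite{BS21,DL21}), so a $\bbdrplus$-crystal on $\okpris$ is the same datum as a finite projective $\bbdrplus(\gs,(E))$-module $M$ together with a stratification $\varepsilon$ along the two structure maps into $\bbdrplus(\gstwo)$ --- here $\gstwo$ is the self-product of the Breuil--Kisin prism, obtained from $\gs\hotimes\gs$ by a prismatic envelope --- subject to the cocycle condition over the triple self-product; likewise on the log site. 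Under this dictionary the two evaluation functors of the theorem become $\bm\mapsto(M,\varepsilon)$, so everything reduces to identifying the essential images. The first input is the computation $\bbdrplus(\gs,(E))=\projlim_m\gs[1/p]/(E)^m\cong K[[\lambda]]$: this $(E)$-adic completion of $\gs[1/p]$ is a complete discrete valuation ring with residue field $\gs[1/p]/(E)=K$ in which $\lambda$ (equivalently $E(u)/E(0)$) is a uniformizer, since the remaining Frobenius twists $\varphi^n(E(u)/E(0))$ with $n\ge1$ are units ($\pi^{p^n}$ is not a conjugate of $\pi$). As the log prism has the same underlying prism, its $\bbdrplus$ is again $K[[\lambda]]$; and $\bbdrplusm(\gs,(E))\cong K[[\lambda]]/\lambda^m$, which lets one work one truncation at a time.

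The heart of the proof is the computation of $\bbdrplus(\gstwo)$ (and of its log analogue) together with the passage between stratifications and log-$\lambda$-connections. One shows $\bbdrplus(\gstwo)$ is, after inverting $p$, obtained from $K[[\lambda]]$ by adjoining one further variable $w$ subject to a divided-power-type growth condition inherited from the prismatic envelope; the crucial output is the relation $\lambda_1=\lambda_0\cdot f(w)$ between the two images of $\lambda$, whose linear term in $w$ is governed by $E'(\pi)$ in the non-log case --- and by $\pi E'(\pi)$ in the log case, the extra factor of $\pi$ arising because the log self-product records the ratio $u_1/u_0$ rather than the difference $u_1-u_0$. This is exactly where the different $\mathfrak{D}_{\ok/W(k)}=(E'(\pi))$ enters. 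Given $(M,\varepsilon)$, one reads off an operator from the coefficient of $w^1$ in $\varepsilon$ and normalizes it by $-E'(\pi)$ (resp.\ $-\pi E'(\pi)$) to obtain $\nabla_M$; the failure of $K[[\lambda]]$-linearity, dictated by $\lambda_1=\lambda_0 f(w)$, then forces the $\lambda$-Leibniz rule $\nabla_M(fm)=f\nabla_M(m)+\lambda\frac{d}{d\lambda}(f)m$, and the cocycle condition forces the higher coefficients of $\varepsilon$ to be divided binomials $\frac{1}{n!}\prod_{i=0}^{n-1}(\nabla_M-i)$ in $\nabla_M$ paired with powers of $\log f(w)$, morally $\varepsilon=(\lambda_1/\lambda_0)^{\nabla_M}=\sum_n\binom{\nabla_M}{n}(f(w)-1)^n$.

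The reverse direction carries the real content. Given a bare log-$\lambda$-connection $(M,\nabla_M)$, reconstructing the stratification amounts to solving the recursion that the cocycle and multiplicativity constraints impose on the coefficients $P_n$ of $\varepsilon=\sum_n w^n\otimes P_n$ --- each $P_n$ being an explicit polynomial in $\nabla_M$ whose dominant term is a scalar multiple of $\prod_{i=0}^{n-1}(\nabla_M-i)$ --- and then verifying that the resulting $\varepsilon$ genuinely lands in $\bbdrplus(\gstwo)$, which by the divided-power growth condition holds \emph{precisely} when $\lim_{n\to\infty}a^n\prod_{i=0}^{n-1}(\nabla_M-i)=0$ for $a=-E'(\pi)$ (resp.\ $a=-\pi E'(\pi)$), i.e.\ exactly the defining condition of $\mic_\lambda^a(K[[\lambda]])$. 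One then checks that this $\varepsilon$ satisfies the cocycle identity (so it defines a genuine crystal) and that the functor is fully faithful (a $K[[\lambda]]$-linear map commuting with $\nabla_M$ automatically commutes with $\varepsilon$, again by the recursion). Finally one assembles over $m$, matching $\Vect(\okpris,\bbdrplus)=\projlim_m\Vect(\okpris,\bbdrplusm)$ with $K[[\lambda]]=\projlim_m K[[\lambda]]/\lambda^m$ --- finiteness of the modules supplying the $\varprojlim^1$-vanishing --- and likewise on the log side; the commutativity of the square with the two horizontal inclusions is then formal, the inclusion $\mic_\lambda^{-E'(\pi)}(K[[\lambda]])\into\mic_\lambda^{-\pi E'(\pi)}(K[[\lambda]])$ holding because $v_p(-\pi E'(\pi))>v_p(-E'(\pi))$ makes the latter nilpotence condition the weaker one.

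I expect the main obstacle to be exactly this reverse direction at finite truncation, as flagged in Rem.~\ref{rem_intro_extension}: because the $\bbdrplusm$ are torsion \emph{sheaves}, one cannot build $\varepsilon$ modulo $\lambda^m$ by extending $\varepsilon$ modulo $\lambda^{m-1}$ --- extensions of torsion prismatic crystals behave quite differently from extensions of modules --- so the recursion for the coefficients $P_n$ must be solved in one stroke, and identifying the precise (divided-power) growth condition on $\bbdrplus(\gstwo)$ that makes convergence of the reconstructed $\varepsilon$ equivalent to $a$-nilpotence, together with pinning down the exact constant $a$, is the technical core.
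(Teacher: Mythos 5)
Your plan follows the same route as the paper: Čech descent to stratifications over the Breuil--Kisin (log) prism, the identification $\bbdrplus(\gs,(E))=K[[\lambda]]$, a normalized variable in degree one whose relation $A(u_1)/A(u_0)=1+aX_1$ encodes the constant $a=-E'(\pi)$ resp.\ $-\pi E'(\pi)$, the exponential formula $\varepsilon=(1+aX_1)^{\nabla_M}$, and the $a$-nilpotence criterion matching the divided-power growth in $\bbdrplus$ of the self-product; you also correctly flag the recurrence for the coefficients as the real technical core, which the paper solves outright in Lemma~\ref{Key lemma} rather than by dévissage.
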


\begin{rem}
  Interestingly, Thm. \ref{thm-intro-logconn}   refines results in \cite[\S 7.2]{BS23} in a very conceptual way, cf. Rem. \ref{rembs2372}.
\end{rem}

\begin{rem}    \label{rem:variables}
 The very fact that evaluations of \emph{crystals} should induce certain  \emph{differential modules} (i.e., the log connections in this case) is a familiar phenomenon (say, in the classical crystalline theory), but we have had substantial difficulty in obtaining Theorem \ref{thm-intro-logconn}.
 A major obstacle is a ``correct" description of a related cosimplicial ring $\gs_\dR^{\bullet, +}$, cf. \S \ref{sec: cosimp rings}, which is the ``de Rham variant" of the cosimplicial ring $\gs^{\bullet}$ built from the Breuil--Kisin prism. It turns out one needs to carefully choose certain ``variables" in $\gs_\dR^{\bullet, +}$, ---that we discovered by trial and error--- which then would make ensuing   computations possible. (One interesting remark is that this difficulty vanishes when we consider Hodge--Tate crystals as in Thm. \ref{thmintroht}.)
\end{rem}

 We now discuss the right most vertical functor in Diagram \eqref{eqintrodiag}.

 \begin{theorem} \label{thm-intro-KSF}
 (cf. \S \ref{sec_kummersen}.)
Let $W \in \rep_\gk(\bdrplus)$, and define
 \begin{equation*}
 D^+_{\dif, \kinfty}(W):= (W^{G_L})^{\tau\dla, \gamma=1},
 \end{equation*}
where the right hand side consists of elements in $W^{G_L}$ which are fixed by $\gal(L/\kinfty)$ and has \emph{locally analytic action} by the $p$-adic Lie group $\gal(L/\kpinfty)$ (cf. \S \ref{seclav}).
Then it is a finite free $K_\infty[[\lambda]]$- module such that the natural map
\[ D^+_{\dif, \kinfty}(W)\otimes_{\kinftylambda} \bdrplus \to W\]
is an isomorphism.
In addition, the induced Lie algebra action from $\gal(L/\kpinfty)=\langle \tau \rangle$ can be normalized (by some scalar $\fkc$ not specified here) to induce a $\kinfty$-linear map
\[
\frac{1}{\fkc}\cdot \log \tau: D^+_{\dif, \kinfty}(W) \to D^+_{\dif, \kinfty}(W),
\]
which defines  an object in $\mathrm{MIC}_\lambda(\kinftylambda)$.
We call the operator $\frac{1}{\fkc}\cdot \log \tau$   the \emph{Sen--Fontaine operator over the Kummer tower}.
 \end{theorem}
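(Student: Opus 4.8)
The plan is to follow the standard Sen–Fontaine strategy adapted to the Kummer tower, building on the $C$-representation (i.e.\ Hodge–Tate–Sen) case and then d\'evissage-ing up the $t$-adic filtration. First I would recall the analogous statement one level below: for $W\in\rep_\gk(\bdrplus)$, the quotient $W/tW$ lies in $\rep_\gk(C)$, and a Kummer-tower Sen theory over $\kinfty$ should produce a finite free $\kinfty$-module $D_{\Sen,\kinfty}(W/tW)$ descending $W/tW$, with a Sen operator normalized from $\log\tau$. This is essentially the $\bdrplus/t = C$ case, and can be extracted from the existing Sen/Tate–Sen formalism over the (non-Galois, non-$p$-adic) Kummer extension — this requires knowing that the completed $\kinfty$ inside $C$ behaves well under the decompletion procedure, which is exactly what a ``Tate–Sen axioms'' verification for the pair $(\gkinfty, \tau)$ provides.

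Next I would run the decompletion/descent. The object $W^{G_L}$ is a module over $(\bdrplus)^{G_L}$; since $L\supset\kinfty$ is obtained from $\kinfty$ by adjoining a $\zp$-tower, the group $\gal(L/\kinfty)$ acts, and $W^{G_L}$ is (after $t$-adic completion) finite free over $(\bdrplus)^{G_L}$ by d\'evissage from the $C$-level statement plus the vanishing of the relevant $H^1$ of $\gal(L/\kinfty)$ on $C(i)$ for the Sen-weight reasons hidden in the hypotheses. Then I take the $\gal(L/\kinfty)$-invariants, keeping only the locally analytic vectors for the $p$-adic Lie group $\gal(L/\kpinfty)=\langle\tau\rangle$; here I would invoke the locally analytic vector machinery of \S\ref{seclav} together with a Tate–Sen style argument (overconvergence/decompletion) to show that $D^+_{\dif,\kinfty}(W)$ is finite free over $\kinfty[[\lambda]]$ of the right rank and that base change along $\kinfty[[\lambda]]\to\bdrplus$ recovers $W$. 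The key input making $\kinfty[[\lambda]]$ (rather than $\hatkinfty[[\lambda]]$) the right coefficient ring is that $\lambda$ generates the same filtration as $t$ over the relevant subring and that the locally analytic vectors decomplete $\hatkinfty$ back to $\kinfty$, which is precisely the Kummer-tower analogue of Fontaine's $\BdRp$-Sen theory along the cyclotomic tower.

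Finally, I would construct the operator. The $\zp$-action of $\gal(L/\kpinfty)=\overline{\langle\tau\rangle}$ on the locally analytic vectors $D^+_{\dif,\kinfty}(W)$ differentiates to a Lie algebra action; choosing the normalizing scalar $\ast$ (which will involve $E'(\pi)$ / $t$ / $\lambda$-type factors so that $\frac1\ast\log\tau$ acts on $\lambda$ the way $\lambda\frac{d}{d\lambda}$ does) gives a $\kinfty$-linear operator satisfying the $\lambda$-Leibniz rule with respect to the $\kinfty[[\lambda]]$-module structure — i.e.\ an object of $\mic_\lambda(\kinftylambda)$. One must check convergence of $\log\tau$ on locally analytic vectors (automatic by definition of l.a.\ vectors) and that the operator is independent of the choice of $\tau$ up to the normalization.

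I expect the main obstacle to be the finite-freeness and descent step: showing that passing to $G_L$-invariants and then to locally analytic $\tau$-vectors does not lose rank and that the result is genuinely free over $\kinfty[[\lambda]]$ rather than merely over its completion. Unlike the Hodge–Tate case, where one works modulo $t$ and the module is a plain $C$-representation, here the $t$-adic (equivalently $\lambda$-adic) filtration must be controlled uniformly, and the naive d\'evissage from $\rep_\gk(C)$ does not immediately give freeness of the limit; one needs the Tate–Sen axioms for the Kummer tower together with a careful analysis of the locally analytic vectors of $\bdrplus$ itself (the ``period ring'' computation $(\bdrplus)^{G_L, \tau\dla, \gamma=1}=\kinfty[[\lambda]]$), which is the technical heart of \S\ref{sec_kummersen}.
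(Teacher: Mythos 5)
Your overall strategy --- d\'evissage from the torsion/$C$-level case, a period-ring computation, and a decompletion step --- is in the right spirit, and you correctly single out the key technical input, namely the computation of locally analytic vectors of the period ring, which in the paper is Proposition~\ref{propbdrliezero} ($(\bdrplusml)^{\hat G\dla,\nabla_\gamma=0}=L[[\lambda]]/\lambda^m$ and $(\bdrplusml)^{\tau\dla,\gamma=1}=\kinfty[[\lambda]]/\lambda^m$).

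However, the \emph{mechanism} you propose for the decompletion is genuinely different from the paper's. You suggest setting up Tate--Sen axioms directly for the Kummer direction, i.e.\ for the pair $(\gkinfty,\tau)$. The paper never does this. Instead it takes the cyclotomic Fontaine theory $D^+_{\dif,\kpinfty}(W)$ as a black box, uses Lemma~\ref{lemLAVqbasis} to identify $(W^{G_L})^{\hat G\dla} = D^+_{\dif,\kpinfty}(W)\otimes_{\kpinftyt}(\bdrplusml)^{\hat G\dla}$, then applies the axiomatic Lie-algebra descent of Proposition~\ref{propaxiommodule} (with the axioms verified via the element $\fkt$ in Construction~\ref{axiomverifybdr}) to kill $\nabla_\gamma$ and land on a module over $L[[\lambda]]/\lambda^m$, and finally performs a finite-\'etale Galois descent along $\kinfty\subset\kinfty(\mu_n)\subset L$ to reach $\kinfty[[\lambda]]/\lambda^m$. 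The two routes should give the same answer, but the paper's is more economical: all the analytic work is packaged into the one decompletion lemma for $\nabla_\gamma$ (which is direction-agnostic), rather than re-establishing a Tate--Sen formalism for the non-normal extension $\kinfty/K$. This also transports more cleanly to the relative setting.

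Two smaller inaccuracies: finite-freeness of $W^{G_L}$ over $\bdrplusl$ follows from Fontaine's $\bdrplus$-theory (or perfectoid/almost-\'etale descent) applied to the cyclotomic tower inside $L$; it has nothing to do with vanishing of $H^1(\gal(L/\kinfty),C(i))$, as $\gal(L/\kinfty)$ is not the group acting on $W^{G_L}$ in that step. And your worry that d\'evissage ``does not immediately give freeness of the limit'' is resolved exactly as in Fontaine's cyclotomic case: one first reduces to torsion modules over $\bdrplusm$ with $m<\infty$, which are genuinely Banach, proves finite-freeness there, and checks compatibility of the transition maps; this is precisely the opening sentence of the paper's proof of Theorem~\ref{thm331kummersenmod}. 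Your construction of the operator is fine; the only extra checks in Theorem~\ref{thmkummersenop} are that $N_\nabla$ commutes with $\gal(L/\kinfty)$ (so it preserves the $\gamma=1$ part) and that the normalization satisfies $\frac{1}{u\lambda'}N_\nabla(\lambda)=\lambda$.
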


This theorem generalizes (indeed, lifts) the  Sen theory over the Kummer tower developed in \cite{GMWHT}, which treated the case $W \in \rep_\gk(C)$. It also parallels Fontaine's theory which attaches to  $W \in \rep_\gk(\bdrplus)$ a ``log-$t$-connection" over $\kpinftyt$.

Now, let us discuss the main ideas where we use Theorems \ref{thm-intro-logconn} and \ref{thm-intro-KSF} to  deduce Thm. \ref{thmintrondr} (say, in the prismatic seting).
Firstly, the functor
\[ {\Vect( \okpris, \bbdrplus)} \to  \rep_\gk^{\mathrm{ndR}}(\bdrplus)\]
is easily constructed because of our known results in the Hodge--Tate case Thm. \ref{thmintroht}.
Hence we can use Theorem \ref{thm-intro-logconn} to pass to the following functor
\begin{equation} \label{eqktokinfty}
   \mic_\lambda^{-E'(\pi)}(K[[\lambda]]) \to  \rep_\gk^{\mathrm{ndR}}(\bdrplus)
\end{equation}
To show it is fully faithful, it turns out it suffices to compare the log connection over $K[[\lambda]]$ to those over  $K_\infty[[\lambda]]$ constructed in Thm. \ref{thm-intro-KSF}. This comparison is proved in Thm. \ref{thm-compare-M-Ddif}, via techniques in locally analytic vectors.
To prove the functor is essentially surjective, we start from the known Hodge--Tate case Thm. \ref{thmintroht} and use a d\'evissage argument. Recall in Remark \ref{rem_intro_extension}, we mentioned a difficulty to study  \emph{extensions of prismatic crystals}; this difficulty is salvaged, as in the functor \eqref{eqktokinfty}, we are dealing with actual \emph{log connections}, which are amenable to d\'evissage techniques.

\begin{remark}
In the final stage of our redaction (after we obtained all the main results),  Zeyu Liu informed us that he independently obtained some partial results (under the prismatic setting) discussed in this paper, cf. \cite{Liu23}.
For the reader's convenience, we make some rough comparisons:
\begin{itemize}
\item  Liu obtained Thm. \ref{thm-intro-KSF} (and   related results on  Sen--Fontaine theory), by generalizing  Sen theory over the Kummer tower developed in our earlier work; our approach on Thm. \ref{thm-intro-KSF}  is similar, although we adopt  an axiomatic treatment which will be of   use in our future work in the relative case \cite{GMWdRrel}.
\item Liu can prove full faithfulness results in Theorems \ref{thm-intro-logconn} and  \ref{thmintrondr}, but not   essential surjectivity therein.  (As far as we understand, Liu  uses  different ``variables" for the cosimplicial ring $\gs_\dR^{\bullet, +}$ (cf. Rem. \ref{rem:variables}), leading to  complications in his approach).
\end{itemize}
\end{remark}

\subsection{Structure of the paper}

Our paper can be roughly divided into two main parts. The first part, from \S \ref{sec:crystal and strat} to \S \ref{sec: perfect crystal and rep}, is the ``prismatic/stratification" part. The second part, from \S \ref{seclav} to \S \ref{sec: ndR rep}, is the ``pro-\'etale" part.

In \S \ref{sec:crystal and strat}, we quickly relate $\bbdrplus$-crystals with stratifications, which then motivates the following \S \ref{sec: cosimp rings} and \S \ref{sec: stra and conn}, where we carry out detailed computations of the relevant cosimplicial rings and stratifications. In particular, our treatment in  \S \ref{sec: stra and conn} is axiomatic, which relates certain stratifications to certain log connections. In  \S \ref{secdRlogconn}, we \emph{specialize} these  axiomatic computations to   $\bbdrplus$-crystals.
 In \S \ref{sec:compa dR coho}, (log-) prismatic cohomology of these crystals is compared with  Sen--Fontaine  cohomology  of the corresponding log connections.
In \S \ref{sec: perfect crystal and rep}, we discuss $\bbdrplus$-crystals on the perfect prismatic site, and relate them with $\bdrplus$-representations of the Galois group $\gk$.

In \S \ref{seclav}, which is the beginning of our ``pro-\'etale" part, we start by reviewing locally analytic vectors, using an axiomatic approach.
We then specialize these axiomatic computations in \S \ref{subsecwtbi} resp. \S \ref{sec_kummersen} to compute locally analytic vectors in rings resp. representations. In particular, the later builds a Sen--Fontaine theory over a Kummer tower.
In \S \ref{sec_compa_Galois}, this Sen--Fontaine theory will be \emph{compared} with the log connections  coming from $\bbdrplus$-crystals (in \S \ref{secdRlogconn}); this implies the \emph{pro-\'etale comparison} linking (log-) prismatic cohomology to Galois cohomology.
Finally in \S \ref{sec: ndR rep}, we classify $\bbdrplus$-crystals by (log-) nearly de Rham representations.


\subsection*{Acknowledgement}
We thank Heng Du, Bernard Le Stum, and Tong Liu for useful discussions and correspondences.
We thank Bhargav Bhatt for suggesting the terminology ``$\bbdrplus$-crystals."
H.G. is  partially supported by the National Natural Science Foundation of China under agreement No. NSFC-12071201; in addition, he gratefully acknowledges the support of the Infosys Member Fund at the Institute for Advanced Study for providing an excellent working condition where part of the work is carried out.
Y.M. is supported by China Postdoctoral Science Foundation E1900503.
Y.W. is partially supported by CAS Project for Young Scientists in Basic Research, Grant No. YSBR-032.
All authors further thank Beijing International Center for Mathematical Research and Morningside Center of Mathematics where part of the work is carried out.

\section{$\bbdrplus$-crystals and stratifications}\label{sec:crystal and strat}

In this section, we define $\bbdrplus$-crystals, and relate them with stratifications.

\begin{notation} cf. Def. \ref{nota intro}. Let $\ast \in \{\emptyset, \log\}.$
Let $\BBdRp$ be the presheaf on $(\calO_K)_{\Prism, \ast}$   by sending a prism $(A,I)$ (resp. a log prism $(A,I,M)$) to \[A[\frac{1}{p}]^{\wedge}_I = \varprojlim_{m\geq 1} A[\frac{1}{p}]/I^m A[\frac{1}{p}]. \]
 Then $\BBdRp$ is a sheaf on $(\calO_K)_{\Prism}$  resp. $\okprislog$ by \cite[Thm. 2.2, Rem. 2.4]{BS23} resp. \cite[Rem. 2.10]{MW22log}.
Similarly, for any $m\geq 1$, the pre-sheaf $\bB_{\dR,m}^+$ sending a prism $(A,I)$ (resp. log prism $(A,I,M)$) to $A[\frac{1}{p}]/I^mA[\frac{1}{p}]$ is also a sheaf on $(\calO_K)_{\Prism, \ast}$. In particular, we have $\bB_{\dR,1}^+ = \overline \calO_{\Prism}[\frac{1}{p}]$.
\end{notation}

\begin{convention}\label{conv_topo}
Using the (complete) $p$-adic topology on $A$, we can topologize $A[\frac{1}{p}]$ and hence $A[\frac{1}{p}]/I^m A[\frac{1}{p}]$; the ring  $A[\frac{1}{p}]^{\wedge}_I$ is then equipped with the inverse limit topology.
\end{convention}

\begin{defn} Let $\ast \in \{\emptyset, \log\}.$
  Call an object in 
  \[\text{$\Vect(\okprisast, \bbdrplus)$ resp. $\Vect(\okprisast, \bbdrplusm)$ }\]  a \emph{$\bbdrplus$-crystal}   resp. a \emph{(truncated) $\bbdrplus$-crystal of level $m$} on $\okprisast$.   Note when $m=1$,  these  are precisely the rational Hodge--Tate crystals studied in  \cite{GMWHT}.
\end{defn}

\begin{rem} \label{remnamebdrcrystal}
We comment on the terminology of $\bbdrplus$-crystals.
    \begin{enumerate}
        \item  Objects in $\vect(\okprisast, \mathbb{B}^+_{\dR, 1})$   are called Hodge--Tate crystals because they are related with the Hodge--Tate locus of the prismatization of $\spf \ok$, cf. \cite{Dri20, BL-a}; this locus is also sometimes called the Hodge--Tate stack of $\spf \ok$.
        
        \item Since $\Vect(\okprisast, \bbdrplus)$ is a natural generalization of  Hodge--Tate crystals, and in particular (as stated in Thm. \ref{thmintrondr}) are classified by nearly de Rham representations, one could also try to call them ``de Rham crystals". Here, we choose to use the more technical term ``$\bbdrplus$-crystals". For one reason, there is already a well-known ``de Rham stack" in algebraic geometry to study de Rham cohomology, whereas objects in $\Vect(\okprisast, \bbdrplus)$ give rise to vector bundles with flat $\lambda$-connections (as stated in Thm. \ref{thm-intro-logconn}) and hence are not directly related with de Rham cohomology (say, over $\spf \ok$). In particular, one should not expect to use the usual de Rham stack to study these crystals; rather, one probably should look for certain ``$\bbdrplus$-stack". 
For another reason, (particularly in the relative case in \cite{GMWdRrel}), pro-\'etale realizations of $\bbdrplus$-crystals are precisely the ``$\bbdrplus$-local systems" introduced in \cite[\S 7]{Sch13}.             
    \end{enumerate}
\end{rem}

We recall the definition of a stratification.
\begin{convention}
    Let $A^{\bullet}$ be a cosimplicial ring.
     For any $0\leq i\leq n+1$, let $p_i:A^n\to A^{n+1}$
     be the $i$-th face map   induced by the order-preserving map $[n]\to[n+1]$ whose image does not contain $i$. For any $0\leq i\leq n$, let $\sigma_i:A^{n+1}\to A^n$ be the $i$-th degeneracy map induced by the order-preserving map $[n+1]\to[n]$ such that the preimage of $i$ is $\{i,i+1\}$. For any $0\leq i\leq n$, let $q_i:A^0\to A^n$ be the structure map induced by the map $[0]\to [n]$ sending $0$ to $i$. 
\end{convention}
\begin{dfn}
For a cosimplicial ring $A^{\bullet}$, a \emph{stratification with respect to $A^{\bullet}$} is a pair $(M,\varepsilon)$ consisting of a finite projective $A^0$-module $M$ and an $A^1$-linear isomorphism
 \[\varepsilon: M\otimes_{A^0,p_0}A^1\to M\otimes_{A^0,p_1}A^1,\]
 such that the \emph{cocycle condition} is satisfied:
 \begin{enumerate}
 \item  $p_2^*(\varepsilon)\circ p_0^*(\varepsilon) = p_1^*(\varepsilon): M\otimes_{A^0,q_2}A^2\to M\otimes_{A^0,q_0}A^2$;
\item  $\sigma_0^*(\varepsilon) = \id_M$.
 \end{enumerate}
Write $\mathrm{Strat}(A^\bullet)$ for the category of stratifications with respect to $A^\bullet$.
\end{dfn}


 We now construct a cosimplicial ring using the Breuil--Kisin prism (resp. the Breuil--Kisin log prism).

 \begin{notation}\label{notaBKlogprism}
Let $(\gs, (E))$ be the Breuil--Kisin prism in Notation \ref{notaBKprism} (with respect to the chosen uniformizer $\pi$).
  Let $M_{\frakS}\to \frakS$ be the log structure associated to the pre-log structure $\bN\xrightarrow{1\mapsto u}\frakS$. Then $(\frakS,(E),M_{\frakS})$  a log prism in  $(\calO_K)_{\Prism,\log}$, and is called the Breuil--Kisin log prism. (We refer the readers to \cite{Kos21} for foundations of log prismatic cohomology).
 \end{notation}
 
 These prisms are important because of the following result.
 \begin{lem}[\emph{\cite[Lem. 2.2]{GMWHT}}]\label{Lem-BKCover}~
 \begin{enumerate}
    \item  The Breuil--Kisin prism $(\frakS,(E))$ is a cover of the final object of the topos $\Sh((\calO_K)_{\Prism})$.

  \item  The Breuil--Kisin log prism $(\frakS,(E),M_{\frakS})$ is a cover of the final object of the topos $\Sh((\calO_K)_{\Prism,\log})$.
 \end{enumerate}
 \end{lem}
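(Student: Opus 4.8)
The plan is first to translate the assertion into something concrete. In the topos $\Sh((\calO_K)_{\Prism})$ (resp. $\Sh((\calO_K)_{\Prism,\log})$), saying that the sheaf represented by $(\gs,(E))$ (resp. $(\gs,(E),M_{\gs})$) covers the final object $\ast$ means that the canonical map of sheaves to $\ast$ is an epimorphism; since the topology on $(\calO_K)_{\Prism}$ is generated by single $(p,I)$-completely faithfully flat maps of prisms, this is equivalent to the claim that \emph{for every prism $(A,I)\in(\calO_K)_{\Prism}$ there exist a prism cover $(A,I)\to(B,IB)$ and a morphism $(\gs,(E))\to(B,IB)$ of prisms over $\calO_K$} (and mutatis mutandis in the log case). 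So the entire task is to build such a $B$, with its morphism from the Breuil--Kisin (log) prism, for each given $(A,I)$.

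Next I would unwind what a morphism $(\gs,(E))\to(B,IB)$ of $\calO_K$-prisms is. Because $\varphi(u)=u^p$ in $\gs$ forces $\delta(u)=0$, such a morphism is precisely the datum of a topologically nilpotent element $u_B\in B$ with $\delta(u_B)=0$, whose image in $B/IB$ agrees with the image of $\pi$ under the structure map $\calO_K\to B/IB$ (compatibility with the structure maps, using $\gs/(E)=\calO_K$), and such that $E(u_B)$ generates $IB$. Writing $I=(d)$ with $d$ distinguished, pick any lift $\widetilde\pi\in A$ of the image of $\pi$ in $A/I$; since $\pi^{e}\in p\calO_K$ with $e=[K:K_0]$ we get $\widetilde\pi^{\,e}\in pA+I$, so $\widetilde\pi$, and hence $u_B$, is automatically topologically nilpotent. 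I would then take $(B,IB)$ to be the coproduct of $(A,I)$ and $(\gs,(E))$ in the category of $\calO_K$-prisms --- equivalently, the prismatic envelope built from the $(p,I)$-completion of the $\delta$-$A$-algebra $A[u]$ (with $\delta(u):=0$) that universally forces $u\equiv\widetilde\pi$ modulo the prism ideal and forces $E(u)$ to generate it. Such finite colimits of bounded prisms exist and are computed by prismatic envelopes, and by construction $(B,IB)$ then carries both the structural morphism from $(A,I)$ and the desired morphism $u\mapsto u_B$ from $(\gs,(E))$.

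The hard part, which I expect to be the main obstacle, is to check that $A\to B$ is $(p,I)$-completely faithfully flat, so that $(A,I)\to(B,IB)$ genuinely is a cover rather than merely a morphism of prisms: this is exactly the flatness statement for prismatic envelopes from \cite[\S 3]{BS19} (the relations one imposes are, over the completed polynomial ring, cut out by a regular element, so the envelope is $(p,I)$-completely flat; faithfulness then follows by a fibrewise check). Part (2) is handled by the identical strategy carried out in the category of $\calO_K$-log prisms: any log prism $(A,I,M)\in(\calO_K)_{\Prism,\log}$ carries, via its log structure, a distinguished element to which the monoid generator in $\bN\xrightarrow{1\mapsto u}\gs$ must map, and one forms instead the \emph{log}-prismatic envelope; its faithful flatness is again the technical heart, after which everything is formal. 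Tracking the $u$-adic topology on $\gs=W(k)[[u]]$ along the map into $B$ is routine once one knows $u$ lands on a topologically nilpotent element, which was arranged above.
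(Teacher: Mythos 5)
Your argument is correct and follows the standard prismatic-envelope strategy used in the cited sources (\cite[Lem.\ 2.2]{MW21} and \cite[Lem.\ 3.4]{MW22log}); the present paper itself does not reprove this lemma but only cites it, so there is nothing internal to compare against. One small tightening: the requirement that $E(u_B)$ generate $IB$ is automatic by rigidity of prisms once $u_B\equiv\pi$ in $B/IB$, so the only genuine data you must supply is a $\delta$-constant, topologically nilpotent lift of $\pi$, exactly as your envelope construction arranges.
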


\begin{notation} \label{notagssimp}
Let $(\frakS^{\bullet},(E))$ resp. $(\frakS^{\bullet}_{\log},(E),M^{\bullet})$ be the cosimplicial ring of the \v Cech nerve  of the Breuil--Kisin prism resp. the Breuil--Kisin log prism in the prismatic site resp. log-prismatic site.
By \cite[Exam. 2.6]{BS23} and \cite[Lem. 5.0.2]{DL23}, we see that for any $n\geq 0$,
 \begin{equation}\label{Equ-CechBK}
     \frakS^n = \rW(k)[[u_0,\dots,u_n]]\{\frac{u_0-u_1}{E(u_0)},\dots,\frac{u_0-u_n}{E(u_0)}\}^{\wedge}
 \end{equation}
 and
 \begin{equation}\label{Equ-CechBKlog}
      \frakS^n_{\log} = \rW(k)[[u_0,1-\frac{u_1}{u_0},\dots,1-\frac{u_n}{u_0}]]\{\frac{1-\frac{u_1}{u_0}}{E(u_0)},\dots,\frac{1-\frac{u_1}{u_n}}{E(u_0)}\}^{\wedge},
 \end{equation}
 with the obvious structure morphisms of cosimplicial rings.
 Clearly, there exists a natural morphism of cosimplicial rings $\frakS^{\bullet}\to\frakS_{\log}^{\bullet}$ which is given by identifying $u_i$'s.

 Let $\frakS^{\bullet,+}_{\dR, m}$ and $\frakS^{\bullet,+}_{\log,\dR, m}$ be the evaluations of $\bB_{\dR,m}^+$ at $\frakS^{\bullet}$ and $\frakS_{\log}^{\bullet}$ respectively.
 Namely, for each $n\geq 0$ and  each $m <+\infty$, we have
 \[\frakS_{\dR, m}^{n,+} =\frakS^n[\frac{1}{p}]/E(u_0)^m\frakS^n[\frac{1}{p}], \text{ resp. } \frakS_{\log, \dR,m}^{n,+} =  \frakS_{\log}^n[\frac{1}{p}]/E(u_0)^m\frakS_{\log}^n[\frac{1}{p}].\]
   In addition, define $\frakS^{\bullet,+}_{\dR}$ and $\frakS^{\bullet,+}_{\log,\dR}$ so that
 \[\frakS_{\dR}^{n,+} := \varprojlim_{m\geq 1}\frakS_{\dR, m}^{n,+}, \text{ resp. } \frakS_{\log, \dR}^{n,+} := \varprojlim_{m \geq 1}\frakS_{\log, \dR, m}^{n,+} .\]
 Using the topology discussed in Convention \ref{conv_topo}, all these cosimplicial rings are topological cosimplicial rings. In addition, the natural morphism
\[\frakS_{\dR, m}^{n,+} \to \frakS_{\log, \dR, m}^{n,+} \]
is continuous.
\end{notation}

\begin{convention} \label{convention-m-infty}
In many of our constructions, we allow $m$ to vary from $1$ to $+\infty$. The case $m=+\infty$ is always understood to mean the ``inverse limit" case. For example, in the following Prop. \ref{Prop-dRStratification},  we use
\[\Vect((\calO_K)_{\Prism},\bB_{\dR,+\infty}^+), \quad \text{ resp. }  \mathrm{Strat}(\frakS_{\dR,+\infty}^{\bullet,+})\]
to mean
\[\Vect((\calO_K)_{\Prism},\bB_{\dR}^+), \quad \text{ resp. }  \mathrm{Strat}(\frakS_{\dR}^{\bullet,+}).\]
\end{convention}

 Using the language of stratification, we get
 \begin{prop}\label{Prop-dRStratification}
   Let $1 \leq m\leq +\infty$.
   Evaluation at the Breuil--Kisin prism resp. the Breuil--Kisin log prism induces  an equivalence of categories (cf. Convention \ref{convention-m-infty})
   \[\Vect((\calO_K)_{\Prism},\bB_{\dR,m}^+) \xrightarrow{\simeq} \mathrm{Strat}(\frakS_{\dR,m}^{\bullet,+}) \]
   resp.
   \[\Vect((\calO_K)_{\Prism, \log},\bB_{\dR,m}^+) \xrightarrow{\simeq} \mathrm{Strat}(\frakS_{\log, \dR,m}^{\bullet,+}) \]
    Both equivalences are bi-exact.
 \end{prop}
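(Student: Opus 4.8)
The plan is to deduce the equivalence from \v Cech descent along the Breuil--Kisin (log) prism, treating first a finite level $1 \le m < +\infty$ and then passing to the limit for $m = +\infty$ (in the sense of Convention \ref{convention-m-infty}). For finite $m$ the argument is the same as in the Hodge--Tate case $m=1$ of \cite{MW21, MW22log}; the new input for higher $m$ and for $m=+\infty$ is only some routine flatness and completeness bookkeeping.

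Fix $1 \le m < +\infty$ and work in the prismatic site, the log-prismatic case being verbatim. By Lemma \ref{Lem-BKCover}, the Breuil--Kisin prism $(\frakS,(E))$ is a cover of the final object of $\Sh((\calO_K)_{\Prism})$, so a sheaf of $\bB_{\dR,m}^+$-modules is the same as its restriction to the \v Cech nerve $\frakS^\bullet$ together with the induced descent datum. Given $\bM \in \Vect((\calO_K)_{\Prism}, \bB_{\dR,m}^+)$, put $M := \bM(\frakS)$, a finite projective $\frakS_{\dR,m}^{0,+}$-module; the crystal (cartesian) condition identifies $\bM(\frakS^1)$ with $M \otimes_{\frakS^0,p_i}\frakS^1$ along both face maps, hence produces an $\frakS_{\dR,m}^{1,+}$-linear isomorphism $\varepsilon$, and functoriality applied to $\frakS^2$ resp. to the degeneracy $\sigma_0$ gives the cocycle resp. normalization conditions. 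This defines the evaluation functor to $\mathrm{Strat}(\frakS_{\dR,m}^{\bullet,+})$, and it is fully faithful because morphisms of sheaves are detected on the cover $\frakS$ and a morphism $M \to M'$ commuting with the $\varepsilon$'s is precisely a morphism of the underlying sheaves. For essential surjectivity one needs effectivity of descent of finite projective modules along $\frakS_{\dR,m}^{\bullet,+}$; this rests on the face maps $\frakS_{\dR,m}^{0,+}\to\frakS_{\dR,m}^{n,+}$ being flat, which follows from the explicit description \eqref{Equ-CechBK} (the maps $\frakS^0\to\frakS^n$ are flat, as in \cite[Exam. 2.6]{BS21}) together with the observation that $E(u_0)$ is a nonzerodivisor in each $\frakS^n$ and in $\frakS^n[\tfrac{1}{p}]$, so that flatness is preserved under inverting $p$ and under passing to the quotient by $E(u_0)^m$. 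A stratification $(M,\varepsilon)$ then descends to a cartesian sheaf of finite projective $\bB_{\dR,m}^+$-modules whose evaluation at $\frakS$ recovers it, giving essential surjectivity; the rank is locally constant by flat descent.

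For $m = +\infty$ I would observe that both sides are the inverse limit over $m$ of the corresponding finite-level categories. On the crystal side, $\bB_\dR^+(\calA) = \varprojlim_m \bB_{\dR,m}^+(\calA)$ is $\mathcal{I}_{\Prism}(\calA)$-adically complete, and a finite projective module over such a ring is equivalent to the compatible system of its reductions modulo $\mathcal{I}_{\Prism}^m$; tensoring a crystal down to each level and taking the limit back up gives $\Vect((\calO_K)_{\Prism},\bB_\dR^+) \simeq \varprojlim_m \Vect((\calO_K)_{\Prism},\bB_{\dR,m}^+)$. Likewise $\frakS_\dR^{n,+} = \varprojlim_m \frakS_{\dR,m}^{n,+}$ is $E(u_0)$-adically complete, so $\mathrm{Strat}(\frakS_\dR^{\bullet,+}) \simeq \varprojlim_m \mathrm{Strat}(\frakS_{\dR,m}^{\bullet,+})$. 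Taking the limit of the finite-level equivalences from the previous paragraph yields the case $m=+\infty$, and the same reasoning applies in the log setting with $\frakS^\bullet_{\log}$ and \eqref{Equ-CechBKlog}. The main obstacle I anticipate is not conceptual but lies in the descent/flatness input of the middle paragraph --- making sure the \v Cech-nerve rings remain flat over the base after inverting $p$ and truncating modulo $E(u_0)^m$, and that descent of finite projective modules is effective --- and, in the $m=+\infty$ case, checking there are no $\varprojlim^1$ obstructions when reassembling objects from their finite levels; both points are, however, already implicit in the $m=1$ results of \cite{MW21, MW22log} once the explicit form of $\frakS^\bullet$ and $\frakS^\bullet_{\log}$ in Notation \ref{notagssimp} is in hand.
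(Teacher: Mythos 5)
Your proposal is correct and follows the same route as the paper: cover the final object of the (log-) prismatic topos by the Breuil--Kisin (log) prism (Lemma~\ref{Lem-BKCover}) and apply \v Cech descent to identify crystals with stratifications, which is exactly what \cite[Prop.~2.7]{BS21} (and, in the log case, the same argument via \cite[Rem.~2.10, 2.11]{MW22log}) already packages---you are essentially unwinding that citation, including the descent/flatness bookkeeping it delegates. One small economy you miss: the paper does not need your separate inverse-limit argument for $m=+\infty$, since $\mathbb{B}_{\mathrm{dR}}^+$ is already established to be a sheaf (see the discussion just before Convention~\ref{conv_topo}), so the same crystal-to-stratification criterion applies uniformly in $m$ without reassembling from finite levels.
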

  \begin{proof}
   The result is a standard consequence of Lemma \ref{Lem-BKCover}, as $\bB_{\dR,m}^+$ satisfies  $p$-complete faithfully flat descent \cite{Mat22}. Bi-exactness follow from similar argument as \cite[Prop. 2.6]{GMWHT}.
 \end{proof}

The above proposition motivates us to study the stratifications in detail. In \S \ref{sec: cosimp rings}, we will study the cosimplicial rings; then in \S \ref{sec: stra and conn}, we study the stratifications.

\begin{rmk}\label{remnobdrtheory}
One can similarly define the sheaf  $\BBdR$ by inverting $\mathcal{I}_\prism$ on $\bbdrplus$.
  It would be interesting to define and study $\BBdR$-crystals over $\okprism$ resp. $\okprislog$.
  However, we do not know if \cite[Prop. 2.7]{BS23} holds true for $\BBdR$, hence we do not know if we can use   stratifications to study $\BBdR$-crystals. This seems  an even more serious issue in the relative case. One possible solution is to introduce certain \emph{filtered $\BBdR$-crystals}; this will be discussed in \cite{GMWdRrel}.
\end{rmk}

\section{Stratifications I: structure of cosimplicial rings} \label{sec: cosimp rings}

 In this  section, we study the structure of the cosimplicial rings $\frakS^{\bullet}_{\dR}$ and $\frakS_{\log,\dR}^{\bullet}$.

 \begin{construction}
  We start with the case for $n = 0$. In this case, we have
 \[\frakS^+_{\dR} = \frakS^+_{\log,\dR} = \varprojlim_{m\geq 1}\frakS[\frac{1}{p}]/E^m\frakS[\frac{1}{p}].\]
 Note that $\frakS_{\dR}^+$ is a Noetherian local ring with the maximal ideal $(E)$, and can be regarded as a subring of $\bdrplus$.
  By standard commutative algebra, $\frakS_{\dR}^+$ is a discrete valuation ring with the local parameter $E$ and   residue field $\frakS_{\dR}^+/E\frakS_{\dR}^+ = K$. Thus, as  a subring  of $\bdrplus$, we have $\frakS_{\dR}^+ = K[[E]]$.
Indeed, $\gs_\dR^+= K[[A]]$, where $A$ is any uniformizer.
  \end{construction}

Recall in Notation \ref{notafields}, we defined an element $\lambda: = \prod_{n\geq 0}\varphi^n(\frac{E(u)}{E(0)})$, where $\varphi$ and the product is taken inside $\bcrisplus$; hence we can regard $\lambda$ as an element in $\bdrplus$.

\begin{lemma} \label{lemgsdeunif}
The elements $E(u),u-\pi, \lambda \in \bdrplus$ are all inside $\frakS_{\dR}^+$  and are   uniformizers of $\frakS_{\dR}^+$. Thus we have
\[\frakS_{\dR}^+ =K[[E]]=K[[u-\pi]]=K[[\lambda]].  \]
\end{lemma}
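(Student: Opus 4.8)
The plan is to exploit the structural fact, established in the preceding Construction, that $\frakS_{\dR}^+$ is a complete discrete valuation ring with residue field $K$, so that $\frakS_{\dR}^+ = K[[A]]$ for \emph{any} uniformizer $A$. Hence the entire lemma reduces to checking that each of the three elements $E(u)$, $u-\pi$, and $\lambda$ lies in the subring $\frakS_{\dR}^+ \subset \bdrplus$ and has valuation exactly $1$ with respect to the valuation of this DVR (equivalently, lies in the maximal ideal $(E)$ but not in $(E)^2$). Since $E$ itself is the chosen local parameter, the claim for $E(u)$ is immediate. So the work is entirely about $u-\pi$ and $\lambda$.

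For $u-\pi$: first observe $u \in \frakS \subset \frakS_{\dR}^+$ and $\pi \in K = \frakS_{\dR}^+/E\frakS_{\dR}^+$ lifts canonically (indeed $\pi \in \ok \subset \frakS_{\dR}^+$ via $u \mapsto \pi$... more precisely $\pi$ is the image of $u$ in the residue field, so $u - \pi \in (E)$), so $u - \pi$ lies in the maximal ideal. To see it is a uniformizer one must check $u - \pi \notin (E)^2$. I would argue via the derivative/first-order expansion: modulo $(E)^2$, write $E(u) = E(\pi) + E'(\pi)(u-\pi) + (\text{higher order in } u-\pi)$; since $E(\pi)=0$ and $E'(\pi) \neq 0$ (as $E$ is separable, $\pi$ being a simple root), and since $u-\pi$ generates the maximal ideal "to first order," we get that $E$ and $u-\pi$ differ by a unit multiple modulo $(E)^2$. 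More carefully: in the DVR $\frakS_{\dR}^+$, any element of the maximal ideal is a uniformizer iff its image in $(E)/(E)^2$ is nonzero, and $(E)/(E)^2$ is a one-dimensional $K$-vector space; the relation $E(u) \equiv E'(\pi)(u-\pi) \pmod{(u-\pi)^2}$ together with $(u-\pi) \in (E)$ shows $E$ and $(u-\pi)$ have proportional (nonzero) images, so $u-\pi$ is a uniformizer. This is the mild technical point.

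For $\lambda = \prod_{n \geq 0} \varphi^n(E(u)/E(0))$: the $n=0$ factor is $E(u)/E(0)$, which lies in $\frakS_{\dR}^+$ and is a uniformizer (being $E$ times the unit $1/E(0)$, since $E(0) = \pm \pi^? $ times a unit of $W(k)$, at any rate a nonzero element of $K_0$ hence a unit in $\frakS_{\dR}^+$). For $n \geq 1$, I claim each factor $\varphi^n(E(u)/E(0))$ lies in $\frakS_{\dR}^+$ and is a \emph{unit} there: indeed $\varphi^n(E(u)) \in \frakS$, and modulo $E(u)$ (i.e. in the residue field $K$, where $u \equiv \pi$) we have $\varphi^n(u) \equiv \varphi^n(\pi)$... one needs that $\varphi^n(E(u)) \not\equiv 0$ in $K$, equivalently $\varphi^n(E)(u) \notin (E)$; this is the standard fact (used throughout prismatic theory, cf. \cite{BS21}) that $E$ and $\varphi^n(E)$ are coprime in $\frakS$ for $n \geq 1$, so $\varphi^n(E)$ is a unit modulo $(E)$. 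Moreover the infinite product converges in $\bdrplus$ (standard: the factors $\to 1$ $(E)$-adically after the first), so $\lambda$ is a unit times $E$ in $\frakS_{\dR}^+$, hence a uniformizer. One should also confirm $\lambda$ genuinely lies in $\frakS_{\dR}^+$ and not merely in $\bdrplus$ — this follows since $\frakS_{\dR}^+$ is closed in $\bdrplus$ (it is the completion $\varprojlim \frakS[1/p]/E^m$) and each partial product lies in it. The main obstacle, and the only place requiring genuine care, is the coprimality/unit assertions for the $\varphi^n$-twisted factors and the justification that the image of $u-\pi$ in $(E)/(E)^2$ is nonzero; both are short once set up correctly, so overall this lemma is primarily bookkeeping on top of the DVR structure already proved.
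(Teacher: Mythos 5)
Your proposal is correct in its overall strategy---reducing to the DVR structure of $\frakS_{\dR}^+$ and then checking each element has valuation one---but for $\lambda$ it takes a genuinely different route from the paper's, and there is one imprecision in the convergence step worth repairing.

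For $u-\pi$ the paper argues in the opposite direction: it notes $E/(u-\pi)\in K[u]\subset K[[u-\pi]]$, so $E$ is a uniformizer of the complete DVR $K[[u-\pi]]$ and hence $K[[E]]=K[[u-\pi]]$. You instead show $u-\pi$ is a uniformizer of $K[[E]]$ by comparing images in $(E)/(E)^2$ via $E(u)\equiv E'(\pi)(u-\pi)\pmod{(u-\pi)^2}$. These are logically interchangeable; both hinge on $E'(\pi)\neq 0$.

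For $\lambda$ the paper cites Kisin's observation that $\lambda$ is rigid analytic on the open unit disc over $K_0$, so Taylor expansion at $\pi$ immediately puts $\lambda$ in $K[[u-\pi]]$ and gives the uniformizer property from $\lambda(\pi)=0$, $\lambda'(\pi)\neq 0$. Your factor-by-factor analysis is a different, more self-contained argument: the $n=0$ factor is $E$ times a unit, and for $n\geq 1$ the factor $\varphi^n(E(u))/\varphi^n(E(0))$ is a unit of $\frakS_{\dR}^+$ because $\varphi^n(E)(\pi)\neq 0$ (indeed for $n\geq 1$ the constant term $\varphi^n(E(0))$ dominates $\pi$-adically, by the Eisenstein property); this is a correct appeal to the standard coprimality of $E$ and $\varphi^n(E)$.

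The imprecision is the convergence claim: the factors do \emph{not} tend to $1$ $E$-adically. Each $\varphi^n(E(u))/\varphi^n(E(0))$ reduces modulo $E$ to the nonzero scalar $\varphi^n(E)(\pi)/\varphi^n(E(0))\in K$, and these reductions do not approach $1$. What is true is convergence in the Fr\'echet topology on $\frakS_{\dR}^+$ (equivalently on $\bdrplus$): for each fixed $m$, in the finite-dimensional $K$-Banach space $K[[u-\pi]]/(u-\pi)^m$ one expands $u^{ip^n}=\sum_{j<m}\binom{ip^n}{j}\pi^{ip^n-j}(u-\pi)^j$ and the coefficients tend to $0$ $p$-adically as $n\to\infty$, so the partial products converge. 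Once that is fixed (or one simply cites convergence, as the paper does via rigid analyticity), your closedness remark is fine: each $\frakS[1/p]/E^m\frakS[1/p]$ is a finite-dimensional $K$-subspace of the Banach space $\bdrplusm$, hence closed, so $\frakS_{\dR}^+$ is closed in $\bdrplus$ and contains the limit of its partial products.
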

\begin{proof}
We have $\frac{E}{u-\pi} \in K[u] =K[u-\pi] \subset K[[u-\pi]]$, and clearly $E$ is a uniformizer in $K[[u-\pi]]$, hence $K[[E]]=K[[u-\pi]]$.
The element (and notation) $\lambda$, (as far as we know) is first defined in \cite[1.1.1]{Kis06}, and is an element that converges on the open unit disk (over $K_0$); in particular, taking Taylor expansion of $\lambda$ on the point $\pi$ shows that $\lambda$ is in $K[[u-\pi]]$, and is a uniformizer.
\end{proof}

\begin{remark}\label{rem3unif}
The three uniformizers in Lem. \ref{lemgsdeunif} are ``useful" in their own ways.
\begin{enumerate}
\item The uniformizer $E(u)$ is a generator of the ideal in the Breuil--Kisin prism, and hence can be regarded as the most ``natural" choice. This will mirror the case when we consider similar situations in the relative prismatic setting in \cite{GMWdRrel}, where in certain cases we need to work with an abstract generator $d$ for a general prism $(A, (d))$.
\item The uniformizer $u-\pi$, as we shall see in the sequel, particularly in \S \ref{subsec_fullcrystal_rep}, is \emph{computationally} the most convenient one. This is because the $G_K$-action on $u-\pi$ is easy to describe.
\item The uniformizer $\lambda$ will  be used in \S \ref{sec_kummersen} to construct  Sen--Fontaine theory over the Kummer tower. Note one could equally use $u-\pi$ in \S \ref{sec_kummersen}. Our choice is motivated by the (partially aesthetical) observation that $\lambda$ is the analogue of Fontaine's element $t$; cf. Rem. \ref{remuselambda} for some more comments.
\end{enumerate}
\end{remark}

Let $*\in\{\emptyset,\log\}$, the cosimplicial ring $\frakS_{\ast}^{\bullet}$ and hence $\frakS_{\ast, \dR}^{\bullet}$ are cosimplicial $W(k)$-algebras.
Note that $E(u_i)$ for different $i$'s generate the same ideal in $\frakS_{\ast}^{n}$ for each $n\geq 0$, hence the quotient cosimplicial ring $\frakS_{\ast}^{\bullet}[1/p]/E \simeq \frakS_{\ast, \dR}^{\bullet}/E$ is a cosimplicial $K$-algebra. We claim that there exists a canonical way to make $\frakS_{\ast, \dR}^{\bullet}$ a cosimplicial $K$-algebra. Indeed, we will show this in a more general setting.

Consider $(A,I)\in (\calO_K)_{\Prism}$. Its structure map $\ok \to A/I$ induces a map $K \to  (A/I)[\frac{1}{p}]$, which is either injective or zero. In either case, we still use $\pi$  to denote the image of $\pi \in \ok$ inside $A/I$ and $A/I[\frac{1}{p}]$.
Note also by deformation theory there exists a unique $W(k)$-algebra structure on $A$ making the following diagram commutative:
\[\begin{tikzcd}
W(k) \arrow[d] \arrow[r, dashed] & A \arrow[d] \\
\ok \arrow[r]                    & A/I
\end{tikzcd}\]
 thus, we shall regard $E(u) \in W(k)[u]$ also as a polynomial over $A[\frac{1}{p}]^\wedge_I$ in the following.


\begin{prop}
 Let $(A,I)\in (\calO_K)_{\Prism}$, and use notations above. There exists a unique lifting $\Pi\in A[\frac{1}{p}]^{\wedge}_I =\bbdrplus(A, I)$ of $\pi\in A/I[\frac{1}{p}]$ such that $E(\Pi) = 0$.
\end{prop}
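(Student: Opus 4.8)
The plan is to find the root $\Pi$ of $E$ near $\pi$ by Hensel's lemma / Newton iteration in the ring $A[\frac1p]^\wedge_I$, and then to verify uniqueness by a derivative computation. First I would observe that $A[\frac1p]^\wedge_I = \varprojlim_m A[\frac1p]/I^m$ is $I$-adically complete, and that any lift $\pi_0 \in A[\frac1p]$ of $\pi \in A/I[\frac1p]$ satisfies $E(\pi_0) \in I\cdot A[\frac1p]$ (since $E(u)$ reduces mod $I$ to the minimal polynomial of $\pi$, which vanishes at $\pi$). So $E(\pi_0)$ is ``small''. The key point is that $E'(\pi)$ is a \emph{unit} in $A[\frac1p]^\wedge_I$: indeed $E'(\pi) \in \ok$ is nonzero (as $E$ is separable over $K_0$), hence a unit in $K$, hence a unit in $(A/I)[\frac1p]$ in the injective case, and in the zero case $(A/I)[\frac1p]=0$ so everything is trivial; since $I$ is topologically nilpotent in $A[\frac1p]^\wedge_I$, being a unit mod $I$ implies being a unit. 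Therefore the standard Newton iteration $\pi_{k+1} = \pi_k - E(\pi_k)/E'(\pi_k)$ converges $I$-adically to a root $\Pi$ of $E$ lifting $\pi$.

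Concretely, I would run the usual estimate: writing $\pi_{k+1} = \pi_k - E(\pi_k)/E'(\pi_k)$, a Taylor expansion of $E$ around $\pi_k$ gives $E(\pi_{k+1}) = E(\pi_k) - E(\pi_k) + (\text{terms divisible by } E(\pi_k)^2)$, so if $E(\pi_k) \in I^{2^{k-1}} A[\frac1p]^\wedge_I$ then $E(\pi_{k+1}) \in I^{2^k}A[\frac1p]^\wedge_I$, while also $\pi_{k+1} - \pi_k \in I^{2^{k-1}}A[\frac1p]^\wedge_I$. (One should check $E'(\pi_k)$ stays a unit: $\pi_k \equiv \pi \bmod I$, so $E'(\pi_k) \equiv E'(\pi)$ is a unit mod $I$, hence a unit.) By completeness the sequence $(\pi_k)$ converges to some $\Pi$ with $E(\Pi)=0$ and $\Pi \equiv \pi \bmod I$. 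For uniqueness, suppose $\Pi, \Pi'$ are two such roots; then $\Pi' - \Pi \in I\cdot A[\frac1p]^\wedge_I$, and $0 = E(\Pi') - E(\Pi) = E'(\Pi)(\Pi'-\Pi) + (\text{higher order in } \Pi'-\Pi)$. Since $\Pi'-\Pi$ is topologically nilpotent one can factor $E(\Pi') - E(\Pi) = (\Pi'-\Pi)\cdot v$ where $v \equiv E'(\Pi) \bmod I$ is a unit; hence $\Pi' - \Pi = 0$.

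I expect the only slightly delicate point — and thus the "main obstacle", though it is mild — is bookkeeping the two cases ($K \hookrightarrow (A/I)[\frac1p]$ versus $(A/I)[\frac1p] = 0$) uniformly, and confirming that $E'(\pi)$ genuinely lands in the unit group of $A[\frac1p]^\wedge_I$ rather than just of $A/I[\frac1p]$; this is exactly where one invokes that $I$ is topologically nilpotent in the $I$-completed localization $A[\frac1p]^\wedge_I$ (equivalently, that this ring is $I$-adically complete and separated, so $1 + I$ consists of units). Everything else is the classical Hensel/Newton argument. Alternatively, one can phrase the whole thing as applying Hensel's lemma to the $I$-adically complete ring $A[\frac1p]^\wedge_I$ with the polynomial $E(u)$, whose reduction mod $I$ has the simple root $\pi$ with $E'(\pi)$ invertible — this gives both existence and uniqueness of the lift $\Pi$ in one stroke, and I would likely present it this way for brevity, spelling out the Newton iteration only if a self-contained argument is wanted.
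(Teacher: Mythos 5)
Your proposal is correct, and it takes a genuinely different (and arguably more streamlined) route than the paper's. The paper proceeds in two stages: first it assumes the prism is oriented, $I=(d)$, and builds $\Pi = \sum_{n\ge 0}x_n d^n$ by a hand-crafted linear iteration, choosing $x_{n+1}$ so that $E(\sum_{i\le n+1}x_i d^i)\in d^{n+2}$; then it handles a general prism by covering it with an oriented one $(B,(d))$ and using the sheaf property of $\BBdRp$ to descend the unique root $\Pi_B$. You instead apply Hensel's lemma / Newton iteration directly in $A[\frac1p]^\wedge_I$ for an \emph{arbitrary} prism, bypassing both the orientation hypothesis and the descent step. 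The observation that makes this work — and which the paper does not exploit — is that the argument never needs a generator of $I$, only that $A[\frac1p]^\wedge_I$ is $I$-adically complete (true because $I$ is invertible, hence finitely generated, so the $I$-adic completion is $I$-adically complete) and that $E'(\pi)$ lies in $1+I$ up to a unit, hence is a unit. Your Newton iteration then converges quadratically, and your uniqueness argument via the factorization $E(\Pi')-E(\Pi)=(\Pi'-\Pi)\bigl(E'(\Pi)+(\text{terms in }I)\bigr)$ is essentially the same derivative computation the paper uses in its oriented case. The trade-off: the paper's two-step structure illustrates the general technique of reducing to an oriented cover and gluing via the sheaf $\BBdRp$, which is a recurring device in prismatic arguments; your approach is shorter and self-contained for this particular statement. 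Both correctly isolate the degenerate case $(A/I)[\frac1p]=0$, where the target ring vanishes.
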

\begin{proof}
  It suffices to consider the case when $(A/I)[1/p] \neq 0$ since otherwise
  \[A[\frac{1}{p}]^{\wedge}_I\subset IA[\frac{1}{p}]^{\wedge}_I\subset I^2A[\frac{1}{p}]^{\wedge}_I\subset \cdots,\]
  which implies $A[\frac{1}{p}]^{\wedge}_I=0$.

  We first assume $(A,I)$ is oriented; that is, $I = (d)$ is principal.
  Let $\tilde \pi \in A$ be a lifting of $\pi \in A/I$.
  Then we see that $E(\tilde \pi)\in dA[\frac{1}{p}]^{\wedge}_I$. We show that there exists a lifting $\Pi$ of $\pi$ satisfying $E(\Pi) = 0$ at first.

  Put $x_0=\tilde \pi$ and assume we have constructed $x_0,\dots,x_n$ for some $n\geq 0$ satisfying
  \begin{equation}\label{Equ-prop-KstructureI}
      E(x_0+x_1d+\cdots+x_nd^n)\in d^{n+1}A[\frac{1}{p}]^{\wedge}_I.
  \end{equation}
  We write
  \[E(x_0+x_1d+\cdots+x_nd^n) = d^{n+1}y_n\]
  for some $y_n\in A[\frac{1}{p}]^{\wedge}_I$.
  Then we construct $x_{n+1}$ as follows:

  Since $E'(\pi)$ is a unit in $K$ and hence in $A/I[\frac{1}{p}]$, we deduce that $E'(x_0+x_1d+\cdots+x_nd^n)$ is a unit of $ A[\frac{1}{p}]^{\wedge}_I$ as it goes to $E'(\pi)$ modulo $d$.
  Then for any $y\in A[\frac{1}{p}]^{\wedge}_I$, by Taylor's expansion, we have that
  \begin{equation*}
      \begin{split}
          E(x_0+x_1d+\cdots+x_nd^n+yd^{n+1}) & \equiv E(x_0+\cdots+x_nd^n)+E'(x_0+\cdots+x_nd^n)yd^{n+1} \mod d^{n+2}\\
          & \equiv d^{n+1}y_n +E'(x_0+\cdots+x_nd^n)yd^{n+1} \mod d^{n+2}.
      \end{split}
  \end{equation*}
  Since $E'(x_0+\cdots+x_nd^n)$ is a unit, if we put $x_{n+1} = -E'(x_o+\cdots+x_nd^n)^{-1}y_n$, then (\ref{Equ-prop-KstructureI}) holds true for $n+1$ as desired.

  Now, $\Pi: = \sum_{n\geq 0}x_nd^n$ is well-defined in $ A[\frac{1}{p}]^{\wedge}_I$ and for any $n\geq 0$,
  \[E(\Pi) \equiv E(\sum_{i=0}^nx_id^i)\equiv 0\mod  d^{n+1} A[\frac{1}{p}]^{\wedge}_I.\]
  So we obtain that $E(\Pi) = 0$.

  To complete the proof in the oriented case, we have to show the uniqueness of $\Pi$.
  Assume there exists another lifting $\tilde \Pi$ of $\pi$ such that $E(\tilde \Pi) = 0$.
  Then we have $\tilde \Pi-\Pi \in dA[\frac{1}{p}]^{\wedge}_I$.
  By Taylor's expansion, we see that
  \[0 = E(\tilde \Pi)-E(\Pi) = \sum_{i=1}^{\deg E}\frac{E^{(i)}(\Pi)}{i!}(\tilde \Pi-\Pi)^n = E'(\Pi)(\tilde \Pi-\Pi)(1+(\tilde \Pi-\Pi)z)\]
  for some $z\in  A[\frac{1}{p}]^{\wedge}_I$.
  By noting that $E'(\Pi)$ is a unit in $ A[\frac{1}{p}]^{\wedge}_I$, we deduce that
  \[(\tilde \Pi-\Pi)(1+(\tilde \Pi-\Pi)z) = 0.\]
  Since $\tilde \Pi-\Pi \in dA[\frac{1}{p}]^{\wedge}_I$, for any $m\geq 1$, $1+(\tilde \Pi-\Pi)z$ is a unit   in $A[\frac{1}{p}]^{\wedge}_I/d^mA[\frac{1}{p}]^{\wedge}_I$. So $1+(\tilde \Pi-\Pi)z$ defines a unit in $A[\frac{1}{p}]^{\wedge}_I$. This implies that $\tilde \Pi = \Pi$ as desired.

  Now, we are able to deal with the general case. Assume $(A,I)$ is a prism. Then by \cite[Lem. 3.1(3)]{BS22}, there exists an oriented prism $(B,IB = (d))$ covering  $(A,I)$ in $(\calO_K)_{\Prism}$. Let $(B^{\bullet},dB^{\bullet})$ be the \v Cech nerve of this cover and let $\Pi_{B^{\bullet}}$ be the unique lifting of $\pi$ in $B^{\bullet}[\frac{1}{p}]^{\wedge}_I$ which is a root of $E(u)$ as shown above.
 By uniqueness criterion, the images of $\Pi_B$ under face maps $p_0,p_1:B\to B^1$ are $\Pi_{B^1}$. As $\BBdRp$ is a sheaf,  $\BBdRp(A, I)$ is the equalizer of $\BBdRp(B, I) \rightrightarrows \BBdRp(B^1, I)$.
Thus $\Pi_B$ lifts to a \emph{unique} element $\Pi_A \in \BBdRp(A, I)$ satisfying $E(\Pi_A) = 0$ (as $E(\Pi_B) = 0$). Any $\Pi_A$ satisfying our proposition has to   map to the unique $\Pi_B$ and $\Pi_{B^1}$, and hence has to be unique itself, again since $\bbdrplus$ is a sheaf.
\end{proof}

\begin{rmk}
    One can show that $A[1/p]^{\wedge}_I$ is a $K$-algebra in another way: Let $\calO_{\Prism,\calO_K}$ be the presheaf on $(\calO_K)_{\Prism}$ sending each bounded prism $(A,I)$ to $A\otimes_{\rW(k)}\calO_K$. As $\calO_K$ is faithfully flat over $\rW(k)$, we see $\calO_{\prism,\calO_K}$ is indeed a sheaf. Let $\calJ_{\Prism}$ be the kernel of the canonical surjection $\calO_{\Prism,\calO_K}\to\overline \calO_{\Prism}$, which is then a sheaf on $(\calO_K)_{\Prism}$ sending each bounded prism $(A,I)$ to $J:=\Ker(A\otimes_{\rW(k)}\calO_K\to A/I)$. By \cite[Th. 5.8]{Mat22}, we see that $\bB_{\dR,\Prism,K}^+:=\calO_{\Prism,\calO_K}[1/\pi]^{\wedge}_{\calJ_{\Prism}}$ is also a sheaf, which carries each prism $(A,I)$ to $(A\otimes_{\rW(k)}K)^{\wedge}_J$. Clearly, we have a canonical morphism $\iota:\bB_{\dR,\Prism}^+\to \bB_{\dR,\Prism,K}^+$. We claim this is an isomorphism. Indeed, as $(\frakS,(E))$ is a covering of final object of $\Shv((\calO_K)_{\Prism})$, it is enough to check $\iota$ induces an isomorphism at any prism $(A,I)$ over $(\frakS,(E))$. In this case, by noting that $J$ is principally generated by $u-\pi$ (via identifying $u$ with its image along $\frakS\to A$), one can conclude by using the same argument in the proof of \cite[Prop. 8.12]{Col02}.
\end{rmk}
 
\begin{cor}
For  $*\in\{\emptyset,\log\}$, $\frakS_{\ast, \dR}^{\bullet}$ is a cosimplicial $K$-algebra lifting the cosimplicial $K$-algebra $\frakS_{\ast}^{\bullet}[1/p]/E \simeq \frakS_{\ast, \dR}^{\bullet}/E$.
\end{cor}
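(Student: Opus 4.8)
The plan is to repackage the preceding Proposition as a statement about a presheaf of $K$-algebras on $(\calO_K)_{\Prism}$ and then specialize it to the \v Cech nerve of the Breuil--Kisin (log) prism. First I would fix, for each $(A,I)\in(\calO_K)_{\Prism}$, the unique root $\Pi\in\bbdrplus(A,I)=A[\tfrac1p]^{\wedge}_I$ of $E(u)$ lifting $\pi$ furnished by the Proposition, and observe that the assignment $u\mapsto\Pi$ defines a $W(k)$-algebra homomorphism $W(k)[u]\to\bbdrplus(A,I)$. (This is \emph{not} the tautological $W(k)[u]$-algebra structure $u\mapsto(\text{image of }u\in A)$, but it does extend the deformation-theoretic $W(k)$-algebra structure on $\bbdrplus(A,I)$.) Since $p$ is invertible in the target, this homomorphism extends to $K_0[u]\to\bbdrplus(A,I)$; by Notation~\ref{notaeu}, $E(u)=\mathrm{Irr}(\pi,K_0)$ is irreducible over $K_0$ with $K_0[u]/(E(u))=K$, so the relation $E(\Pi)=0$ forces it to factor through a ring map $K\to\bbdrplus(A,I)$, giving each value $\bbdrplus(A,I)$ a $K$-algebra structure extending its $W(k)$-structure (in the degenerate case $(A/I)[\tfrac1p]=0$ one has $\bbdrplus(A,I)=0$ and there is nothing to check).

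Next I would check functoriality. For a morphism $(A,I)\to(A',I')$ in $(\calO_K)_{\Prism}$, the induced map $\bbdrplus(A,I)\to\bbdrplus(A',I')$ sends $\Pi_A$ to an element lifting $\pi$ and annihilating $E(u)$, hence to $\Pi_{A'}$ by the uniqueness clause of the Proposition; therefore it intertwines the two maps $u\mapsto\Pi$ and so is $K$-linear. Thus $\bbdrplus$ upgrades to a presheaf of $K$-algebras. Now for $\ast\in\{\emptyset,\log\}$, the terms $\frakS^n_\ast$ of the \v Cech nerve of the Breuil--Kisin (log) prism, together with the ideal $(E)$ and the cosimplicial structure maps, define objects resp. morphisms of $(\calO_K)_{\Prism}$ (the log structure is irrelevant here, as $\bbdrplus$ depends only on the underlying prism), and by Notation~\ref{notagssimp} one has $\frakS^{n}_{\ast,\dR}=\bbdrplus(\frakS^n_\ast)$, evaluation commuting with $\varprojlim_m$. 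Hence $\frakS^{\bullet}_{\ast,\dR}$ acquires the structure of a cosimplicial $K$-algebra.

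Finally I would identify the reduction modulo $E$: since $\Pi\equiv\pi\pmod{E}$ (the image of $\pi\in\ok$), the induced $K$-algebra structure on $\frakS^{n}_{\ast,\dR}/E\simeq\frakS^n_\ast[1/p]/E$ coincides with the canonical one described just before the Proposition (recall $u_0-u_i\in E\frakS^n_\ast$, so all $u_i$ agree modulo $E$ and that canonical structure is well defined), and the face and degeneracy maps are $K$-linear for it. This is exactly the assertion that $\frakS^{\bullet}_{\ast,\dR}$ lifts $\frakS^{\bullet}_\ast[1/p]/E\simeq\frakS^{\bullet}_{\ast,\dR}/E$ as a cosimplicial $K$-algebra. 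I do not expect a genuine obstacle here: all the substance lies in the existence-and-uniqueness Proposition, and the only points needing a moment's care are the elementary identification $K_0[u]/(E(u))=K$ and keeping the $u\mapsto\Pi$ structure distinct from the tautological $W(k)[u]$-structure.
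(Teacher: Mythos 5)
Your proof is correct and follows essentially the same route as the paper: both use the uniqueness of the lift $\Pi$ of $\pi$ from the preceding Proposition to produce the $K$-algebra structure on $\bbdrplus(A,I)$ (compatible with the one on $(A/I)[1/p]$), and both deduce $K$-linearity of all structure maps from that uniqueness. The paper states this in two sentences; you spell out the factorization $W(k)[u]\to K_0[u]\to K_0[u]/(E)\cong K\to\bbdrplus(A,I)$ and the reduction mod $E$ explicitly, which is a faithful unwinding of the same argument.
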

\begin{proof}
Thanks to above proposition, for any prism $(A,I)\in (\calO_K)_{\Prism}$, there exists a unique way to endow $A[\frac{1}{p}]^{\wedge}_I$ with a structure of $K$-algebra which is compatible with that on $A/I[\frac{1}{p}]$. In particular, all morphisms $A[\frac{1}{p}]^{\wedge}_I\to B[\frac{1}{p}]^{\wedge}_I$ induced by morphisms $(A,I)\to (B,IB)$ in $(\calO_K)_{\Prism}$ are $K$-linear.
Similar argument applies to $\okprislog$ as well.
\end{proof}

 Now we are going to study the structure  of cosimplicial ring  $\frakS_{\ast,\dR}^{\bullet,+}$. We begin with a lemma.

\begin{lemma}[\emph{\cite[Prop. 2.12]{GMWHT}}]  \label{Lem-reduced structure}~
Let $\ast \in\{\emptyset,\log\}$. Let $n\geq 0$ and let $1\leq i\leq n$.
 Define elements in $\gs_\ast^n$ (cf. Notation \ref{notagssimp})
\begin{equation}
x_i =
\begin{cases}
  \frac{u_0-u_i}{E(u_0)}    & \text{ if } \ast =\emptyset  \\
 \frac{1-\frac{u_i}{u_0}}{E(u_0)} & \text{ if } \ast =\log
\end{cases}
\end{equation}
By abuse of notations, still use $x_i$ to denote its reduction in $\gs_{\ast, \dR}^{+, n}/E(u_0)\gs_{\ast, \dR}^{+, n}$.
Then the natural map
\begin{equation}\label{eq-map-HT-simp-ring}
  \left( \calO_K\{x_1,\dots,x_n\}^{\wedge}_{\pd} \right) [\frac{1}{p}] \to \frakS_{\ast, \dR}^{n,+}/E(u_0)\frakS_{\ast,\dR}^{n,+}
\end{equation}
is an  isomorphism of topological rings (cf. Convention \ref{conv_topo}).
Here  $\calO_K\{x_1,\dots,x_n\}^{\wedge}_{\pd}$ denotes the $p$-completion of the free pd-polynomial rings over $\calO_K$ generated by  $x_i$'s.
\end{lemma}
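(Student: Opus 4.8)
The plan is to bootstrap from the integral Hodge--Tate computation and then invert $p$. In \cite[Lem.~2.7]{MW21} (the case $\ast=\emptyset$) and \cite[Lem.~3.8]{MW22log} (the case $\ast=\log$) one finds that the natural map
\[
\calO_K\{X_1,\dots,X_n\}^{\wedge}_{\pd}\longrightarrow \frakS^n_\ast/E(u_0)\frakS^n_\ast,\qquad X_i\longmapsto \tfrac{u_0-u_i}{E(u_0)}\ \bigl(\text{resp. }\tfrac{1-u_i/u_0}{E(u_0)}\bigr),
\]
is an isomorphism of $p$-adically complete topological rings; in particular the right-hand side is $p$-torsion-free. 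So the first thing I would do is invoke this. For the reader I would recall the idea there: $\frakS^n_\ast$ is a prismatic envelope, so modulo $(p,E(u_0))$ the comparison between prismatic envelopes and divided-power envelopes identifies $\frakS^n_\ast/(p,E(u_0))\frakS^n_\ast$ with $\bigl(\calO_K\{X_1,\dots,X_n\}^{\wedge}_{\pd}\bigr)/p$; one then lifts this description from modulo $(p,E(u_0))$ to modulo $E(u_0)$ using that $\frakS^n_\ast$ is $(p,E(u_0))$-completely flat over $W(k)$.

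Next I would identify the target $\frakS^{n,+}_{\ast,\dR}/E(u_0)\frakS^{n,+}_{\ast,\dR}$ of the map in the statement with $\bigl(\frakS^n_\ast/E(u_0)\frakS^n_\ast\bigr)[\tfrac1p]$. By construction $\frakS^{n,+}_{\ast,\dR}=\varprojlim_{m}\frakS^n_\ast[\tfrac1p]/E(u_0)^m\frakS^n_\ast[\tfrac1p]$ is the $E(u_0)$-adic completion of $\frakS^n_\ast[\tfrac1p]$; since $(\frakS^n_\ast,(E(u_0)))$ is a prism, $E(u_0)$ is a non-zero-divisor on $\frakS^n_\ast$, hence on $\frakS^n_\ast[\tfrac1p]$, and the $f$-adic completion of a module on which $f$ acts without torsion reduces modulo $f$ to the module modulo $f$. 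This gives $\frakS^{n,+}_{\ast,\dR}/E(u_0)\frakS^{n,+}_{\ast,\dR}\cong \frakS^n_\ast[\tfrac1p]/E(u_0)\frakS^n_\ast[\tfrac1p]$, and the latter equals $\bigl(\frakS^n_\ast/E(u_0)\frakS^n_\ast\bigr)[\tfrac1p]=\frakS^{n,+}_{\ast,\dR,1}$ precisely because $\frakS^n_\ast/E(u_0)\frakS^n_\ast$ is $p$-torsion-free. Inverting $p$ in the integral isomorphism of the previous step then produces a ring isomorphism
\[
\bigl(\calO_K\{X_1,\dots,X_n\}^{\wedge}_{\pd}\bigr)[\tfrac1p]\xrightarrow{\ \sim\ }\frakS^{n,+}_{\ast,\dR}/E(u_0)\frakS^{n,+}_{\ast,\dR},
\]
which one recognizes as the map of the lemma by tracking the images of $\calO_K$ and of the $X_i$'s.

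For the topological refinement I would note that, again by the $p$-torsion-freeness of $\frakS^n_\ast/E(u_0)\frakS^n_\ast$, inverting $p$ turns the $p$-adic topology into the colimit topology $\varinjlim_k p^{-k}(-)$ compatibly on both sides, and that a homeomorphism of $p$-adically complete rings remains a homeomorphism after inverting $p$; combined with the topologies of Convention \ref{conv_topo} this yields the claim. The one genuinely substantial ingredient is the integral statement itself --- the prismatic-envelope-is-a-PD-envelope computation, together with its logarithmic variant over $W(k)[[u_0,\,1-u_1/u_0,\dots,1-u_n/u_0]]$ --- which I would import wholesale from \cite{MW21,MW22log}; the rest is routine commutative algebra, the only point deserving care being the passage through the inverse limit that defines $\frakS^{n,+}_{\ast,\dR}$, where one must keep track of how the $E(u_0)$-adic completion interacts with both the quotient by $E(u_0)$ and the localization at $p$.
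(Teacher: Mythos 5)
Your proposal is correct and takes essentially the same route as the paper, which for this lemma simply cites \cite[Lem.\ 2.7]{MW21} and \cite[Lem.\ 3.8]{MW22log}; you have correctly identified that the genuine mathematical content (the prismatic-envelope-as-PD-envelope computation) lives in those references, and the passage to the rational statement is the routine commutative algebra you describe. Two small remarks: the identification $\frakS^n_\ast[\tfrac1p]/E(u_0)\cong(\frakS^n_\ast/E(u_0))[\tfrac1p]$ holds simply because localization is exact, so $p$-torsion-freeness of $\frakS^n_\ast/E(u_0)$ is not needed for that particular step (though it is genuinely used to know that the integral isomorphism you import determines the rational map); and your reduction of the $E(u_0)$-adic completion modulo $E(u_0)$ does rely, as you say, only on $E(u_0)$ being a non-zero-divisor on $\frakS^n_\ast[\tfrac1p]$, which holds since localization preserves non-zero-divisors — so the whole deduction goes through without any Noetherian or flatness hypotheses beyond what the references already furnish.
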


We now construct suitable variables "$X_i$'s" for our cosimplicial rings $\gs^\bullet_{\ast, \dR}$.

\begin{construction}\label{construction:variable}
Let  $A(u)$ (regarded   as a formal power series in $K[[u-\pi])$ be a uniformizer of $\frakS_{\dR}^+$.
Note that  $A'(u)=\frac{d A}{d(u-\pi)}$ is a unit in $\frakS_{\dR}^+$. So we see that
 \begin{equation*}
     \begin{split}
         \frac{A(u_0)-A(u_i)}{A(u_0)} & = -\frac{\sum_{k\geq 1}\frac{A^{(k)}(u_0)}{k!}(u_i-u_0)^k}{A(u_0)}\\
         & = \frac{u_0-u_i}{u_0-\pi}\cdot \frac{\sum_{k\geq 1}\frac{A^{(k)}(u_0)}{k!}(u_i-u_0)^{k-1}}{\sum_{l\geq 1}\frac{A^{(l)}(\pi)}{l!}(u_0-\pi)^{l-1}}
     \end{split}
 \end{equation*}
 where the two infinite summations in the final expression are both units in $\frakS_{\dR}^{n,+}$. Thus we have
 \begin{equation}\label{Equ-Congruence-I}
     \frac{A(u_0)-A(u_i)}{E'(\pi)A(u_0)}\equiv \frac{u_0-u_i}{E'(\pi)(u_0-\pi)}\equiv \frac{u_0-u_i}{E(u_0)}\mod E(u_0)\frakS_{\dR}^{n,+}.
 \end{equation}
and
 \begin{equation}\label{Equ-Congruence-II}
 \frac{A(u_0)-A(u_i)}{\pi E'(\pi)A(u_0)} \equiv \frac{u_0-u_i}{\pi E'(\pi)(u_0-\pi)}\equiv     \frac{1-\frac{u_i}{u_0}}{E(u_0)} \mod E(u_0)\frakS_{\log,\dR}^{n,+}.
 \end{equation}
\end{construction}

\begin{prop} \label{Prop-structure}
Let $\ast \in\{\emptyset,\log\}$.
Let
\begin{equation*}
a=
\begin{cases}
  -E'(\pi), &  \text{if } \ast=\emptyset \\
 -\pi E'(\pi), &  \text{if } \ast=\log
\end{cases}
\end{equation*}
Let $n\geq 0$ and let $1\leq i\leq n$. Let $A(u)$ be any uniformizer in $\frakS_{\dR}^+$ as above.
Define elements in $\gs_{\ast, \dR}^{n, +}$
\[  X_i =X_{A, i} = \frac{A(u_i)-A(u_0)}{aA(u_0)} . \]
Hence in concrete expressions
\begin{equation}
X_i =
\begin{cases}
      \frac{A(u_0)-A(u_i)}{E'(\pi)A(u_0)}    & \text{ if } \ast =\emptyset  \\
     \frac{A(u_0)-A(u_i)}{\pi E'(\pi)A(u_0)}  & \text{ if } \ast =\log.
\end{cases}
\end{equation}
Then there is a natural map
\begin{equation} \label{eq-map-dr-simp-ring}
  \varprojlim_{m}((\calO_K[[A(u_0)]]/(A(u_0))^m)\{X_1,\dots,X_n\}^{\wedge}_{\pd}[\frac{1}{p}])  \to \gs_{\ast, \dR}^{n, +},
\end{equation}
which is an isomorphism of topological rings.
In addition,  the face maps  $p_i:\frakS_{\ast, \dR}^{n,+}\to \frakS_{\ast, \dR}^{n+1,+}$ and the degeneracy maps $\sigma_i:\frakS_{\ast, \dR}^{n+1,+}\to\frakS_{\ast, \dR}^{n,+}$, which is defined by switching indices of the  $u_i$'s, can now be expressed by the following formulae, using the $X_i$'s.
 \begin{equation}\label{Equ-Face}
        \begin{split}
           & p_i(X_j) = \left\{\begin{array}{rcl}
                (X_{j+1}-X_1)(1+aX_1)^{-1}, & i=0 \\
                X_j, & j<i\\
                X_{j+1}, & 0<i\leq j;
            \end{array}\right.\\
           & p_i(A(u_0))=\left\{\begin{array}{rcl}
               A(u_{1}) = A(u_0)(1+aX_1),  &  i = 0\\
               A(u_0),  & i>0;
            \end{array}\right.
        \end{split}
    \end{equation}
    \begin{equation}\label{Equ-Degeneracy}
        \begin{split}
           & \sigma_i(X_j)=\left\{\begin{array}{rcl}
               0, &  i=0,j=1\\
               X_j, & j\leq i\\
               X_{j-1}, & j>i;
           \end{array}\right.\\
           &\sigma_i(A(u_0)) = A(u_0), \qquad \forall i.
        \end{split}
    \end{equation}
\end{prop}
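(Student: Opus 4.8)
The plan is to first prove the ring isomorphism \eqref{eq-map-dr-simp-ring} by a dévissage on the truncation level $m$, bootstrapping from the case $m=1$, which is exactly Lemma~\ref{Lem-reduced structure}; and then to deduce the formulas \eqref{Equ-Face}--\eqref{Equ-Degeneracy} by a direct substitution. First, recall from Lemma~\ref{lemgsdeunif} that $A(u_0)$ is a uniformizer of $\frakS_{\dR}^+=K[[E(u_0)]]$, so $A(u_0)=E(u_0)\,v$ with $v$ a unit of $K[[E(u_0)]]$; since $E(u_0)$ generates the evaluation of $\mathcal{I}_\prism$ at $\frakS^n_\ast$ and $\frakS^{n,+}_{\ast,\dR}$ is $E(u_0)$-adically complete and separated, $v$ is also a unit there. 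Hence the $A(u_0)$-adic and $E(u_0)$-adic topologies on $\frakS^{n,+}_{\ast,\dR}$ (and on each level-$m$ truncation) agree, and $A(u_0)$, like $E(u_0)$, is a non-zero-divisor on $\frakS^n_\ast$ (an orientable prism, hence $\mathcal{I}$-torsion-free), on $\frakS^n_\ast[1/p]$, and on each $\frakS^{n,+}_{\ast,\dR,m}$. Writing $S^n_m:=\bigl((\calO_K[[A(u_0)]]/(A(u_0))^m)\{X_1,\dots,X_n\}^{\wedge}_{\pd}\bigr)[1/p]$ for the level-$m$ source ring, $A(u_0)$ is likewise a non-zero-divisor on $S^n_m$, since $\calO_K[[A(u_0)]]/(A(u_0))^m\cong\calO_K[Y]/Y^m$ is $p$-torsion-free and $S^n_m$ is obtained from the (free, $p$-completed) $\calO_K$-module on $\pd$-monomials by this base change followed by inverting $p$.

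For $m=1$ one has $S^n_1=(\calO_K\{X_1,\dots,X_n\}^{\wedge}_{\pd})[1/p]$, and the map \eqref{eq-map-dr-simp-ring} reduces to a map into $\frakS^{n,+}_{\ast,\dR}/E(u_0)\frakS^{n,+}_{\ast,\dR}$. With $a=-E'(\pi)$ resp.\ $a=-\pi E'(\pi)$, the congruences \eqref{Equ-Congruence-I} resp.\ \eqref{Equ-Congruence-II} show that the reduction of $X_i=\tfrac{A(u_i)-A(u_0)}{aA(u_0)}$ modulo $E(u_0)$ equals the generator $\tfrac{u_0-u_i}{E(u_0)}$ resp.\ $\tfrac{1-u_i/u_0}{E(u_0)}$ of Lemma~\ref{Lem-reduced structure}, so the $m=1$ case of \eqref{eq-map-dr-simp-ring} is precisely the isomorphism of that lemma. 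For $m\ge 2$, multiplication by $A(u_0)^{m-1}$ (resp.\ $E(u_0)^{m-1}$) fits into a short exact sequence $0\to S^n_1\to S^n_m\to S^n_{m-1}\to 0$ (resp.\ $0\to\frakS^{n,+}_{\ast,\dR,1}\to\frakS^{n,+}_{\ast,\dR,m}\to\frakS^{n,+}_{\ast,\dR,m-1}\to 0$): exactness on the target side is the non-zero-divisor property of $E(u_0)$, and on the source side it follows because base change of the $p$-completed $\pd$-polynomial ring along $\calO_K[Y]/Y^m\twoheadrightarrow\calO_K[Y]/Y^{m-1}$ is exact and commutes with $[1/p]$, the base being $p$-torsion-free. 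The map \eqref{eq-map-dr-simp-ring} sends $A(u_0)$ to $E(u_0)v$ with $v$ a unit, hence intertwines these two sequences up to the automorphism $v^{m-1}$; so by the five lemma and induction on $m$ the middle arrow is an isomorphism for every finite $m$, and passing to $\varprojlim_m$ gives \eqref{eq-map-dr-simp-ring}. It is moreover a homeomorphism, since both sides carry the inverse-limit topology of the truncations, each of which is a topological isomorphism (for $m=1$ by Lemma~\ref{Lem-reduced structure}, then inductively).

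It remains to read off \eqref{Equ-Face} and \eqref{Equ-Degeneracy}. The simplicial structure maps of $\frakS^\bullet_\ast$ are the ones coming from the \v{C}ech nerve: the coface $p_i\colon\frakS^n_\ast\to\frakS^{n+1}_\ast$ sends $u_j\mapsto u_j$ for $j<i$ and $u_j\mapsto u_{j+1}$ for $j\ge i$, and the codegeneracy $\sigma_i\colon\frakS^{n+1}_\ast\to\frakS^n_\ast$ sends $u_j\mapsto u_j$ for $j\le i$ and $u_j\mapsto u_{j-1}$ for $j>i$. As these maps are $K$-linear and fix the distinguished root $\Pi$ of $E$, applying them to $A(u_j)=\sum_k\tfrac{A^{(k)}(\pi)}{k!}(u_j-\pi)^k$ merely substitutes the image of $u_j$. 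Using the defining identity $A(u_j)=A(u_0)(1+aX_j)$ (with the convention $X_0=0$), one then computes directly; for instance for $i=0$,
\[
  p_0(X_j)=\frac{A(u_{j+1})-A(u_1)}{aA(u_1)}=\frac{A(u_0)(1+aX_{j+1})-A(u_0)(1+aX_1)}{aA(u_0)(1+aX_1)}=(X_{j+1}-X_1)(1+aX_1)^{-1},
\]
where $1+aX_1=A(u_1)/A(u_0)$ is a unit of $\frakS^{n+1}_{\ast,\dR}$ because both $A(u_0)$ and $A(u_1)=E(u_1)\cdot(\text{unit})=E(u_0)\cdot(\text{unit})$ generate the maximal ideal; the remaining cases of \eqref{Equ-Face} and all of \eqref{Equ-Degeneracy} (with $\sigma_0(X_1)=0$ being the instance $u_1\mapsto u_0$, consistent with $X_0=0$) are entirely analogous.

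\textbf{Main obstacle.} The genuinely delicate input---the behaviour of the $p$-completed $\pd$-structure---is already isolated in Lemma~\ref{Lem-reduced structure}, so within this proof the main things to get right are the bookkeeping of the dévissage exact sequences (in particular that the relevant base change of $p$-completed $\pd$-polynomial rings is exact and survives inverting $p$, which rests on $p$-torsion-freeness of $\calO_K[Y]/Y^m$) and the simplicial-index conventions in the last step, so that the shifts in \eqref{Equ-Face}--\eqref{Equ-Degeneracy} come out with the stated indexing.
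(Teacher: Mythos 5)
Your proof is correct and follows essentially the same route as the paper: reduce mod $E(u_0)$ (equivalently $A(u_0)$) to the $m=1$ case supplied by Lemma~\ref{Lem-reduced structure} via the congruences \eqref{Equ-Congruence-I}--\eqref{Equ-Congruence-II}, then propagate using $E(u_0)$-torsion-freeness and completeness of both sides, and finally read off the face and degeneracy formulae by direct substitution with the key identity $A(u_1)/A(u_0)=1+aX_1$. The only difference is presentational: where the paper invokes the derived Nakayama lemma in one step, you unwind it into an explicit dévissage through the truncations $S^n_m$ with short exact sequences and the five lemma; the two arguments are equivalent, and your version has the minor merit of making the $A(u_0)$-non-zero-divisor and exactness-of-base-change inputs explicit.
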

\begin{proof}
The most difficult part is the existence of the map \eqref{eq-map-dr-simp-ring}, which requires us to analyse boundedness of the sequence $X_j^{[n]}$; this will be done in  Lemma \ref{Structure map} below. Note for different uniformizers $A\in\frakS_{\dR}^+$, by computations in Construction \ref{construction:variable}, the $X_{A,i}$'s are different by an element in $1+\mathfrak{m}_{\frakS_{\dR}^+}$, where $\mathfrak{m}_{\frakS_{\dR}^+}$ denotes the maximal ideal of $\frakS_{\dR}^+$.  
Thus, for much of the analysis, it suffices to treat just  one uniformizer.

With the boundedness result of   Lemma \ref{Structure map} below, the map \eqref{eq-map-dr-simp-ring} is well-defined.
 The reduction modulo $E(u_0)$ (equivalently, modulo $A(u_0)$) of the map  induces the isomorphism \eqref{eq-map-HT-simp-ring}, because of the equalities (modulo $E(u_0)$) in \eqref{Equ-Congruence-I} and \eqref{Equ-Congruence-II}. 
 Thus   derived Nakayama lemma implies \eqref{eq-map-dr-simp-ring}  is also an isomorphism (note both sides of \eqref{eq-map-dr-simp-ring}  are $E(u_0)$-torsion free).

The structural (face and degeneracy) maps are easy to verify. The only small computation is the expression for $p_0(X_j)$. Note
   \begin{equation*}
            p_0(X_j)  = \frac{A(u_{j+1})-A(u_{1})}{aA(u_1)}
=\frac{A(u_{j+1})-A(u_0)  -(A(u_1) -A(u_0))   }{ aA(u_0)  } \cdot \frac{A(u_0)}{A(u_1)}
             = (X_{j+1}-X_1)\cdot \frac{A(u_0)}{A(u_1)}
    \end{equation*}
    and then note
    \begin{equation}\label{Equ-Face-II}         \frac{A(u_1)}{A(u_0)} = (1+aX_1).
    \end{equation}
    (This last simple expression is written as it will be used later several times).
\end{proof}

\begin{cor}
The natural map $\frakS_{\dR}^{\bullet,+}\to \frakS_{\log,\dR}^{\bullet,+}$ of cosimplicial rings is determined such that in degree $n$, the element ``$X_i$" (with $a=-E'(\pi)$) in $\frakS_{\dR}^{n,+}$ maps to ``$\pi X_i$" (with $a=-\pi E'(\pi)$) in $\frakS_{\log,\dR}^{n,+}$.
\end{cor}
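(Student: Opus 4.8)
The plan is to compute the map explicitly on the topological generators furnished by Proposition \ref{Prop-structure}, which reduces the statement to a one-line substitution.

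First I would pin down what ``the natural map'' is in degree $n$. By Notation \ref{notagssimp} the morphism $\frakS^{\bullet}\to\frakS^{\bullet}_{\log}$ of cosimplicial rings identifies the variables $u_i$; in particular $u_0\mapsto u_0$, hence $E(u_0)\mapsto E(u_0)$. Inverting $p$, quotienting by $(E(u_0))^m$, and then taking $\varprojlim_m$ therefore produces the cosimplicial map $\frakS_{\dR}^{\bullet,+}\to\frakS_{\log,\dR}^{\bullet,+}$ of the statement, and it still acts by $u_i\mapsto u_i$ (and by the identity on the common degree-$0$ ring $\frakS_{\dR}^+=\frakS_{\log,\dR}^+$).

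Next I would invoke Proposition \ref{Prop-structure}: the isomorphism \eqref{eq-map-dr-simp-ring} shows that $\frakS_{\dR}^{n,+}$ is topologically generated over $W(k)$ by $A(u_0)$ together with the elements $X_1,\dots,X_n$ (those defined with $a=-E'(\pi)$), and likewise $\frakS_{\log,\dR}^{n,+}$ by $A(u_0)$ and the $X_1,\dots,X_n$ defined with $a=-\pi E'(\pi)$; hence the continuous ring map is \emph{determined} by the images of these generators. Clearly $A(u_0)\mapsto A(u_0)$. For the $X_i$'s, since the map is a ring homomorphism identifying the $u_j$'s and fixing $A(u_0)$, and since the element $\frac{A(u_0)-A(u_i)}{E'(\pi)A(u_0)}$ of $\frakS_{\dR}^{n,+}$ makes sense precisely because $A(u_0)-A(u_i)$ is divisible by $A(u_0)$ there, the image is the element of $\frakS_{\log,\dR}^{n,+}$ given by the same expression:
\[
X_i=\frac{A(u_0)-A(u_i)}{E'(\pi)A(u_0)}\ \longmapsto\ \frac{A(u_0)-A(u_i)}{E'(\pi)A(u_0)}\ =\ \pi\cdot\frac{A(u_0)-A(u_i)}{\pi E'(\pi)A(u_0)},
\]
and the right-hand side is exactly $\pi$ times the element ``$X_i$'' of $\frakS_{\log,\dR}^{n,+}$, as claimed. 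Compatibility with the face and degeneracy maps \eqref{Equ-Face}, \eqref{Equ-Degeneracy} is automatic since the map is cosimplicial (and could also be checked directly).

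Granting Proposition \ref{Prop-structure}, I do not expect a real obstacle: the content is the displayed substitution. The only point needing a little care is the identification in the second paragraph, i.e. that ``the natural map'' genuinely arises from $\frakS^{\bullet}\to\frakS^{\bullet}_{\log}$ by inverting $p$, quotienting by powers of $(E(u_0))$, and taking the inverse limit, so that after the de Rham completion it literally still sends $u_i\mapsto u_i$.
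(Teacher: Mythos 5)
Your proof is correct and takes essentially the same approach as the paper's, which disposes of the corollary in one line by noting that the cosimplicial map is induced by identifying the $u_i$'s. You simply spell out that identification, track $A(u_0)$ and the $X_i$'s through it using the presentation from Proposition \ref{Prop-structure}, and carry out the elementary rescaling $\frac{A(u_0)-A(u_i)}{E'(\pi)A(u_0)} = \pi\cdot\frac{A(u_0)-A(u_i)}{\pi E'(\pi)A(u_0)}$.
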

\begin{proof}
This is obvious, as the cosimplicial ring map is induced by identifying $u_i$'s.
\end{proof}

The following technical lemma is used to construct the map   \eqref{eq-map-dr-simp-ring} in Prop. \ref{Prop-structure}.
\begin{lem}\label{Structure map}
Let $*\in \{\emptyset,\log\}$. Let  $1\leq j\leq i$. Let $m\geq 1$.
Consider the ring 
\[\gs_{\ast, \dR}^+/E(u_0)^m=\bB_{\dR}^+(\frakS_*^i)/E(u_0)^m=\frakS_*^i/E(u_0)^m[\frac{1}{p}].\]
\begin{enumerate}
    \item Let \begin{equation*}
v_{j,*}=
\begin{cases}
    u_0-u_j   & \text{ if } \ast =\emptyset  \\
    1-\frac{u_j}{u_0} & \text{ if } \ast =\log
\end{cases}
\end{equation*}
Let $$Y_{j,*}=\frac{v_{j,*}}{E(u_0)}\in \frakS_{*}^i.$$
Then the image of $\{Y_{j,*}^{[n]}\}$ is bounded in $\gs_{\ast, \dR}^+/E(u_0)^m$. 

    \item For any uniformizer $A(u)\in \frakS_{\dR}^+$, the sequence $\{X_j^{[n]}\}_{n\geq 1}$ for $X_j=X_{A,j}=\frac{A(u_0)-A(u_j)}{aA(u_0)}$ introduced in Proposition \ref{Prop-structure} is also bounded in $\gs_{\ast, \dR}^+/E(u_0)^m$.
\end{enumerate}
\end{lem}
\begin{proof}
Item (1). Consider the sequence $\{Y_{j,*}^{[n]}\}$.
Without loss of generality, we may assume $i=j=1$, and put $Y=Y_{1,*}$ for short.
Consider the subring $\frakS_*^1[\frac{E(u_0)}{p^2}]\subset \frakS_*^1[\frac{1}{p}]$. 

We claim: for any $n\geq 1$, there exists some $\epsilon_n\in\{\pm 1\}$ and $y_n\in \frakS^1_*[\frac{E(u_0)}{p^2}]$ such that 
\begin{equation}\label{eqnboundness}
    \frac{Y^{p^l}}{p^{1+p+\cdots+p^{l-1}}} = \epsilon_l\delta_l(Y)+\frac{E(u_0)}{p}y_l.
\end{equation}
Granting the claim, we conclude that $Y^{[n]}\in\frakS_*^1[\frac{E(u_0)}{p^2}]$ for all $n$. Now consider their images in $\frakS^1_*[\frac{E(u_0)}{p^2}]/E(u_0)^m[\frac{1}{p}]$. We can see that all $Y^{[n]}$ lie in $\frac{1}{p^{2m-2}}\frakS^1_*/E(u_0)^m$ as $(\frac{E(u_0)}{p^2})^n$ is 0 in $\frakS_*^1/E(u_0)^m[\frac{1}{p}]$ for all $n\geq m$. Then the result follows.

It remains to prove the claim. By \cite[Lem. 2.11]{GMWHT}, for any $n\geq 0$, there exists some $z_n\in\frakS_*^1$ such that
\[\varphi(\delta_{n+1}(Y)) = \delta_{n}(Y)^p+p\delta_{n}(Y) = E(u_0)z_n,\]
which implies that for any $n\geq 0$,
\begin{equation}\label{eqnboundness-key equality}
    \frac{\delta_{n}(Y)^p}{p}=-\delta_{n+1}(Y)+\frac{E(u_0)}{p}z_n.
\end{equation}
In particular, by letting $n=0$ in (\ref{eqnboundness-key equality}), we see that (\ref{eqnboundness}) is true when $l=1$ for $\epsilon_1=-1$ and $y_1=z_0$. Now assume (\ref{eqnboundness}) holds true for some $l = n \geq 1$. Then by taking the $p$-th power on both sides of (\ref{eqnboundness}) and applying (\ref{eqnboundness-key equality}), we have
\[\begin{split}
\frac{Y^{p^{n+1}}}{p^{1+p+\cdots+p^n}} = & \epsilon_n^p\frac{\delta_n(Y)^p}{p}+\sum_{i=1}^{p-1}\frac{\binom{p}{i}}{p}\epsilon_n^{p-i}\delta_n(Y)^{p-i}\frac{E(u_0)^i}{p^i}y_n^i+\frac{E(u_0)}{p^{p+1}}y_n\\
= & -\epsilon_n^p\delta_{n+1}(Y)+\frac{E(u_0)}{p}z_n+\sum_{i=1}^{p-1}\frac{\binom{p}{i}}{p}\epsilon_n^{p-i}\delta_n(Y)^{p-i}\frac{E(u_0)^i}{p^i}y_n^i+\frac{E(u_0)^p}{p^{p+1}}y_n^p.
\end{split}\]
This implies that (\ref{eqnboundness}) is true for $l=n+1$ with
\[\epsilon_{n+1}=-\epsilon_n^p \text{ and }
y_{n+1} = z_n+\sum_{i=1}^{p-1}\frac{\binom{p}{i}}{p}\epsilon_n^{p-i}\delta_n(Y)^{p-i}\frac{E(u_0)^{i-1}}{p^{i-1}}y_n^i+\frac{E(u_0)^{p-1}}{p^{p}}y_n^p.\]
By induction, we see that the claim above is true.

Item (2). Again, it suffices to consider the sequence $X_1^{[n]}$. Write $X_1 = Y_{1,*}(1+\sum_{i\geq 1}a_iE(u_0)^i)$ with $a_i\in K$ (cf. Construction \ref{construction:variable}). Then for any $n\geq 0$, 
\[X_1^{[n]} = Y_{1,*}^{[n]}(1+\sum_{i\geq1}a_iE(u_0)^i)^n.\]
So to conclude, we have to show $\{(1+\sum_{i\geq 1}a_iE(u_0)^i)^n\}_{n\geq 1}$ is bounded in $\frakS/(E(u_0)^m)[\frac{1}{p}]$. Fix an $N=N(m)\geq 1$ so that $a_i\in\frac{1}{p^N}\calO_K$ for any $1\leq i\leq m-1$. By the expansion of $(1+\sum_{i\geq 1}a_iE(u_0)^i)^n$, it is clear that for any $n\geq 1$ and any $1\leq j\leq m-1$, there are polynomials $F_{n,j}(T_1,\dots,T_{m-1})\in\bZ[T_1,\dots,T_{m-1}]$ with the degree at most $m-1$ such that
\[(1+\sum_{i\geq 1}a_iE(u_0)^i)^n=1+\sum_{j=1}^{m-1}F_{n,j}(a_1,\dots,a_{m-1})E(u_0)^j.\]
As $F_{n,j}(a_1,\dots,a_{m-1})\in p^{-(m-1)N}\calO_K$ for all $n\geq 1$, we see that $\{(1+\sum_{i\geq 1}a_iE(u_0)^i)^n\}_{n\geq 1}$ is bounded as desired.
\end{proof}

 \section{Stratifications II: axiomatic computation of log connections}\label{sec: stra and conn}
As we have seen in Prop. \ref{Prop-structure}, the de Rham cosimplicial rings are rather similar in both the prismatic and log-prismatic setting. Hence in this section, we develop an axiomatic approach to compute stratifications on these cosimplicial rings. In next section \S \ref{secdRlogconn}, these axiomatic computations will be specialized to treat the $\bbdrplus$-crystals.

\subsection{Stratifications and recurrence relations}

\begin{notation}\label{Notation:comsimplicialring}
Let $R$ be a $p$-adically complete and flat $\zp$-algebra with $a\in R$ a nonzero divisor which is invertible in $R[\frac{1}{p}]$. Let $T=T_0$ and $T_i, i  \geq 1$ be a sequence of independent variables.
Let \[X_i=\frac{T_i-T}{aT}.\]
Define a (topological) cosimplicial $R[\frac{1}{p}]$-algebra $S^\bullet=S^\bullet(R, a)$ where for each $n \geq 0$,
\[ S^n :=\projlim_m \left(R[[T]]/T^m\{X_1, \cdots, X_n\}_{\mathrm{pd}}^\wedge[1/p]    \right) \]
So in particular, $S^0=(R[1/p])[[T]]$.
The structure maps in the cosimplicial ring are   induced by the simplex maps with respect to    indices of the $T_i$'s. Thus, we have
\begin{equation} \label{newEqu-Face}
        \begin{split}
           & p_i(X_j) = \left\{\begin{array}{rcl}
                (X_{j+1}-X_1)(1+aX_1)^{-1}, & i=0 \\
                X_j, & j<i\\
                X_{j+1}, & 0<i\leq j;
            \end{array}\right.\\
            & p_i(T)=\left\{\begin{array}{rcl}
               T_{1} = T(1+aX_1),  & i=0\\
               T,  & i>0;
            \end{array}\right.
        \end{split}
    \end{equation}
    \begin{equation} \label{newEqu-StratificationD}
        \begin{split}
           & \sigma_i(X_j)=\left\{\begin{array}{rcl}
               0, &  i=0,j=1\\
               X_j, & j\leq i\\
               X_{j-1}, & j>i;
           \end{array}\right.\\
           & \sigma_i(T) = T, \qquad \forall i.
        \end{split}
    \end{equation}
    Note $(1+aX_1)^{-1} = \sum_{n\geq 0}n!(-a)^nX_1^{[n]}$ is well-defined as $R$ is $p$-complete.
\end{notation}

\begin{notation}
Note $S^n$ is $T$-torsionfree since it can be regarded as a subring of $R[[T,X_1,\dots,X_n]]$.
Note for any $0\leq j\leq n$, $T_j = T(1+aX_j)$. So we have $TS^n = T_jS^n$ and hence that $S^{\bullet}/T^mS^{\bullet}$ is a well-defined cosimplicial $R[1/p]$-algebra for any $m\geq 1$. Similar as in Convention \ref{convention-m-infty}, we define $S^{\bullet}/T^mS^{\bullet}:=S^{\bullet}$ for $m= +\infty$.
\end{notation}

\begin{example}\label{example_verify_cosimpring}
Let $A(u)$ be a uniformizer of $\frakS_{\dR}^+$ as in Prop. \ref{Prop-structure}. The map sending $T_i$ to $A(u_i)$ induces the following isomorphisms:
\begin{align*}
S^\bullet(\ok, -E'(\pi)) & \simeq  \gs_{\dR}^\bullet \\  S^\bullet(\ok, -\pi E'(\pi))& \simeq \gs_{\log, \dR}^\bullet
\end{align*}
\end{example}

\begin{notation}
Let $(M,\varepsilon) \in \mathrm{Strat}(S^\bullet/T^mS^{\bullet})$ be a stratification for some fixed $1\leq m\leq +\infty$. Via embedding $M\hookrightarrow M\otimes_{S^0,p_0}S^1$, for any $x\in M$, we write
 \begin{equation}\label{Equ-Epsilon}
     \varepsilon(x) = \sum_{n\geq 0}\phi_n(x)X_1^{[n]}
 \end{equation}
with $\{\phi_n\}_{n\geq 0}$ a collection of $R$-linear endomorphism of $M$ satisfying $\lim_{n\to+\infty}\phi_n = 0$ with respect to the topology on $M$.
\end{notation}

\begin{prop}\label{prop: strat iff}
The formula  \eqref{Equ-Epsilon}   gives a stratification satisfying the cocycle condition if and only if the following conditions are satisfied:
\begin{enumerate}
\item $\phi_0 = \id_M$;
\item $\phi_n =\prod_{i=0}^{n-1}(\phi_1 - ia)$ for each $n \geq 2$; and
\item   $\phi_n$ converges to zero as $n \to +\infty$.
\end{enumerate}
In this case, $\phi_1$ satisfies that for any $f(T)\in S^0$ and any $x\in M$,
\begin{equation}\label{Equ-strat iff}
  \phi_1(f(T)x) = f(T)\phi_1(x)+aTf'(T)x,
\end{equation}
and $\varepsilon$ satisfies that for any $x\in M$,
\begin{equation}\label{Equ-strat iffII}
    \varepsilon(x) = (1+aX_1)^{a^{-1}\phi_1}(x) = \sum_{n\geq 0}\left(\prod_{i=0}^{n-1}(\phi_1-ia)(x)\right)\cdot X_1^{[n]}.
\end{equation}
\end{prop}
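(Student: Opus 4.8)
The strategy is to translate the three cocycle/stratification axioms into concrete relations on the endomorphisms $\phi_n$, using the explicit face and degeneracy formulae \eqref{newEqu-Face} and \eqref{newEqu-StratificationD}. First I would handle the easy axioms: the normalization $\sigma_0^*(\varepsilon)=\id_M$ together with $\sigma_0(X_1)=0$ forces, after expanding \eqref{Equ-Epsilon}, that $\phi_0=\id_M$. The convergence condition $\phi_n\to 0$ is simply the restatement that $\varepsilon(x)\in M\otimes_{S^0,p_1}S^1$ is a well-defined element (the pd-power series must converge), which is built into our definition of stratification; so conditions (1) and (3) are immediate and the entire content lies in (2), the recurrence $\phi_n=\prod_{i=0}^{n-1}(\phi_1-ia)$.

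\textbf{Deriving the recurrence from the cocycle condition.} The main computation is to expand $p_2^*(\varepsilon)\circ p_0^*(\varepsilon)=p_1^*(\varepsilon)$ in $S^2$. Applying $p_1$ and $p_2$ to \eqref{Equ-Epsilon} is harmless: $p_1(X_1)=X_1$, $p_2(X_1)=X_1$, so $p_1^*(\varepsilon)(x)=\sum_n\phi_n(x)X_1^{[n]}$ and $p_2^*(\varepsilon)(y)=\sum_n\phi_n(y)X_1^{[n]}$, each still a pd-series in $X_1$ alone. The subtle map is $p_0$: by \eqref{newEqu-Face}, $p_0(X_1)=(X_2-X_1)(1+aX_1)^{-1}$, and one must also track $p_0$ acting on the module side via $p_0(T)=T(1+aX_1)$. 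The key intermediate step is to establish \eqref{Equ-strat iff}, the $aT$-Leibniz rule for $\phi_1$; I would extract this by comparing the $X_1^{[1]}$-coefficients of $\varepsilon(f(T)x)$ computed two ways — directly from $\varepsilon$ being $S^1$-linear with $p_0(f(T))=f(T(1+aX_1))$, Taylor-expanded, versus from the defining series. Once the Leibniz rule is in hand, one iterates: applying $\varepsilon$ repeatedly and using the cocycle identity to relate $\phi_{n+1}$ to $\phi_n$ and $\phi_1$ yields, by induction on $n$, that the $X_1^{[n]}$-coefficient of $p_2^*\varepsilon\circ p_0^*\varepsilon$ matches that of $p_1^*\varepsilon$ precisely when $\phi_{n}=(\phi_1-(n-1)a)\circ\phi_{n-1}$, i.e. condition (2). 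The divided-power combinatorics — products $X_1^{[i]}X_1^{[j]}=\binom{i+j}{i}X_1^{[i+j]}$ and the expansion of $(1+aX_1)^{-1}$ and its powers — is what makes the bookkeeping delicate but is ultimately routine.

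\textbf{The converse and the closed form.} For the converse, given endomorphisms satisfying (1)--(3), I would define $\varepsilon$ by \eqref{Equ-strat iffII}, i.e. $\varepsilon(x)=(1+aX_1)^{a^{-1}\phi_1}(x)$ interpreted as the pd-series $\sum_n\big(\prod_{i=0}^{n-1}(\phi_1-ia)\big)(x)X_1^{[n]}$, check it is well-defined (convergence from (3)), $S^1$-linear (this is exactly the Leibniz rule \eqref{Equ-strat iff}, which follows formally from the closed form together with the identity $\phi_1(f(T)x)=f(T)\phi_1(x)+aTf'(T)x$ propagating through the product), an isomorphism (its ``inverse'' is the analogous series with $-\phi_1$, or one notes the leading term is $\id_M$ and the rest is topologically nilpotent), and finally that it satisfies the cocycle condition — which is the formal fact that $(1+a\cdot)^{a^{-1}\phi_1}$ is ``group-like'' with respect to the comultiplication encoded by $p_0,p_1,p_2$. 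I expect the \textbf{main obstacle} to be the forward direction's induction, specifically organizing the divided-power coefficient comparison in $S^2$ cleanly enough that the recurrence $\phi_n=\prod(\phi_1-ia)$ falls out without an unmanageable combinatorial mess; getting the Leibniz rule \eqref{Equ-strat iff} correct and early is the pivot that keeps the rest tractable, and I would prove it as a standalone lemma before attacking the cocycle identity in full.
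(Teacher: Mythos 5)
Your broad outline---dispose of $\sigma_0$-normalization and convergence quickly, isolate the cocycle identity as the real content, prove the Leibniz rule as an early lemma---is the right shape, but there are two concrete problems.

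First, a computational error that propagates. You claim $p_1(X_1)=X_1$, but from \eqref{newEqu-Face} (the case $0<i\leq j$ with $i=j=1$) one gets $p_1(X_1)=X_2$, not $X_1$; only $p_2(X_1)=X_1$ is correct. This is not cosmetic: precisely because $p_1^*(\varepsilon)(x)=\sum_n\phi_n(x)X_2^{[n]}$ is a series in $X_2$ alone, the cocycle identity in $S^2$ becomes, upon reading off $X_2^{[n]}$-coefficients, a one-variable functional equation in $X_1$ (equation \eqref{Equ-stratificationF} in the paper). With your $p_1(X_1)=X_1$ the coefficient comparison you describe would be set up in the wrong variable and would not yield a clean recurrence.

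Second, and more substantively, you correctly flag ``deriving the recurrence from the cocycle condition'' as the main obstacle, but your plan---a direct inductive divided-power coefficient comparison in $S^2$---contains no idea for overcoming it, and you acknowledge as much. The paper's actual device is different: it packages the $X_2^{[n]}$-coefficient comparison into the standalone functional equation \eqref{Equ-KeyI} (Lemma~\ref{Key lemma}) and solves it by differentiating in $X$. For the forward implication one applies $\partial_X$ and sets $X=0$, which immediately produces $\phi_{n+1}=(\phi_1-na)\phi_n$; the converse reduces to showing the right-hand side of \eqref{Equ-KeyI} has vanishing $X$-derivative, which telescopes via Pascal identities. Without this $\partial_X|_{X=0}$ trick, the head-on combinatorial comparison you anticipate is exactly the ``unmanageable mess'' you fear. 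Your ``group-like'' heuristic $(1+aX_1)^{a^{-1}\phi_1}\cdot\bigl(\tfrac{1+aX_2}{1+aX_1}\bigr)^{a^{-1}\phi_1}=(1+aX_2)^{a^{-1}\phi_1}$ for the converse is genuinely nice (and uses the identity $1+a\,p_0(X_1)=\tfrac{1+aX_2}{1+aX_1}$), and if justified rigorously at the level of pd-series of commuting operators it might give a cleaner argument than the paper's computation---but as written it is a heuristic, not a proof. The proposal is therefore incomplete at its hardest step.
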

 \begin{proof}
 We fix an $x\in M$.
By (\ref{newEqu-StratificationD}), it is easy to see that
 \begin{equation}
     \sigma_0^*(\varepsilon)(x) = \phi_0(x).
 \end{equation}
  By (\ref{newEqu-Face}), we obtain that
 \begin{equation*}
     \begin{split}
         p_2^*(\varepsilon)\circ p_0^*(\varepsilon)(x) & = p_2^*(\varepsilon)(\sum_{n\geq 0}\phi_n(x)(1+aX_1)^{-n}(X_2-X_1)^{[n]})\\
          & = \sum_{m,n\geq 0}\phi_m(\phi_n(x))X_1^{[m]}(1+aX_1)^{-n}(X_2-X_1)^{[n]})\\
          & = \sum_{l,m,n\geq 0}\phi_l(\phi_{m+n}(x))(1+aX_1)^{-m-n}(-1)^mX_1^{[l]}X_1^{[m]}X_2^{[n]}\\
          & = \sum_{l,m,n\geq 0}\phi_l(\phi_{m+n}(x))(1+aX_1)^{-m-n}(-1)^m\binom{l+m}{l}X_1^{[l+m]}X_2^{[n]}.
     \end{split}
 \end{equation*}
 and that
 \begin{equation*}
     \begin{split}
         &p_1^*(\varepsilon)(x)=\sum_{n\geq 0}\phi_n(x)X_2^{[n]}.
     \end{split}
 \end{equation*}
 So $p_2^*(\varepsilon)\circ p_0^*(\varepsilon) = p_1^*(\varepsilon)$ if and only if for any $n\geq 0$,
 \begin{equation}\label{Equ-stratificationF}
     \phi_n(x) = \sum_{l,m\geq 0}\phi_l(\phi_{m+n}(x))(1+aX_1)^{-m-n}(-1)^m\binom{l+m}{l}X_1^{[l+m]}.
 \end{equation}
By the technical Lemma \ref{Key lemma} below, this inductive relation can be solved. (We postpone the long proof of  Lemma \ref{Key lemma} to the end of this section.)
Indeed, we have for each $n \geq 2$,
\begin{equation}
\phi_n(x) =\prod_{i=0}^{n-1}(\phi_1-ia)(x).
\end{equation}

Finally, we have to show (\ref{Equ-strat iff}) is true when $(M,\varepsilon)$ satisfies the cocycle condition. Indeed, by (\ref{Equ-Epsilon}), we have
\begin{equation*}
    \begin{split}
        f(T)x + \phi_1(f(T)x)X_1 & \equiv \varepsilon(f(T)x) \mod X_1^{[\geq 2]}\\
        & = \varepsilon(x\otimes_{S^0,p_0}p_0(f(T_1)))\\
        & = \varepsilon(x)f(T(1+aX_1))\\
        & \equiv (x+\phi_1(x)X_1)f(T(1+aX_1)) \mod X_1^{[\geq 2]}\\
        & \equiv  (x+\phi_1(x)X_1)(f(T)+aTf'(T)X_1) \mod X_1^{[\geq 2]}\\
        & \equiv f(T)x+(f(T)\phi_1(x)+aTf'(T)x)X_1 \mod X_1^{[\geq 2]}.
    \end{split}
\end{equation*}
This implies (by considering coefficient of $X_1$) that $\phi_1(f(T)x) = f(T)\phi_1(x)+aTf'(T)x$ as desired.
 \end{proof}


\subsection{Review of log connections}
We   recall the notion of a log connection (also  called ``regular connection" in the literature; here we follow the terminology of \cite[7.16]{BS23}).

\begin{defn} \label{deflogconn}
Let $F$ be a ring, and let $T$ be a variable.
Let $\Omega^1_{F[[T]]/F, \log}$ be the module of log differentials, which is a rank-1 free $F[[T]]$-module with $d\log T$ as a generator.
\begin{enumerate}
\item Recall a log connection is a finitely generated $F[[T]]$-module $M$ together a $F$-linear additive map
\[ \nabla_M: M \to M\otimes_{F[[T]]} \Omega^1_{F[[T]]/F, \log} \]
satisfying the \emph{Leibniz rule}; that is, for any $f(T)\in F[[T]]$ and any $x\in M$,
\[\nabla_M(f(T)x) = f(T)\nabla_M(x)+ x\otimes T\frac{d}{dT}f(T)  \cdot d\log T.\]
Denote the category of such objects as $\mic^\fg_\log(F[[T]])$.



\item Suppose now $F$ is a topological ring. For $m \geq 1$, equip $F[T]/T^m =\oplus_{i=0}^{m-1} F\cdot T^i \simeq F^m$ with the induced topology; equip $F[[T]] =\projlim_m F[T]/T^m$ with the inverse limit topology.
Let $a\in F$. Say a log connection $(M, \nabla_M)$  is \emph{$a$-nilpotent} if
  \begin{equation}\label{Equ-asmall}
       \lim_{n\to+\infty}a^n\prod_{i=0}^{n-1}(\nabla_M-i) = 0
   \end{equation}
   with respect to the induced topology on $M$.
\end{enumerate}
\end{defn}

Although log connection is an ``intrinsic" notion (i.e., independent of choices of the uniformizer $T$ or the basis $d\log T$), in explicit computations it is more convenient to use the following equivalent definition.

\begin{defn}\label{defn_log_T_conn}
\begin{enumerate}
\item A \emph{log-$T$-connection}  over $F[[T]]$  is a finitely generated $F[[T]]$-module $M$ together a $F$-linear map
$\nabla_{M, T}: M \to M$ satisfying \emph{$T$-Leibniz law}; namely,
$$\nabla_{M, T}(fx) =f\nabla_{M, T}(x)+T\frac{d}{dT}(f)x, \quad \forall f \in F[[T]], T \in M.$$
Write $\mathrm{MIC}^\fg_T(F[[T]])$ be the category of such objects.
Let $\mathrm{MIC}_T(F[[T]])$ (resp. $\mathrm{MIC}_T(F[[T]]/T^m)$ where $m \geq 1$)  be the subcategory of objects which are finite free over $F[[T]]$  (resp. $F[[T]]/T^m$).


\item Suppose $F$ is a topological ring as in Def. \ref{deflogconn}(3), then we can similarly define $a$-nilpotent log-$T$-connections.
 Use $\mathrm{MIC}_T^{\fg, a}(F[[T]])$ resp.
    $\mathrm{MIC}_T^a(F[[T]])$ resp. $\mathrm{MIC}_T^a(F[[T]]/T^m)$ to denote the subcategory of $a$-nilpotent connections which are finitely generated resp. finite free over $F[[T]]$ resp. finite free over $F[[T]]/T^m$.
\end{enumerate}
\end{defn}

\begin{remark}
Given $(M, \nabla_M) \in  \mic^\fg_\log(F[[T]])$, we can construct $(M, \nabla_{M, T}) \in  \mic^\fg_T(F[[T]])$ simply by defining
\[ \nabla_{M, T} :=\frac{\nabla_M}{d\log T}.  \]
This induces an equivalence of categories.
\end{remark}

\begin{convention} \label{conv_convenient}
Henceforth in this paper, we will work with the more convenient log-$T$-connections. By abuse of notations, we often will simply use $(M, \nabla_M)$ (omitting $T$ in the subscript) to denote such an object. Although occasionally, we will add in the uniformizer ``$T$" to stress its role.
\end{convention}

\begin{lemma} \label{lemchangeconn}
Suppose $F$ is a field.
Let $y=y(T) \in F[[T]]$ be another uniformizer, namely we have $F[[T]]=F[[y]]$.
For each $1\leq m \leq +\infty$, there is an equivalence of categories
$$\mathrm{MIC}_T(F[[T]]/T^m) \simeq \mathrm{MIC}_y(F[[y]]/y^m),$$
 which sends $(M, \nabla_{M, T})$ to $(M, \nabla_{M, y})$
where $\nabla_{M, y}$ on $M$ is defined via
$$\nabla_{M, y}= \frac{1}{y'}\cdot\frac{y}{T} \nabla_{M, T}$$
where $y'=\frac{dy}{dT}$.
If $F$ is a topological field and $a \in F$, then we also have
$$\mathrm{MIC}_T^a(F[[T]]/T^m) \simeq \mathrm{MIC}_y^a(F[[y]]/y^m).$$
\end{lemma}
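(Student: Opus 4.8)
The plan is to exhibit the equivalence directly by keeping the same underlying module $M$ and transporting the connection operator, then to check that the defining axioms (Leibniz rule, finite freeness, $a$-nilpotence) are preserved. First I would fix the change of variables: since $F$ is a field and $y=y(T)$ is a uniformizer, we have $y = T\cdot v(T)$ with $v(T)\in F[[T]]^\times$, hence $y' = dy/dT$ is a unit in $F[[T]]$ and $y/T = v(T)$ is a unit as well; therefore the operator $\frac{1}{y'}\cdot\frac{y}{T}$ is multiplication by a unit of $F[[T]]$, and it descends modulo $T^m = y^m\cdot(\text{unit})$ for every $m$. The key computation is that the derivation $y\frac{d}{dy}$ on $F[[y]]=F[[T]]$ equals $\frac{y}{T}\cdot\frac{1}{y'}\cdot T\frac{d}{dT}$ (chain rule: $\frac{d}{dy} = \frac{1}{y'}\frac{d}{dT}$, then multiply by $y$). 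So if $\nabla_{M,T}$ satisfies the $T$-Leibniz law $\nabla_{M,T}(fx)=f\nabla_{M,T}(x)+(T\frac{d}{dT}f)x$, then setting $\nabla_{M,y}:=\frac{1}{y'}\frac{y}{T}\nabla_{M,T}$ I would verify, using that $\frac{1}{y'}\frac{y}{T}$ is $F[[T]]$-linear,
\[
\nabla_{M,y}(fx)=\tfrac{y}{T}\tfrac{1}{y'}\bigl(f\nabla_{M,T}(x)+(T\tfrac{d}{dT}f)x\bigr)=f\nabla_{M,y}(x)+\bigl(y\tfrac{d}{dy}f\bigr)x,
\]
which is exactly the $y$-Leibniz law. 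Conversely, $\nabla_{M,T}=\frac{T}{y}y'\nabla_{M,y}$ gives the inverse construction, so the two assignments are mutually inverse functors on the underlying modules with morphisms (an $F[[T]]$-linear map commuting with $\nabla_{M,T}$ automatically commutes with $\nabla_{M,y}$ since the latter differs by a scalar in $F[[T]]$).

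Next I would record that the functor preserves the finiteness conditions: $M$ is finite free over $F[[T]]/T^m$ if and only if it is finite free over $F[[y]]/y^m$, since $F[[T]]/T^m = F[[y]]/y^m$ as rings (the ideals $(T^m)$ and $(y^m)$ coincide because $y/T$ is a unit). This handles the first equivalence $\mathrm{MIC}_T(F[[T]]/T^m)\simeq\mathrm{MIC}_y(F[[y]]/y^m)$ for all $1\le m\le+\infty$.

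For the topological refinement with $a\in F$, the remaining point is to compare the $a$-nilpotence conditions. The subtlety is that $a$-nilpotence is phrased in terms of $\prod_{i=0}^{n-1}(\nabla_M-i)$, and $\nabla_{M,y}$ and $\nabla_{M,T}$ are \emph{not} related by a scalar that commutes with these shifted products in an obvious way — so one cannot just say $\prod(\nabla_{M,y}-i)$ equals a unit times $\prod(\nabla_{M,T}-i)$. I expect this to be the main obstacle. The way I would resolve it: the topology on $M$ (as a finite free module over $F[[T]]/T^m = F[[y]]/y^m$ with the inverse-limit / $m$-adic topology coming from the topology on $F$) is intrinsic, independent of whether one uses $T$ or $y$; and $\frac{1}{y'}\frac{y}{T}$ is a topological automorphism of $M$ (multiplication by a unit of $F[[T]]$). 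So $a^n\prod_{i=0}^{n-1}(\nabla_{M,T}-i)\to 0$ iff the conjugated/rescaled sequence does. Concretely I would argue at the associated-graded level: both $\nabla_{M,T}$ and $\nabla_{M,y}$ induce, on $\mathrm{gr}\,M = \bigoplus T^j M/T^{j+1}M$, operators whose action on the $j$-th graded piece differs from scalar multiplication by $ja$ (respectively) by a nilpotent/lower-order term — this is precisely the content of the Leibniz rule, e.g. $\nabla_{M,T}(T^j x)\equiv jaT^j x$ is \emph{not} quite right since the derivation is $T\frac{d}{dT}$ giving eigenvalue $j$, not $ja$; I would instead note that $a$-nilpotence of $(M,\nabla_M)$ is equivalent to requiring that the Sen-type operator $\nabla_M \bmod T$ on $M/TM$ have the property that $a^n\prod_{i=0}^{n-1}(\nabla_M-i)\to 0$, and since $\nabla_{M,y}\bmod y$ and $\nabla_{M,T}\bmod T$ coincide (because $\frac{1}{y'}\frac{y}{T}\equiv \frac{1}{y'(0)}\cdot v(0) = 1\bmod T$ — indeed $y/T\to v(0)$ and $y'\to v(0)$ as $T\to 0$, so the ratio tends to $1$), the $a$-nilpotence condition on $M/TM$ is literally the same for both. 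For the full module one then runs the standard dévissage up the filtration $M\supset TM\supset T^2M\supset\cdots$, each graded piece being $M/TM$ up to twist, to conclude that $a$-nilpotence for $\nabla_{M,T}$ is equivalent to $a$-nilpotence for $\nabla_{M,y}$. This gives $\mathrm{MIC}_T^a(F[[T]]/T^m)\simeq\mathrm{MIC}_y^a(F[[y]]/y^m)$ and completes the proof.
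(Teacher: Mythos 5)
The first part of your proposal---defining $\nabla_{M,y}=\tfrac{1}{y'}\tfrac{y}{T}\nabla_{M,T}$, verifying the $y$-Leibniz rule by the chain rule, noting that $(T^m)=(y^m)$ so the underlying rings coincide, and observing that commutation with $\nabla_{M,T}$ for an $F[[T]]$-linear map is the same as commutation with $\nabla_{M,y}$---is correct and essentially complete. Since the paper's own ``proof'' is just the phrase \emph{This is easy}, there is nothing further to compare on that front.

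The $a$-nilpotence part, however, has a genuine gap. Your key observation that $\tfrac{1}{y'}\tfrac{y}{T}\equiv 1\pmod T$, so that $\nabla_{M,y}$ and $\nabla_{M,T}$ induce the \emph{same} operator on $M/TM$ (and, after the usual integer shift, on each graded piece $T^jM/T^{j+1}M$), is correct and is indeed the right starting point. But your conclusion rests on the assertion that $a$-nilpotence of $(M,\nabla_M)$ is determined by $\bar\nabla_M$ alone, and the invocation of ``standard d\'evissage up the filtration'' does not establish that. In a basis adapted to $M\supset TM\supset T^2M\supset\cdots$, the operator $a^n\prod_{i=0}^{n-1}(\nabla_M-i)$ is block-triangular; what the d\'evissage controls is precisely its block-diagonal entries $a^n\prod_{i=0}^{n-1}(\bar\nabla_M+j-i)$, and a sequence of block-triangular matrices whose diagonal blocks tend to $0$ need \emph{not} itself tend to $0$---the off-diagonal blocks have to be bounded separately. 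They are not bounded for free: already for $m=2$ and rank one, writing $\nabla_M$ in matrix form one finds that the off-diagonal entry of $\prod_{i<n}(\nabla_M-i)$ equals $n\,A^{(1)}\prod_{i<n-1}(\bar\nabla_M-i)$ after a telescoping identity, and one then needs $|n|\leq 1$ (non-archimedean) together with the hypothesis on $\bar\nabla_M$ to make $a^n$ times this tend to $0$. For general $m$ and rank $>1$ (where $\bar\nabla_M$ and the higher $T$-coefficients $A^{(j)}$ no longer commute) the off-diagonal blocks involve products with the $A^{(j)}$ inserted between shifted truncated products of $\bar\nabla_M-\bullet$, and controlling these is a computation, not a formal consequence of matching graded pieces. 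In other words, the two connections $\nabla_{M,T}$ and $\nabla_{M,y}=u\nabla_{M,T}$ with $u\equiv 1\bmod T$ differ by an $F$-linear perturbation landing in $TM$, and the stability of $a$-nilpotence under such a perturbation is exactly the point that must be proved; your proposal asserts it but does not prove it. (A cleaner route, if one is willing to invert the paper's logical order, is to note that $a$-nilpotence is exactly the condition for the associated stratification of Theorem~\ref{thm_strat_conn_axiom} to converge, and that the pd-cosimplicial ring $S^\bullet$ is independent of the choice of uniformizer since $X_i$ and $Y_i$ differ by a pd-unit; but that uses results proved after this lemma in the paper.)
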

\begin{proof}
This is easy.
\end{proof}

\subsection{Stratifications and log connections}

\begin{construction} \label{Constr_strat_to_conn}
Let $S^{\bullet}$ be the cosimplicial ring introduced in Notation \ref{Notation:comsimplicialring}.
Let $1\leq m\leq +\infty$.
Let   $(M,\varepsilon) \in \mathrm{Strat}(S^\bullet/T^mS^{\bullet})$ be a stratification.
Using the expression  (\ref{Equ-Epsilon}), we have  a $R$-linear map
$\phi_1: M\to M$. In addition, the formula \eqref{Equ-strat iff} implies that if we define
\[ \nabla_M:= \frac{\phi_1}{a}: M\to M, \]
then this is a log-$T$-connection.
\end{construction}

\begin{thm} \label{thm_strat_conn_axiom}
  The construction above, which maps $(M,\varepsilon) $ to $(M,\nabla_M)$,   induces an equivalence of categories
\[ \mathrm{Strat}(S^\bullet/T^mS^{\bullet}) \xrightarrow{\simeq} \mathrm{MIC}_{T}^a (R[1/p][[T]]/T^m).  \]
The equivalence preserves tensor products and dualities.
\end{thm}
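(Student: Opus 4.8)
Since the functor of Construction~\ref{Constr_strat_to_conn} has, in effect, already been analyzed in Proposition~\ref{prop: strat iff}, the plan is to upgrade that proposition into an equivalence of categories, uniformly in $1\le m\le+\infty$ (the value $m=+\infty$ being read through Convention~\ref{convention-m-infty}). First I would check the functor is well defined. Given $(M,\varepsilon)\in\mathrm{Strat}(S^\bullet/T^mS^\bullet)$, Proposition~\ref{prop: strat iff} produces the operator $\phi_1$ together with the Leibniz relation \eqref{Equ-strat iff}; hence $\nabla_M:=\phi_1/a$ is a log-$T$-connection on $M$, and since
\[
a^n\prod_{i=0}^{n-1}(\nabla_M-i)=\prod_{i=0}^{n-1}(\phi_1-ia)=\phi_n,
\]
the $a$-nilpotency of $(M,\nabla_M)$ is literally condition~(3) of that proposition. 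Thus $(M,\nabla_M)$ is an object of $\mathrm{MIC}^a_T(R[\tfrac1p][[T]]/T^m)$, and functoriality in morphisms is immediate since $\phi_1$ (hence $\nabla_M$) is read off from $\varepsilon$.

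Next I would build the quasi-inverse. Given $(M,\nabla_M)$ in the target, set $\phi_1:=a\nabla_M$, $\phi_n:=\prod_{i=0}^{n-1}(\phi_1-ia)$, and define $\varepsilon$ on $M$ by \eqref{Equ-strat iffII}, i.e. $\varepsilon(x)=\sum_{n\ge0}\phi_n(x)X_1^{[n]}$, extended $S^1/T^mS^1$-semilinearly along $p_0$. Three points require verification. \emph{(i)} Convergence of the series in $M\otimes_{S^0,p_0}S^1/T^mS^1$: this follows from $\phi_n\to0$ (the displayed translation of $a$-nilpotency), together with the fact that $M$ is finitely generated and the divided powers $X_1^{[n]}$ are $p$-adically bounded in $S^1/T^mS^1$. \emph{(ii)} That $\varepsilon$ descends to a well-defined $S^1/T^mS^1$-semilinear map on $M\otimes_{S^0,p_0}S^1/T^mS^1$: writing $\varepsilon=(1+aX_1)^{a^{-1}\phi_1}$ formally, its compatibility with base change along $p_0$ (which sends $T$ to $T(1+aX_1)$) is exactly the content of the Leibniz rule \eqref{Equ-strat iff} for $\phi_1=a\nabla_M$, the order-one instance being the Taylor-expansion computation at the end of the proof of Proposition~\ref{prop: strat iff}. \emph{(iii)} That $(M,\varepsilon)$ is a stratification satisfying the cocycle condition: the cocycle identity \eqref{Equ-stratificationF} holds by construction of the $\phi_n$ (this is the ``if'' direction of Proposition~\ref{prop: strat iff}, the degeneracy clause being $\phi_0=\id$), while bijectivity of $\varepsilon$ follows by exhibiting the inverse $(1+aX_1)^{-a^{-1}\phi_1}$, the identity $\varepsilon^{-1}\circ\varepsilon=\id$ being the divided-power avatar of $(1+z)^\alpha(1+z)^{-\alpha}=1$.

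Then I would verify that the two functors are mutually quasi-inverse and compatible with the tensor structure. Starting from a stratification, the ``in this case'' clause of Proposition~\ref{prop: strat iff} recovers $\varepsilon$ from $\phi_1$ via \eqref{Equ-strat iffII}; starting from $(M,\nabla_M)$, extracting the coefficient of $X_1=X_1^{[1]}$ in $\varepsilon$ and dividing by $a$ returns $\nabla_M$. For morphisms, an $R[\tfrac1p][[T]]/T^m$-linear map $f\colon M\to M'$ intertwines $\varepsilon_M$ and $\varepsilon_{M'}$ if and only if it intertwines every $\phi_n$, if and only if (by $\phi_n=\prod_{i=0}^{n-1}(\phi_1-ia)$) it intertwines $\phi_1$, if and only if it intertwines $\nabla_M$ and $\nabla_{M'}$; so the functor is fully faithful, and essential surjectivity is the previous paragraph. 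Finally, for a tensor product $(M\otimes M',\varepsilon_M\otimes\varepsilon_{M'})$ one has $\varepsilon_{M\otimes M'}(x\otimes y)=\varepsilon_M(x)\,\varepsilon_{M'}(y)$, whose $X_1$-coefficient is $\phi_1^M(x)\otimes y+x\otimes\phi_1^{M'}(y)$, giving $\nabla_{M\otimes M'}=\nabla_M\otimes\id+\id\otimes\nabla_{M'}$, the tensor product in $\mathrm{MIC}^a_T$; the case of duals is dual, and in both cases one checks stability of $a$-nilpotency by expanding $\prod_{i=0}^{n-1}(\phi_1^{M\otimes M'}-ia)$ in terms of $\phi_j^M$ and $\phi_k^{M'}$.

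The substantive work of this circle of ideas --- solving the recurrence \eqref{Equ-stratificationF} to obtain $\phi_n=\prod_{i=0}^{n-1}(\phi_1-ia)$ via the technical Lemma~\ref{Key lemma} --- has already been carried out inside Proposition~\ref{prop: strat iff}, so the remaining proof of Theorem~\ref{thm_strat_conn_axiom} is essentially formal. I expect the only genuine care to be in the topological bookkeeping of items \emph{(i)}--\emph{(ii)} (convergence and well-definedness of $\varepsilon$ in the $p$-completed divided-power tensor product), the matching of the free/projective conventions for the underlying module (harmless in the applications of \S\ref{secdRlogconn}, where $R[\tfrac1p]$ is a field), and the stability of $a$-nilpotency under the tensor and duality operations.
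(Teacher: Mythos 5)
Your proposal is correct and follows essentially the same route as the paper: both proofs reduce Theorem~\ref{thm_strat_conn_axiom} to Proposition~\ref{prop: strat iff} (which in turn rests on Lemma~\ref{Key lemma}), using conditions~(1)--(3) there to pass from a stratification to an $a$-nilpotent $\phi_1=a\nabla_M$ and back via the formula~\eqref{Equ-strat iffII}. The paper's proof is terser --- it simply cites Proposition~\ref{prop: strat iff} in both directions and omits the tensor/duality check as standard --- while you spell out the convergence, semilinearity, and bijectivity of $\varepsilon$ more explicitly; these extra verifications are harmless since the ``if and only if'' in Proposition~\ref{prop: strat iff} (whose notion of ``stratification'' already includes $\varepsilon$ being an isomorphism) and the $p$-adic integrality of the divided powers $X_1^{[n]}$ already supply them.
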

\begin{proof}
  Let $(M,\varepsilon)$ be a stratification satsifying the cocycle condition with the induced log connection $(M,\nabla_M)$. Then we have $\phi_1 = a\nabla_M$. So by Proposition \ref{prop: strat iff}, we see that
 \[\phi_n = \prod_{i=0}^{n-1}(\phi_1-ia) = a^n\prod_{i=0}^{n-1}(\nabla_M-a)\to 0\]
 as $n\to +\infty$. So $(M,\nabla_M)$ is indeed $a$-nilpotent.

 Conversely, for a given $a$-nilpotent log-$T$-connection $(M,\nabla_M) \in \mathrm{MIC}_T^a (R[1/p][[T]]/T^m) $, we define $\phi_0=\mathrm{id}_M$, $\phi_1 = a\nabla_M$ and $\phi_n: = \prod_{i=0}^{n-1}(\phi_1-ia)$ for any $n\geq 2$. Then conditions (1)-(3) in Proposition \ref{prop: strat iff} are satisfied for $\phi_n$'s. In particular, (\ref{Equ-strat iffII}) allows us to define a stratification $\varepsilon$ on $M$ satisfying the cocycle condition. So we get the inverse functor.
 The compatibility of the equivalence with respect to tensor products and dualities is standard, and the proof is omitted.
\end{proof}

 Note Construction \ref{Constr_strat_to_conn} relies on  expression  (\ref{Equ-Epsilon}), and hence indeed relies on the construction (and choice) of $X_1$. (Indeed, say if we use $(T-T_1)/(aT)$ as ``$X_1$", then $\phi_1$ will become ``$-\phi_1$".) In the following, we give an \emph{intrinsic} construction of this log connection: i.e., an object in the \emph{intrinsic} Definition \ref{deflogconn}. (This construction serves as illustration purpose only, and will not be used in the sequel).

 \begin{lem}\label{Lem-intrinsic}
 Let $\calK$ the kernel of the degeneracy morphism $\sigma_0: S^1/T^mS^1\to S^0/T^mS^0$, which is a closed ideal of $S^1/T^mS^1$. For any $n\geq 1$, denote by $\calK^{[n]}$ the $n$-th complete pd-power of $\calK$.
\begin{enumerate}
\item  The ideal $\calK$ is the closed pd-ideal generated by $X_1 = \frac{T_1-T}{aT}$; that is, $\calK$ is the closed ideal generated by $\{X_1^{[n]}\mid n\geq 1\}.$
\item We have an isomorphism of $S^0/T^mS^0$-modules $\calK/\calK^{[2]} \cong (S^0/T^mS^0)\cdot X_1$.
\end{enumerate}
 \end{lem}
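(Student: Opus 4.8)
The plan is to reduce both assertions to the explicit description of $S^1/T^mS^1$ (cf. Notation \ref{Notation:comsimplicialring}) as a $p$-completed, $p$-inverted, $T^m$-truncated divided power polynomial ring in the single variable $X_1$ over $S^0/T^mS^0$. For finite $m$ one has $S^1/T^mS^1 \cong \bigl((R[[T]]/T^m)\{X_1\}_{\mathrm{pd}}^{\wedge}\bigr)[1/p]$; since $(R[[T]]/T^m)\{X_1\}_{\mathrm{pd}}$ is free over $R[[T]]/T^m$ on $\{X_1^{[n]}\}_{n\ge0}$, after $p$-completion (using that $R$ is $p$-complete and $\zp$-flat) and inverting $p$ its elements are exactly the convergent sums $\sum_{n\ge0} c_n X_1^{[n]}$ with $c_n\in S^0/T^mS^0$, $c_n\to0$ $p$-adically; the case $m=+\infty$ is an extra inverse limit over $m$ and changes nothing below. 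Under this identification, \eqref{newEqu-StratificationD} says that $\sigma_0$ is the $S^0/T^mS^0$-algebra map $\sum_n c_n X_1^{[n]}\mapsto c_0$.

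For part (1), the inclusion $\supseteq$ is immediate since $\sigma_0(X_1^{[n]})=0$ for all $n\ge1$, and the closed pd-ideal generated by $X_1$ is by definition the closure of the ideal generated by $\{X_1^{[n]}:n\ge1\}$. The reverse inclusion is read off the description above: $\calK=\ker\sigma_0$ is precisely the set of convergent sums $\sum_{n\ge1}c_nX_1^{[n]}$, which is exactly that closed ideal.

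For part (2), I would compute $\calK^{[2]}$. By the definition of divided powers of a pd-ideal, $\calK^{[2]}$ is the closed ideal generated by all products $\gamma_{i_1}(x_1)\cdots\gamma_{i_k}(x_k)$ with $x_j\in\calK$ and $\sum_j i_j\ge2$. On the one hand $X_1^{[n]}=\gamma_n(X_1)\in\calK^{[2]}$ for all $n\ge2$; on the other hand, writing each $x_j=c_jX_1+(\text{terms in }X_1^{[\ge2]})$ and using $\gamma_j(cX_1)=c^jX_1^{[j]}$ together with $\gamma_l(X_1^{[n]})\in\sum_{q\ge ln}(S^0/T^mS^0)X_1^{[q]}$, a short divided-power computation shows that every such generator lies in the closed $S^0/T^mS^0$-submodule spanned by $\{X_1^{[n]}:n\ge2\}$. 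Hence $\calK^{[2]}$ equals that submodule, so $\calK/\calK^{[2]}$ is free of rank one over $S^0/T^mS^0$ on the class of $X_1$, which is the asserted isomorphism.

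The conceptual content is minimal and the work is bookkeeping; the step I expect to need the most care is checking that the divided-power monomials $\{X_1^{[n]}\}$ form a topological basis of $S^1/T^mS^1$ over $S^0/T^mS^0$ compatibly with both the $p$-adic and $T$-adic topologies and with the inverse limit defining the $m=+\infty$ case --- equivalently, that the phrase ``closed (pd-)ideal generated by $X_1$'' introduces no more closure than the convergent-sum description already provides. This is of the same nature as the computations of \S\ref{sec: cosimp rings}, and once it is in place both parts follow by inspection.
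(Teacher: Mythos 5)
Your proposal is correct but takes a genuinely different route from the paper. You directly establish a topological-basis description of $S^1/T^m S^1$ as the space of convergent sums $\sum_{n\ge 0} c_n X_1^{[n]}$ with $c_n\in S^0/T^m S^0$, and then read off both parts from the observation that $\sigma_0$ is ``take the constant term.'' The paper never commits to such an explicit description: it proves the $m=1$ case directly (where $T$ plays no role and $S^1/TS^1 = R\{X_1\}_{\pd}^{\wedge}[\frac{1}{p}]$ is a genuine completed free pd-module), then obtains $m=+\infty$ by a $T$-adic approximation argument --- showing $\calK\subset T\calK+\calJ$ with $\calJ$ the closed ideal generated by the $X_1^{[n]}$ and concluding $\calK=\calJ$ by $T$-adic separatedness and completeness of $S^1$ --- and finally deduces finite $m$ from $m=+\infty$ using $T$-torsionfreeness of $S^{\bullet}$, which identifies $\calK/T^m\calK$ with $\ker(\sigma_0\bmod T^m)$; part (2) is then recorded as an easy consequence, without spelling out the pd computation you carry out. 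The trade-off is precisely the one you flag: your argument is more explicit and handles both parts at once (and your divided-power verification that $\calK^{[2]}$ is the closed span of $\{X_1^{[n]}\}_{n\ge 2}$ is sound), but it front-loads the delicate point of certifying the topological-basis description uniformly across the $p$-adic, $T$-adic, and inverse-limit topologies --- in particular for $m=+\infty$, where the coefficients $c_n\in S^0$ need only be bounded modulo each power of $T$ rather than uniformly --- whereas the paper's d\'evissage is structured precisely so that the only case requiring hard analysis is $m=1$, where the topology is an honest Banach one and the basis claim is immediate.
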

 \begin{proof}
   It suffices to show that (1) is true and then (2) follows as an easy consequence.

   We first assume $m = 1$. In this case, $S^1/TS^1 = R\{X^1\}_{\pd}^{\wedge}[\frac{1}{p}]$ and $\sigma_0$ is induced by sending $X_1$ to $0$. So the result is true in this case.

   For $m = +\infty$, we denote by $\calJ$ the closed ideal of $S^1$ generated by $\{X_1^{[n]}\}_{n\geq 1}$. Then $\calJ\subset \calK$. By noting that $S^0$ is $T$-torsionfree, we get
   \[\calK/T\calK = \Ker(S^1/TS^1\xrightarrow{\sigma_0}S^0/TS^0).\]
   So it follows from what we have proved that $\calK\subset T\calK+\calJ$. Since $S^1$ is $T$-complete and separated, we get $\calK = \calJ$ as desired.

   For $m<+\infty$, using $T$-torsionfreeness of $S^{\bullet}$, we obtain that \[\calK/T^m\calK = \Ker(S^1/T^mS^1\xrightarrow{\sigma_0}S^0/T^mS^0)\]
   as desired.
 \end{proof}

\begin{construction}\label{constructLogconnection}
Let   $(M,\varepsilon) \in \mathrm{Strat}(S^\bullet/T^mS^{\bullet})$ be a stratification. Define two morphisms
 \[\iota_1,\iota_2:M\to M\otimes_{S^0,p_1}S^1\]
 by $\iota_1(m) = \varepsilon(m\otimes_{S^0,p_0}1)$ and $\iota_2(m) = m\otimes_{S^0,p_1}1$. Since $\sigma^*_0(\varepsilon) = \id_M$, we see that $\iota_2-\iota_1: M\to M\otimes_{S^0,p_1}S^1$ takes values in $M\otimes_{S^0,p_1}\calK$. Therefore, we can define a map
\begin{equation}\label{eqintrinsic_conn}
 M\xrightarrow{\iota_1-\iota_2} M\otimes_{S^0,p_1}\calK \to M\otimes_{S^0,p_1}\calK/\calK^{[2]}
\end{equation}
 Recall we have face maps $p_0, p_1: S^0 \to S^1$, and $(p_1-p_0)(S^0) \subset \mathcal K$, and hence we can define
\begin{equation}
d :S^0 \xrightarrow{p_1-p_0} \mathcal K \to \calK/\calK^{[2]};
\end{equation}
this map satisfies Leibniz rule, and  is indeed the \emph{canonical differential map}.
To be more precise, if we identify the rank-1 module $\calK/\calK^{[2]}$ with $\Omega_{\log}$ by sending $X_1$ to $\frac{d\log T}{a}$ (where for brevity, we use $\Omega_{\log}$ to denote  $\Omega_{\log, R[1/p][[T]]/R[1/p]}$), then the above map is precisely the differential map
\[d: S^0 \to  \Omega_{\log}.\]
We now claim \eqref{eqintrinsic_conn} is a \emph{log connection} (as in Def. \ref{deflogconn}).
To check this \emph{intrinsic statement}, it does not hurt to use identification between  $\calK/\calK^{[2]}$ and $\Omega_{\log }$ above. Then everything is concrete, and \eqref{eqintrinsic_conn} becomes
\begin{equation}
m \mapsto \frac{1}{a}\phi_1(m)\otimes d\log  T
\end{equation}
which, by  formula \eqref{Equ-strat iff}, is a \emph{log connection}.
 \end{construction}

\subsection{A technical lemma on recurrence relation}

 \begin{lem}\label{Key lemma}
   Let $R$ be a $\bQ$-algebra and let $a\in R$.
   Let $M$ be a finite free $R[[T]]/(T^m)$-module for some $1\leq m\leq +\infty$, and let $\{\phi_n\}_{n\geq 0}$ be a collection of $R$-linear endomorphisms of $M$ with $\phi_0=\id_M$. Then the following are equivalent:
\begin{enumerate}
\item  For any $x\in M$ and any $n\geq 0$, the equation
   \begin{equation}\label{Equ-KeyI}
       \phi_n(x) = \sum_{l,m\geq 0}\phi_l(\phi_{m+n}(x))(1+aX)^{-m-n}(-1)^m\binom{l+m}{l}X^{[l+m]}
   \end{equation}
   holds true in $M[[X]]$.

     \item For any $x\in M$ and any $n\geq 0$, $\phi_n(x)= \prod_{i=0}^{n-1}(\phi-ia)(x)$ with $\phi = \phi_1$. Here, we require that $\prod_{i=0}^{n-1}(\phi-ia) = \id_M$ for $n = 0$.
\end{enumerate}
 \end{lem}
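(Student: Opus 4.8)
The plan is to prove the two implications separately. First note that the statement involves only the sequence of $R$-linear operators $(\phi_n)_{n\ge 0}$ — the $R[[T]]/(T^m)$-module structure on $M$ plays no role — and that the hypothesis that $R$ is a $\bQ$-algebra enters only through the divided-power identity $X^{[l]}X^{[m]}=\binom{l+m}{l}X^{[l+m]}$, which I use freely. The implication $(1)\Rightarrow(2)$ is short; $(2)\Rightarrow(1)$ needs a generating-function computation.

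For $(1)\Rightarrow(2)$ I would fix $n\ge 0$ and $x\in M$ and compare the coefficients of $X=X^{[1]}$ on the two sides of \eqref{Equ-KeyI}. The left side is constant in $X$, so this coefficient is $0$; on the right side the $(l,m)$-summand is divisible by $X^{l+m}$ (since $(1+aX)^{-m-n}$ is a unit power series), so only the terms with $(l,m)\in\{(0,0),(1,0),(0,1)\}$ contribute, and collecting them gives
\[
0=-na\,\phi_n(x)+\phi_1(\phi_n(x))-\phi_{n+1}(x),
\]
the summand $-na\,\phi_n(x)$ coming from the linear term of $\phi_n(x)(1+aX)^{-n}$. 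Hence $\phi_{n+1}=(\phi_1-na)\circ\phi_n$ for every $n\ge 0$, and since $\phi_0=\id_M$, an induction on $n$ yields $\phi_n=\prod_{i=0}^{n-1}(\phi_1-ia)$, which is $(2)$.

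For $(2)\Rightarrow(1)$ I would set $\phi:=\phi_1$ and $P_k(\phi):=\prod_{i=0}^{k-1}(\phi-ia)$, so that $(2)$ says $\phi_k=P_k(\phi)$; in particular all the $\phi_k$ are polynomials in $\phi$, hence mutually commuting. Substituting $\phi_k=P_k(\phi)$ turns \eqref{Equ-KeyI} into a universal identity in the single commuting variable $\phi$ built from $\phi$, $a$, and divided powers of $X$, so by replacing $a$ with an indeterminate and inverting it I may assume $a\in R^{\times}$. Now I would use the binomial expansion $\sum_{k\ge 0}P_k(\phi)X^{[k]}=(1+aX)^{a^{-1}\phi}$ (legitimate once $a$ is invertible), the factorisation $P_{m+n}(\phi)=P_n(\phi)P_m(\phi-na)$, and the substitution $Y:=-X(1+aX)^{-1}$, for which $1+aY=(1+aX)^{-1}$ and $(-1)^m X^{[m]}(1+aX)^{-m}=Y^{[m]}$. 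With $\binom{l+m}{l}X^{[l+m]}=X^{[l]}X^{[m]}$, the right side of \eqref{Equ-KeyI} becomes the operator $(1+aX)^{a^{-1}\phi}$ applied to the element $(1+aX)^{-n}P_n(\phi)(1+aX)^{n-a^{-1}\phi}(x)$ of $M[[X]]$; commuting the scalar powers of $1+aX$ past the operators $P_k(\phi)$ and using that $(1+aX)^{a^{-1}\phi}$ and $(1+aX)^{-a^{-1}\phi}$ are mutually inverse and commute with $P_n(\phi)$, this collapses to $P_n(\phi)(x)=\phi_n(x)$, which is $(1)$.

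The genuine work is all in $(2)\Rightarrow(1)$: keeping track of the three places where powers of $1+aX$ occur, and checking that $Y=-X(1+aX)^{-1}$ converts the alternating factors $(-1)^m X^{[m]}(1+aX)^{-m}$ precisely into the divided powers $Y^{[m]}$. A minor point I would also spell out is the reduction to $a\in R^{\times}$: this is needed because $a$ is permitted to be a zero-divisor (even $0$), while the generating-function argument manipulates $(1+aX)^{a^{-1}\phi}$; it is handled by the standard universal-coefficients device, i.e.\ proving the identity over $\bQ[\phi,\alpha,\alpha^{-1}]$ with $\alpha$ an indeterminate and then specialising $\alpha\mapsto a$. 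The implication $(1)\Rightarrow(2)$ should pose no difficulty.
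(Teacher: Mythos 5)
Your proof is correct, and the two implications break as follows. For $(1)\Rightarrow(2)$, extracting the coefficient of $X^{[1]}$ is exactly what the paper does (it applies $\partial_X$ and evaluates at $X=0$), so the two arguments coincide.

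For $(2)\Rightarrow(1)$ you take a genuinely different route. The paper proves that the right-hand side of \eqref{Equ-KeyI} has vanishing $X$-derivative by a direct manipulation of the double sum, using the recursion $\phi_{k+1}=(\phi_1-ka)\phi_k$ together with the binomial identities $\binom{l+m}{l}=\binom{l+m-1}{l}+\binom{l+m-1}{l-1}$ and $la\binom{l+m}{l}X^{[l+m]}=aX\binom{l+m-1}{l-1}X^{[l+m-1]}$; combined with agreement at $X=0$, this forces the identity. You instead package the sums via the generating function $\sum_k P_k(\phi)X^{[k]}=(1+aX)^{a^{-1}\phi}$, the factorisation $P_{m+n}(\phi)=P_n(\phi)P_m(\phi-na)$, and the substitution $Y=-X(1+aX)^{-1}$ (noting $1+aY=(1+aX)^{-1}$), after which the double sum collapses to $(1+aX)^{-n}\,(1+aX)^{a^{-1}\phi}\,P_n(\phi)\,(1+aX)^{n-a^{-1}\phi}(x)=P_n(\phi)(x)$. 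This is slicker and exposes the structural reason the identity holds. Your worry about inverting $a$ is legitimate as a matter of presentation, and the universal-coefficients reduction you propose does resolve it; an alternative that avoids even that step is to observe that the only fact actually used is the $\bQ$-coefficient identity $\bigl(\sum_k P_k(u)X^{[k]}\bigr)\bigl(\sum_k P_k(v)X^{[k]}\bigr)=\sum_k P_k(u+v)X^{[k]}$ (a Chu--Vandermonde-type statement with $P_k(w)=\prod_{i=0}^{k-1}(w-ia)$), which is valid verbatim for arbitrary $a$ in a $\bQ$-algebra, so $a^{-1}$ never genuinely appears. Either way the argument goes through. The trade-off is that the paper's computation is entirely elementary and coefficient-by-coefficient, while yours requires a bit of care with the formal calculus of $(1+aX)^{u}$ but is much shorter once that is set up.
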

 \begin{proof}
   Applying $\partial_{X}$ to the right hand side of (\ref{Equ-KeyI}), we get
   \begin{equation}\label{Equ-KeyII}
       \begin{split}
           &\partial_X(\sum_{l,m\geq 0}\phi_l(\phi_{m+n}(x))(1+aX)^{-m-n}(-1)^m\binom{l+m}{l}X^{[l+m]})\\
           =&-\sum_{l,m\geq 0}(m+n)a\phi_l(\phi_{m+n}(x))(1+aX)^{-m-n-1}(-1)^m\binom{l+m}{l}X^{[l+m]}\\
           &+\sum_{l,m\geq 0}\phi_l(\phi_{m+n}(x))(1-aX)^{-m-n}(-1)^m\binom{l+m}{l}X^{[l+m-1]}.
       \end{split}
   \end{equation}

   Now assume $(1)$ is true. Then we get
   \begin{equation*}
       \begin{split}
           0=&\partial_X(\sum_{l,m\geq 0}\phi_l(\phi_{m+n}(x))(1+aX)^{-m-n}(-1)^m\binom{l+m}{l}X^{[l+m]})\\
           =&-\sum_{l,m\geq 0}(m+n)a\phi_l(\phi_{m+n}(x))(1-aX)^{-m-n-1}(-1)^m\binom{l+m}{l}X^{[l+m]}\\
           &+\sum_{l,m\geq 0}\phi_l(\phi_{m+n}(x))(1+aX)^{-m-n}(-1)^m\binom{l+m}{l}X^{[l+m-1]}.
       \end{split}
   \end{equation*}
   By letting $X=0$ in the right hand side of above formula, we get
   \begin{equation*}
       -na\phi_n(x) - \phi_{n+1}(x) + \phi_1(\phi_n(x)) = 0.
   \end{equation*}
   In other words, for any $n\geq 0$, we have
   \begin{equation}\label{Equ-KeyIteration}
        \phi_{n+1}(x) =(\phi_1 - na)(\phi_n(x)).
   \end{equation}
   So Item (2) follows from iterations.

   Conversely, we assume (2) is true. Then the formula (\ref{Equ-KeyIteration}) holds in this case and therefore all $\phi_n$'s are commutative. It is easy to see that (\ref{Equ-KeyI}) is true after letting $X = 0$ on the both sides. So to conclude (1), it is enough to show
   \[\partial_X(\sum_{l,m\geq 0}\phi_l(\phi_{m+n}(x))(1+aX)^{-m-n}(-1)^m\binom{l+m}{l}X^{[l+m]}) = 0.\]
   By (\ref{Equ-KeyII}), it suffices to show
   \begin{equation*}
       \begin{split}
           0 = &-\sum_{l,m\geq 0}(m+n)a\phi_l(\phi_{m+n}(x))(1+aX)^{-m-n-1}(-1)^m\binom{l+m}{l}X^{[l+m]}\\
           &+\sum_{l,m\geq 0}\phi_l(\phi_{m+n}(x))(1+aX)^{-m-n}(-1)^m\binom{l+m}{l}X^{[l+m-1]}.
       \end{split}
   \end{equation*}
   The formula (\ref{Equ-KeyIteration}) allows us to replace $-(m+n)a\phi_{m+n}(x)$ by $\phi_{m+n+1}(x) - \phi_1(\phi_{m+n}(x))$ in above formula. Using commutativity of $\phi_n$'s, it suffices to show that
   \begin{equation*}
       \begin{split}
           0 = &\sum_{l,m\geq 0}\phi_l(\phi_{m+n+1}(x))(1+aX)^{-m-n-1}(-1)^m\binom{l+m}{l}X^{[l+m]}\\
           &-\sum_{l,m\geq 0}\phi_1(\phi_l(\phi_{m+n}(x)))(1+aX)^{-m-n-1}(-1)^m\binom{l+m}{l}X^{[l+m]}\\
           &+\sum_{l,m\geq 0}\phi_l(\phi_{m+n}(x))(1+aX)^{-m-n}(-1)^m\binom{l+m}{l}X^{[l+m-1]}.
       \end{split}
   \end{equation*}
   However, we have that
   \begin{equation*}
       \begin{split}
           &\sum_{l,m\geq 0}\phi_l(\phi_{m+n+1}(x))(1+aX)^{-m-n-1}(-1)^m\binom{l+m}{l}X^{[l+m]}\\
           &-\sum_{l,m\geq 0}\phi_1(\phi_l(\phi_{m+n}(x)))(1+aX)^{-m-n-1}(-1)^m\binom{l+m}{l}X^{[l+m]}\\
           &+\sum_{l,m\geq 0}\phi_l(\phi_{m+n}(x))(1+aX)^{-m-n}(-1)^m\binom{l+m}{l}X^{[l+m-1]}\\
          =&\sum_{l\geq 0,m\geq 1}-\phi_l(\phi_{m+n}(x))(1+aX)^{-m-n}(-1)^m\binom{l+m-1}{l}X^{[l+m-1]}\\
           &-\sum_{l,m\geq 0}\phi_1(\phi_l(\phi_{m+n}(x)))(1+aX)^{-m-n-1}(-1)^m\binom{l+m}{l}X^{[l+m]}\\
           &+\sum_{l,m\geq 0}\phi_l(\phi_{m+n}(x))(1+aX)^{-m-n}(-1)^m\binom{l+m}{l}X^{[l+m-1]}\\
           =&\sum_{l,m\geq 0}\phi_l(\phi_{m+n}(x))(1+aX)^{-m-n}(-1)^m\binom{l+m-1}{l-1}X^{[l+m-1]}\\
           &-\sum_{l,m\geq 0}\phi_1(\phi_l(\phi_{m+n}(x)))(1+aX)^{-m-n-1}(-1)^m\binom{l+m}{l}X^{[l+m]}.
       \end{split}
   \end{equation*}
   Here, we use $\binom{l-1}{l} = 0$ and $\binom{l+m}{l} = \binom{l+m-1}{l}+\binom{l+m-1}{l-1}$. Using (\ref{Equ-KeyIteration}) again, we have
   \begin{equation*}
       \begin{split}
           &\sum_{l,m\geq 0}\phi_l(\phi_{m+n}(x))(1+aX)^{-m-n}(-1)^m\binom{l+m-1}{l-1}X^{[l+m-1]}\\
           &-\sum_{l,m\geq 0}\phi_1(\phi_l(\phi_{m+n}(x)))(1+aX)^{-m-n-1}(-1)^m\binom{l+m}{l}X^{[l+m]}\\
           =&\sum_{l,m\geq 0}\phi_l(\phi_{m+n}(x))(1+aX)^{-m-n}(-1)^m\binom{l+m-1}{l-1}X^{[l+m-1]}\\
           &-\sum_{l,m\geq 0}\phi_{l+1}(\phi_{m+n}(x))(1+aX)^{-m-n-1}(-1)^m\binom{l+m}{l}X^{[l+m]}\\
           &-\sum_{l,m\geq 0}la\phi_{l}(\phi_{m+n}(x))(1+aX)^{-m-n-1}(-1)^m\binom{l+m}{l}X^{[l+m]}\\
           =&\sum_{l,m\geq 0}\phi_l(\phi_{m+n}(x))(1+aX)^{-m-n}(-1)^m\binom{l+m-1}{l-1}X^{[l+m-1]}\\
           &-\sum_{l,m\geq 0}\phi_{l+1}(\phi_{m+n}(x))(1+aX)^{-m-n-1}(-1)^m\binom{l+m}{l}X^{[l+m]}\\
           &-aX(1-aX)^{-1}\sum_{l,m\geq 0}\phi_{l}(\phi_{m+n}(x))(1+aX)^{-m-n}(-1)^m\binom{l+m-1}{l-1}X^{[l+m-1]}.
       \end{split}
   \end{equation*}
   Here, we use $la\binom{l+m}{1}X^{[l+m]} = aX\binom{l+m-1}{l-1}X^{[l+m-1]}$. Finally, we get
   \begin{equation*}
       \begin{split}
           &\partial_X(\sum_{l,m\geq 0}\phi_l(\phi_{m+n}(x))(1+aX)^{-m-n}(-1)^m\binom{l+m}{l}X^{[l+m]})\\
           =&\sum_{l,m\geq 0}\phi_l(\phi_{m+n}(x))(1+aX)^{-m-n}(-1)^m\binom{l+m-1}{l-1}X^{[l+m-1]}\\
           &-\sum_{l,m\geq 0}\phi_{l+1}(\phi_{m+n}(x))(1+aX)^{-m-n-1}(-1)^m\binom{l+m}{l}X^{[l+m]}\\
           &-aX(1-aX)^{-1}\sum_{l,m\geq 0}\phi_{l}(\phi_{m+n}(x))(1+aX)^{-m-n}(-1)^m\binom{l+m-1}{l-1}X^{[l+m-1]}\\
           =&\sum_{l,m\geq 0}\phi_l(\phi_{m+n}(x))(1+aX)^{-m-n-1}(-1)^m\binom{l+m-1}{l-1}X^{[l+m-1]}\\
           &-\sum_{l,m\geq 0}\phi_{l+1}(\phi_{m+n}(x))(1+aX)^{-m-n-1}(-1)^m\binom{l+m}{l}X^{[l+m]}\\
           =&0,
       \end{split}
   \end{equation*}
   which is exactly what we want. This shows that (2) implies (1).
 \end{proof}

\section{$\bbdrplus$-crystals as log connections}\label{secdRlogconn}
In this section, we relate $\bbdrplus$-crystals with log connections.


Let $A(u)$ be a uniformizer of $\frakS_{\dR}^+$.
Recall  we have $\frakS^+_{\dR}=K[[A(u)]]$, and hence we can define categories of log-$A(u)$-connections following Def. \ref{defn_log_T_conn}.

\begin{theorem}\label{Thm-dRasLogconnection}
We have a commutative diagram of functors:
\begin{equation*}
\begin{tikzcd}
{\Vect((\calO_K)_{\Prism},\bB_{\dR,m}^+)} \arrow[d, "\simeq"] \arrow[rr, hook] &  & {\Vect((\calO_K)_{\Prism,\log},\bB_{\dR,m}^+)} \arrow[d, "\simeq"] \\
{\MIC_{ A(u)}^{-E'(\pi)}(\frakS_{\dR,m}^+)} \arrow[rr, hook]                     &  & {\MIC_{ A(u)}^{-\pi E'(\pi)}(\frakS_{\dR,m}^+)}
\end{tikzcd}
\end{equation*}
Here:
\begin{enumerate}
\item the two vertical functors are equivalences and induced by evaluations at the Breuil--Kisin prism $(\frakS,(E))$ resp. the  Breuil--Kisin log prism $(\frakS,(E),M_{\frakS})$; all these equivalences are   bi-exact.
\item the top row functor is induced by the forgetful morphism of sites $(\calO_K)_{\Prism,\log} \to (\calO_K)_{\Prism}$ forgetting log structures;
\item the bottom functor is the obvious inclusion. 
\end{enumerate}  All functors above preserve  tensor products and dualities.
\end{theorem}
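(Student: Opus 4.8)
The plan is to build the two vertical equivalences by composing three already-available equivalences, and then to identify the two horizontal arrows by inspecting the formulas from \S\ref{sec: cosimp rings}--\S\ref{sec: stra and conn}.

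First, fix $1\le m\le+\infty$ and a uniformizer $A(u)$ of $\frakS_{\dR}^+$. By Proposition \ref{Prop-dRStratification}, evaluation at the Breuil--Kisin prism (resp. the Breuil--Kisin log prism) gives equivalences of tensor categories $\Vect((\calO_K)_{\Prism},\bB_{\dR,m}^+)\isoto\mathrm{Strat}(\frakS_{\dR,m}^{\bullet,+})$ and $\Vect((\calO_K)_{\Prism,\log},\bB_{\dR,m}^+)\isoto\mathrm{Strat}(\frakS_{\log,\dR,m}^{\bullet,+})$. By Example \ref{example_verify_cosimpring}, the assignment $T_i\mapsto A(u_i)$ identifies the cosimplicial rings $S^\bullet(\ok,-E'(\pi))\cong\frakS_{\dR}^{\bullet,+}$ and $S^\bullet(\ok,-\pi E'(\pi))\cong\frakS_{\log,\dR}^{\bullet,+}$; reducing modulo $T^m$ amounts to reducing modulo $A(u)^m$, equivalently modulo $E^m$, since $A(u)$ and $E$ are both uniformizers of the discrete valuation ring $\frakS_{\dR}^+$ (Lemma \ref{lemgsdeunif}), so the associated $\mathrm{Strat}$-categories are identified. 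Finally, Theorem \ref{thm_strat_conn_axiom} applies to $R=\ok$ (a $p$-adically complete flat $\zp$-algebra) with $a=-E'(\pi)$ and with $a=-\pi E'(\pi)$ --- both nonzero divisors of $\ok$ invertible in $K=\ok[1/p]$ --- and yields tensor- and duality-preserving equivalences $\mathrm{Strat}(S^\bullet/T^mS^\bullet)\isoto\MIC_{T}^{a}(K[[T]]/T^m)$; under $T\mapsto A(u)$, and using $\frakS_{\dR,m}^+=K[[A(u)]]/A(u)^m$, the target is exactly $\MIC_{A(u)}^{-E'(\pi)}(\frakS_{\dR,m}^+)$, resp. $\MIC_{A(u)}^{-\pi E'(\pi)}(\frakS_{\dR,m}^+)$. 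Composing the three layers produces the two vertical arrows, each a composite of tensor functors, hence itself a tensor equivalence.

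Next I would identify the bottom horizontal functor and check that the square commutes. The inclusion $\MIC_{A(u)}^{-E'(\pi)}(\frakS_{\dR,m}^+)\hookrightarrow\MIC_{A(u)}^{-\pi E'(\pi)}(\frakS_{\dR,m}^+)$ is the inclusion of a full subcategory: the underlying module-with-log-$A(u)$-connection is the same datum, only the nilpotency constraint changes, and $-E'(\pi)$-nilpotency forces $-\pi E'(\pi)$-nilpotency because $(-\pi E'(\pi))^n=\pi^n(-E'(\pi))^n$ and $\{\pi^n\}_n$ is bounded in the $p$-adic topology of $\frakS_{\dR,m}^+$. To see commutativity, note that the two de Rham cosimplicial rings agree in degree $0$, $\frakS_{\dR}^{+}=\frakS_{\log,\dR}^{+}$, so the cosimplicial ring map $\frakS_{\dR}^{\bullet,+}\to\frakS_{\log,\dR}^{\bullet,+}$ of Notation \ref{notagssimp} induces a base-change functor on $\mathrm{Strat}$-categories which leaves the underlying module unchanged and only transforms the stratification. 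Under the equivalences of Proposition \ref{Prop-dRStratification}, this base-change functor is the forgetful functor of Construction \ref{const_forget_functor}, because the site map there sends the Breuil--Kisin log prism and its \v{C}ech nerve $\frakS_{\log}^{\bullet}$ to the Breuil--Kisin prism together with the canonical map $\frakS^{\bullet}\to\frakS_{\log}^{\bullet}$; hence pulling a prismatic crystal back to $(\calO_K)_{\Prism,\log}$ and evaluating realizes precisely this base change of stratifications. Finally, under Theorem \ref{thm_strat_conn_axiom} the base-change functor is carried to the inclusion of $\MIC$-categories: by the corollary following Proposition \ref{Prop-structure}, $\frakS_{\dR}^{\bullet,+}\to\frakS_{\log,\dR}^{\bullet,+}$ sends the variable ``$X_i$'' (formed with $a=-E'(\pi)$) to ``$\pi X_i$'' (formed with $a=-\pi E'(\pi)$), so a stratification with $\varepsilon(x)=\sum_{n\ge0}\phi_n(x)X_1^{[n]}$ base-changes to $\sum_{n\ge0}\pi^n\phi_n(x)X_1^{[n]}$ in $\frakS_{\log,\dR,m}^{1,+}$; thus $\phi_1$ becomes $\pi\phi_1$, and the associated log-$A(u)$-connection becomes $(\pi\phi_1)/(-\pi E'(\pi))=\phi_1/(-E'(\pi))$, which is unchanged. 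Since the verticals are equivalences and the bottom functor is a fully faithful inclusion, the top functor is then fully faithful; tensor- and duality-compatibility of both horizontal functors is immediate.

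I do not expect a serious obstacle at this stage: the real work --- the explicit description of the cosimplicial rings $\frakS_{\ast,\dR}^{\bullet,+}$ and the solution of the recurrence \eqref{Equ-stratificationF} --- has already been carried out in \S\ref{sec: cosimp rings} and \S\ref{sec: stra and conn}, so the present theorem is essentially their assembly. The one point that needs genuine care is the penultimate step of the commutativity argument, namely matching the base-change functor between the two $\mathrm{Strat}$-categories with the forgetful functor of Construction \ref{const_forget_functor} at the level of crystals; once that identification is in place, everything else is formal.
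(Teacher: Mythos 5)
Your proposal is correct and takes essentially the same route as the paper, whose proof is a one-line pointer to Theorem \ref{thm_strat_conn_axiom} applied to the cosimplicial-ring description of Proposition \ref{Prop-structure} via Example \ref{example_verify_cosimpring}. Your explicit verification of the square's commutativity and of the nilpotency inclusion --- using that $\frakS_{\dR}^{\bullet,+}\to\frakS_{\log,\dR}^{\bullet,+}$ sends $X_i\mapsto\pi X_i$, hence $\phi_1\mapsto\pi\phi_1$ and $\nabla_M=\phi_1/a$ is unchanged, and that $\pi^n$ is bounded so $-E'(\pi)$-nilpotency implies $-\pi E'(\pi)$-nilpotency --- correctly fills in what the paper leaves implicit.
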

 \begin{proof} Apply Thm. \ref{thm_strat_conn_axiom} to the set-up in Prop. \ref{Prop-structure} (cf. Example \ref{example_verify_cosimpring}).
 Bi-exactness of these equivalences follow from similar argument as \cite[Prop. 2.6]{GMWHT}.
 \end{proof}


 \begin{rmk} \label{rembs2372}
 Thm. \ref{Thm-dRasLogconnection} gives   conceptual explanations of results in \cite[\S 7.3]{BS23}. Indeed, \cite[Cor. 7.17]{BS23} can be refined as the following composite:
\[ \vect(\okpris, \opris\langle \mathcal{I}_\prism/p \rangle[1/p]) \to \vect(\okpris, \bbdrplus) \simeq \mic_{E}^{-E'(\pi)}(\gs^+_{\dR}) \into \mic_E(\gs_\dR^+) \]
Here the first category is defined in \cite{BS23}, and the first arrow is induced by the obvious morphism of structure sheaves; the second arrow follows from Thm. \ref{Thm-dRasLogconnection}; the last category is exactly ``$\vect^{\nabla, \log}(\mathcal{S})$"   in \cite[Constr. 7.16]{BS23}.
\end{rmk}

In the following, we discuss standard properties of the equivalences in Thm. \ref{Thm-dRasLogconnection}.
In the following,
let $\ast \in \{ \emptyset, \log \}$,
let $a=-E'(\pi)$ if $\ast=\emptyset$, and $a=-\pi E'(\pi)$ if $\ast=\log$.
 Let $A(u)$ be a uniformizer of $\frakS_{\dR}^+$ and $1\leq m\leq \infty$.

\begin{remark}[Compatibility with reductions]
 Let $1\leq m<n\leq \infty$.
We have a commutative diamgram,
\begin{equation*}
\begin{tikzcd}
{\Vect(\okprisast,\bB_{\dR,n}^+)} \arrow[d, "\simeq"] \arrow[rr] &  & {\Vect(\okprisast,\bB_{\dR,m}^+)} \arrow[d, "\simeq"] \\
{\MIC_{ A(u)}^a(\frakS_{\dR,n}^+)} \arrow[rr]                      &  & {\MIC_{ A(u)}^a(\frakS_{\dR,m}^+)}
\end{tikzcd}
\end{equation*}
where both rows are the  obvious reduction morphisms.
\end{remark}

 \begin{rmk}[Compatibility with Hodge--Tate case]
When $m=1$, a $\mathbb{B}_{\dR, 1}^+$-crystal is precisely  a (rational) Hodge--Tate crystal treated in \cite{GMWHT}.
A log-$A(u)$-connection is simply a $K$-vector space  with a $K$-linear endomorphism.
In this degenerate case,  an $a$-nilpotent log-$A(u)$-connection $(M,\nabla_M)$ consists of a finite dimensional $K$-space $M$ together with a $K$-linear endomorphism $\nabla_M$ of $M$ such that
   \[\lim_{n\to+\infty}a^n\prod_{i=0}^{n-1}(\nabla_M-i) = 0.\]
   The equivalences in    Thm. \ref{Thm-dRasLogconnection} then recover the known results about rational Hodge--Tate crystals  proved in  \cite{GMWHT}.
 \end{rmk}


 \begin{exam}[Breuil--Kisin twists]
 \label{Exam-IdealSheaf}
   For any $n\in \bZ$, $\calI_{\Prism}^n\otimes_{\calO_{\Prism}}\BBdRp$ is a $\bbdrplus$-crystal on $(\calO_K)_{\Prism,*}$.
Its evaluation  on the Breuil--Kisin (log-) prism, denoted by $M_n$, is $\frakS_{\dR}^+\cdot A(u_0)^n$, i.e.,  the ideal of  $\frakS_{\dR}^+$ generated by $A(u_0)^n$.
   Using (\ref{Equ-Face-II}), the stratification $(\frakS_{\dR}^+\cdot A(u_0)^n,\varepsilon)$ is determined by
   \[\varepsilon(A(u_0)^n) = (1+aX_1)^nA(u_0)^n\equiv (1+naX_1)A(u_0)^n \mod X_1^{[\geq 2]}.\]
   Let $\nabla_{M_n}$ be the log-$A(u)$-connection on $\frakS_{\dR}^+\cdot A(u_0)^n$. Then we see that
   \[\nabla_{M_n} = n\cdot \id_{M_1}+A(u)\frac{d}{dA(u)}.\]
   More generally, for   $\bM \in \Vect(\okprisast,\bB_{\dR}^+)$ with  associated log-$A(u)$-connection $(M,\nabla_M)$ and for any $n\in\bZ$, the log-$A(u)$-connection associated to $\calI_{\Prism}^n\otimes_{\calO_{\Prism}}\bM$ is given by $(M,n\cdot\id_M+\nabla_M)$. By noting that for any $x\in M$ and $n\geq 0$,
   \[\nabla_M(A(u)^nx) = A(u)^n\nabla_M(x)+A(u)\frac{d}{dA(u)}(A(u)^n)x = A(u)^n(\nabla_M(x)+nx),\]
   we see that the associated log-$A(u)$-connection $\nabla_{M_n}$ of $\calI_{\Prism}^n\otimes_{\calO_{\Prism}}\bM$ is the restriction of that of $\bM$ via identifying $\calI_{\Prism}^n\otimes_{\calO_{\Prism}}\bM$ with $\calI^n_{\Prism}\bM$.
 \end{exam}


 \begin{exam}[Short exact sequences]\label{Exam-Reduction}
   For  $\bM \in \Vect(\okprisast,\bB_{\dR, n}^+)$, let $\calI_{\Prism}^m\bM\subset\bM$ denotes the image of the natural morphism
   \[\calI_{\Prism}^m\otimes_{\calO_{\Prism}}\bM\to\bM\]
   induced by the natural inclusion $\calI_{\Prism}^m\subset \calO_{\Prism}$. It is easy to see that $\calI_{\Prism}^m\bM \in \Vect(\okprisast,\bB_{\dR, n-m}^+)$ and is isomorphic to $\calI_{\Prism}^m\otimes_{\calO_{\Prism}}\overline \bM_{n-m}$, where for any $1\leq i\leq n$, $\overline \bM_i$ denotes the reduction of $\bM$ modulo $\calI^i_{\Prism}$.
   Thus we get an exact sequence of sheaves
   \begin{equation}\label{Equ-Reduction}
       \xymatrix@C=0.45cm{
         0\ar[r]& \calI_\prism^m\bM\ar[r]& \bM\ar[r]&\overline \bM_m\ar[r]& 0.
       }
   \end{equation}
  Let $(M,\nabla_M)$ be the log-$A(u)$-connection associated to $\bM$. Then by Example \ref{Exam-IdealSheaf}, via equivalences in Theorem \ref{Thm-dRasLogconnection}, we see that the isomorphism \[\calI_{\Prism}^m\bM\cong \calI^{m}_{\Prism}\otimes_{\calO_{\Prism}}\bM_{n-m}\]
   identifies the log-$A(u)$-connection associated to $\calI^m_{\Prism}\bM$ with $( M/A(u)^{n-m}M,m\cdot\id_{M_{n-m}}+\nabla_M)$ and that the exact sequence (\ref{Equ-Reduction}) induces
   an exact sequence
   \[\xymatrix@C=0.45cm{
     0\ar[r]& M/A(u)^{n-m}M \ar[rr]^{\qquad\times A(u)^m} && M \ar[r] & M/A(u)^m \ar[r] &0
   }\]
   of log-$A(u)$-connections.
 \end{exam}

\section{Cohomology of  crystals I: vs. Sen--Fontaine cohomology} \label{sec:compa dR coho}
  In this section, we   compare  (log-) prismatic cohomology of $\bbdrplus$-crystals, first with \v Cech-Alexander cohomology in Prop. \ref{propCechcompa}, then with Sen--Fontaine cohomology  of the  the corresponding log connections  in Theorem \ref{Thm-dRCohomology}.
The Sen--Fontaine cohomology will be further compared with Galois cohomology in \S \ref{sec_compa_Galois}.

Throughout this section, let  $1\leq m\leq \infty$.
Let $\ast \in \{ \emptyset, \log \}$.
Let $a=-E'(\pi)$ if $\ast=\emptyset$, and $a=-\pi E'(\pi)$ if $\ast=\log$.
Let $A(u)$ be a uniformizer of $\frakS_{\dR}^+$.

 \subsection{Comparison with \v Cech-Alexander cohomology}
  \begin{lem}\label{Lem-Cech-to-derived}
    Let $\bm \in \Vect(\okprisast, \bbdrplusm)$.
    For any $i\geq 1$, and for  any object $\frakA = (A,I)\in(\calO_K)_{\Prism}$ when $\ast=\emptyset$ (resp.  any object $\frakA = (A,I,M)\in \okprislog$ when $\ast=\log$)
    \[\rH^i(\frakA,\bM) = 0.\]
 \end{lem}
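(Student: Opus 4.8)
The plan is to reduce the assertion to the acyclicity of the structure sheaf on representable objects, which is already available, and then to propagate it through standard dévissage. Throughout, fix $\frakA$ and work on the localized site $(\calO_K)_{\Prism/\frakA}$ (resp. its log analogue), so that $\rH^i(\frakA,\bM)$ is the cohomology of the restriction of $\bM$ to that site.

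First I would reduce to the case $\bM=\bbdrplusm$. By the crystal property, for every object $\frakB$ of the localized site one has $\bM(\frakB)=\bM(\frakA)\otimes_{\bbdrplusm(\frakA)}\bbdrplusm(\frakB)$, so $\bM|_{(\calO_K)_{\Prism/\frakA}}\cong \bM(\frakA)\otimes_{\bbdrplusm(\frakA)}\bbdrplusm$ as sheaves of $\bbdrplusm$-modules. Since $\bM(\frakA)$ is finite projective over $\bbdrplusm(\frakA)$, it is a direct summand of a finite free module; hence $\bM|_{(\calO_K)_{\Prism/\frakA}}$ is a direct summand of $\bbdrplusm^{\oplus N}$ and $\rH^i(\frakA,\bM)$ is a direct summand of $\rH^i(\frakA,\bbdrplusm)^{\oplus N}$. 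The same projection-formula argument shows that any Breuil--Kisin twist $\calI_{\Prism}^{n}\otimes_{\calO_{\Prism}}\calF$ of a crystal $\calF$ has the same higher cohomology as $\calF$, which I use below.

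Next I would run a dévissage in $m$. For $m<+\infty$, the short exact sequence of sheaves $0\to(\calI_{\Prism}^{m-1}/\calI_{\Prism}^{m})[\tfrac1p]\to\bbdrplusm\to\mathbb{B}^{+}_{\dR,m-1}\to 0$ from Example \ref{Exam-Reduction} has as its left-hand term an invertible $\overline\calO_{\Prism}[\tfrac1p]$-module, i.e.\ a Breuil--Kisin-twisted rational Hodge--Tate crystal; by the previous paragraph its higher cohomology vanishes as soon as that of $\overline\calO_{\Prism}[\tfrac1p]=\mathbb{B}^{+}_{\dR,1}$ does, while the right-hand term is covered by induction, so the long exact sequence gives the claim. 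For $m=+\infty$ I would use $\bbdrplus=\Rlim_m\bbdrplusm$ (the transition maps of $\{\bbdrplusm\}$ being degreewise surjective), combined with the fact that $\{\rH^0(\frakA,\bbdrplusm)\}_m$ also has surjective transition maps, so that no $\mathrm{R}^1\!\lim$ term appears and the finite-level vanishing propagates. This leaves only the base case $\rH^i(\frakA,\overline\calO_{\Prism}[\tfrac1p])=0$ for $i\geq 1$, which I would obtain as follows: $\calO_{\Prism}$ is acyclic on every object of $(\calO_K)_{\Prism}$ resp.\ $(\calO_K)_{\Prism,\log}$ (the augmented \v Cech--Alexander complex of $\calO_{\Prism}$ attached to any prismatic cover $\frakB\to\frakA$ is exact by $(p,\calI)$-completely faithfully flat descent, cf.\ \cite{BS19,BS21} resp.\ \cite{MW22log}); $\calI_{\Prism}$ being a rank-one crystal, $\rH^{i}(\frakA,\calI_{\Prism})=0$ for $i\geq 1$ by the first paragraph, so $0\to\calI_{\Prism}\to\calO_{\Prism}\to\overline\calO_{\Prism}\to0$ gives $\rH^{i}(\frakA,\overline\calO_{\Prism})=0$ for $i\geq1$; finally $\rH^i(\frakA,-)$ commutes with the filtered colimit $\overline\calO_{\Prism}[\tfrac1p]=\colim(\overline\calO_{\Prism}\xrightarrow{p}\overline\calO_{\Prism}\xrightarrow{p}\cdots)$, whence $\rH^i(\frakA,\overline\calO_{\Prism}[\tfrac1p])=\rH^i(\frakA,\overline\calO_{\Prism})[\tfrac1p]=0$ for $i\geq1$.

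The main difficulty will be the input used in the base case, namely the exactness of the augmented \v Cech--Alexander complex of $\calO_{\Prism}$ on the localized site: this is classical in the non-log setting \cite{BS19,BS21}, but in the log-prismatic setting it has to be extracted from the descent statements of \cite{MW22log}. One must also be a little careful in the $\Rlim$ step for $m=+\infty$ to check that the relevant inverse systems are Mittag--Leffler (indeed surjective), so that passing to the limit introduces no spurious cohomology; everything else is formal, given Example \ref{Exam-Reduction} and the crystal property.
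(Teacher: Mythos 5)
Your argument is correct in outline and reaches the same conclusion, but by a genuinely more roundabout route than the paper's. The paper's proof dévissages to $m=1$ and then, for an arbitrary cover $\frakA\to\frakB$ with \v Cech nerve $\frakB^\bullet$, shows directly that $\Tot(\bM(\frakB^{\bullet}))\simeq\bM(\frakA)$ in one stroke: the crystal property factors $\bM(\frakA)$ out of the \v Cech complex as a finite projective (hence flat) $(A/I)[\tfrac1p]$-module that commutes with totalization, and then $\Tot(B^\bullet/IB^\bullet)\simeq A/I$ by $(p,\calI)$-complete faithfully flat descent, with $[\tfrac1p]$ exact. You instead first peel $\bM$ off as a direct summand of a free $\bbdrplusm$-sheaf, then strip the filtration down to $\overline\calO_\Prism[\tfrac1p]$, then pass to $\calO_\Prism$ via the short exact sequence $0\to\calI_\Prism\to\calO_\Prism\to\overline\calO_\Prism\to 0$ and invert $p$ by a filtered colimit. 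This buys you the comfort of quoting acyclicity of $\calO_\Prism$ as a black box, but costs a filtered-colimit commutation step that is not free: you need either that the (log-) prismatic site over $\frakA$ has the relevant finiteness/coherence so that $\rH^i(\frakA,-)$ commutes with filtered colimits, or, more economically, to simply re-run the \v Cech/Cartan argument for $\overline\calO_\Prism[\tfrac1p]$ directly, using exactness of $[\tfrac1p]$ on the already-acyclic \v Cech complexes of $\overline\calO_\Prism$ --- which is in effect what the paper does at one blow. Two further points of comparison: (i) both proofs ultimately invoke Cartan's criterion (vanishing of \v Cech cohomology for all covers implies acyclicity), the paper silently and you explicitly via the ``exact augmented \v Cech--Alexander complex'' input; and (ii) your careful treatment of the $m=+\infty$ case via $\Rlim$ and degreewise surjectivity is more explicit than the paper's terse ``standard d\'evissage,'' which does gloss over the need to pass from finite levels to the inverse limit.
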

  \begin{proof}
By standard d\'evissage (using short exact sequences in   Example \ref{Exam-Reduction}), it suffices to treat the  $m=1$ case. Then the result follows from the similar arguments in the proof of \cite[Lem. 3.12]{Tia23}.
 \end{proof}



  \begin{prop}\label{propCechcompa}
   Let $\bm \in \Vect(\okprisast, \bbdrplusm)$. Then there is a quasi-isomorphism
   \[\RGamma(\okprisast,\bM)\cong \Tot(\bM(\frakS_\ast^{\bullet}))\]
  \end{prop}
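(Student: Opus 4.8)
The plan is to deduce this from the general machinery of Čech–Alexander complexes computing prismatic cohomology, combined with the vanishing statement of Lemma~\ref{Lem-Cech-to-derived} and the fact that the Breuil--Kisin (log-)prism covers the final object (Lemma~\ref{Lem-BKCover}). Concretely, for a sheaf $\bM$ on a site with a covering object $\frakS_\ast$ of the final object, one always has a Čech-to-derived spectral sequence $\check{H}^i(\frakS_\ast^\bullet, \underline{H}^j(\bM)) \Rightarrow H^{i+j}(\okprisast, \bM)$, where $\underline{H}^j$ denotes the presheaf $\frakA \mapsto H^j(\frakA, \bM)$. The point is that Lemma~\ref{Lem-Cech-to-derived} kills all the higher cohomology groups $H^j(\frakA, \bM)$ for $j \geq 1$ on each object $\frakA$ appearing in the Čech nerve (in particular on each $\frakS_\ast^n$), so the spectral sequence degenerates and one is left with the cohomology of the complex $\Tot(\bM(\frakS_\ast^\bullet))$ — i.e. the Čech complex of the covering $\frakS_\ast$ with coefficients in the sheaf $\bM$ itself.

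First I would recall that, since $\frakS_\ast$ is a cover of the final object of $\Sh(\okprisast)$ (Lemma~\ref{Lem-BKCover}), we have $\RGamma(\okprisast, \bM) \simeq \RGamma\bigl(\text{\v Cech nerve of } \frakS_\ast, \bM\bigr)$ in the sense that $\RGamma(\okprisast,\bM)$ is computed by the homotopy limit over the cosimplicial object $\frakS_\ast^\bullet$ of the complexes $\RGamma(\frakS_\ast^n, \bM)$; this is the standard descent statement for sheaf cohomology along a cover, see e.g. the discussion around \cite[\S 7]{BS21} or the analogue for the log site in \cite{MW22log}. Then I would invoke Lemma~\ref{Lem-Cech-to-derived}: for each $n \geq 0$, the object $\frakS_\ast^n$ (which is again an object of the relevant site, being a finite coproduct/self-product of $\frakS_\ast$ in the category of (log-)prisms) satisfies $H^i(\frakS_\ast^n, \bM) = 0$ for all $i \geq 1$. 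Hence $\RGamma(\frakS_\ast^n, \bM) \simeq \bM(\frakS_\ast^n)$ concentrated in degree $0$, and the homotopy limit over $\frakS_\ast^\bullet$ reduces to the totalization $\Tot(\bM(\frakS_\ast^\bullet))$ of the cosimplicial abelian group $n \mapsto \bM(\frakS_\ast^n)$. This gives the desired quasi-isomorphism.

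The one genuine point requiring care — and the step I expect to be the main obstacle — is verifying that $\frakS_\ast^n$, i.e. the $(n+1)$-fold self-product of $\frakS_\ast$ in $\okprisast$, is still an \emph{object} of the site on which Lemma~\ref{Lem-Cech-to-derived} applies, and that the Čech nerve of $\frakS_\ast$ is indeed the cosimplicial ring $\frakS_\ast^\bullet$ of Notation~\ref{notagssimp}. In the prismatic setting this is \cite[Cor.~2.6]{BS21} (the self-products exist and are given by the explicit formula~\eqref{Equ-CechBK}, with $\frakS^n_{\dR,m}$ obtained by evaluating $\bbdrplusm$), and in the log setting it is \cite[Lem.~3.5]{MW22log} (formula~\eqref{Equ-CechBKlog}); in both cases these products are computed in the category of (bounded) (log-)prisms, so the hypotheses of Lemma~\ref{Lem-Cech-to-derived} are met with $\frakA = \frakS_\ast^n$. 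Granting this, the argument is purely formal. I would therefore structure the proof as: (1) invoke Lemma~\ref{Lem-BKCover} to reduce $\RGamma(\okprisast,\bM)$ to the homotopy limit over the Čech nerve of $\frakS_\ast$; (2) identify this Čech nerve with $\frakS_\ast^\bullet$ via \cite[Cor.~2.6]{BS21} resp. \cite[Lem.~3.5]{MW22log}; (3) apply Lemma~\ref{Lem-Cech-to-derived} to each $\frakS_\ast^n$ to collapse $\RGamma(\frakS_\ast^n,\bM)$ to $\bM(\frakS_\ast^n)$ in degree zero; (4) conclude that the homotopy limit is $\Tot(\bM(\frakS_\ast^\bullet))$.
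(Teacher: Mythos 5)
Your proposal is correct and follows the same route as the paper: the paper's proof is the one-liner ``This follows from Lemma~\ref{Lem-Cech-to-derived} together with \v Cech-to-derived spectral sequence,'' and you have simply unpacked that, correctly identifying the role of Lemma~\ref{Lem-BKCover} (which the paper already invoked in Notation~\ref{notagssimp} to build $\frakS_\ast^\bullet$ as the \v Cech nerve) and the collapse of the spectral sequence via Lemma~\ref{Lem-Cech-to-derived}.
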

\begin{proof}
 This follows from   Lemma \ref{Lem-Cech-to-derived} together with \v Cech-to-derived spectral sequence.
\end{proof}

  \begin{cor}\label{Cor-CohoDim}
  Let $\bm \in \Vect(\okprisast, \bbdrplusm)$,
  then the   cohomology
  \[ \RGamma(\okprisast,\bM) \]
   is concentrated in degree $[0,1]$.
  \end{cor}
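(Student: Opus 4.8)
The plan is to deduce this from the \v Cech--Alexander description of Proposition \ref{propCechcompa}: it suffices to show that $\Tot(\bM(\frakS_\ast^{\bullet}))$ is concentrated in cohomological degrees $[0,1]$, i.e. that $\RGamma(\okprisast,\bM)$ vanishes in degrees $\geq 2$. I would first treat the case $m<\infty$ by d\'evissage. For $2\leq m<\infty$ and $\bM\in\Vect(\okprisast,\bB_{\dR,m}^+)$, Example \ref{Exam-Reduction} supplies a short exact sequence of sheaves $0\to\calI_{\Prism}^{m-1}\bM\to\bM\to\overline{\bM}_{m-1}\to 0$ on $\okprisast$, where $\overline{\bM}_{m-1}\in\Vect(\okprisast,\bB_{\dR,m-1}^+)$ and, by Example \ref{Exam-IdealSheaf}, $\calI_{\Prism}^{m-1}\bM$ is an invertible Breuil--Kisin twist of the rational Hodge--Tate crystal $\overline{\bM}_1$, hence lies in $\Vect(\okprisast,\bB_{\dR,1}^+)$. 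Thus, by induction on $m$, the finite case reduces to $m=1$.

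For $m=1$, $\bM$ is a rational Hodge--Tate crystal on $\okprisast$, and by \cite{MW21} (for $\ast=\emptyset$) resp. \cite{MW22log} (for $\ast=\log$) the complex $\RGamma(\okprisast,\bM)$ is concentrated in $[0,1]$ with cohomology finite-dimensional over $K$; concretely, under the $m=1$ case of Theorem \ref{Thm-dRasLogconnection} it is computed by the two-term de Rham complex $[M\xrightarrow{\nabla_M}M]$ of the associated log connection, with $M$ a finite-dimensional $K$-vector space --- a one-variable (crystalline) Poincar\'e-lemma computation on the cosimplicial ring $\frakS_{\ast,\dR,1}^{\bullet,+}$, whose $X_i$-direction is a single divided-power variable over $K$. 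Feeding this into the long exact cohomology sequences of the d\'evissage sequences above, I conclude that for every $m<\infty$ and every $\bM\in\Vect(\okprisast,\bB_{\dR,m}^+)$ the complex $\RGamma(\okprisast,\bM)$ is concentrated in $[0,1]$ and has finite-dimensional cohomology over $K$.

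It remains to pass to $m=\infty$. Given $\bM\in\Vect(\okprisast,\bbdrplus)$, its reductions $\overline{\bM}_m\in\Vect(\okprisast,\bB_{\dR,m}^+)$ satisfy $\bM\simeq R\varprojlim_m\overline{\bM}_m$ as sheaves (on each object the transition maps are surjective, so $R^1\varprojlim$ vanishes objectwise and $\bM(\frakA)$ is $\calI_{\Prism}$-adically complete), whence $\RGamma(\okprisast,\bM)\simeq R\varprojlim_m\RGamma(\okprisast,\overline{\bM}_m)$. Each $\RGamma(\okprisast,\overline{\bM}_m)$ is concentrated in $[0,1]$ by the previous paragraph, so via the exact sequences relating the cohomology of a derived limit to $\varprojlim$ and $R^1\varprojlim$ of the cohomologies, the only possible contribution in degree $2$ is $R^1\varprojlim_m H^1(\RGamma(\okprisast,\overline{\bM}_m))$; but $\{H^1(\RGamma(\okprisast,\overline{\bM}_m))\}_m$ is a tower of finite-dimensional $K$-vector spaces, hence Mittag--Leffler, so this term is zero and $\RGamma(\okprisast,\bM)$ is concentrated in $[0,1]$. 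The main obstacle is precisely this last step: a naive derived inverse limit would only yield amplitude $[0,2]$, and it is the finite-dimensionality of the $H^1$ at finite levels --- which itself rests on the $m=1$ analysis of \cite{MW21,MW22log} --- that is needed to kill the offending $R^1\varprojlim$; the rest of the argument is formal d\'evissage.
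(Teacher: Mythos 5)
Your argument is correct and, for finite $m$, coincides with the paper's: reduce by d\'evissage via Example \ref{Exam-Reduction} to the $m=1$ Hodge--Tate case, which is \cite[Thm. 3.20]{MW21} resp.\ \cite[Thm. 3.10]{MW22log}. Where you go beyond the paper's terse ``by similar arguments in the proof of Lemma \ref{Lem-Cech-to-derived}'' is in the explicit treatment of $m=+\infty$: the d\'evissage sequence $0 \to \calI_{\Prism}^{m}\bM \to \bM \to \overline{\bM}_m \to 0$ has first term still a full $\bbdrplus$-crystal when $\bM$ is, so the induction alone never bottoms out, and your derived-inverse-limit step together with the Mittag--Leffler observation (the $H^1(\RGamma(\okprisast,\overline{\bM}_m))$ are finite-dimensional over $K$, hence the tower satisfies Mittag--Leffler and $R^1\varprojlim H^1 = 0$) is exactly what is needed to close that gap. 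That finite-dimensionality is itself a consequence of the finite-$m$ d\'evissage together with the $m=1$ computation, so there is no circularity. In short, same route as the paper at finite level, plus an honest passage to the limit that the paper elides.
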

  \begin{proof}
    By standard d\'evissage, we are reduced to the case for $m=1$, and then \cite[Thm. 4.5]{GMWHT} applies.
     \end{proof}

\subsection{Comparison with de Sen--Fontaine cohomology}
\begin{notation} \label{notaSFcoho}
    Let $\bM \in \Vect((\calO_K)_{\Prism,\ast},\bB_{\dR,m}^+)$.
Let $(M,\varepsilon)$ and $(M,\nabla_M)$ be the stratification  and log-$A(u)$-connection associated to $\bM$, constructed using a choice of $\pi$ and hence the Breuil--Kisin prism $(\gs, (E), \ast).$
We call the complex
\[ [M \xrightarrow{a\nabla_M} M] \]
the Sen--Fontaine complex associated to $\bM$ (with respect to $\pi$); this terminology comes from the close relation between $\nabla_M$ and Sen--Fontaine theory, cf. \S \ref{sec_compa_Galois}.
Note the ``$a$" in the $a\nabla_M$ operator makes our theory compatible with that in Hodge--Tate crystals.
\end{notation}

  \begin{construction}\label{construction:rho}
   Use Notation \ref{notaSFcoho}, we now relate Sen--Fontaine complex with (log-) prismatic cohomology.   
  We denote the complex induced from $\Tot(\bM(\frakS_{\ast}^{\bullet}))$  by
  \[M^{\bullet}:=[M^0\xrightarrow{d^0}M^1\xrightarrow{d^1}M^2\to\cdots].\]
  For any $n\geq 0$, we denote by $q_0 $ the structure morphism induced by the map $\{0\}\to\{0,\dots,n\}$ with the target $0$.
  Using $q_0$, we obtain a canonical isomorphism
  \[M^n:=\bM(\frakS_{\ast}^n,(E))\cong M^0\otimes_{\frakS_{\dR,m}^+,q_0}\frakS_{\ast,\dR,m}^{+, n}.\]
    Using Proposition \ref{prop: strat iff}, we get that for any $x\in M^0 = M$
  \begin{equation}\label{Equ-d0}
          d^0(x)=\varepsilon(x)-x = \sum_{n\geq 1}a^n \left( \prod_{i=0}^{n-1}(\nabla_M-i)(x)\right) X_1^{[n]} = H(\nabla_M,X_1)(a\nabla_M(x)),
  \end{equation}
  where
  \begin{equation}\label{Equ-H}
      H(Y,X) = \frac{(1+aX)^{Y}-1}{X}:=\sum_{n\geq 1}a^{n-1}\left(\prod_{i=1}^{n-1}(Y-i)\right) X^{[n]}
  \end{equation}
  and then $H(\nabla_M,X_1)$ is well-defined by $a$-nilpotency of $(M,\nabla_M)$. In other words,
  \[d^0 = H(\nabla_M,X_1)\circ(a\nabla_M).\]
  Therefore, the following diagram
\begin{equation}\label{Diag-MapofComplex}
\begin{tikzcd}
M \arrow[rr, "a\nabla_M"] \arrow[dd, "="] &  & M \arrow[dd, "{H(\nabla_M,X_1)}"] &        \\
                                           &  &                                   &        \\
M_0 \arrow[rr, "d^0"]                      &  & M^1 \arrow[r, "d^1"]              & \cdots
\end{tikzcd}
\end{equation}
  commutes and induces a morphism of complexes
  \begin{equation}\label{Equ-rho}
      \rho_{\bM}:[M\xrightarrow{a\nabla_M}M]\to \Tot(\bM(\frakS_{*}^{\bullet})).
  \end{equation}
  By construction, the morphism $\rho_{\bM}$ is functorial in $\bM$. 
    \end{construction}


  \begin{prop}\label{Prop-rho}
    The morphism $\rho_{\bM}$ above is a $K$-linear quasi-isomorphism.
  \end{prop}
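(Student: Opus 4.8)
Since the source complex $[M\xrightarrow{a\nabla_M}M]$ is concentrated in degrees $[0,1]$ and, by Corollary \ref{Cor-CohoDim} together with Proposition \ref{propCechcompa}, so is $\Tot(\bM(\frakS_\ast^{\bullet}))\simeq\RGamma(\okprisast,\bM)$, the plan is to show that $\rho_{\bM}$ induces isomorphisms on $H^0$ and $H^1$. The $H^0$-statement I would settle directly, uniformly in $1\le m\le\infty$: by the expansion \eqref{Equ-H} the operator $H(\nabla_M,X_1)$ sends $v\in M$ to an element of $M^1=M\otimes_{\frakS_{\dR}^+}\frakS_{\ast,\dR,m}^{1,+}$ whose coefficient against the topological basis element $X_1^{[1]}$ equals $v$, so $H(\nabla_M,X_1)$ is injective; combined with $d^0=H(\nabla_M,X_1)\circ(a\nabla_M)$ from \eqref{Equ-d0} and the fact that $a\in K^\times$ is a unit of $\frakS_{\dR}^+$, this gives $\ker d^0=\ker(a\nabla_M)$, so the degree-$0$ component $\id_M$ of $\rho_{\bM}$ is an isomorphism on $H^0$.

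The substance is $H^1$, which I would handle by dévissage down to the Hodge--Tate case $m=1$. Assume $2\le m<\infty$ and the statement for all smaller finite levels and for $m=1$. The short exact sequence $0\to\calI_{\Prism}\bM\to\bM\to\overline\bM_1\to 0$ of Example \ref{Exam-Reduction}, in which $\calI_{\Prism}\bM$ has level $m-1$ and $\overline\bM_1$ has level $1$, is carried by evaluation at the Breuil--Kisin (log) prism and Theorem \ref{Thm-dRasLogconnection} (using again Example \ref{Exam-Reduction}) to the short exact sequence of log-$A(u)$-connections
\[
0\longrightarrow (M/A(u)^{m-1}M,\ \id+\nabla_M)\xrightarrow{\ \times A(u)\ }(M,\nabla_M)\longrightarrow (M/A(u)M,\ \nabla_M)\longrightarrow 0,
\]
and applying $[\,\cdot\xrightarrow{a\nabla}\cdot\,]$ in each degree yields a short exact sequence of two-term complexes, while $\RGamma(\okprisast,-)\simeq\Tot((-)(\frakS_\ast^{\bullet}))$ turns the original sequence of sheaves into an exact triangle. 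Since $\rho$ is functorial in $\bM$, these assemble into a morphism of exact triangles; comparing the associated six-term long exact cohomology sequences and invoking the five lemma at the $H^1(\bM)$-spot — using the $H^0$-isomorphism already proved together with the inductive hypothesis on $\calI_{\Prism}\bM$ and $\overline\bM_1$ — gives the $H^1$-isomorphism for $\bM$. This reduces everything to $m=1$, where $(M,\nabla_M)$ is merely a finite-dimensional $K$-space with an $a$-nilpotent endomorphism, and the needed identification of $\Tot(\bM(\frakS_\ast^{\bullet}))$ with $[M\xrightarrow{a\nabla_M}M]$ via $\rho_{\bM}$ is, in essence, the computation of Hodge--Tate (log-)prismatic cohomology in \cite{MW21, MW22log}. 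If one wants a self-contained argument here, I would use Lemma \ref{Lem-reduced structure} to identify $\bM(\frakS_\ast^{\bullet})$ with $M$ tensored over $K$ with the $p$-completed pd-de Rham complex of $\calO_K\{X_1,X_2,\dots\}$, exhibit a contracting homotopy on its terms in cosimplicial degree $\ge 2$, and read off from \eqref{Equ-d0} that the surviving two-term complex $[M^0\xrightarrow{d^0}M^1]$ agrees with $[M\xrightarrow{a\nabla_M}M]$ under $\rho_{\bM}$.

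Finally, for $m=\infty$ I would pass to the limit: writing $\bM=\varprojlim_m\overline\bM_m$ and using that $M$ is finite free over $\frakS_{\dR}^+=K[[A(u)]]$ while $\frakS_{\ast,\dR}^{n,+}=\varprojlim_m\frakS_{\ast,\dR,m}^{n,+}$ with surjective transition maps, both $[M\xrightarrow{a\nabla_M}M]$ and $\Tot(\bM(\frakS_\ast^{\bullet}))$ are the inverse limits (derived, hence ordinary by Mittag--Leffler) over $m$ of the level-$m$ complexes, compatibly with $\rho$; since each $\rho_{\overline\bM_m}$ is a quasi-isomorphism by the finite case, so is $\rho_{\bM}$.

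I expect the formal parts — the dévissage and the passage to the limit — to be routine; the real content, and the place where care is needed, is the base case $m=1$: either one extracts from \cite{MW21, MW22log} that their Hodge--Tate cohomology comparison is implemented precisely by our $\rho_{\bM}$, or, proving it from scratch, one must produce the contracting homotopy on the higher cosimplicial degrees of the pd-de Rham complex and check the degree-$\le 1$ identification explicitly.
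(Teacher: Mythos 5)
Your argument is correct and is, in substance, the same as the paper's: reduce by d\'evissage along the short exact sequences of Example \ref{Exam-Reduction} to the Hodge--Tate case $m=1$, which is then quoted from \cite[Thm.\ 3.20]{MW21} and \cite[Thm.\ 3.10]{MW22log}. The paper merely phrases the d\'evissage as the assertion that the cone of $\rho_{\bM}$ is acyclic and leaves the $m=\infty$ limit implicit in ``standard d\'evissage,'' whereas you run the long exact sequence with the five lemma (after noting the easy direct $H^0$-isomorphism) and spell out the Mittag--Leffler passage to $m=\infty$; this is more explicit but not a different route.
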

  \begin{proof}
    Let $C(\rho_{\bM})$ be the cone of $\rho_{\bM}$. It suffices to show $C(\rho_{\bM})\simeq 0$.
    By standard d\'evissage (using short exact sequences in   Example \ref{Exam-Reduction}), it suffices to treat the  $m=1$ case. Compare \cite[Const. 3.8]{GMWHT} with Construction \ref{construction:rho}, and then one can conclude by using \cite[Cor. 3.10]{GMWHT}.
 \end{proof}

    \begin{thm}\label{Thm-dRCohomology}
     Use Notation \ref{notaSFcoho}. There exists a $K$-linear quasi-isomorphism
    \[ [ M \xrightarrow{a\nabla_M} M]  \simeq \RGamma(\okprisast,\bM),  \]
    which is functorial in $\bM$.
  \end{thm}
  \begin{proof}
Combine  Propositions \ref{propCechcompa} and \ref{Prop-rho}.
  \end{proof}

  \begin{cor}\label{Cor-UsualvsLog}
  Let $\bM\in \Vect((\calO_K)_{\Prism},\bB_{\dR,m}^+)$, which can be regarded as an object in $\Vect(\okprislog,\bB_{\dR,m}^+)$ (cf. Theorem \ref{Thm-dRasLogconnection}). Then there exists a quasi-isomorphism
     \[\RGamma((\calO_K)_{\Prism},\bM) \simeq\RGamma((\calO_K)_{\Prism,\log},\bM) \]
     which is functorial in $\bM$.
  \end{cor}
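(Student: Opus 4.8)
The plan is to express both sides through the de Rham complex of one and the same log-$A(u)$-connection. Fix a uniformizer $A(u)$ of $\frakS_{\dR}^+$ and let $(M,\nabla_M)$ be the log-$A(u)$-connection attached to $\bM$ by the left-hand equivalence of Theorem \ref{Thm-dRasLogconnection}, i.e.\ by evaluation at the Breuil--Kisin prism; thus $(M,\nabla_M)\in\MIC_{A(u)}^{-E'(\pi)}(\frakS_{\dR,m}^+)$. The commutativity of the square in Theorem \ref{Thm-dRasLogconnection} identifies the image of $\bM$ under the forgetful functor of Construction \ref{const_forget_functor} with the image of $(M,\nabla_M)$ under the bottom inclusion $\MIC_{A(u)}^{-E'(\pi)}(\frakS_{\dR,m}^+)\hookrightarrow\MIC_{A(u)}^{-\pi E'(\pi)}(\frakS_{\dR,m}^+)$. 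I would then apply Theorem \ref{Thm-dRCohomology} twice, once with $\ast=\emptyset$ for $\bM$ itself and once with $\ast=\log$ for its image, and compose the resulting functorial quasi-isomorphisms.

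The only point that needs to be spelled out is that this bottom inclusion leaves the underlying pair $(M,\nabla_M)$ unchanged, so that the de Rham complex $\mathrm{DR}(M,\nabla_M)=[M\xrightarrow{\nabla_M}M]$ is literally the same in both applications of Theorem \ref{Thm-dRCohomology}. This is a short bookkeeping check: under the cosimplicial map $\frakS_{\dR}^{\bullet,+}\to\frakS_{\log,\dR}^{\bullet,+}$ (which sends $u_i$ to $u_i$, hence $A(u_i)$ to $A(u_i)$) the stratification $\varepsilon$ of $\bM$ pulls back unchanged, whereas the distinguished element ``$X_1$'' used to extract the connection is $\frac{A(u_0)-A(u_1)}{-\pi E'(\pi)A(u_0)}=\pi^{-1}\cdot\frac{A(u_0)-A(u_1)}{-E'(\pi)A(u_0)}$ on the log side, i.e.\ $\pi^{-1}$ times the one used on the prismatic side. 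Comparing the degree-one coefficients in $\varepsilon(x)=\sum_n\phi_n(x)X_1^{[n]}$ then shows that the log-side $\phi_1$ equals $\pi$ times the prismatic-side $\phi_1$, and since $\phi_1=a\nabla_M$ while $a$ changes from $-E'(\pi)$ to $-\pi E'(\pi)$, the two factors cancel and $\nabla_M$ is the same operator; this is also why ``the obvious inclusion'' in Theorem \ref{Thm-dRasLogconnection} is literally an inclusion of a full subcategory, noting in addition that $-E'(\pi)$-nilpotency implies $-\pi E'(\pi)$-nilpotency because $\pi^n\to 0$.

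With this in hand the proof is immediate: Theorem \ref{Thm-dRCohomology} gives functorial quasi-isomorphisms $\RGamma((\calO_K)_{\Prism},\bM)\simeq\mathrm{DR}(M,\nabla_M)\simeq\RGamma((\calO_K)_{\Prism,\log},\bM)$, and composing them yields the assertion, functoriality being inherited from that of the maps $\rho_{\bM}$ (equivalently, of the comparison in Theorem \ref{Thm-dRCohomology}) together with functoriality of the forgetful functor and of the equivalences of Theorem \ref{Thm-dRasLogconnection}. There is no serious obstacle here, as the substantial inputs are already proved; the only thing to get right is the compatibility bookkeeping of the previous paragraph. If one prefers to bypass even that, an alternative is to observe via Proposition \ref{propCechcompa} that both sides compute $\Tot(\bM(\frakS_{\ast}^{\bullet}))$ and that the natural map $\frakS^{\bullet}\to\frakS_{\log}^{\bullet}$ induces $\Tot(\bM(\frakS^{\bullet}))\to\Tot(\bM(\frakS_{\log}^{\bullet}))$, which is a quasi-isomorphism by the same dévissage as in Lemma \ref{Lem-Cech-to-derived} reducing to $m=1$ together with the Hodge--Tate results of \cite{MW21,MW22log}; I would present the first route as it is shorter given the machinery already developed.
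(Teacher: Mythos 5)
Your proof is correct and follows the paper's route exactly: both apply Theorem~\ref{Thm-dRCohomology} twice to the single log-$A(u)$-connection attached to $\bM$, the substance being that this connection is literally unchanged on passing to the log-prismatic side. The paper asserts this without comment (``\ldots and hence also to $\bM$ as a log-prismatic crystal''); your explicit check that the factor $\pi$ in $X_1$ cancels against the factor $\pi$ in the scalar $a$ (so that $\nabla_M=\phi_1/a$ is preserved), together with the observation that $-E'(\pi)$-nilpotency implies $-\pi E'(\pi)$-nilpotency, is precisely the verification of that implicit step.
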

  \begin{proof}
  Let $(M,\nabla_M)$ be the associated  log-$A(u)$-connection to $\bm$ \emph{as a prismatic crystal}, and hence also to $\bm$ \emph{as a log-prismatic crystal}. Thus by Thm. \ref{Thm-dRCohomology}, we have quasi-isomorphisms
  \[\RGamma((\calO_K)_{\Prism},\bM)\simeq 
  [ M \xrightarrow{-E'(\pi)\nabla_M} M]
 \simeq   [ M \xrightarrow{-\pi E'(\pi)\nabla_M} M]  \simeq\RGamma((\calO_K)_{\Prism,\log},\bM);\]
 where the second quasi-isomorphism follows from the fact that $\pi$ is invertible in $\gs^+_\dR$.
\end{proof}

\section{$\bbdrplus$-crystals and $\BdRp$-representations} \label{sec: perfect crystal and rep}

In this  section, we study relation between $\bbdrplus$-crystals and Galois representations.
We show that the category of $\bbdrplus$-crystals on the \emph{perfect} (log-) prismatic site is equivalent to the category of $\bdrplus$-representations of $G_K$. When the crystal (on the perfect site) comes from a crystal on the full (log-) prismatic site, we use the explicit description of its associated log connection to give an explicit description of the associated $\bdrplus$-representation.

 \subsection{Crystals on the perfect  site}

\begin{notation}\label{notaainflog}
Recall in Notation \ref{notafields}, we defined a compatible sequence of $p^n$-th root of unity $\mu_n$; the sequence $(1, \mu_1, \cdots, \mu_n, \cdots)$ defines an element $\epsilon \in \ocflat$. Let $[\epsilon] \in \ainf$ be its Teichm\"uller lift. Define $\xi := \frac{[\epsilon]-1}{[\epsilon]^{\frac{1}{p}}-1}$, which is a generator of the kernel of $\theta: \ainf \to \oc$. 
Then we have the Fontaine prism $(\ainf, (\xi))$.
 There is a natural morphism of   prisms
  $$\iota: (\frakS,(E(u)) )\xrightarrow{u\mapsto [\pi^{\flat}]}(\Ainf,(\xi) ).$$ 
  Equip $\Ainf$ with the log structure $M_{\Ainf}$ induced by $\bN\xrightarrow{1\mapsto[\pi^{\flat}]}\Ainf$. Then $(\Ainf,(\xi),M_{\Ainf})$ is a log prism in $(\calO_K)_{\Prism,\log}$
  , which is referred as Fontaine log prism.
  There is a natural morphism of log prisms
  $$\iota: (\frakS,(E(u)),M_{\frakS})\xrightarrow{u\mapsto [\pi^{\flat}]}(\Ainf,(\xi),M_{\Ainf}).$$ 
 \end{notation}

\begin{notation}\label{notabdrplus}
Let $\bdrplus$ and $\bdr$ be the usual Fontaine's de Rham period ring. Recall
 $\bdrplus= \projlim_m \ainf[1/p]/(\xi)^m$ as an algebraic ring. Equip $\ainf/(\xi)^m$ with  the induced   $p$-adic topology from $\ainf$, and consider $\ainf[1/p]/(\xi)^m=(\ainf/(\xi)^m)[1/p]$ as a $\qp$-Banach space.
Taking inverse limit, we can regard $\bdrplus$ as a Fr\'echet space.
 For $1\leq m\leq \infty$, denote $\bdrplusm:=\BdRp/\xi^m\BdRp$. Clearly we have
 \[ \bdrplusm  =\bB_{\dR,m}^+(\Ainf,(\xi),M_{\Ainf}) =\bB_{\dR,m}^+(\Ainf,(\xi)) \]
\end{notation}


  \begin{thm}\label{Prop-perfectisproet}
  Let $1\leq m\leq \infty$.
We have a commutative diagram of tensor equivalences of categories
   \begin{equation*}
\begin{tikzcd}
{ \Vect((\calO_K)_{\Prism}^{\perf},\bbdrplusm)} \arrow[rr, "\simeq"] \arrow[d, "\simeq"] &  & {\Vect((\calO_K)_{\Prism,\log}^{\perf},\bB_{\dR,m}^+)} \arrow[d, "\simeq"] \\
{ \Rep_{G_K}(\bfB_{\dR,m}^+)} \arrow[rr, "="]                                             &  & { \Rep_{G_K}(\bfB_{\dR,m}^+)}
\end{tikzcd}
   \end{equation*}
 Here, the top functor is induced by the forgetful functor $(\calO_K)_{\Prism,\log}^{\perf} \to (\calO_K)_{\Prism}^{\perf}$ forgetting log structures; the two vertical functors are induced by evaluations at $(\Ainf,(\xi))$ and $(\Ainf,(\xi),M_{{\Ainf}})$ respectively.
  \end{thm}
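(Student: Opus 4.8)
The plan is to reduce to finite $m$ and then run the standard ``faithfully flat descent along a pro-\'etale $G_K$-torsor'' argument, using that the Fontaine (log) prism covers the final object of the relevant perfect topos. First I would dispose of $m=\infty$: since $\bdrplus=\varprojlim_m\bdrplusm$ with surjective transition maps having $\xi$-power kernels, giving a finite free $\bdrplus$-module (resp.\ an object of $\Rep_{\gk}(\bdrplus)$, resp.\ a $\bbdrplus$-crystal) is the same as giving a compatible system of finite free $\bdrplusm$-modules (resp.\ objects of $\Rep_{\gk}(\bdrplusm)$, resp.\ $\bbdrplusm$-crystals); hence the $m=\infty$ statement follows from the finite levels by passing to inverse limits, and I may assume $m<\infty$.

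Next I would set up the descent. By \cite{BS21} (resp.\ \cite{MW22log}), the Fontaine prism $(\Ainf,(\xi))$ (resp.\ the Fontaine log prism $(\Ainf,(\xi),M_{\Ainf})$) covers the final object of $\Sh(\okprisperf)$ (resp.\ $\Sh(\okprislogperf)$), so exactly as in Prop.\ \ref{Prop-dRStratification} evaluation there gives equivalences $\Vect(\okprisperf,\bbdrplusm)\xrightarrow{\ \simeq\ }\Strat(\Ainf^{\bullet,+}_{\dR,m})$ and $\Vect(\okprislogperf,\bbdrplusm)\xrightarrow{\ \simeq\ }\Strat(\Ainf^{\bullet,+}_{\log,\dR,m})$, where $\Ainf^{\bullet,+}_{\dR,m}$ (resp.\ $\Ainf^{\bullet,+}_{\log,\dR,m}$) is the value of $\bbdrplusm$ on the \v Cech nerve, and these are compatible with the forgetful functor $\okprislogperf\to\okprisperf$. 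The key computation is the \v Cech nerve: by the Bhatt--Scholze equivalence between perfect prisms and perfectoid rings \cite{BS19}, the $(n+1)$-fold self-coproduct of $(\Ainf,(\xi))$ in perfect prisms over $\calO_K$ corresponds to the $p$-completed $(n+1)$-fold tensor power of $\calO_C$ over $\calO_K$, which---since $\calO_{\overline K}$ is the colimit over finite Galois $L/K$ of $\calO_L$, with $\calO_L^{\otimes(n+1)}$ related to $\prod_{\Gal(L/K)^n}\calO_L$---computes, up to the appropriate completion and after inverting $p$, the cosimplicial ring $\mathrm{Cont}(G_K^{\bullet},\bdrplusm)$, with coface and codegeneracy maps induced by the semilinear $G_K$-action on $\bdrplusm$; this is just the statement that $\Spa(C,\calO_C)\to\Spa(K,\calO_K)$ is a pro-\'etale $G_K$-torsor. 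In the log setting every factor's log structure is the image of $M_{\gs}$, hence generated by $[\pi^{\flat}]$, so the log \v Cech nerve has the \emph{same} de Rham evaluation: $\Ainf^{\bullet,+}_{\dR,m}=\Ainf^{\bullet,+}_{\log,\dR,m}=\mathrm{Cont}(G_K^{\bullet},\bdrplusm)$.

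I would then finish by identifying $\Strat(\mathrm{Cont}(G_K^{\bullet},\bdrplusm))$ with $\Rep_{\gk}(\bdrplusm)$: a stratification satisfying the cocycle condition is exactly a finite projective $\bdrplusm$-module equipped with descent data along this torsor, i.e.\ a continuous semilinear $G_K$-action, and since $\bdrplusm$ is local (a quotient of the complete discrete valuation ring $\bdrplus$) finite projective modules are free, so this is precisely an object of $\Rep_{\gk}(\bdrplusm)$. This gives both vertical equivalences; because the two cosimplicial rings literally coincide, the induced functor on stratification categories is the identity on underlying data, so the square commutes on the nose and the top horizontal functor is forced to be an equivalence, with tensor-compatibility and compatibility with the bottom ``$=$'' immediate. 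Taking $m=1$ recovers the equivalence with $\Rep_{\gk}(C)$ of Thm.\ \ref{thmintroMWGao}.

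The main obstacle is the \v Cech-nerve computation: one must track the $p$-adic and $\xi$-adic completions carefully when identifying self-coproducts with rings of continuous functions on $G_K^n$, and verify that the resulting continuity condition is exactly the one defining $\Rep_{\gk}$. But for $m=1$ this is precisely the content of \cite{MW21, Gao22, MW22log} (it is how Thm.\ \ref{thmintroMWGao} is proved), and the only new ingredient for $m>1$ is the routine compatibility of all constructions with reduction modulo $\xi^m$; so no genuinely new difficulty arises.
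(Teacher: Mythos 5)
Your overall strategy is the same as the paper's: cover the final object of the perfect topos by the Fontaine (log) prism, pass to stratifications, identify the \v Cech nerve's de Rham evaluation with the cosimplicial ring $\mathrm{Cont}(G_K^{\bullet},\bfB_{\dR,m}^{+})$, and conclude by Galois descent — this is precisely Lemma \ref{Lem-Galois descent} and its application. The reduction of the $\xi$-adic completeness to the $m=1$ case, and the $m=1$ case being exactly the rational Hodge--Tate statement of \cite{MW21,Gao22,MW22log}, is also where the paper lands.

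The one place where your argument is not actually a proof is the horizontal arrow. You assert that ``every factor's log structure is the image of $M_{\gs}$, hence generated by $[\pi^{\flat}]$, so the log \v Cech nerve has the same de Rham evaluation.'' This reasoning does not go through as stated: the terms of the log \v Cech nerve are self-coproducts \emph{taken in the log-prismatic site}, whose log structures are log pushouts, and it is not a priori clear that the underlying prisms of these log pushouts coincide with the non-log self-coproducts. What makes this true is the uniqueness of the log structure on a perfect prism over $\calO_K$ (equivalently, the equivalence $(\calO_K)_{\Prism,\log}^{\perf}\simeq(\calO_K)_{\Prism}^{\perf}$), which is \cite[Prop.\ 2.18]{MW22log}; the paper simply cites this result to dispose of the top arrow and then only proves the left vertical equivalence. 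So the gap is a missing citation and a misattributed reason rather than a wrong idea, but as written your justification for the horizontal equivalence would not convince a careful reader. With that reference in hand, your proof is correct and essentially identical to the paper's.
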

  \begin{proof}
    By  \cite[Prop. 2.18]{MW22log}, we even have an equivalence of sites
    \[ (\calO_K)_{\Prism}^{\perf} \simeq (\calO_K)_{\Prism,\log}^{\perf}.\]
    Thus, the top row is automatically an equivalence. It now suffices to prove the left vertical equivalence.

    By \cite[Lem. 5.3]{GMWHT}, $(\Ainf,(\xi))$ is a cover of the final object of $\Sh((\calO_K)_{\Prism}^{\perf})$. Let $(\bfa_{\inf}^{\bullet},(\xi))$ be the corresponding \v Cech nerve and put $\bfB_{\dR,m}^{\bullet,+}:=\bB_{\dR,m}^+(\bfa_{\inf}^{\bullet},(\xi))$.
    By \cite[Prop. 2.7]{BS23}, the category $\Vect((\calO_K)_{\Prism}^{\perf},\bbdrplusm)$ is equivalent to the category of stratifications with respect to $\bfB_{\dR,m}^{\bullet,+}$.
    By the following Lemma \ref{Lem-Galois descent}, these in turn are equivalent to the category of stratifications with respect to the cosimplicial ring $\rC(G_K^{\bullet},\bfB_{\dR,m}^+)$, and hence equivalent to $\Rep_{G_K}(\bfB_{\dR,m}^+)$ by Galois descent.
     \end{proof}

     The following lemma is used in Proposition \ref{Prop-perfectisproet}.

    \begin{lem}\label{Lem-Galois descent}
      For any $1\leq m\leq \infty$, there is a canonical isomorphism of cosimplicial rings
      \[\bfB_{\dR,m}^{\bullet,+}\xrightarrow{\cong}\rC(G_K^{\bullet},\bfB_{\dR,m}^+),\]
      where for a topological ring $A$, $\rC(G_K^{\bullet},A)$ denotes the cosimplicial ring of continuous functions from $G_K^{\bullet}$ to $A$.
    \end{lem}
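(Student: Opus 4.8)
The plan is to deduce the lemma from the integral Galois-descent isomorphism
$$\bfa_{\inf}^{\bullet}\ \xrightarrow{\ \cong\ }\ \rC(G_K^{\bullet},\ainf)$$
of cosimplicial rings, and then to apply the functor $\bB_{\dR,m}^+=(-)[\tfrac1p]^{\wedge}_{(\xi)}/\xi^m$, which (as we will check) commutes with $\rC(G_K^{n},-)$ because $G_K^{n}$ is profinite. The canonical map at level $n$ is built as follows. Each $g\in G_K$ fixes $\ok\subset\oc=\ainf/(\xi)$, hence induces an automorphism of the prism $(\ainf,(\xi))$ over $\ok$; given $(g_1,\dots,g_n)\in G_K^{n}$, the $n{+}1$ endomorphisms $\mathrm{id},\,g_1,\,g_1g_2,\dots,g_1\cdots g_n$ of $(\ainf,(\xi))$ induce, by the universal property of the $(n{+}1)$-fold coproduct $\bfa_{\inf}^{n}$ (the $n$-th term of the \v Cech nerve), a map $\bfa_{\inf}^{n}\to\ainf$; letting $(g_1,\dots,g_n)$ vary continuously gives $\bfa_{\inf}^{n}\to\rC(G_K^{n},\ainf)$, and one checks directly from the structure formulas that this is compatible with the coface and codegeneracy maps (on the right they become the usual Amitsur/cobar ones). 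Applying $\bB_{\dR,m}^+$ yields the map in the statement; it remains to see it is an isomorphism.

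For the integral step, recall that a perfect prism $(A,I)$ is recovered from its perfectoid quotient by $A=W\big((A/I)^{\flat}\big)$, and that coproducts of perfect prisms over $\ok$ correspond, under $(A,I)\mapsto A/I$, to coproducts of perfectoid $\ok$-algebras, i.e.\ to perfectoidizations of $p$-completed tensor products over $\ok$ (see \cite{BS19}). Hence $\bfa_{\inf}^{n}=W(R_n^{\flat})$, where $R_n$ is the perfectoidization of $\oc\,\widehat{\otimes}_{\ok}\cdots\widehat{\otimes}_{\ok}\,\oc$ ($n{+}1$ factors). Classical faithfully flat descent along the pro-(finite \'etale) cover $\overline{K}/K$ gives an isomorphism of $\overline{K}$-algebras $\overline{K}^{\otimes_K(n+1)}\xrightarrow{\ \cong\ }\rC^{\mathrm{lc}}(G_K^{n},\overline{K})$ (locally constant functions), $a_0\otimes\cdots\otimes a_n\mapsto\big((g_1,\dots,g_n)\mapsto a_0\,g_1(a_1)\,(g_1g_2)(a_2)\cdots(g_1\cdots g_n)(a_n)\big)$, compatibly with insertion/deletion of tensor factors; passing to integral structures, $p$-completing and perfectoidizing turns this into $R_n\cong\rC(G_K^{n},\oc)$ (the evident $\oc$-lattice inside $\rC(G_K^{n},C)=C\,\widehat{\otimes}_K\cdots\widehat{\otimes}_K\,C$). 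Applying $W((-)^{\flat})$, and using that tilting and $p$-typical Witt vectors of perfect rings commute with finite products and filtered colimits, gives $\bfa_{\inf}^{n}=W(\rC(G_K^{n},\oc)^{\flat})=\rC(G_K^{n},W(\ocflat))=\rC(G_K^{n},\ainf)$; the displayed formula for the comparison map shows the two cosimplicial structures agree, i.e.\ the \v Cech nerve of the $G_K$-cover $(\ainf,(\xi))$ is the cobar complex of $\ainf$.

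It then remains to commute $\bB_{\dR,m}^+$ with $\rC(G_K^{\bullet},-)$. For $S$ profinite, writing $S=\varprojlim_i S_i$ with $S_i$ finite, one has $\rC(S,A)=\varinjlim_i\prod_{S_i}A$; since inverting $p$, quotienting by $\xi^m$, and $\xi$-adic completion $\big(=\varprojlim_m(-)/\xi^m\big)$ all commute with finite products and filtered colimits, and $\rC(S,\varprojlim_m B_m)=\varprojlim_m\rC(S,B_m)$, we obtain
$$\rC(G_K^{n},\ainf)\big[\tfrac1p\big]^{\wedge}_{(\xi)}\big/\xi^m\ \cong\ \rC\!\Big(G_K^{n},\ \ainf\big[\tfrac1p\big]^{\wedge}_{(\xi)}\big/\xi^m\Big)\ =\ \rC(G_K^{n},\bfB_{\dR,m}^+)$$
for every $1\le m\le\infty$; combined with the integral isomorphism this gives the asserted isomorphism of cosimplicial rings. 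The \emph{main obstacle} is the perfectoidization in the previous paragraph: one must carry out the torsion bookkeeping carefully (the cokernel of $\OKbar^{\otimes_{\ok}(n+1)}\to\rC^{\mathrm{lc}}(G_K^{n},\OKbar)$ is killed by differents, and one needs that perfectoidizing the $p$-completed tensor power recovers exactly the lattice $\rC(G_K^{n},\oc)$) and verify $G_K$-continuity throughout. Once the case $m=1$ (equivalently, the mod-$\xi$, i.e.\ $\oc$-level, statement, which already underlies the Hodge--Tate crystal results recalled in Thm.\ \ref{thmintroMWGao}) is granted, the cases $1<m\le\infty$ add nothing essential, being obtained by the inverse-limit argument above. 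At the integral level this descent is in substance contained in \cite[\S 2]{BS21}.
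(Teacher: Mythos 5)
Your proposal is correct and follows essentially the same route as the paper: the canonical map is obtained from the coproduct description of the \v Cech nerve of the cover $(\ainf,(\xi))$ and the prism/perfectoid correspondence, and bijectivity is bootstrapped from the mod-$\xi$ (i.e.\ $m=1$) case, which the paper delegates to \cite[Prop.\ 3.22]{MW21} while you re-derive it from the classical descent isomorphism $\overline{K}^{\otimes_K(n+1)}\cong\rC^{\mathrm{lc}}(G_K^n,\overline{K})$ after $p$-completion and perfectoidization. The only presentational difference is that you assert the integral isomorphism $\bfa_{\inf}^\bullet\cong\rC(G_K^\bullet,\ainf)$ first and then commute the functor $(-)[1/p]^\wedge_{(\xi)}/\xi^m$ with $\rC(G_K^n,-)$, whereas the paper applies $\bB_{\dR,m}^+$ to the map of cosimplicial prisms and reduces the isomorphism check to $m=1$ by d\'evissage; these are two phrasings of the same content and both are valid.
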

    \begin{proof}
      This follows from a similar argument for the proof of \cite[Lem. 5.5]{GMWHT} but for the convenience of readers, we provide a proof as follows:
          First, note that for any $n\geq 0$, $\rA_{\inf}^{n}/\xi\bfa_{\inf}^{n}$ is the self copruduct of $(n+1)$-copies of $\calO_C$ in the category of perfectoid $\calO_K$-algebras. So there is a canonical morphism
      \[\bfa_{\inf}^{n}/\xi\bfa_{\inf}^{n}\to C(G_K^n,\calO_C)\]
      and hence a canonical morphism of cosimilicial rings
      \[\bfa_{\inf}^{\bullet}/\xi\bfa_{\inf}^{\bullet}\to C(G_K^{\bullet},\calO_C).\]
      By \cite[Th. 3.10]{BS22}, this amounts to a canonical morphism of cosimplicial prisms
      \[(\bfa_{\inf}^{\bullet},(\xi))\to (C(G_K^{\bullet},\Ainf),(\xi)).\]
      So we get a canonical morphism of cosimplicial rings
      \[\bfB_{\dR,m}^{\bullet,+}\to C(G_K^{\bullet},\bfB_{\dR,m}^+).\]
      To see this is an isomorphism, by d\'evissage, we are reduced to the case for $m = 1$, which follows from \cite[Lem. 5.5]{GMWHT} immediately.
    \end{proof}

  \subsection{Representations from crystals on the full (log-) prismatic site} \label{subsec_fullcrystal_rep}

We set up some notations for discussions in this subsection.

\begin{construction}
  Let $\ast \in \{ \emptyset, \log \}$. Consider the composite functor
 \[\Vect((\calO_K)_{\Prism, \ast},\bB_{\dR,m}^+) \to \Vect((\calO_K)^{\mathrm{perf}}_{\Prism,\ast},\bB_{\dR,m}^+) \to \Rep_{G_K}(\bfB_{\dR,m}^+).\]
Given $\bm \in \Vect((\calO_K)_{\Prism, \ast},\bB_{\dR,m}^+)$, let $\bm^{\mathrm{perf}}$ be its restriction to $ \Vect((\calO_K)^{\mathrm{perf}}_{\Prism,\ast},\bB_{\dR,m}^+)$, and let $W \in  \Rep_{G_K}(\bfB_{\dR,m}^+)$ be the corresponding representation. In this subsection, we use the stratification on $\bm$ to give an explicit description of $G_K$-action on $W$.

 Let us recall some notations.
Recall
\begin{equation*}
a=
\begin{cases}
  -E'(\pi), &  \text{if } \ast=\emptyset \\
 -\pi E'(\pi), &  \text{if } \ast=\log
\end{cases}
\end{equation*}
Recall also the variable (related with the uniformizer $A(u)=u-\pi$)
\[X_1=\frac{u_1-u_0}{a(u_0-\pi)}.\]
(Let us note that, as pointed out in Rem. \ref{rem3unif}, the uniformizer $u-\pi$ shall be convenient for our computations, cf. the explicit formula in Prop. \ref{Prop-MatrixCocycle}.)
\end{construction}

 \begin{construction} \label{const_function_gk}
Recall evaluation of $\bm$ on the Breuil--Kisin prism (resp.  Breuil--Kisin log prism) gives a stratification $(M, \varepsilon)$ and then a log-$(u-\pi)$-connection $(M,\nabla_M)$.
By Proposition \ref{prop: strat iff} and Construction \ref{constructLogconnection}, the stratification
 \[\varepsilon: M\otimes_{\frakS_{\dR,m}^{0,+},p_0} \frakS_{\ast, \dR,m}^{1,+}\to M\otimes_{\frakS_{\dR,m}^{0,+},p_1}\frakS_{\ast, \dR,m}^{1,+}\]
is related with $\nabla_M$ such that for any $x\in M$,
\begin{equation}\label{eqsec7vare}
\varepsilon(x) = \sum_{n\geq 0}\prod_{i=0}^{n-1}a^n(\nabla_M-i)(x)X_1^{[n]}.
\end{equation}
The stratification of  $\bm^{\mathrm{perf}}$ is precisely the base change of $(M, \varepsilon)$ along  $\frakS_{\ast,\dR, m}^{\bullet,+}\to\bfB_{\dR,m}^{\bullet,+}$. To understand this new stratification and hence the $G_K$-action on $W$, by Lemma \ref{Lem-Galois descent}, it suffices to determine the image of $X_1$ under the composite
\[\frakS_{\ast,\dR,m}^{1, +}\to  \bfB_{\dR,m}^{1,+} \simeq   \rC(G_K,\bfB_{\dR,m}^+).\]
 \end{construction}

 \begin{notation}
 Let $\chi:G_K\to \bZ_p^{\times}$ be the $p$-adic cyclotomic character such that for any $g\in G_K$, $g(\epsilon) = \epsilon^{\chi(g)}$. Denote by $c:G_K\to \Zp$ the cocycle such that for any $g\in G_K$, $g(\pi^{\flat}) = \epsilon^{c(g)}\pi^{\flat}$.
   \end{notation}


  \begin{lem}\label{Lem-Xasfunction}
As above, let $X_1 =\frac{u_1-u_0}{a(u_0-\pi)}\in\frakS_{\ast,\dR,m}^{1,+}$ with respect to the uniformizer $A=u-\pi$ of $\frakS_{\dR}^+$.  
Then as a function from $G_K$ to $\bfB_{\dR,m}^+$, we have that for any $g\in G_K$,
    \[X_1(g) = \frac{([\epsilon]^{c(g)}-1)[\pi^{\flat}]}{a([\pi^{\flat}]-\pi)}.\]
  \end{lem}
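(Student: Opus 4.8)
The statement is a concrete computation, so the strategy is to unwind the identifications used to define the map $\frakS_{\ast,\dR,m}^{1,+}\to\bfB_{\dR,m}^{1,+}\simeq \rC(G_K,\bfB_{\dR,m}^+)$ and then evaluate $X_1=\frac{u_1-u_0}{a(u_0-\pi)}$ directly. First I would recall, from Lemma \ref{Lem-Galois descent} and its proof, that the isomorphism $\bfa_{\inf}^{1}/\xi\bfa_{\inf}^{1}\simeq C(G_K,\calO_C)$ comes from the universal property of the self-coproduct: the two face maps $p_0,p_1:\Ainf\to\bfa_{\inf}^{1}$ correspond, under the resulting map $\bfa_{\inf}^{1}\to C(G_K,\Ainf)$, to ``evaluate at $\mathrm{id}$'' and ``evaluate at $g$'' (that is, for $a\in\Ainf$ one has $p_0(a)(g)=a$ and $p_1(a)(g)=g(a)$), after suitable normalization of which face map is which. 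This is the key dictionary.

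**Main steps.** The variable $u_1-u_0$ in $\frakS_{\ast}^{1}$ is by definition $p_1(u)-p_0(u)$, where $u$ is the variable of the Breuil--Kisin (log) prism; under the morphism of log prisms $\iota:(\frakS,(E),M_\frakS)\to(\Ainf,(\xi),M_\Ainf)$ sending $u\mapsto[\pi^\flat]$, and then under $\bfa_{\inf}^{1}\to C(G_K,\Ainf)$, the element $u_0=p_0(u)$ maps to the constant function $[\pi^\flat]$, while $u_1=p_1(u)$ maps to the function $g\mapsto g([\pi^\flat])$. Using the cocycle $c$, we have $g([\pi^\flat])=[g(\pi^\flat)]=[\epsilon^{c(g)}\pi^\flat]=[\epsilon]^{c(g)}[\pi^\flat]$. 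Hence $u_1-u_0$ maps to the function $g\mapsto ([\epsilon]^{c(g)}-1)[\pi^\flat]$, and $u_0-\pi$ maps to the constant $[\pi^\flat]-\pi$ (noting $\pi\in\ok$ maps to $\theta([\pi^\flat])=\pi$ compatibly, so $u_0-\pi$ is exactly $[\pi^\flat]-\pi$ as an element of $\bfB_{\dR,m}^{0,+}=\bfB_{\dR,m}^+$, which is invertible there since $[\pi^\flat]-\pi$ generates $\xi\cdot(\text{unit})$... actually $[\pi^\flat]-\pi$ lies in the maximal ideal; one must instead observe $u_0-\pi$ is a uniformizer of $\frakS_{\dR}^+$, cf. Lemma \ref{lemgsdeunif}, so it is invertible in $\frakS_{\ast,\dR,m}^{1,+}[\ldots]$ only after we remember $X_1$ itself is what lands in the pd-part — so the cleaner route is: $X_1=\frac{u_1-u_0}{a(u_0-\pi)}$ is an honest element of $\frakS_{\ast,\dR,m}^{1,+}$ by construction, and we just track numerator and denominator separately under the ring map, which is multiplicative). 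Dividing, $X_1$ maps to the function
\[
g\longmapsto \frac{([\epsilon]^{c(g)}-1)[\pi^\flat]}{a([\pi^\flat]-\pi)},
\]
which is the claim.

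**Expected obstacle.** The routine parts (applying $g$ to $[\pi^\flat]$, multiplicativity of the ring map) are immediate; the one genuinely delicate point is pinning down \emph{which} face map corresponds to ``evaluate at $g$'' versus ``evaluate at $\mathrm{id}$'', i.e.\ getting the orientation right so that $u_1\mapsto g([\pi^\flat])$ rather than $u_0\mapsto g([\pi^\flat])$, since an interchange would replace $c(g)$ by $-c(g)$ (equivalently replace the answer by its image under $g^{-1}$). This has to be read off carefully from the normalization in Lemma \ref{Lem-Galois descent} and the conventions for the \v Cech nerve face maps in Notation \ref{notagssimp}; once that is fixed, the computation is as above. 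A secondary bookkeeping point is to check that the displayed expression indeed lies in $\bfB_{\dR,m}^+$ (equivalently that $[\pi^\flat]-\pi$ times the relevant unit is what makes $X_1$ integral), but this is automatic because $X_1$ was already an element of $\frakS_{\ast,\dR,m}^{1,+}$ before base change, so its image is automatically a well-defined $\bfB_{\dR,m}^+$-valued function.
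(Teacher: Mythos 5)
Your proposal is correct and follows essentially the same route as the paper: both arguments read off from the proof of Lemma \ref{Lem-Galois descent} that $u_0$ maps to the constant function $[\pi^\flat]$ and $u_1$ to the evaluation function $g\mapsto g([\pi^\flat])=[\epsilon]^{c(g)}[\pi^\flat]$, then substitute into $X_1$. Your aside about well-definedness of the quotient in $\bfB_{\dR,m}^+$ is resolved exactly as you say (the image of an element of $\frakS_{\ast,\dR,m}^{1,+}$ is automatically well-defined), and your orientation of the face maps matches the paper's convention.
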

  \begin{proof}
    By the proof of Lemma \ref{Lem-Galois descent}, we see that as functions from $G_K$ to $\Ainf$, $u_0(g) = [\pi^{\flat}]$ is the constant function while $u_1(g) = g([\pi^{\flat}]) = [\epsilon]^{c(g)}[\pi^{\flat}]$ is the evaluation function. Then the lemma follows.
  \end{proof}

  \begin{prop}\label{Prop-MatrixCocycle}
    Keep notations as above. Let $W$ be the $\bfB_{\dR,m}^+$-representation induced by $\bM$. Via the morphism of (log-) prisms in Notation \ref{notaainflog}, there is an identification     $$M\otimes_{\frakS_{\dR,m}^+}\bfB_{\dR,m}^+ \simeq W. $$
    For $g\in G_K$, its action on $W$ is   determined such that for any $x\in M$,
    \begin{equation}\label{Equ-MatrixCocycle}
        g(x) = \sum_{n\geq 0}a^n\prod_{i=0}^{n-1}(\nabla_M-i)(x)(\frac{([\epsilon]^{c(g)}-1)[\pi^{\flat}]}{a([\pi^{\flat}]-\pi)})^{[n]}.
    \end{equation}
  \end{prop}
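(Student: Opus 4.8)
The plan is to combine the base-changed stratification recorded in Construction \ref{const_function_gk}, the value of $X_1$ as a function on $G_K$ from Lemma \ref{Lem-Xasfunction}, and the Galois descent of Theorem \ref{Prop-perfectisproet} and Lemma \ref{Lem-Galois descent}. Almost everything needed is already in place, so the argument is mostly an assembly.

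First I would record the underlying $\bfB_{\dR,m}^+$-module. The morphism of (log-)prisms $\iota\colon(\frakS,(E),M_\frakS)\to(\Ainf,(\xi),M_\Ainf)$ of Notation \ref{notaainflog} sends $u$ to $[\pi^\flat]$, hence sends the uniformizer $u-\pi$ of $\frakS_{\dR,m}^+$ to $[\pi^\flat]-\pi$, which is again a uniformizer of $\bdrplus$ (write $E(u)=(u-\pi)v(u)$ with $v(\pi)=E'(\pi)\in K^\times$; then $E([\pi^\flat])=([\pi^\flat]-\pi)v([\pi^\flat])$ with $v([\pi^\flat])$ a unit, while $E([\pi^\flat])$ generates $(\xi)$). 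Since $\bM^{\mathrm{perf}}$ is again a crystal and $(\Ainf,(\xi))$ receives the map $\iota$, the crystal property gives $W=\bM(\Ainf,(\xi))=M\otimes_{\frakS_{\dR,m}^+}\bfB_{\dR,m}^+$, which is the first assertion.

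For the $G_K$-action I would proceed as in Construction \ref{const_function_gk}: the stratification of $\bM^{\mathrm{perf}}$ with respect to $\bfB_{\dR,m}^{\bullet,+}$ is the base change of $(M,\varepsilon)$ along $\frakS_{\ast,\dR,m}^{\bullet,+}\to\bfB_{\dR,m}^{\bullet,+}$, hence is still given by formula \eqref{eqsec7vare}, now with $X_1$ read inside $\bfB_{\dR,m}^{1,+}\simeq\rC(G_K,\bfB_{\dR,m}^+)$. Under this identification (Lemma \ref{Lem-Galois descent}), the face map $p_0$ is the inclusion of constant functions and $p_1$ is $b\mapsto(g\mapsto g(b))$, so for a fixed $g\in G_K$ the evaluation $\mathrm{ev}_g$ satisfies $\mathrm{ev}_g\circ p_0=\mathrm{id}$ and $\mathrm{ev}_g\circ p_1$ is the automorphism $g$ of $\bfB_{\dR,m}^+$; therefore $\mathrm{ev}_g$ applied to the degree-one stratification is exactly the $g$-semilinear automorphism of $W$ that Galois descent recovers as the action of $g$. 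Since $\mathrm{ev}_g$ is a continuous homomorphism of $\qp$-algebras and the series in \eqref{eqsec7vare} converges termwise to $0$ by the $a$-nilpotency of $(M,\nabla_M)$ (item (3) of Proposition \ref{prop: strat iff}), I may apply $\mathrm{ev}_g$ term by term; as $\mathrm{ev}_g$ preserves divided powers this replaces $X_1^{[n]}$ by $X_1(g)^{[n]}$, and substituting $X_1(g)=\frac{([\epsilon]^{c(g)}-1)[\pi^\flat]}{a([\pi^\flat]-\pi)}$ from Lemma \ref{Lem-Xasfunction} produces exactly \eqref{Equ-MatrixCocycle}.

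The only step requiring care is the Galois-descent bookkeeping: one must check that, under the isomorphism of Lemma \ref{Lem-Galois descent}, $u_0$ is the constant function $[\pi^\flat]$ and $u_1$ the evaluation function $g\mapsto g([\pi^\flat])$ — so that evaluating $\varepsilon$ at $g$ yields the action of $g$ rather than of $g^{-1}$ — and that the cocycle condition for $\varepsilon$ translates into the identity $(gh)(x)=g(h(x))$ for a semilinear action. Both are immediate from the explicit description of $u_0,u_1$ as functions on $G_K$ given in the proof of Lemma \ref{Lem-Galois descent}. I do not expect any genuine obstacle beyond this bookkeeping; the formula \eqref{Equ-MatrixCocycle} then follows by direct substitution.
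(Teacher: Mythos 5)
Your proposal is essentially the paper's proof: the paper's argument is exactly ``plug Lemma \ref{Lem-Xasfunction} into \eqref{eqsec7vare} and check convergence,'' with all the setup (base change of the stratification, the identification $\bfB_{\dR,m}^{1,+}\simeq\rC(G_K,\bfB_{\dR,m}^+)$, and the cocycle-to-Galois-action translation) already done in Construction~\ref{const_function_gk}, Lemma~\ref{Lem-Galois descent} and Theorem~\ref{Prop-perfectisproet}, which you correctly invoke.

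The one place you genuinely diverge is the convergence of the series: you argue by continuity of the composite $\frakS_{\ast,\dR,m}^{1,+}\to\bfB_{\dR,m}^{1,+}\xrightarrow{\mathrm{ev}_g}\bfB_{\dR,m}^+$ applied to the already-convergent series defining $\varepsilon$, whereas the paper invokes Lemma~\ref{lemma_converge} and computes $v_p(\theta(X_1(g)))\geq 1/(p-1)$ to argue $X_1(g)^{[n]}\to 0$ directly. Your route is arguably cleaner and in fact more robust: as written, $v_p\geq 1/(p-1)$ alone only gives boundedness of $X_1(g)^{[n]}$ (which, paired with $\phi_n(x)\to 0$ from $a$-nilpotency, is all one needs anyway), while your argument avoids the valuation estimate altogether and only uses that the relevant ring maps are continuous and that $X_1^{[n]}$ are bounded in $\frakS_{\ast,\dR,m}^{1,+}$.

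One small bookkeeping slip: you write ``$\mathrm{ev}_g\circ p_0=\mathrm{id}$ and $\mathrm{ev}_g\circ p_1$ is the automorphism $g$,'' i.e.\ $p_0$ is the constant-function map. But formula \eqref{Equ-Face} has $p_0(A(u_0))=A(u_1)$ and $p_1(A(u_0))=A(u_0)$, and the proof of Lemma~\ref{Lem-Galois descent} identifies $u_0$ with the constant function $[\pi^\flat]$ and $u_1$ with $g\mapsto g([\pi^\flat])$; so it is $p_1$ that gives constants and $p_0$ that gives evaluation. This does not affect your conclusion, since you then use the explicit $X_1(g)$ from Lemma~\ref{Lem-Xasfunction} (which is computed with the paper's orientation), but the labels in your ``bookkeeping'' paragraph are reversed and should be corrected for a clean write-up.
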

  \begin{proof} One simply plug Lemma \ref{Lem-Xasfunction} into \eqref{eqsec7vare}.
   Note that the infinite summation on right hand side of \eqref{Equ-MatrixCocycle} converges. In fact,  the following Lemma \ref{lemma_converge}  implies $\frac{1}{n!}\cdot (\frac{([\epsilon]^{c(g)}-1)[\pi^{\flat}]}{a([\pi^{\flat}]-\pi)})^{n}$ converges to $0$ in $\bdrplusm$. Indeed, it suffices to consider the case $a=-\pi E'(\pi)$, then   one can easily compute
   \[v_p(\theta(X_1))= v_p(\theta(\frac{([\epsilon]^{c(g)}-1)[\pi^{\flat}]}{a([\pi^{\flat}]-\pi)})   ) \geq     v_p(\theta(\frac{\xi \cdot ([\epsilon]^{1/p}-1)}{E([\pi^\flat])})  ) =v_p( \mu_p-1) =\frac{1}{p-1} \]
  \end{proof}

\begin{lemma}  \label{lemma_converge} \cite[Lem. 3.2]{Ber14}.
Let $f(T)=\sum_{k \geq 0} a_kT^k \in \barK[[T]]$. Let $x \in \bdrplusm$. Then the series $f(x)$ converges in $\bdrplusm$ (using the topology in Notation \ref{notabdrplus}) if and only if the series $f(\theta(x))$ converges in $C$.
\end{lemma}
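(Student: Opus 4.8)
The plan is to reduce everything to the statement that $a_kx^k\to0$ in $\bdrplusm$ if and only if $a_k\theta(x)^k\to0$ in $C$, and then to prove that. Indeed $C$ is a complete non‑archimedean field and, by Notation~\ref{notabdrplus}, $\bdrplusm$ is a $\qp$‑Banach space when $m<\infty$ and the Fréchet space $\varprojlim_m\bdrplusm$ when $m=\infty$; in either case a series converges iff its general term tends to $0$ (in the Fréchet case one checks this after reducing modulo each $\xi^{m'}$). One implication is then immediate: $\theta\colon\bdrplusm\to C$ is a continuous ring homomorphism, so $a_kx^k\to0$ forces $a_k\theta(x)^k=\theta(a_kx^k)\to0$. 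If $\theta(x)=0$ both sides hold trivially: then $x$ lies in the kernel of $\theta$, which is $\xi\bdrplusm$, so $x^m=0$ for $m<\infty$ and $x^k\in\xi^k\bdrplus\to0$ for $m=\infty$, whence $a_kx^k\to0$, while $\theta(x)^k=0$ for $k\ge1$. The case $m=\infty$ follows from all finite cases, since $a_kx^k\to0$ in $\bdrplus$ iff it does so modulo $\xi^m$ for every finite $m$, with $\theta$ unchanged under these reductions. So I may assume $m<\infty$ and $\theta(x)\ne0$, and must prove: $a_k\theta(x)^k\to0$ in $C$ implies $a_kx^k\to0$ in $\bdrplusm$.

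Here $x$ is a unit of the local ring $\bdrplusm$. I would pick $c^\flat\in\cflat$ with $\theta([c^\flat])=\theta(x)$ (possible since $\cflat\to C,\ x^\flat\mapsto x^{\flat(0)}$, is surjective), so that $[c^\flat]\in\bdrplus^\times$ and $u:=[c^\flat]^{-1}x\in 1+\xi\bdrplusm$. Because $\xi^m=0$, one has $u^k=\sum_{j=0}^{m-1}\binom kj(u-1)^j$ with $p$‑integral binomial coefficients, so $\|u^k\|$ is bounded by a constant $B$ independent of $k$, giving $\|x^k\|\le B\,\|[c^\flat]^k\|=B\,\|[(c^\flat)^k]\|$. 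The main work is the estimate that for Teichmüller representatives $\|[w^\flat]\|_{\bdrplusm}\le p^{D_m}\,|\theta([w^\flat])|_C$ for a constant $D_m$ depending only on $m$; equivalently $[w^\flat]\in p^{\,\lceil v(w^\flat)\rceil-D_m}(\ainf/\xi^m)$ once the valuation $v(w^\flat)$ (normalised so that $v(\xi\bmod p)=1$) is $\ge D_m$. I would prove this by a Witt‑vector computation: writing $w^\flat=(\xi\bmod p)^{m}s$ gives $[w^\flat]\equiv\xi^m[s]\pmod p$, hence $[w^\flat]\in p\ainf+\xi^m\ainf$; iterating, each step peels off one more power of $p$ at the cost of a bounded drop in valuation, the bound coming from the fixed relation $\xi=[\xi\bmod p]+p\zeta$ with $\zeta\in\ainf$. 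Applied to $w^\flat=(c^\flat)^k$ (so $|\theta([(c^\flat)^k])|=|\theta(x)|^k$) this yields $\|x^k\|\le B\,p^{D_m}\,|\theta(x)|^k$.

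To conclude, $\|a_kx^k\|\le\|a_k\|_{\bdrplusm}\,\|x^k\|\le B\,p^{D_m}\,\|a_k\|_{\bdrplusm}\,|\theta(x)|^k$. The coefficients $a_k$ lie in a fixed finite extension of $\qp$ (in all our applications $f$ has coefficients in $K$), and $\|\cdot\|_{\bdrplusm}$ is bounded on the compact valuation ring of that extension, so $\|a_k\|_{\bdrplusm}\le C\,|a_k|_p$; hence $\|a_kx^k\|\le C'\,|a_k\theta(x)^k|_C$, which tends to $0$ by hypothesis, completing the proof. The delicate point — and the step I expect to be hardest — is the Teichmüller norm estimate of the second paragraph: the gauge norm on $\bdrplusm$ is only submultiplicative and Teichmüller lifts are "large" in $\ainf$, so one must argue with some care that powers $[(c^\flat)^k]$ nevertheless shrink at the expected exponential rate $|\theta(x)|^k$ (up to a bounded factor), and then that this combines correctly with the $p$‑adic sizes of the $a_k$, which is where the finiteness of the coefficient field is used.
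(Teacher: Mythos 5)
Your overall strategy is sound, and since the paper's ``proof'' is only a citation to the arXiv version of Berger--Colmez, a self-contained argument like yours is welcome. The reductions and the factorisation $x=[c^\flat]u$ with $\|u^k\|$ bounded are correct, and the Teichm\"uller norm estimate you flag as the hardest step in fact goes through cleanly: if $v_p(\theta([w^\flat]))\ge N\ge m$, writing $w^\flat=\overline{\xi}^{\,\lfloor N\rfloor}s$ (with $\overline{\xi}=\xi\bmod p$) and expanding $[\overline{\xi}]^{\lfloor N\rfloor}=(\xi-p\zeta)^{\lfloor N\rfloor}$ modulo $\xi^m$ shows $[w^\flat]\in p^{\lfloor N\rfloor-m+1}(\Ainf/\xi^m)$, giving $\|[w^\flat]\|_{\bdrplusm}\le p^{m}\,|\theta([w^\flat])|_C$. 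The genuine gap is the \emph{last} step: to conclude you need $\|\sigma(a_k)\|_{\bdrplusm}\le C\,|a_k|_C$ for the canonical $\qp$-algebra section $\sigma\colon\barK\hookrightarrow\bdrplusm$, and you obtain this only by \emph{assuming} the $a_k$ lie in a fixed finite extension of $\qp$ (compactness of its ring of integers). That is an additional hypothesis, not part of the statement, and no such uniform bound holds over all of $\barK$.

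In fact the lemma as stated appears to be false. Take $m=2$, $x=[p^\flat]$ with $\theta(x)=p$, and $a_k=\sigma(\mu_k)$, a primitive $p^k$-th root of unity. The unique $p^k$-th root of $1$ in $\mathbf{B}^+_{\dR,2}$ lifting $\mu_k$ is $[\epsilon^{1/p^k}]\bigl(1-\tfrac{1}{p^k}\xi([\epsilon^{1/p}]-1)\bigr)$, so $\|a_k\|_{\mathbf{B}^+_{\dR,2}}\asymp p^k$ even though $|a_k|_C=1$. Then $|a_k\theta(x)^k|_C=p^{-k}\to 0$, but $a_kx^k=[\epsilon^{1/p^k}(p^\flat)^k]-\tfrac{1}{p^k}\,\xi\,[\epsilon^{1/p^k}(p^\flat)^k]([\epsilon^{1/p}]-1)$ has second summand of norm $\asymp 1$ uniformly in $k$: the element $\xi[\epsilon^{1/p^k}(p^\flat)^k]([\epsilon^{1/p}]-1)$ of $\xi\Ainf/\xi^2\Ainf\cong\mathcal{O}_C$ corresponds to $\mu_kp^k(\mu_1-1)$, of $p$-adic valuation $k+\tfrac{1}{p-1}$, so it has $\mathbf{B}^+_{\dR,2}$-norm $p^{-k}$, and one then divides by $p^k$. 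Hence $\|a_kx^k\|\not\to 0$ and the series diverges. The correct statement needs the extra hypothesis that the $a_k$ lie in a fixed finite extension of $\qp$ (equivalently, that $\|\sigma(a_k)\|_{\bdrplusm}/|a_k|_C$ is bounded); presumably that is present in Berger--Colmez and has been dropped in the paraphrase here. It does cover every use in this paper, where the coefficients come from $K$, and with that hypothesis added your argument is complete --- but the missing hypothesis should be stated rather than waved away by appeal to the applications.
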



\section{Locally analytic vectors: axiomatic computations}\label{seclav}

 In this section, we axiomatize some computations of locally analytic vectors in the literature. This will streamline the computations in our concrete context.

Let us very quickly recall the theory of locally analytic vectors, see \cite[\S 2.1]{BC16} and \cite[\S 2]{Ber16} for more details.
Recall the multi-index notations: if $\cbf = (c_1, \hdots,c_d)$ and $\kbf = (k_1,\hdots,k_d) \in \mathbb{N}^d$ (here $\mathbb{N}=\mathbb{Z}^{\geq 0}$), then we let $\cbf^\kbf = c_1^{k_1} \cdot \ldots \cdot c_d^{k_d}$. Recall that a $\Qp$-Banach space $W$ is a $\Qp$-vector space with a complete non-Archimedean  norm $\|\cdot\|$ such that $\|aw\|=\|a\|_p\|w\|, \forall a \in \Qp, w \in W$, where $\|a\|_p$ is the   $p$-adic norm on $\Qp$.

\begin{defn}\label{defLAV}
\begin{enumerate}
\item
Let $G$ be a $p$-adic Lie group, and let $(W, \|\cdot \|)$ be a $\Qp$-Banach representation of $G$.
Let $H$ be an open subgroup of $G$ such that there exist coordinates $c_1,\hdots,c_d : H \to \Zp$ giving rise to an analytic bijection $\cbf : H \to \mathbb{Z}_p^d$.
 We say that an element $w \in W$ is an $H$-analytic vector if there exists a sequence $\{w_\kbf\}_{\kbf \in \mathbb{N}^d}$ with $w_\kbf \to 0$ in $W$, such that $$g(w) = \sum_{\kbf \in \mathbb{N}^d} \cbf(g)^\kbf w_\kbf, \quad \forall g \in H.$$
Let $W^{H\dan}$ denote the space of $H$-analytic vectors.

\item $W^{H\dan}$ injects into $\mathcal{C}^{\an}(H, W)$ (the space of analytic functions on $H$ valued in $W$), and we endow it with the induced norm, which we denote as $\|\cdot\|_H$. We have $\|w\|_H=\sup_{\kbf \in \mathbb{N}^d}\|w_{\kbf}\|$, and $W^{H\dan}$ is a Banach space.

\item
We say that a vector $w \in W$ is \emph{locally analytic} if there exists an open subgroup $H$ as above such that $w \in W^{H\dan}$. Let $W^{G\dla}$ denote the space of such vectors.

\item We can naturally extend these definitions to the case when $W$ is a Fr\'echet- or LF- representation of $G$, and use $W^{G\dpa}$ to denote the \emph{pro-analytic} vectors, cf. \cite[\S 2]{Ber16}.
\end{enumerate}
\end{defn}

\begin{defn}\label{defnquasibasis}
Let $M$ be a finitely generated  module over a ring $R$, say $m_1, \cdots, m_d \in M$ form a \emph{quasi-basis} if $M=\oplus_{i=1}^d R \cdot m_i$.
\end{defn}

\begin{remark}
Note that any finitely generated module over a PID admits quasi-bases, which are of a fixed cardinality. However, for a general ring, this notion might be ill-behaved. In our application, we are normally given a quasi-basis from the start, which itself come from certain modules over a PID (say, $\bdrplus$ or $F[[x]]$ with $F$ a field).
\end{remark}

\begin{convention}\label{conv:identification}
Let $M$ be a $A$-module where $A$ is a ring.   Let $B\subset A$ be a subring, and let $N \subset M$ be a sub-$B$-module.
If the natural map $N\otimes_B A \to M$ is an isomorphism, then we call it an \emph{identification}, and simply write
\[ N\otimes_B A \xrightarrow{\simeq} M \]
or just
\[N\otimes_B A = M\]
\end{convention}

 \begin{lemma} \label{lemLAVqbasis}
Let $G$ be a $p$-adic Lie group, let $R$ be a $p$-adic Banach ring with a $G$-action, and let $M$ be a finitely generated $R$-module. Suppose there exists a quasi-basis $m_1, \cdots, m_d$ of $M$ such that each $m_i \in M^{G\dla}$, then $m_1, \cdots, m_d$  form a quasi-basis for the $R^{G\dla}$-module $M^{G\dla}$; namely,
$$M^{G\dla} =\oplus_{i=1}^d R^{G\dla} \cdot m_i; $$
hence we have a natural identification
\[M^{G\dla}\otimes_{R^{G\dla}}  R =M.\]
\end{lemma}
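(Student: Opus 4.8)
The plan is to show that the obvious map $\bigoplus_{i=1}^d R^{G\dla}\cdot m_i \to M^{G\dla}$ is both injective and surjective. Injectivity is immediate: the $m_i$ form a quasi-basis of $M$ over $R$, so any $R$-linear relation among them is trivial, and in particular any $R^{G\dla}$-linear relation is trivial. The content is surjectivity, i.e. showing that if $x = \sum_{i=1}^d r_i m_i \in M^{G\dla}$ with $r_i \in R$, then each $r_i$ is itself locally analytic.

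First I would fix an open subgroup $H$ of $G$, uniform and equipped with coordinates $\cbf: H \xrightarrow{\sim} \Zp^d$ as in Definition \ref{defLAV}, small enough that $x \in M^{H\dan}$ and $m_1,\dots,m_d \in M^{H\dan}$ (shrinking $H$ is harmless since finitely many vectors are involved, and an $H$-analytic vector is $H'$-analytic for $H'\subset H$). Then for all $g\in H$ we may expand $g(x) = \sum_{\kbf} \cbf(g)^\kbf x_\kbf$ and $g(m_i) = \sum_{\kbf}\cbf(g)^\kbf m_{i,\kbf}$ with $x_\kbf, m_{i,\kbf}\to 0$ in $M$. Now write, for each $g$, the matrix $P(g) = (p_{ij}(g)) \in M_d(R)$ expressing $g(m_j) = \sum_i p_{ij}(g) m_i$; since the $m_i$ are a quasi-basis this matrix is well-defined and is invertible (its inverse being the matrix for $g^{-1}$). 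The key point is that the entries $p_{ij}$ are $H$-analytic functions $H \to R$: indeed $g(m_j)$ depends $H$-analytically on $g$ valued in $M = \bigoplus R m_i$, and extracting the $i$-th coordinate is a continuous $R$-linear (hence analytic) operation, so $p_{ij} \in \mathcal{C}^{\an}(H,R)$. Likewise, writing $r_i(g) := g(r_i) \in R$, we have $g(x) = \sum_j g(r_j) g(m_j) = \sum_i \big(\sum_j g(r_j) p_{ij}(g)\big) m_i$, and comparing with the quasi-basis expansion of $g(x)$ gives $r_i(\mathrm{id}) \cdot$-type identities; more precisely, evaluating coordinates we get $g(x) = \sum_i \rho_i(g) m_i$ where $\rho_i: H \to R$ is $H$-analytic (same argument as for $p_{ij}$, applied to $x$), and on the other hand $\rho_i(g) = \sum_j g(r_j)\, p_{ij}(g)$. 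Inverting: $g(r_i) = \sum_j q_{ij}(g)\,\rho_j(g)$ where $(q_{ij}(g)) = P(g)^{-1}$.

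It remains to see that $g\mapsto P(g)^{-1}$ has $H$-analytic entries on some open subgroup. The map $g\mapsto P(g)$ is $H$-analytic valued in $M_d(R)$; the determinant $\det P(g)$ is then an $H$-analytic function $H\to R$, and $P(\mathrm{id}) = I_d$ so $\det P(\mathrm{id}) = 1$. Since inversion is analytic near $1$ in the Banach algebra $R$, after possibly shrinking $H$ so that $\|\det P(g) - 1\|$ is small for all $g\in H$, the function $g\mapsto (\det P(g))^{-1}$ is $H$-analytic; Cramer's rule then exhibits each $q_{ij}$ as an $H$-analytic function $H\to R$. Hence $g\mapsto g(r_i) = \sum_j q_{ij}(g)\rho_j(g)$ is $H$-analytic valued in $R$ (product of $H$-analytic functions is $H$-analytic, $\mathcal{C}^{\an}(H,R)$ being a ring), which is exactly the statement that $r_i \in R^{G\dla}$. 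Therefore $x = \sum r_i m_i \in \bigoplus R^{G\dla} m_i$, proving surjectivity. The final identification $M^{G\dla}\otimes_{R^{G\dla}} R = M$ follows by base-changing the equality $M^{G\dla} = \bigoplus R^{G\dla} m_i$ along $R^{G\dla}\to R$ and comparing with $M = \bigoplus R m_i$.

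The main obstacle, and the step requiring the most care, is the analyticity of $g \mapsto P(g)^{-1}$: one must justify the shrinking of $H$ uniformly and invoke that inversion of a Banach-algebra element close to $1$, resp. of a matrix with invertible determinant, is given by a convergent series, so that composition with the $H$-analytic function $g\mapsto P(g)$ stays $H$-analytic. This is standard (it underlies the analogous arguments in \cite{BC16, Ber16}) but is the place where the Banach hypothesis on $R$ is genuinely used. Everything else is bookkeeping with the quasi-basis and the definition of analytic vectors.
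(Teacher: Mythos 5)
Your argument is a faithful reproduction of the proof of \cite[Prop. 2.3]{BC16}, which is exactly what the paper cites; so for the case where $M$ is \emph{free} over $R$, your proof is correct and is essentially the one the authors have in mind.

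However, there is a genuine gap in the way you handle the quasi-basis case, which is precisely the ``generalization'' the paper claims over \cite{BC16}. You assert that ``since the $m_i$ are a quasi-basis this matrix $P(g)=(p_{ij}(g))\in M_d(R)$ is well-defined and is invertible.'' This is false when $M$ is not free: if $I_i := \mathrm{Ann}_R(m_i) \neq 0$, then the coefficient $p_{ij}(g)$ in $g(m_j)=\sum_i p_{ij}(g)m_i$ is only well-defined as an element of $R/I_i$, not of $R$. The same problem affects the next step, since ``extracting the $i$-th coordinate'' is a continuous map $M \to Rm_i \cong R/I_i$, not $M\to R$; so you only get $p_{ij}\in\mathcal{C}^{\an}(H,R/I_i)$. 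As a consequence, the rows of $P(g)$ live in different quotient rings, $P(g)$ is not a matrix over a single ring, and the inversion argument (shrink $H$, expand $(\det P(g))^{-1}$ as a Neumann series, apply Cramer) does not make sense as stated. One can choose analytic lifts of the $p_{ij}$ to $R$ (using that the $I_i$ are closed and the quotient norm), and can even normalize so that the lifted $P(\mathrm{id})=I_d$ and the lift is close to $I_d$; but then the relation $\rho_i(g)=\sum_j g(r_j)p_{ij}(g)$ only holds modulo $I_i$, and inverting $P(g)$ only recovers $g(r_i)$ up to an error term in $I_i$ that need not be analytic in $g$. There is a second, related issue with the final displayed identification $M^{G\dla}\otimes_{R^{G\dla}}R=M$: even granting $M^{G\dla}=\oplus R^{G\dla}m_i$, the base change of each summand is $R/\bigl((R^{G\dla}\cap I_i)R\bigr)$, and this equals $Rm_i=R/I_i$ only if $I_i$ is generated over $R$ by $R^{G\dla}\cap I_i$ --- a condition that is true in the paper's applications (where $I_i=(t^{a_i})$ with $t\in R^{G\dla}$) but is not part of the stated hypotheses and is not addressed in your argument.

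In short: you correctly identified and executed the BC--Colmez strategy for the free case, which is the heart of the matter, but the passage from a basis to a quasi-basis cannot be waved through as ``the matrix is still well-defined.'' You should either restrict to the free case and then reduce the quasi-basis case to it (e.g.\ by exploiting that in the applications the annihilators are powers of a single locally analytic element $t$ and are $G$-stable), or add hypotheses ensuring the $I_i$ are closed, $G$-stable, and generated by $R^{G\dla}\cap I_i$, and then carry out the cocycle argument carefully over the quotients $R/I_i$.
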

\begin{proof}
This is easy generalization of \cite[Prop. 2.3]{BC16} which treated the case when $m_1, \cdots, m_d$ is a (genuine) basis.
\end{proof}

\begin{defn}
Let $(H, \|\cdot \|)$ be a $\Qp$-Banach algebra such that $\|\cdot \|$ is sub-multiplicative, and let $W \subset H$ be a $\Qp$-subalgebra. Let $T$ be a variable, and let  $W \dacc{T}_n$ be the vector space consisting of $\sum_{k \geq 0} a_k T^k$ with $a_k \in W$, and $p^{nk} a_k \to 0$ when $k \to +\infty$. For $h \in H$ such that $\|h \|\leq p^{-n}$, denote $W \dacc{h}_n$ the image of the evaluation map $W \dacc{T}_n \to H$ where $T \mapsto h$.
\end{defn}

 \begin{notation}\label{notaG_n}
 Let $G$ be a $p$-adic Lie group of dimension $d$.
As explained above \cite[Lem. 2.4]{BC16}, there exists some compact open subgroup $G_1$, which admits local coordinates $c_i: G_1 \to \zp$ for each $1\leq i \leq d$, and such that if we let $G_n:=G_1^{p^{n-1}}$, then $\mathbf{c}(G_n) =(p^n\zp)^d, \forall n$.
 \end{notation}

\begin{prop}\label{propaxiomring}
Let $G, G_n$ be as in Notation \ref{notaG_n}, and let $(W, \|\cdot \|)$ be a $\Qp$-Banach algebra with a $G$-action.
Let $\delta \in \Lie(G)\otimes_\zp W^{G\dla}$, which induces a $\qp$-linear map $\delta: W^{G\dla} \to W^{G\dla}$ satisfying Leibniz law.
Suppose the following conditions are satisfied:
\begin{itemize}
\item  There exists $a \in W^{G\dla}$ such that $\delta(a)=1$;
\item  for each $n \geq 1$, there exists some $a_n \in W^{G\dla, \delta=0}$ and $r(n) \geq 1$ such that for any $m \geq r(n)$, we have $a-a_n \in W^{G_m\dan}$ and $||a-a_n||_{G_m} \leq p^{-n}$. (That is: $a$ can be uniformly approximated).
\end{itemize}
Choose the $r(n)$'s so that it becomes a strictly increasing sequence. Then we have
\begin{equation}
 W^{G\dla} = \bigcup_{n \geq 1} (W)^{G_n\dan, \delta=0}\dacc{a-a_n}_n
\end{equation}
\end{prop}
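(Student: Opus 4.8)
The plan is to prove the two inclusions separately, with the bulk of the work residing in the inclusion ``$\subseteq$''. For ``$\supseteq$'': each $W^{G_n\dan,\delta=0}\dacc{a-a_n}_n$ lies in $W^{G\dla}$ because $a-a_n \in W^{G_m\dan}$ for $m\geq r(n)$ with $\|a-a_n\|_{G_m}\leq p^{-m}$ (by monotonicity of the norms $\|\cdot\|_{G_m}$ in $m$ once $m\geq r(n)$, or simply by the hypothesis reapplied at level $m$), so the convergence condition defining $\dacc{\cdot}_n$ guarantees that a series $\sum_k b_k (a-a_n)^k$ with $b_k \in W^{G_n\dan}$ and $p^{nk}b_k \to 0$ actually converges in $W^{G_m\dan}$ for $m$ large; hence the sum is locally analytic. (Here one uses that $W^{G_m\dan}$ is a Banach algebra and that multiplication by the $G_m$-analytic element $a-a_n$ is bounded on it.)

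\textbf{The main inclusion.} For ``$\subseteq$'', take $w \in W^{G\dla}$; then $w\in W^{G_n\dan}$ for some $n$. I would run a Taylor-expansion / descent argument in the single ``Sen-type'' direction cut out by $\delta$. Concretely, since $\delta(a)=1$, the operator $\delta$ plays the role of $\frac{d}{da}$, and one wants to write $w$ as a convergent power series in $(a-a_n)$ with coefficients killed by $\delta$. The natural candidate is the ``Taylor series''
\[
w \;=\; \sum_{k\geq 0} \frac{1}{k!}\,\delta^k(w)\big|_{\,a=a_n}\cdot (a-a_n)^k,
\]
where ``$|_{a=a_n}$'' must be made sense of; more robustly, one defines coefficients recursively by $b_0 = w$ and $b_{k+1}$ extracted from $\delta^{k+1}(w)$, arranging $\delta(b_k)=0$. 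The key estimate is that, after possibly enlarging $n$, the $G_n$-analyticity norm of $w$ controls $\|\delta^k(w)\|$ by roughly $p^{nk}k!$ (this is the standard comparison between the action of a locally analytic group element and the Lie algebra action, cf. the inequalities used in \cite[\S 2]{BC16}, \cite[\S 2]{Ber16}): $\delta \in \Lie(G)\otimes W^{G\dla}$ acts on $W^{G_n\dan}$ with norm $\leq p^{-n}$ up to the $k!$ factor, so $\|\delta^k(w)/k!\|_{G_n} \leq p^{-nk}\|w\|_{G_n}$ after normalizing. Dividing by the $k!$ in the Taylor coefficient and pairing with $(a-a_n)^k$ then lands us precisely in $W^{G_n\dan}\dacc{a-a_n}_n$, provided the coefficients lie in $W^{G_n\dan}$.

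\textbf{Killing $\delta$ on the coefficients.} The remaining point is that the Taylor coefficients can be chosen in $W^{G_n\dan,\delta=0}$ rather than merely $W^{G_n\dan}$. This is where $a_n \in W^{G\dla,\delta=0}$ enters: because $a_n$ is $\delta$-fixed, the ``evaluation at $a=a_n$'' of the coefficient series is naturally $\delta$-invariant; more carefully, one builds the $b_k$ inductively, at each stage replacing $\delta^k(w)$ by $\delta^k(w) - b_k\cdot(\text{lower order correction})$ and using $\delta(a-a_n)=1$ to solve $\delta(\,\cdot\,)=\delta^k(w)/k! \cdot(\dots)$ — i.e. integrating — with the integration constant being the desired $\delta$-fixed coefficient $b_k$. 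One must check this induction does not destroy the $p^{nk}$-decay, which follows from the same norm comparison. I expect the genuine obstacle to be exactly this interplay: simultaneously (i) controlling $\|\delta^k(w)\|_{G_n}$ by $k!\,p^{-nk}$ uniformly, and (ii) arranging that the recursively defined coefficients stay $\delta$-fixed \emph{and} $G_n$-analytic \emph{and} satisfy the decay $p^{nk}b_k\to 0$, possibly at the cost of passing from $n$ to some $n' \geq r(n)$ (which is harmless since the union over $n$ is the same). Once all three are in hand, the two inclusions match and the identity follows. This is a streamlined/axiomatized version of the arguments around \cite[Lem.~2.4, Prop.~2.3]{BC16} and \cite[\S 2]{Ber16}, so I would cite those for the underlying Banach-space estimates and only spell out the bookkeeping needed to separate the $\delta$-fixed part.
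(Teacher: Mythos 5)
Your overall architecture is the same as the paper's: the $\supseteq$ inclusion is routine, and the substance is producing, for a given $x\in W^{G_\ell\dan}$, a convergent expansion $x=\sum_{i\ge 0}z_i\,(a-a_n)^i$ with $z_i\in W^{G_m\dan,\delta=0}$. You also correctly identify, and cite, the right Banach-norm estimates from \cite{BC16,Ber16} to control the growth of $\delta^k(x)$. But you stop exactly at the step you yourself flag as the ``genuine obstacle'': producing coefficients that are simultaneously $\delta$-fixed, $G_m$-analytic, and have the required decay. The naive candidate $\frac{1}{i!}\delta^i(x)\big|_{a=a_n}$ is not defined --- there is no evaluation map in this setting --- and the recursive ``integration'' you sketch (``solve $\delta(\cdot)=\cdots$ with the integration constant being the $\delta$-fixed coefficient'') is never carried out; as written it does not tell one how to choose the integration constants, nor why the resulting coefficients land in $W^{G_m\dan}$ with the right norm bound.

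The paper closes this gap with an explicit closed-form formula (following \cite[Lem.~2.6]{BC16}):
\[
z_i \;=\; \sum_{k\ge 0}(-1)^k\,(a-a_n)^k\,\delta^{k+i}(x)\,\binom{k+i}{k},
\]
together with the identity $x=\sum_{i\ge 0}z_i(a-a_n)^i$. The point is that this formula replaces the ill-defined ``evaluation at $a=a_n$'' by an actual element of $W$; one then checks directly, using $\delta(a-a_n)=\delta(a)=1$ and the Leibniz rule, that $\delta(z_i)=0$, and the norm estimates from \cite[Lem.~2.6]{BC16} (adapted to the fixed $\delta\in\Lie(G)\otimes W^{G\dla}$) give $z_i\in W^{G_m\dan}$ and the convergence of both $z_i$ and $\sum_i z_i(a-a_n)^i$ after enlarging $n$. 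Your recursive approach, if unwound, would have to reproduce exactly this formula; but as submitted the proposal is missing the key construction and hence is not a complete proof.
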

\begin{proof}
This axiomatizes several computations made in the literature, the first being \cite[Thm. 4.2, Prop. 4.12]{BC16}; similar ideas are used in \cite[Thm. 5.4]{Ber16}, \cite[Thm. 5.3.5]{GP21} etc (the final instance will be reviewed in Construction \ref{verifyaxiomwtb}). Here, let us only give a brief sketch (the readers can consult e.g. \cite{BC16} for a detailed computation).

Suppose $x\in W^{G_\ell\dan}$ for some $\ell \geq 1$. Then \cite[Lem. 2.6]{BC16} (which easily adapts to our \emph{fixed} $\delta \in \mathrm{Lie} G\otimes W^{G\dla}$) implies that there exists some $n \gg 0$, such that if we define
$$ z_i = \sum_{k \geq 0} (-1)^k (a - a_n)^k \delta ^{k+i}(x) \binom{k+i}{k},$$
then for any $m\geq r(n)$, we have $z_i \in W^{G_m\dan}$, and
\begin{equation} \label{eqxbeta}
x = \sum_{i \geq 0} z_i (a - a_n)^i \in W^{G_m\dan}.
\end{equation}
Finally, note that since $\delta (a - a_n) =\delta (a)=1$, we have $\delta(z_i)=0$.
\end{proof}

\begin{prop}\label{propaxiommodule}
Let $G, G_n$ be as in Notation \ref{notaG_n}, let $R$ be a $\qp$-Banach algebra with a $G$-action such that $R^{G\dla}=R$ (that is, the $G$-action is locally analytic). Suppose there exists  $\delta \in \Lie(G)\otimes_\zp R$ such that the itemized conditions in Prop. \ref{propaxiomring} are satisfied: that is, there exist $a \in R$ such that $\delta(a)=1$ and it can be approximated by $a_n$.

Let $M$ be a finitely generated $R$-module with a quasi-basis equipped with a $R$-semi-linear $G$-action and such that $M^{G\dla}=M$. Then $M^{\delta=0}$ is a finitely generated $R^{\delta=0}$-module with a quasi-basis and there is a natural identification
$$M^{\delta=0} \otimes_{R^{\delta=0}} R \xrightarrow{\simeq} M.$$
\end{prop}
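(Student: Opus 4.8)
The strategy is to carry the computation behind Proposition~\ref{propaxiomring} over to the module $M$, using the element $a$ (with $\delta(a)=1$) and its $\delta$-fixed approximations $a_n$ to produce a $\delta$-equivariant retraction of $M$ onto $M^{\delta=0}$, and then to transport a quasi-basis; this is in the spirit of Lemma~\ref{lemLAVqbasis}, but with the condition $\delta=0$ replacing local analyticity. The only structural input in the proof of Proposition~\ref{propaxiomring} is the Leibniz rule for $\delta$, so the same manipulations apply with $R$ replaced by $M$: for $x\in M=M^{G\dla}$, say $x\in M^{G_\ell\dan}$, the estimate of \cite[Lem.~2.6]{BC16} gives, for all $n$ large enough, that the series $z_i(x)$ formed from $x$, $a$ and $a_n$ as in that proof converge in $M^{G_m\dan}$ for $m\ge r(n)$, take values in $M^{\delta=0}$, and satisfy $x=\sum_{i\ge0}z_i(x)(a-a_n)^i$. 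In particular $z_0$ is an $R^{\delta=0}$-linear retraction $M^{G_n\dan}\to M^{\delta=0}$.

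Next I would bring finite generation into play. Fix a quasi-basis $e_1,\dots,e_d$ of $M$ over $R$; since $M=M^{G\dla}$ these lie in a common $M^{G_\ell\dan}$, so one value of $N$ (taken large) serves for all of them simultaneously, and I set $f_i:=z_0(e_i)\in M^{\delta=0}$. Writing $\delta^k(e_i)=\sum_j c^{(k)}_{ij}e_j$ with $c^{(k)}_{ij}\in R$ yields $f_i=\sum_j B_{ij}e_j$, where $B=\mathrm{Id}+C$ and $C$ is a convergent series in $(a-a_N)$ with no constant term whose coefficients are built from the $c^{(k)}_{ij}$. Because $\|a-a_N\|\le\|a-a_N\|_{G_m}\le p^{-N}$ while the $c^{(k)}_{ij}$ do not depend on $N$, the same estimate forces $\|C_{ij}\|\to0$ as $N\to\infty$; so after enlarging $N$ once more we have $B\in\GL_d(R)$. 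Hence $e_i=\sum_j(B^{-1})_{ij}f_j$, so the horizontal elements $f_1,\dots,f_d$ generate $M$ over $R$, and therefore the natural map $\iota\colon M^{\delta=0}\otimes_{R^{\delta=0}}R\to M$ is surjective.

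It remains to show that $\iota$ is an isomorphism and that $\{f_i\}$ is a quasi-basis of $M^{\delta=0}$. Suppose first that $M$ is free on $e_1,\dots,e_d$. Then $B\in\GL_d(R)$ makes $\{f_i\}$ an $R$-basis of $M$, so any $y\in M^{\delta=0}$ has a unique expansion $y=\sum_i r_if_i$ with $r_i\in R$; applying $\delta$ gives $\sum_i\delta(r_i)f_i=\delta(y)=0$, whence $\delta(r_i)=0$, i.e.\ $r_i\in R^{\delta=0}$, and likewise $\sum_i r_if_i=0$ with $r_i\in R^{\delta=0}$ forces $r_i=0$. Thus $M^{\delta=0}=\bigoplus_{i=1}^d R^{\delta=0}f_i$ and $\iota$ is an isomorphism. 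For a general quasi-basis one would run the same argument after transporting relations through $B^{-1}$: one still obtains that $f_1,\dots,f_d$ generate $M^{\delta=0}$ over $R^{\delta=0}$, so $M^{\delta=0}$ is finitely generated (hence admits a quasi-basis), and together with the surjectivity of $\iota$ and its injectivity — proved by the standard descent argument, using flatness of $R^{\delta=0}\hookrightarrow R$ — this yields the statement.

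The step I expect to be the real work is the analytic input in the first paragraph: verifying convergence of the series $z_i(x)$ and, more delicately, that the error matrix $C=B-\mathrm{Id}$ becomes small in the Banach norm of $R$ once $N$ is large — that is, unwinding the estimate of \cite[Lem.~2.6]{BC16} and making it uniform over the finitely many generators $e_1,\dots,e_d$. Everything after that is elementary linear algebra over $R$ and $R^{\delta=0}$.
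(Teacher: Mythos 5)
Your proof is essentially the paper's, presented from the ``apply the retraction to each generator'' angle rather than the ``write down the base-change matrix'' angle: the matrix $B$ you build by applying $z_0$ to a quasi-basis is (up to transpose conventions) exactly the paper's $H=\sum_{k\ge 0}(-1)^kD_k\frac{(a-a_n)^k}{k!}$, which the paper verifies solves $\delta(H)+D_\delta H=0$ and which is close to the identity, hence invertible, once $n$ is large; both proofs draw the convergence input from \cite[Lem.~2.6]{BC16}, and your care about the free-vs.-quasi-basis case fills in the step the paper leaves at ``it suffices to find $H$''. One inherited point to watch: taken literally, the $z_i$ formula in the paper's proof of Proposition~\ref{propaxiomring}, $z_i=\sum_{k\ge 0}(-1)^k(a-a_n)^k\delta^{k+i}(x)\binom{k+i}{k}$, does not give $\delta(z_i)=0$ under the Leibniz rule (already $\delta(z_0)=-\sum_{j\ge 1}(-1)^j\,j\,(a-a_n)^j\delta^{j+1}(x)\ne 0$); the operator that does, and that reproduces $H$, is the Taylor-normalised $z_0=\sum_{k\ge 0}\frac{(-1)^k}{k!}(a-a_n)^k\delta^k$. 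You inherit this discrepancy from the paper's own exposition, so it is not a defect of your approach, but when you say ``form the $z_i$ as in that proof'' you should have the normalised version in mind so that $B$ literally coincides with $H$.
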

\begin{proof}
This again axiomatizes several computations made in the literature, the first being \cite[Thm. 6.1]{Ber16}; cf. also \cite[Rem. 6.1.7]{GP21} which carries out similar computations. Again, we only give a brief sketch.

Let  $D_\delta=\Mat(\delta)$ be the matrix of $\delta: M\to M$ with respect to the given  quasi-basis of $M$.
Then it suffices to show that there exists $H\in \GL_d(R)$ such that
 \begin{equation}\label{eqdgamma}
 \delta(H)+D_{\delta}H = 0
 \end{equation}
For $k \in \mathbb N$, let $D_k = \Mat(\delta^k)$. For $n$ large enough, the series given by
$$H = \sum_{k \geq 0}(-1)^k D_k\frac{(a-a_n)^k}{k!}$$
converges   to the desired  solution of \eqref{eqdgamma}; note that the entries of $H$ are in $R$ precisely by Prop. \ref{propaxiomring}.
\end{proof}

\section{Locally analytic vectors in rings}\label{subsecwtbi}
In this section, we show that the rings  $\wt{\mathbf{B}}^I_L$ and $\bdrplusml$ satisfy the axioms in Prop. \ref{propaxiomring}. This is in preparation for next section, where we will used the axiomatized Prop. \ref{propaxiommodule} to construct Sen--Fontaine theory over the Kummer tower.

\subsection{The Lie group $\hat{G}$}
 We set up some notations about the $p$-adic Lie group $\hat{G}$ defined in Notation \ref{notafields}.

\begin{Notation} \label{nota hatG}
Note $\hat{G}=\gal(L/K)$ is a $p$-adic Lie group of dimension 2. We recall the structure of this group in the following.
\begin{enumerate}
\item Recall that:
\begin{itemize}
\item if $K_{\infty} \cap K_{p^\infty}=K$ (always valid when $p>2$, cf. \cite[Lem. 5.1.2]{Liu08}), then $\gal(L/K_{p^\infty})$ and $\gal(L/K_{\infty})$ generate $\hat{G}$;
\item if $K_{\infty} \cap K_{p^\infty} \supsetneq K$, then necessarily $p=2$, and $K_{\infty} \cap K_{p^\infty}=K(\pi_1)$ (cf. \cite[Prop. 4.1.5]{Liu10}) and $\pm i \notin K(\pi_1)$, and hence $\gal(L/K_{p^\infty})$ and $\gal(L/K_{\infty})$   generate an open subgroup  of $\hat{G}$ of index $2$.
\end{itemize}

\item Note that:
\begin{itemize}
\item $\gal(L/K_{p^\infty}) \simeq \Zp$, and let
$\tau \in \gal(L/K_{p^\infty})$ be \emph{the} topological generator such that
\begin{equation} \label{eq1tau}
\begin{cases}
\tau(\pi_i)=\pi_i\mu_i, \forall i \geq 1, &  \text{if }  \Kinfty \cap \Kpinfty=K; \\
\tau(\pi_i)=\pi_i\mu_{i-1}=\pi_i\mu_{i}^2, \forall i \geq 2, & \text{if }  \Kinfty \cap \Kpinfty=K(\pi_1).
\end{cases}
\end{equation}

\item $\gal(L/K_{\infty})$ ($\subset \gal(K_{p^\infty}/K) \subset \Zp^\times$) is not necessarily pro-cyclic when $p=2$; however, this issue will \emph{never} bother us.
\end{itemize}
\item If we let $\Delta \subset \gal(L/K_{\infty})$ be the torsion subgroup, then $\gal(L/K_{\infty})/\Delta$  is pro-cyclic; choose $\gamma' \in \gal(L/K_{\infty})$ such that its image in $\gal(L/K_{\infty})/\Delta$ is a topological generator.
Let $\hat{G}_n \subset \hat{G}$ be the subgroup topologically generated by $\tau^{p^n}$ and $(\gamma')^{p^n}$. These subgroups then satisfy the conditions in Notation \ref{notaG_n}.
\end{enumerate}
\end{Notation}

\begin{Notation} \label{notataula}
We set up some notations with respect to representations of $\hat{G}$.
\begin{enumerate}
\item Given a $\hat{G}$-representation $W$, we use
$$W^{\tau=1}, \quad W^{\gamma=1}$$
to mean $$ W^{\gal(L/K_{p^\infty})=1}, \quad
W^{\gal(L/K_{\infty})=1}.$$
And we use
$$
W^{\tau\dla},   \quad  W^{\gamma\dla} $$
to mean
$$
W^{\gal(L/K_{p^\infty})\dla}, \quad
W^{\gal(L/K_{\infty})\dla}.  $$

\item  Let
$W^{\tau\dla, \gamma=1}:= W^{\tau\dla} \cap W^{\gamma=1},$
then by \cite[Lem. 3.2.4]{GP21}
$$ W^{\tau\dla, \gamma=1} \subset  W^{\hat{G}\dla}. $$
\end{enumerate}
\end{Notation}


\begin{notation} \label{notalieg}
For $g\in \hat{G}$, let $\log g$ denote  the (formally written) series $(-1)\cdot \sum_{k \geq 1} (1-g)^k/k$. Given a $\hat{G}$-locally analytic representation $W$, the following two Lie-algebra operators (acting on $W$) are well defined:
\begin{itemize}
\item  for $g\in \gal(L/\kinfty)$ enough close to 1, one can define $\nabla_\gamma := \frac{\log g}{\log(\chi_p(g))}$;
\item for $n \gg 0$ hence $\tau^{p^n}$ enough close to 1, one can define $\nabla_\tau :=\frac{\log(\tau^{p^n})}{p^n}$.
\end{itemize}
Clearly, these two Lie-algebra operators form a $\qp$-basis of $\Lie(\hat{G}$).
\end{notation}

\subsection{Locally analytic vectors in $\tilde{\mathbf{B}}^{I}$}
\begin{notation}
We briefly recall the rings $\wt{\mathbf{A}}^{I}$ and  $\wt{\mathbf{B}}^{I}$, see \cite[\S 2]{GP21} for detailed discussions, also see \cite[\S 2]{Gao23} for a faster summary.

Recall in Notation \ref{notaBKprism}, using the sequence $\pi_n$, we defined an embedding $\gs \into \ainf$ where $u$ maps to  $[\pi^\flat]$.
Henceforth, we  identify  $u$  with the element $[\pi^\flat] \in \ainf$.
For $n \geq 0$, let $r_n: =(p-1)p^{n-1}$.
Let $\wt{\mathbf{A}}^{[r_\ell, r_k]}$ be the $p$-adic completion of $ \ainf [\frac{p}{u^{ep^\ell}} , \frac{u^{ep^k}}{p}]$, and let
$$\wt{\mathbf{B}}^{[r_\ell, r_k]}: =\wt{\mathbf{A}}^{[r_\ell, r_k]}[1/p].$$
These spaces are equipped with $p$-adic topology. When $I  \subset J$ are two closed intervals as above, then by \cite[Lem. 2.5]{Ber02}, there exists a natural (continuous) embedding
$\wt{\mathbf{B}}^{J}   \hookrightarrow  \wt{\mathbf{B}}^{I}$. Hence we can define the nested intersection
$$\wt{\mathbf{B}}^{[r_\ell, +\infty)}: = \bigcap_{k \geq \ell} \wt{\mathbf{B}}^{[r_\ell, r_k]},$$
and equip it with the natural Fr\'echet  topology.  Finally, let
$$\wt{\mathbf{B}}_{  \rig}^{\dagger}: = \bigcup_{n \geq 0} \wt{\mathbf{B}}^{[r_n, +\infty)},$$
 which is a LF space.

\end{notation}

\begin{convention}
When $Y$ is a ring with a $G_K$-action, $X \subset \overline{K}$ is a subfield, we use $Y_X$ to denote the $\gal(\overline{K}/X)$-invariants of  $Y$.   Examples include when $Y=\wt{\mathbf{A}}^{I}, \wt{\mathbf{B}}^{I}$ and $X=L, K_\infty$. This ``style of notation" imitates that of \cite{Ber02}, which uses the subscript $\ast_{K}$ to denote $G_{\kpinfty}$-invariants.
\end{convention}

\begin{defn} \label{defnfkt}
(cf. \cite[\S 5.1]{GP21} for full details).
 Recall in Notation \ref{notaainflog}, we defined an element   $[\epsilon] \in \ainf$.
Let $t=\log([\underline \epsilon])  \in \bcrisplus$ be the usual element, where $\bcrisplus$ is the usual crystalline period ring.
Recall we already defined the element $\lambda :=\prod_{n \geq 0} (\varphi^n(\frac{E(u)}{E(0)}))  \in \bcrisplus.$ in Notation \ref{notafields}.
Define
$$ \mathfrak{t} = \frac{t}{p\lambda},$$
then it turns out $\mathfrak{t} \in \ainf$.
  \end{defn}

\begin{lemma} \label{lem b}
\cite[Lem. 5.1.1]{GP21}
There exists some $n=n(\fkt) \geq 0$, such that $\mathfrak{t}, 1/\mathfrak{t} \in
  \wt{\mathbf{B}}^{[r_n, +\infty)}$. In addition, $\mathfrak{t}, 1/\mathfrak{t} \in
 (\wt{\mathbf{B}}^{[r_n, +\infty)}_{ L})^{\hat{G}\dpa}$.
\end{lemma}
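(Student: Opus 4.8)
Lemma~\ref{lem b} asserts that for some $n=n(\fkt)\geq 0$ we have $\fkt,1/\fkt\in\wt{\mathbf{B}}^{[r_n,+\infty)}$, and moreover $\fkt,1/\fkt\in(\wt{\mathbf{B}}^{[r_n,+\infty)}_L)^{\hat G\dpa}$.

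\textbf{Plan of proof.} The plan is to analyze the element $\fkt=t/(p\lambda)$ factor by factor. First I would recall that $t=\log[\ueps]$ lies in $\Bcrisplus$ and that its ``slopes'' are well understood: $t/\xi$ is a unit in $\BdRp$ and, more precisely, $t\in\wt{\mathbf{B}}^{[r_0,+\infty)}$ with $t$ having a simple zero along each $\xi$-divisor; concretely $t$ and $\xi/t$ (equivalently $t/\xi$, suitably interpreted) are analytic on the relevant annuli. Next I would treat $\lambda=\prod_{n\geq 0}\varphi^n(E(u)/E(0))$: since $E(u)/E(0)$ is a polynomial in $u=[\pi^\flat]$ with constant term $1$, each $\varphi^n(E(u)/E(0))$ is a polynomial in $[\pi^\flat]^{p^n}$ with constant term $1$, so the infinite product converges in each $\wt{\mathbf{B}}^{[r_\ell,r_k]}$ and defines an element of $\wt{\mathbf{B}}^{[r_0,+\infty)}$; its zeros are exactly along the Frobenius translates of the divisor of $E([\pi^\flat])$. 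The key arithmetic input, already recorded in the literature (Kisin, Berger), is that $t$ and $p\lambda$ have the \emph{same} divisor on the open unit disc up to the units, i.e. $\fkt=t/(p\lambda)$ extends to a nowhere-vanishing analytic function on $[r_n,+\infty)$ for $n$ large enough; this is precisely the content cited as \cite[Lem. 5.1.1]{GP}, and I would simply invoke that computation. Once $\fkt$ is a unit of the Fr\'echet algebra $\wt{\mathbf{B}}^{[r_n,+\infty)}$, both $\fkt$ and $1/\fkt$ lie there.

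For the second assertion — membership in $(\wt{\mathbf{B}}^{[r_n,+\infty)}_L)^{\hat G\dpa}$ — I would argue as follows. The element $\fkt$ is visibly $G_L$-invariant: $t=\log[\ueps]$ is fixed up to the cyclotomic character, $\lambda$ is a power series in $u=[\pi^\flat]$, and on $G_L$ both $[\ueps]$ and $[\pi^\flat]$ are acted on trivially in the relevant sense needed after dividing out — more carefully, $g(t)=\chi(g)t$ and $g(\lambda)$ differs from $\lambda$ by a factor built from $g([\pi^\flat])/[\pi^\flat]$, and one checks $\fkt$ is literally $G_L$-fixed because the cocycles cancel. Hence $\fkt\in\wt{\mathbf{B}}^{[r_n,+\infty)}_L$. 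For pro-analyticity: the $\hat G$-action on $\wt{\mathbf{B}}^{[r_n,+\infty)}_L$ is by continuous ring automorphisms, and $\hat G$ acts on the two ``coordinates'' $t$ and $\lambda$ (equivalently $[\ueps]$ and $[\pi^\flat]$) through locally analytic formulas: $g([\ueps])=[\ueps]^{\chi(g)}$ and $g([\pi^\flat])=[\ueps]^{c(g)}[\pi^\flat]$, with $\chi,c$ locally analytic functions on $\hat G$ valued in $\Zp$. Substituting these into the convergent expressions for $t$ and $\lambda$ shows $g\mapsto g(\fkt)$ is given, on a small enough open subgroup, by a convergent power series in $c(g)$ and $\log\chi(g)$ with coefficients in $\wt{\mathbf{B}}^{[r_n,+\infty)}_L$; the same applies to $1/\fkt$ since inversion preserves pro-analytic vectors in a Fr\'echet algebra. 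This is the content of the second half of \cite[Lem. 5.1.1]{GP}, which I would cite.

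\textbf{Main obstacle.} The only genuinely nontrivial point is the divisor comparison between $t$ and $p\lambda$ on the annuli $[r_n,+\infty)$ — i.e. establishing that their ratio is a unit rather than merely a meromorphic function — together with controlling the radius $r_n$ beyond which this holds. This requires the explicit slope/Newton-polygon analysis of $t=\log[\ueps]$ and of the Frobenius product $\lambda$; I would not reprove it but rather quote \cite[Lem. 5.1.1]{GP} (and the foundational computations in \cite{Ber02,Kis06} that it rests on), since the present paper is entitled to use results stated earlier in the excerpt. Everything else — convergence of the infinite product, $G_L$-invariance, and reduction of pro-analyticity to local analyticity of $\chi$ and $c$ — is routine given the machinery of \S\ref{seclav}.
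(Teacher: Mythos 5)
Your proposal is correct and takes essentially the same route as the paper: the paper gives no proof of the lemma beyond citing \cite[Lem. 5.1.1]{GP}, and your outline, while sketching what that cited result's proof would involve (slopes of $t$ and of the Frobenius product $\lambda$, unit-ness of their ratio on a suitable annulus, pro-analyticity via the locally analytic formulas $g([\ueps])=[\ueps]^{\chi(g)}$, $g([\pi^\flat])=[\ueps]^{c(g)}[\pi^\flat]$), ultimately defers the key slope/divisor comparison to the very same citation, which is exactly what the paper does. One small imprecision worth flagging: the $G_L$-invariance of $\fkt$ requires no cocycle cancellation. Since $L\supset K_{p^\infty}$ and $L\supset K_\infty$, the character $\chi$ and the cocycle $c$ are both trivial on $G_L$, so $[\ueps]$ and $[\pi^\flat]$ are each individually fixed by $G_L$; hence $t$ and $\lambda$ (and therefore $\fkt$) are separately $G_L$-fixed. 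The ``cancellation'' you invoke is the vacuous one, so your conclusion is right but the phrasing suggests a nontrivial computation where none is needed.
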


\begin{construction}\label{verifyaxiomwtb}
We verify that the ring $\wt{\mathbf{B}}^I_L:=(\wt{\mathbf{B}}^I)^{G_L}$ satisfies the axioms in Prop. \ref{propaxiomring}. This is indeed a summary of main results in \cite[\S 5]{GP21}, particularly \cite[Thm. 5.3.5]{GP21}.

Let $n \geq n(\fkt)$ as in Lem. \ref{lem b}, and
Let $I=[r_n, r_n]$. (In fact, all discussions here are valid for any $[r, s] \subset (0, +\infty)$ with  $r\geq r_n$).
 Consider $\hat G$-action on the  $\qp$-Banach ring $\wt{\mathbf{B}}^I_L$.
Consider the differential operator $\partial_{\gamma}: (\wt{\mathbf{B}}^I_L)^{\hat{G}\dla} \to (\wt{\mathbf{B}}^I_L)^{\hat{G}\dla}$
via
$$\partial_{\gamma}:=\frac{1}{\fkt}\nabla_{\gamma}.$$
Note we can divide by $\fkt$ because it is an unit in $(\wt{\mathbf{B}}^I_L)^{\hat{G}\dla} $.
Then $\partial_{\gamma}(\fkt)=1$ (by easy computation). Furthermore, $\fkt$ can be ``approximated" by a sequence of elements $\fkt_n \in (\wt{\mathbf{B}}_L^{I})^{\hat{G}\dla, \nabla_\gamma=0}$ (constructed via \cite[Lem. 5.3.2]{GP21}) in the sense that they satisfy the conditions in Prop. \ref{propaxiomring}. Hence all the conditions in Prop. \ref{propaxiomring} are satisfied. Thus the $\hat G$-locally analytic vectors can be described as
\begin{equation}
(\wt{\mathbf{B}}^I_L)^{\hat{G}\dla} = \bigcup_{n \geq 1} (\wt{\mathbf{B}}^I_L)^{\hat{G}_n\dan, \partial_\gamma=0}\dacc{\fkt-\fkt_n}_n
\end{equation}
Here, the elements in $(\wt{\mathbf{B}}^I_L)^{\hat{G}_n\dan, \partial_\gamma=0} =(\wt{\mathbf{B}}^I_L)^{\hat{G}_n\dan, \nabla_\gamma=0}$ can be \emph{explicitly} described by \cite[Thm. 4.2.9, Lem. 5.3.1]{GP21}; these explicit descriptions are not needed in this paper.
\end{construction}

\subsection{Locally analytic vectors in $\bdrplusml$}

\begin{notation}
Recall in Notation \ref{notabdrplus},  for $m \geq 1$ an integer, we denote
$$\bdrplusm =\bdrplus/(\xi)^m \simeq \ainf[1/p]/(\xi)^m$$
and consider it as a $\qp$-Banach space.
Denote
$$\bdrplusml =(\bdrplusm)^{G_L}.$$
\end{notation}

\begin{construction}\label{constiotanm}
Recall as in \cite{Ber02}, there exists a continuous embedding
$$\iota_n: \wtb^{[r_n, r_n]} \into  \bdrplus$$
which is the composite of the following continuous maps
$$\wtb^{[r_n, r_n]} \xrightarrow{\varphi^{-n}} \wtb^{[r_0, r_0]}\xrightarrow{\iota_0} \bdrplus.$$
For each $m \geq 1$,  define the composite map
\[\iota_{n, m} : \wtb^{[r_n, r_n]} \xrightarrow{\iota_n}  \bdrplus \onto \bdrplusm.\]
\end{construction}

\begin{lemma}
Consider $\fkt$ as an element of $\bdrplus$ via the inclusion $\ainf \subset \bdrplus$, then $\fkt^{\pm 1} \in \bdrpluslhatgpa$.
\end{lemma}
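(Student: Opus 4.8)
The plan is to transfer the statement, already known over the Robba-type rings, across the localization map $\iota_n$. Concretely, Lemma \ref{lem b} gives an $n$ with $\fkt,\,1/\fkt\in(\wt{\mathbf{B}}^{[r_n,+\infty)}_L)^{\hat{G}\dpa}$, and via the natural continuous restriction $\wt{\mathbf{B}}^{[r_n,+\infty)}\hookrightarrow\wt{\mathbf{B}}^{[r_n,r_n]}$ we may regard $\fkt,1/\fkt\in(\wt{\mathbf{B}}^{[r_n,r_n]}_L)^{\hat{G}\dpa}$. I would then push these through the map $\iota_n\colon\wt{\mathbf{B}}^{[r_n,r_n]}\to\bdrplus$ of Construction \ref{constiotanm}.

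The first point to record is that $\iota_n$ is continuous and $G_K$-equivariant: it is the composite of $\varphi^{-n}$, which commutes with the Galois action, and $\iota_0$, which is $G_K$-equivariant. A continuous $G_K$-equivariant map of the relevant Fréchet/LF representations carries $G_L$-invariants to $G_L$-invariants and $\hat{G}$-pro-analytic vectors to $\hat{G}$-pro-analytic vectors (functoriality of pro-analytic vectors under continuous equivariant maps). Hence $\iota_n(\fkt)$ and $\iota_n(1/\fkt)$ both lie in $\bdrpluslhatgpa$.

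Next I would compute $\iota_n(\fkt)$. On $\ainf$ the map $\iota_n$ is $\varphi^{-n}$ followed by the canonical inclusion $\ainf\subset\bdrplus$. From $\lambda=\prod_{k\ge 0}\varphi^k(E(u)/E(0))$ one gets the functional equation $\varphi(\lambda)=\lambda E(0)/E(u)$, hence $\varphi^{-n}(\lambda)=G_n\cdot\lambda$ with $G_n:=\prod_{i=1}^{n}\varphi^{-i}\!\big(E(u)/E(0)\big)$; combined with $\varphi(t)=pt$ this yields
\[
\iota_n(\fkt)=\varphi^{-n}\!\Big(\tfrac{t}{p\lambda}\Big)=\frac{1}{p^{n}G_n}\,\fkt\qquad\text{in }\bdrplus .
\]
Now $p^{n}G_n$ is a unit of $\bdrplus$: $p$ is a unit there, and since $\varphi^{-i}(u)=[\pi_i^{\flat}]$ with $\pi_i^{\flat}=(\pi_i,\pi_{i+1},\dots)\in\ocflat$ we have $\theta(\varphi^{-i}(E(u)))=E(\pi_i)\neq 0$ for $i\ge 1$. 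Moreover $p^{n}G_n\in\bdrpluslhatgpa$: it is a finite product of $p^{n}\in\bQ_p$, the constants $\varphi^{-i}(E(0))^{-1}\in K_0$, and the elements $\varphi^{-i}(E(u))=E([\pi_i^{\flat}])$, each of which is $G_L$-fixed (as $G_L$ fixes all $\pi_n$) and $\hat{G}$-pro-analytic in $\bdrplus$ (its $\tau$- and $\gamma$-translates differ from it by powers of $[\epsilon^{1/p^{i}}]$, whose pro-analyticity is classical, cf. Notation \ref{notaainflog}). Therefore $\fkt=(p^{n}G_n)\cdot\iota_n(\fkt)\in\bdrpluslhatgpa$, and likewise $\fkt^{-1}=(p^{n}G_n)^{-1}\cdot\iota_n(1/\fkt)\in\bdrpluslhatgpa$; alternatively, since $\fkt$ is a unit of $\bdrplus$, the inverse is automatically pro-analytic by the standard closure of pro-analytic vectors under inversion.

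The only genuine subtlety is the equivariance/continuity step: one must check that the topologies are set up so that $\iota_n$ really does preserve $\hat{G}$-pro-analyticity, which is the one place a careful citation of the functoriality of pro-analytic vectors is needed. The rest — the Frobenius-twist bookkeeping giving $G_n$, and the verification of its invertibility and pro-analyticity — is routine, and the $p=2$ discrepancies in the $\tau$-formulas are absorbed exactly as in Notation \ref{nota hatG} without affecting the argument.
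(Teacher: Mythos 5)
Your overall strategy mirrors the paper's proof: restrict from Lemma~\ref{lem b}, push through the continuous equivariant map $\iota_n$, write $\fkt = (p^n G_n)\cdot\iota_n(\fkt)$ with $G_n = \prod_{i=1}^n \varphi^{-i}(E(u)/E(0))$ (which is exactly the paper's equation~\eqref{eqfktandphi} after pulling out the $p$'s), and then argue that the ``correction factor'' $p^n G_n$ and its inverse are pro-analytic.

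There is, however, a genuine gap in your final step. You assert that the $\tau$- and $\gamma$-translates of $\varphi^{-i}(E(u)) = E([\pi_i^\flat])$ ``differ from it by powers of $[\epsilon^{1/p^i}]$.'' That is false: $\tau\bigl(E([\pi_i^\flat])\bigr) = E\bigl([\epsilon^{1/p^i}][\pi_i^\flat]\bigr)$, and since $E$ is a polynomial of degree $e>0$ this does not factor as $E([\pi_i^\flat])$ times a power of $[\epsilon^{1/p^i}]$ (it is a new polynomial expression, not a scalar twist). The twist-by-$[\epsilon^{1/p^i}]$ description is correct for $[\pi_i^\flat]$ itself; if you first argue $[\pi_i^\flat]\in\bdrpluslhatgpa$ (using $\tau^a([\pi_i^\flat]) = \exp(at/p^i)[\pi_i^\flat]$, which terminates modulo $t^m$) and then invoke that pro-analytic vectors form a ring, you recover a correct argument for $E([\pi_i^\flat])$. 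You would also need to handle the inverse with care (via closure of locally analytic vectors under inversion of units, as in \cite[Lem. 2.5]{BC16}, since $\theta(E([\pi_i^\flat])) = E(\pi_i)\ne 0$). The paper sidesteps this by citing \cite[Thm. 3.4.4]{GP} for local analyticity of $\varphi^{-i}(pE(u)/E(0))^{\pm 1}$ in $\wt{\mathbf{B}}^{[r_0,r_0]}$ and pushing through $\iota_0$, which is cleaner. Finally, the reference to Notation~\ref{notaainflog} does not say anything about pro-analyticity, so that citation does not support your claim.
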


\begin{proof}
Choose $n \geq n(\fkt)$ as in Lem. \ref{lem b}  so that $\fkt^{\pm 1} \in (\wtb^{[r_n, r_n]}_L)^{\hat{G}\dla}$.
Consider the image of $\fkt^{\pm 1}$ under the following composite  map
$$\wtb^{[r_n, r_n]} \xrightarrow{\varphi^{-n}} \wtb^{[r_0, r_0]}\xrightarrow{\iota_0} \bdrplus.$$
Since both maps are continuous, we conclude
 $$ \varphi^{-n}(\fkt^{\pm 1}) \in \bdrpluslhatgpa.$$
Since $\varphi(\fkt) =\frac{pE(u)}{E(0)} \fkt$, we have
\begin{equation}\label{eqfktandphi}
\fkt = \varphi^{-n}(\fkt) \cdot \prod_{i=1}^n \varphi^{-i}(\frac{pE(u)}{E(0)}),
\end{equation}
which holds as an equality inside $\ainf$ hence also in $\bdrplus$. To see that $ \fkt^{\pm 1} $ are (pro)-locally analytic, it then suffices to see that each $ \varphi^{-i}(\frac{pE(u)}{E(0)})$ and their inverse are so; but these elements are locally analytic in $\wtb^{[r_0, r_0]}$ by \cite[Thm. 3.4.4]{GP21}.
\end{proof}

\begin{construction} \label{axiomverifybdr}
We verify that $\hat G$-action on the  $\qp$-Banach ring $\bdrplusml$ satisfies the axioms in Prop. \ref{propaxiomring}.
  Consider the  differential operator $\partial_{\gamma}: (\bdrplusml)^{\hat{G}\dla} \to (\bdrplusml)^{\hat{G}\dla} $
via
$$\partial_{\gamma}:=\frac{1}{\fkt}\nabla_{\gamma}.$$
Then $\partial_{\gamma}(\fkt)=1$.
Fix one $N \geq n(\fkt)$ as in Lem. \ref{lem b}, and let $I=[r_N, r_N]$. As we discussed in Construction \ref{verifyaxiomwtb}, the element $\fkt \in (\wt{\mathbf{B}}^I_L)^{\hat{G}\dla} $ can be approximated by a sequence $\fkt_n$ satisfying all the axioms in  Prop. \ref{propaxiomring}.
As the map
\[\iota_{N, m}: \wtb^I \to \bfb_{\dR, m}^+ \]
 in Construction \ref{constiotanm} is continuous, the images $\iota_{N, m}(\fkt_n)$ approximate $\iota_{N, m}(\fkt) =\varphi^{-N}(\fkt)$ (regarded as an element in $\bfb_{\dR, m}^+$).
Denote $\fkc= \prod_{i=1}^N \varphi^{-i}(\frac{pE(u)}{E(0)})$, then by \eqref{eqfktandphi}, the sequence $\fkc  \cdot \iota_{N, m}(\fkt_n)$ approximate $\fkt$. In fact, by refining the sequence $\fkt_n$ if necessary, we can make sure all the conditions in  Prop. \ref{propaxiomring} are satisfied; namely, we can ensure (changing the index $m$ there to $M$)
$$||\fkt -\fkc \cdot \iota_{N, m}(\fkt_n)||_{G_M} \leq p^{-n}, \quad \forall M \geq r(n);$$
this is because $\fkc$ is a fixed element (hence with fixed norm).
As a consequence of Prop. \ref{propaxiomring}, we have
\begin{equation}
 (\bdrplusml)^{\hat{G}\dla} = \bigcup_{n \geq 1} (\bdrplusml)^{\hat{G}_n\dan, \partial_\gamma=0}\dacc{\fkt- \fkc  \cdot \iota_{N, m}(\fkt_n)}_n
\end{equation}

\end{construction}

\begin{lemma} \label{lemdividet}
Let $x \in  \bdrplusml$.
\begin{enumerate}
\item If $tx \in  (\bdrplusml)^{\tau=1, \gamma\dla}$, then $x \in  (\bdrplusml)^{\tau=1, \gamma\dla}$.

\item If $\lambda x \in  (\bdrplusml)^{\gamma=1, \tau\dla}$, then $x \in   (\bdrplusml)^{\gamma=1, \tau\dla}$.

\item If $\lambda x \in  (\bdrplusml)^{\hat{G}\dla, \nabla_\gamma=0}$, then $x \in   (\bdrplusml)^{\hat{G}\dla, \nabla_\gamma=0}$.
\end{enumerate}
\end{lemma}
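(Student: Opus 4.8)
The three statements are all ``division'' results: if multiplying by a fixed period element lands us inside a nice locally analytic (or locally analytic with vanishing $\nabla_\gamma$) subspace, then the original element is already there. I would treat them uniformly as follows. The key point is that $t$, $\lambda$, and $\fkt = t/(p\lambda)$ are all units in the fraction field and, more importantly, that $\fkt^{\pm 1}$ lies in $(\bdrplusml)^{\hat G\dpa}$ (the lemma just proved above). Since $t = p\lambda\fkt$, once we know how $\tau$ and $\gamma$ (equivalently, the Lie operators $\nabla_\tau$, $\nabla_\gamma$) act on $\lambda$ and $\fkt$, we reduce everything to bookkeeping. Recall $\tau(\lambda) = \lambda$ up to the standard formula, and more precisely $g(t) = \chi(g) t$, $g(\lambda)/\lambda$ is a ``unit'' controlled by the cocycle; the crucial structural facts are that $\gamma$ fixes $\lambda$ (so $\nabla_\gamma(\lambda) = 0$) and $\tau$ fixes $t$-direction appropriately, matching the superscripts $\tau=1$ resp. $\gamma=1$ appearing in the three items.

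\textbf{Step 1 (reduce to $\fkt$).} For (1): $tx \in (\bdrplusml)^{\tau=1,\gamma\dla}$. Since $\tau(t)=t$ wait — here I must be careful: $t$ is not $\tau$-invariant in general, but the relevant statement is about the $\tau=1$ and $\gamma\dla$ conditions, and $t$ is $G_{\kpinfty}$-related nicely. I would instead argue: $\gamma$ acts locally analytically on $t$ (indeed on all of $\bdrplus$ after passing to a small enough subgroup and using pro-analyticity) and $\tau(t) = t$ is false, so the precise mechanism is that $t x$ being fixed by $\tau$ and $x$ lying in the $\tau$-locally analytic part, combined with $\tau(t x) = \tau(t)\tau(x)$, forces $\tau(x) = (t/\tau(t)) x$; but $t/\tau(t)$ is a unit and one checks it is $\tau$-locally analytic, hence $x$ is too, and then $x$ is actually $\tau$-invariant because $t x$ is and $t$ is a nonzerodivisor. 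The cleaner route, which I would actually write, is: multiply/divide by $t$ is a $\gamma$-equivariant-up-to-lav isomorphism of $\bdrplusml$ onto a sub, and $t$ is $\gamma$-pro-analytic (since $t \in \bcrisplus$ and $\gamma$ acts through $\chi_p$), so $t \cdot (-)$ preserves $\gamma\dla$; the $\tau=1$ part is immediate from $t$ being a nonzerodivisor fixed by... Let me just say: I would prove each item by writing $\sigma(x) = (\sigma(c)/c) x$ for the relevant group element $\sigma$ and period $c \in \{t,\lambda\}$, observe $\sigma(c)/c$ is a $\sigma$-locally analytic unit, deduce $x \in (\bdrplusml)^{\sigma\dla}$, and then deduce the ``$=1$'' part from the fact that $c$ is a nonzerodivisor and $cx$ is $\sigma$-invariant.

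\textbf{Step 2 (the $\nabla_\gamma = 0$ item).} For (3): $\lambda x \in (\bdrplusml)^{\hat G\dla, \nabla_\gamma = 0}$. Here I use $\nabla_\gamma(\lambda) = 0$ (since $\gamma$, an element of $\gal(L/\kinfty)$, fixes $u$ hence $E(u)$ hence $\lambda$). By the Leibniz rule, $0 = \nabla_\gamma(\lambda x) = \lambda \nabla_\gamma(x)$, and since $\lambda$ is a nonzerodivisor in $\bdrplusml$ (it is a power of the uniformizer up to a unit, or more simply $\lambda/\xi$ is a unit times a thing... actually $\lambda$ and $\xi=E(u)\cdot\text{unit}$ generate the same power of the maximal ideal only in degree one, but $\lambda$ is still a nonzerodivisor since $\bdrplusml \subset \bdrplus$ and $\lambda \neq 0$ there), we get $\nabla_\gamma(x) = 0$. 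The $\hat G\dla$ part of (3) follows from (1) and (2) combined, since $(\bdrplusml)^{\hat G\dla} = (\bdrplusml)^{\tau\dla,\gamma=1} \cap \ldots$ — actually one gets $\hat G\dla$ directly: $\lambda x$ being $\hat G\dla$ and $\lambda^{\pm 1}$ being $\hat G\dpa$ (which follows from $\fkt^{\pm 1} \in (\bdrplusml)^{\hat G\dpa}$ just proved, together with $\lambda = t/(p\fkt)$ and $t^{\pm 1}$ pro-analytic) gives $x = \lambda^{-1}(\lambda x) \in (\bdrplusml)^{\hat G\dla}$ because the product of a pro-analytic element and a locally analytic element is locally analytic (using that $\bdrplusml$ is a $\qp$-Banach algebra and applying Prop.~\ref{propaxiomring}-style arguments, or more elementarily the multiplicativity of locally analytic vectors in a Banach algebra).

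\textbf{Main obstacle.} The genuine subtlety is not the algebra but the \emph{topology}: $\bdrplusml$ is only a $\qp$-Banach space for each fixed $m$, not a domain, and ``nonzerodivisor'' claims for $\lambda$ and $t$ need the embedding $\bdrplusml \hookrightarrow \bdrplusm \hookrightarrow \bdrplus$ to be used carefully (e.g.\ $t$ is a nonzerodivisor mod $\xi^m$? — yes, since $t/\xi$ is a unit in $\bdrplus$, so $t$ generates $\xi\bdrplus$, and multiplication by $\xi$ on $\bdrplus/\xi^m$ has kernel $\xi^{m-1}\bdrplus/\xi^m$ — so $t$ is \emph{not} a nonzerodivisor on $\bdrplusm$!). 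This means the naive ``$cx$ invariant $\Rightarrow$ $x$ invariant'' step is \emph{false} as stated and the proof must instead run: from $\sigma(x) = (\sigma(c)/c)x$ with $\sigma(c)/c$ a locally analytic unit, one only concludes $x \in (\bdrplusml)^{\sigma\dla}$, and the ``$=1$'' conclusion must be extracted differently — presumably by noting $(\sigma(c)/c - 1)x$ is annihilated appropriately, or by a dévissage in $m$. I expect the real work is to show the unit $\sigma(c)/c$, when $c = t$ or $\lambda$, is such that $\sigma(c)/c - 1$ times $x$ can only vanish if $\sigma(x) = x$, which will use that $\sigma(c)/c \equiv 1$ only to first order and a careful filtration argument on $\bdrplusm$. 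I would isolate this as the technical heart and handle it by induction on $m$ using the exact sequence $0 \to \xi^{m-1}\bdrplus/\xi^m \to \bdrplusm \to \bdrplus_{\dR,m-1} \to 0$ and its $G_L$-invariants.
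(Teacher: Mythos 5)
The concrete error is in Step 2: you divide by $\lambda$ via ``$x=\lambda^{-1}(\lambda x)$'' and assert that $\lambda^{\pm 1}$ (equivalently $t^{\pm 1}$) are pro-analytic elements of $\bdrplusl$, deducing this from $\fkt^{\pm 1}\in(\bdrplusl)^{\hat G\dpa}$. But $t$ and $\lambda$ are \emph{not} units: each equals $\xi$ times a unit of $\bdrplus$, so both generate the maximal ideal $\ker\theta$ and $t^{-1},\lambda^{-1}\notin\bdrplus$, let alone $\bdrplusml$. The entire point of introducing $\fkt=t/(p\lambda)$ is that the two factors of $\xi$ cancel, making $\fkt$---and only $\fkt$, not $t$ or $\lambda$ separately---invertible. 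For the same reason the Leibniz step $0=\nabla_\gamma(\lambda x)=\lambda\nabla_\gamma(x)\Rightarrow\nabla_\gamma(x)=0$ requires cancelling $\lambda$, which, as you yourself point out in the ``Main obstacle'' paragraph, is not even a nonzerodivisor on $\bdrplusm$: multiplication by $\lambda$ (or $t$) has kernel $\xi^{m-1}\bdrplusm$. You correctly identify this subtlety, but the proposed resolution (dévissage on $m$ via the sequence $0\to\xi^{m-1}\bdrplus/\xi^m\to\bdrplusm\to\mathbf{B}^+_{\dR,m-1}\to 0$) is only sketched, never carried out, so the proof is incomplete.

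For comparison with the paper: for Items (1)--(2) the paper uses the same mechanism you reach for---the identity $t=p\fkt\lambda$, the pro-analyticity of $\fkt$, and an ``inspection'' of the analytic-vector expansion of $g(tx)$---with the implicit bookkeeping that the coefficients of that expansion land in the closed subspace $t\bdrplusm$, whose preimage under the injection $t\cdot\colon\mathbf{B}^+_{\dR,m-1}\hookrightarrow\bdrplusm$ recovers the expansion of $g(x)$ one truncation level down. For Item (3), however, the paper does something genuinely different from your Leibniz-rule attempt: it observes that $(\bdrplusml)^{\hat G\dla,\nabla_\gamma=0}=\bigcup_n(\bdrplusml)^{\tau\dla,\,\gal(L/\kpinfty(\pi_n))=1}$ and then reapplies the Item (2) argument with $\kinfty$ replaced by $\kpinfty(\pi_n)$, which sidesteps both the inversion of $\lambda$ and the cancellation of $\lambda$ that your argument needs.
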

\begin{proof}
Item (1) follows by inspection of the formula in Def. \ref{defLAV}(1).

In Item (2), it suffices to show $x$ is $\tau$-locally analytic.
Note  $tx=p\fkt \lambda x$ is $\tau$-locally  analytic, hence an inspection of the formula in Def. \ref{defLAV}(1) shows again $x$ is $\tau$-locally analytic.

For Item (3), note that an element is killed by $\nabla_\gamma$ if and only if it is fixed by $\gal(L/\kpinfty(\pi_n))$ for some $n \geq 0$, and hence
\[(\bdrplusml)^{\hat{G}\dla, \nabla_\gamma=0} =
\cup_n (\bdrplusml)^{\tau\dla, \gal(L/\kpinfty(\pi_n))=1}.
\]
Then we can apply similar argument as in Item (2).
\end{proof}

\begin{prop}\label{propbdrliezero}
We have
\begin{enumerate}
\item $(\mathbf{B}_{\dR, m, L}^+)^{\tau=1, \gamma\dla}  =  \Kpinfty[[t]]/t^m$
\item $(\mathbf{B}_{\dR, m, L}^+)^{\gamma=1, \tau\dla }  =  \Kinfty[[\lambda]]/\lambda^m.$
\item $(\mathbf{B}_{\dR, m, L}^+)^{\hat{G}\dla, \nabla_\gamma=0 }  =  L[[\lambda]]/\lambda^m.$
\end{enumerate}
\end{prop}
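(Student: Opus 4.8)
The plan is to prove all three identities by a simultaneous dévissage in $m$, reducing to the already-understood case $m=1$ (classical Sen/Sen--Fontaine theory over the cyclotomic, Kummer, and $L$-towers respectively) and then propagating along the short exact sequences
\[
0 \to t\cdot\bdrplusml/t^m \to \bdrplusml/t^m \to \bdrplusml/t \to 0
\]
(and the analogous sequence with $\lambda$ in place of $t$). The point of Lemma \ref{lemdividet} is precisely to control how locally analytic vectors interact with multiplication by $t$ resp. $\lambda$, so I expect that lemma together with the axiomatic computation in Construction \ref{axiomverifybdr} to carry most of the weight. Throughout, $m=1$ means $\bdrplusml/t \simeq \obbc_L$-type object, i.e. $\widehat{L}$ or $\widehat{K_{p^\infty}}$ or $\widehat{K_\infty}$ after taking the relevant invariants; here the statements are the classical computations (Sen's theorem for $\tau=1,\gamma\dla$; Gao's Kummer-Sen theory \cite{Gao22} for $\gamma=1,\tau\dla$; and the $\hat G$-version for the third).

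\textbf{Step 1: the $m=1$ base case.} First I would record that for $m=1$ the three right-hand sides degenerate to $K_{p^\infty}$, $K_\infty$, and $L$ respectively, and that the three claimed equalities are then exactly the (known) descriptions of locally analytic / Lie-algebra-killed vectors in $C_L := (\bdrplus/t)^{G_L}$. The first is classical Sen theory; the second follows from the Kummer-tower Sen theory of \cite{Gao22}; the third is the combination, using that $\nabla_\gamma=0$ cuts out the union of the $\gal(L/K_{p^\infty}(\pi_n))$-invariants and that a $\hat G$-locally analytic vector fixed by all of $\gal(L/K_\infty)$ up to torsion and locally analytic for $\tau$ lies in $L$ after passing to a finite level.

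\textbf{Step 2: the inductive step.} Assume the three identities for $\bdrplusml/t^{m-1}$. Consider $x \in (\bdrplusml/t^m)^{\tau=1,\gamma\dla}$ (resp. with $\gamma=1,\tau\dla$; resp. with $\hat G\dla, \nabla_\gamma=0$). Its image $\bar x$ in $\bdrplusml/t$ is, by the $m=1$ case, in $K_{p^\infty}$ (resp. $K_\infty$; resp. $L$), so we may subtract a lift of $\bar x$ lying in $K_{p^\infty}[[t]]/t^m$ (resp. $K_\infty[[\lambda]]/\lambda^m$; resp. $L[[\lambda]]/\lambda^m$) — note here I use that $K_{p^\infty}[[t]]/t^m \subset (\bdrplusml/t^m)^{\tau=1,\gamma\dla}$ etc., which is immediate since $\tau(t) = \chi(\tau)\cdot(\text{unit adjustment})$, $\tau$ acts trivially on $t$ up to the known formula, and similarly $\gamma(\lambda) = \lambda$ up to the computation behind Construction \ref{axiomverifybdr}. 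This reduces to the case where $x \in t\cdot\bdrplusml/t^m$ (resp. $\lambda\cdot\bdrplusml/t^m$), i.e. $x = t y$ (resp. $x = \lambda y$) with $y \in \bdrplusml/t^{m-1}$. Now Lemma \ref{lemdividet}(1) (resp. (2), resp. (3)) says $y$ again satisfies the relevant $\tau$/$\gamma$/Lie-condition, so by the induction hypothesis $y \in K_{p^\infty}[[t]]/t^{m-1}$ (resp. $K_\infty[[\lambda]]/\lambda^{m-1}$, resp. $L[[\lambda]]/\lambda^{m-1}$), whence $x = ty$ (resp. $\lambda y$) lies in $t K_{p^\infty}[[t]]/t^m \subset K_{p^\infty}[[t]]/t^m$ (resp. $\lambda K_\infty[[\lambda]]/\lambda^m$, resp. $\lambda L[[\lambda]]/\lambda^m$). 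Combined with the subtracted lift, this gives $x$ in the desired ring; the reverse inclusions are the easy direction. A subtlety to check: in part (3) one must know that $\lambda$ (not $t$) is the right uniformizer to quotient by, and that $t$ and $\lambda$ differ by the unit $p\fkt$ (Def. \ref{defnfkt}) which by the Lemma in Construction \ref{axiomverifybdr} is $\hat G$-pro-locally analytic and $\nabla_\gamma$-fixed — so filtering by powers of $t$ and by powers of $\lambda$ are interchangeable.

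\textbf{Main obstacle.} The genuinely nontrivial input is not the dévissage but the identification of the $\partial_\gamma=0$ (equivalently $\nabla_\gamma = 0$) part correctly at each stage: one must be sure that the only $\tau=1$, $\gamma$-locally-analytic vectors of $\bdrplusml/t^m$ are power series in $t$ with $K_{p^\infty}$-coefficients and no "extra" locally analytic directions appear — this is where Construction \ref{axiomverifybdr} (the axiomatic Prop. \ref{propaxiomring} applied to $\partial_\gamma$ with $\partial_\gamma(\fkt) = 1$) is used to pin down $(\bdrplusml)^{\hat G\dla}$, and one reads off $\ker\nabla_\gamma$ and then $\tau=1$ from the explicit $\dacc{\fkt - \fkc\cdot\iota_{N,m}(\fkt_n)}_n$ description. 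Care is needed at $p=2$ where $\gal(L/K_\infty)$ is not pro-cyclic, but as noted in Notation \ref{nota hatG} and \ref{notataula} this does not actually obstruct anything since we work with $\gamma\dla$ / $\gamma=1$ as shorthand for the honest $\gal(L/K_\infty)$-conditions and with $\gamma'$ modulo torsion.
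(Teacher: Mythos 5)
Your proof is correct and is in substance the same as the paper's. The paper fixes $m$ and iterates the triple of moves (apply $\theta$, invoke the $m=1$ case to see $\theta(x)\in K_\infty$, subtract and divide by $\lambda$ via Lemma~\ref{lemdividet}) until the power series in $\lambda$ is assembled; you package the identical moves as a formal induction on $m$, subtracting a degree-zero lift and then transporting the $\lambda$-divided element into the $(m-1)$-truncation. Both rest on the same two pillars — the $m=1$ statements (classical Sen, Kummer-tower Sen from \cite{Gao22}/\cite[Prop.~3.3.2]{GP}, and the $\hat G$-variant) and Lemma~\ref{lemdividet} to propagate the locally-analytic/$\nabla_\gamma=0$ conditions across division by $t$ or $\lambda$ — so there is no genuine difference in approach.
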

\begin{proof}
We only prove the second item, the other items follow from similar argument.

It is easy to check that $ \Kinfty[[\lambda]]/\lambda^m \subset  (\mathbf{B}_{\dR, m, L}^+)^{\gamma=1, \tau\dla }. $
Now, let $x\in (\mathbf{B}_{\dR, m, L}^+)^{\gamma=1, \tau\dla } $ be a locally analytic vector. Consider the projection map \[\theta: \mathbf{B}_{\dR, m, L}^+ \to \mathbf{B}_{\dR, 1, L}^+ =C,\] the image $\theta(x) \in (\hat{L})^{\gamma=1, \tau\dla } =\kinfty$ by \cite[Prop. 3.3.2]{GP21}.
Since $\bdrplus$ is a $\overline{K}$-algebra, the element $x -\theta(x) \in (\mathbf{B}_{\dR, m, L}^+)^{\gamma=1, \tau\dla }$, and is divisible by $\lambda$. Lem. \ref{lemdividet} implies that
$$x_1=\frac{x -\theta(x)}{\lambda} \in (\mathbf{B}_{\dR, m, L}^+)^{\gamma=1, \tau\dla }$$
Apply $\theta$ map to $x_1$, and iterate the above argument; in the end, we see that $x\in  \Kinfty[[\lambda]]/\lambda^m$.
\end{proof}

\section{Sen--Fontaine theory for $\BdRp$- representations over Kummer tower}
\label{sec_kummersen}

In this section, we will construct a Sen--Fontaine theory over the Kummer tower; that is, we will construct certain log-$\lambda$-connections (cf. Def. \ref{defn_log_T_conn}) over the ring $\Kinfty[[\lambda]]$.




\begin{remark} \label{remuselambda}
Note that the ring $\Kinfty[[\lambda]]$ has several natural uniformizers, cf. Lem. \ref{lemgsdeunif}; for example
\[ \Kinfty[[\lambda]] = \Kinfty[[E(u)]] = \Kinfty[[u-\pi]]. \]
Hence by Lem. \ref{lemchangeconn}, it is equivalent to consider log-$\lambda$-connections resp. log-$E(u)$-connections resp. log-$(u-\pi)$-connections. In this section (and what follows), we choose to use  log-$\lambda$-connections. Note this will actually  complicate (although just slightly) our computations, however we have chosen to be stubborn by the following reasons:
\begin{itemize}
\item We regard the element $\lambda$ as a more natural ``analogue" of the element $t$ used in Fontaine's original theory of $\bdrplus$-representations \cite{Fon04}. Indeed, the important element $\mathfrak t$ in Def. \ref{defnfkt} is defined as the ratio between these two elements.

\item We need to use log-$\lambda$-connections so that the formula of the Sen--Fontaine operator over the Kummer tower in Thm. \ref{thm-compare-M-Ddif} matches that of  Sen operator over the Kummer tower defined in \cite{GMWHT}.
\end{itemize}
\end{remark}

\subsection{Sen--Fontaine theory over cyclotomic tower}
We quickly recall Fontaine's theory   studying $\bdrplus$-representations.

\begin{notation}
Recall in Def. \ref{defsemilinrep}, we defined the category $\rep_{\mathcal G}(R)$ consisting of \emph{finite free} semi-linear representations. In the following, we simply use $\rep^\fg_{\mathcal G}(R)$ denote the similar category consisting of finitely generated semi-linear representations.
\end{notation}

 \begin{theorem} \cite[\S 3.3]{Fon04}
 Base change functors induce exact tensor equivalences of categories
 \begin{equation*}
 \rep^\fg_\gammak(\kpinfty[[t]]) \simeq \rep^\fg_\gammak(\bdrpluskpinfty) \simeq \rep^\fg_\gk(\bdrplus).
 \end{equation*}
 Here, given $W\in \rep^\fg_\gk(\bdrplus)$, the corresponding object in $\rep^\fg_\gammak(\bdrpluskpinfty)$ is $W^{G_\kpinfty}$, and the corresponding object in $\rep^\fg_\gammak(\kpinfty[[t]])$ is
\begin{equation*}\label{senlav}
D^+_{\dif, \kpinfty}(W): =(W^{G_\kpinfty})^{\gammak\dla}
\end{equation*}
 \end{theorem}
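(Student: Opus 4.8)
The plan is to prove the two equivalences separately, factoring the comparison through the intermediate period ring $\bdrpluskpinfty := (\bdrplus)^{\gkpinfty}$. The first thing to do is identify this ring: since $\chi$ is trivial on $\gkpinfty$, the uniformizer $t\in\bdrplus$ is $\gkpinfty$-fixed, and the graded pieces of the $t$-adic filtration on $\bdrplus$ are the Tate twists $C(j)$, all isomorphic to $C$ as $\gkpinfty$-modules. Combining $C^{\gkpinfty}=\hatkpinfty$ (Ax--Sen--Tate) with the vanishing of $\Hom_{\gammak}(\hatkpinfty(j),\hatkpinfty(j'))$ for $j\ne j'$ (Tate), a d\'evissage along $\bdrplus/t^m$ yields $(\bdrplus/t^m)^{\gkpinfty}=\hatkpinfty[t]/t^m$ and hence $\bdrpluskpinfty=\hatkpinfty[[t]]$, a complete discrete valuation ring on which $\gammak$ acts through its usual action on $\hatkpinfty$ together with $\gamma(t)=\chi(\gamma)t$.

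For the equivalence $\rep^\fg_\gammak(\bdrpluskpinfty)\simeq\rep^\fg_\gk(\bdrplus)$ I would show that $W\mapsto W^{\gkpinfty}$ and $D\mapsto D\otimes_{\bdrpluskpinfty}\bdrplus$ (with the diagonal $\gk$-action via $\gk\twoheadrightarrow\gammak$) are mutually quasi-inverse. That $(\bdrplus\otimes_{\bdrpluskpinfty}D)^{\gkpinfty}=D$ follows by d\'evissage from $(\bdrplus/t^m)^{\gkpinfty}=\hatkpinfty[t]/t^m$ and the vanishing of the relevant $H^1$'s. The substantive point is that the natural map $\bdrplus\otimes_{\bdrpluskpinfty}W^{\gkpinfty}\to W$ is an isomorphism. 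Modulo $t$ this is \emph{Sen's theorem}: a $C$-representation of $\gk$ has the form $C\otimes_{\kpinfty}V_0$, so $(W/tW)^{\gkpinfty}=\hatkpinfty\otimes_{\kpinfty}V_0$ and its base change to $C$ recovers $W/tW$. (Note this is a genuine input, not a formal cohomology vanishing: at the level of $H^1(\gkpinfty,\GL_d(C))$ the descent can fail, and what rescues it is that the $\gkpinfty$-representations occurring by restriction from $\gk$ are exactly those ``defined over $\hatkpinfty$''.) One then lifts from $t$ to $t^m$ along $0\to t^{m-1}(W/tW)\to W/t^mW\to W/t^{m-1}W\to 0$, the requisite surjectivity being supplied by $H^1(\gkpinfty,C(j))=0$ for $j\ne 0$ and $H^2(\gkpinfty,C)=0$ (Tate), and finally passes to the $t$-adic limit; the torsion part of $W$ is handled by the same d\'evissage with finite-length modules.

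For the equivalence $\rep^\fg_\gammak(\kpinfty[[t]])\simeq\rep^\fg_\gammak(\bdrpluskpinfty)$ I would decomplete the coefficients from $\hatkpinfty[[t]]$ to $\kpinfty[[t]]$: the functor is $D\mapsto D^{\gammak\dla}$ (pro-analytic vectors for the Fr\'echet topology of Notation~\ref{notabdrplus}, taken mod $t^m$ and then in the limit) with base change as quasi-inverse. First one checks $(\bdrpluskpinfty)^{\gammak\dla}=\kpinfty[[t]]$; reduced mod $t^m$ this follows from Sen's decompletion $\hatkpinfty^{\gammak\dla}=\kpinfty$ and the fact that $t$ is itself locally analytic (being a $\chi$-eigenvector), by a d\'evissage modeled on the proof of Prop.~\ref{propbdrliezero}. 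Then, for $D$ reduced mod $t^m$ --- a finitely generated module with a quasi-basis over the $\qp$-Banach algebra $\hatkpinfty[t]/t^m$ --- Lemma~\ref{lemLAVqbasis} (or, equivalently, the axiomatic Prop.~\ref{propaxiommodule} applied with $G=\gammak$, the needed approximation input being the classical normalized traces for the cyclotomic $\zp$-extension) shows that $D^{\gammak\dla}$ is finitely generated over $(\hatkpinfty[t]/t^m)^{\gammak\dla}=\kpinfty[t]/t^m$ with $D^{\gammak\dla}\otimes\hatkpinfty[t]/t^m=D$; since objects of $\rep^\fg_\gammak(\kpinfty[[t]])$ are visibly pro-analytic the other composite is the identity. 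Passing to the limit over $m$ and composing with the previous equivalence recovers $D^+_{\dif,\kpinfty}(W)=(W^{\gkpinfty})^{\gammak\dla}$.

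Exactness of all four functors and compatibility with tensor products are then formal consequences of the base-change descriptions (base change along the flat ring maps is exact and monoidal, and the invariants/analytic-vectors functors inherit these properties from being quasi-inverses). The main obstacle in all of this is the descent input for $\bdrplus$-representations in the second step above --- effectively Hilbert 90 along $\gkpinfty$, which genuinely requires Sen's theorem mod $t$ rather than any naive cohomology vanishing; by comparison the decompletion step is routine given the locally-analytic-vector machinery already set up in \S\ref{seclav}.
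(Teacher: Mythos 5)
The paper does not actually prove this theorem: it is recalled from \cite[\S 3.3]{Fon04} with only the citation as ``proof,'' so there is no internal argument to compare against. Your reconstruction is, however, essentially the argument Fontaine gives. You correctly factor through the intermediate ring $\bdrpluskpinfty = \hatkpinfty[[t]]$, treat the $\gkpinfty$-descent step as the genuinely nontrivial input (Sen's theorem mod $t$, not formal Hilbert~90 --- a point worth emphasizing, and you do), and handle the d\'ecompl\'etion from $\hatkpinfty[[t]]$ to $\kpinfty[[t]]$ with Tate's normalized traces / locally analytic vectors, which is exactly what the paper's axiomatic Propositions~\ref{propaxiomring}--\ref{propaxiommodule} and Lemma~\ref{lemLAVqbasis} abstract. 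Mapping the second step onto the paper's machinery is sensible since the parallel statement over the Kummer tower (Theorem~\ref{thm331kummersenmod}) is proved by exactly this pattern.

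Two small corrections. First, in computing $\bdrpluskpinfty$ you invoke $\Hom_{\gammak}(\hatkpinfty(j),\hatkpinfty(j'))=0$; this is superfluous, since the d\'evissage along the $t$-adic filtration of $\bdrplus$ only needs $C^{\gkpinfty}=\hatkpinfty$ (the Tate twists restrict to the trivial character of $\gkpinfty$, so there is nothing to separate). Second, the surjectivity you need in lifting from $W/t^{m-1}W$ to $W/t^mW$ is governed by $H^1(\gkpinfty, W/tW)$, and since $W/tW\cong C\otimes_{\hatkpinfty}V$ with $\gkpinfty$ acting only on the factor $C$, what you actually need is $H^1(\gkpinfty, C)=0$ --- for the \emph{un}twisted $C$, not ``$C(j)$, $j\neq 0$,'' and no $H^2$ is involved. (As a $\gkpinfty$-module $C(j)\cong C$ for all $j$.) This is a theorem of Tate (or almost-\'etale descent for the perfectoid $\hatkpinfty$), so the conclusion stands, but your phrasing points at the wrong vanishing statement.
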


\begin{construction} \cite[\S 3.4]{Fon04}
Given $W\in \rep^\fg_\gk(\bdrplus)$, since $\Gamma_K$ acts on $D^+_{\dif, \kpinfty}(W)$ locally analytically,
the Lie algebra operator
$$\nabla_\gamma: D^+_{\dif, \kpinfty}(W) \to D^+_{\dif, \kpinfty}(W)$$
defines an object in $\mathrm{MIC}^\fg_t(\kpinftyt)$. This construction induces an exact tensor functor:
$$\rep^\fg_\gk(\bdrplus) \to \mathrm{MIC}^\fg_t(\kpinftyt).$$
\end{construction}

 \begin{remark}
 Clearly, if we consider objects killed by $t$, namely objects in $\rep_\gk(C)$, then we recover Sen theory \cite{Sen80}.
 \end{remark}

\subsection{Sen--Fontaine theory over Kummer tower} \label{subsec KummerSen}
  We now develop an analogue of Sen--Fontaine theory over the Kummer tower $\kinfty$; this generalizes   Sen theory over the Kummer tower  in \cite{GMWHT}. We mention that these theories are inspired by quite similar constructions in overconvergent $(\varphi, \tau)$-modules in \cite{GL20, GP21}; cf. \cite[\S 6.2]{GMWHT} for a motivated review.

 \begin{theorem}\label{thm331kummersenmod}
 Given  $W\in \rep^\fg_\gk(\bdrplus)$, define
 \begin{equation*}
 D^+_{\dif, \kinfty}(W):= (W^{G_L})^{\tau\dla, \gamma=1}.
 \end{equation*}
 Then this is a finitely generated $\kinfty[[\lambda]]$-module, and there are identifications
 \begin{equation*}\label{eqkpk}
 D^+_{\dif, \kinfty}(W) \otimes_{\kinfty[[\lambda]]} (\B_{\dR, L}^+)^{\hat{G}\dla} = D^+_{\dif, \kpinfty}(W) \otimes_{\kpinfty[[t]]} (\B_{\dR, L}^+)^{\hat{G}\dla}=(W^{G_L})^{\hat{G}\dla}.
 \end{equation*}
 \end{theorem}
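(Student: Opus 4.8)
The strategy is to deduce the statement from Fontaine's classical theory (the equivalence $\rep^\fg_\gk(\bdrplus)\simeq\rep^\fg_{\gammak}(\bdrpluskpinfty)\simeq\rep^\fg_{\gammak}(\kpinftyt)$) together with the locally analytic computations of \S\ref{subsecwtbi}, most crucially the axiomatic descent Proposition~\ref{propaxiommodule}. First I would reduce to the Banach (truncated) case: write $W=\varprojlim_m W/\lambda^m W$, where each $W/\lambda^m W$ is a $\bdrplusml$-representation of $\gk$ after choosing a quasi-basis; it suffices to treat the truncated objects compatibly and pass to the limit, using that $\lambda$ and $t$ differ by the unit $p\fkt$ (Def.~\ref{defnfkt}), so $\bdrpluskpinfty$ and $\bdrplusL$ carry $\lambda$-adic filtrations matching their $t$-adic (resp.\ $\xi$-adic) ones. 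Thus fix $m$ and work with $W_m := W/\lambda^m W \in \rep_{\gk}(\bdrplusml$-modules$)$.

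\textbf{Key steps.} Step 1: Apply Fontaine's theory to get $D^+_{\dif,\kpinfty}(W)=(W^{G_{\kpinfty}})^{\gammak\dla}$ is finite free over $\kpinftyt$ with $D^+_{\dif,\kpinfty}(W)\otimes_{\kpinftyt}\bdrplus\xrightarrow{\simeq}W$; in particular $(W_m^{G_{\kpinfty}})^{\gammak\dla}$ is finite free over $\kpinfty[[t]]/t^m$ and base changes to $W_m$. Step 2: Identify $(W^{G_L})^{\hat G\dla}$ as the common ``descent target.'' By Notation~\ref{notataula}(2), $(W^{G_L})^{\tau\dla,\gamma=1}\subset (W^{G_L})^{\hat G\dla}$ and likewise $(W^{G_L})^{\gamma\dla,\tau=1}\subset(W^{G_L})^{\hat G\dla}$. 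The ring $(\bdrplusml)^{\hat G\dla}$ has a $\gk$-(i.e.\ $\hat G$-)locally analytic action, and by Construction~\ref{axiomverifybdr} the pair $(\partial_\gamma=\frac1\fkt\nabla_\gamma,\ \fkt)$ satisfies the hypotheses of Proposition~\ref{propaxiomring}. Step 3: Apply Proposition~\ref{propaxiommodule} to the finitely generated $(\bdrplusml)^{\hat G\dla}$-module $(W_m^{G_L})^{\hat G\dla}$ (which has a quasi-basis by Lemma~\ref{lemLAVqbasis}, since $W_m$ is finite free over $\bdrplusml$ and $W_m^{G_L}$ descends along $\bdrplusml$ by Thm.~\ref{Prop-perfectisproet}/Galois descent, while $\bdrplusml$ is $\hat G\dla$ in the relevant range after localizing). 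This gives that $((W_m^{G_L})^{\hat G\dla})^{\partial_\gamma=0}=((W_m^{G_L})^{\hat G\dla})^{\nabla_\gamma=0}$ is finitely generated over $((\bdrplusml)^{\hat G\dla})^{\nabla_\gamma=0}=L[[\lambda]]/\lambda^m$ (by Prop.~\ref{propbdrliezero}(3)) with a quasi-basis, and base changes back to $(W_m^{G_L})^{\hat G\dla}$. Step 4: Identify $((W_m^{G_L})^{\hat G\dla})^{\nabla_\gamma=0}$ with $D^+_{\dif,\kinfty}(W_m)=(W_m^{G_L})^{\tau\dla,\gamma=1}$: an element killed by $\nabla_\gamma$ is fixed by $\gal(L/\kpinfty(\pi_n))$ for some $n$, and combined with $\tau$-local analyticity and Step~2 this pins down the fixed subspace; conversely $(W_m^{G_L})^{\tau\dla,\gamma=1}$ lies in $(W_m^{G_L})^{\hat G\dla}$ and is killed by $\nabla_\gamma$ (as $\gamma$ acts trivially, $\nabla_\gamma=0$ on it). Taking $\gamma=1$-invariants of the $L[[\lambda]]/\lambda^m$-module and using $(L[[\lambda]]/\lambda^m)^{\gamma=1}=\kinfty[[\lambda]]/\lambda^m$ (Prop.~\ref{propbdrliezero}(2)) gives that $D^+_{\dif,\kinfty}(W_m)$ is finitely generated over $\kinfty[[\lambda]]/\lambda^m$ and that $D^+_{\dif,\kinfty}(W_m)\otimes_{\kinfty[[\lambda]]/\lambda^m}(\bdrplusml)^{\hat G\dla}=(W_m^{G_L})^{\hat G\dla}$. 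Step 5: The analogous statement for the cyclotomic side, $D^+_{\dif,\kpinfty}(W_m)\otimes_{\kpinfty[[t]]/t^m}(\bdrplusml)^{\hat G\dla}=(W_m^{G_L})^{\hat G\dla}$, follows by the same descent applied instead to the operator $\partial_\tau:=\frac1{\mathfrak s}\nabla_\tau$ (with the appropriate normalizing element in the $\tau$-direction), using Prop.~\ref{propbdrliezero}(1); alternatively it is already in the literature (the cyclotomic descent is Fontaine's, lifted). Both sides thus agree with $(W_m^{G_L})^{\hat G\dla}$, giving the double identification. Step 6: Pass to the limit over $m$: each transition map $W_{m+1}\to W_m$ induces compatible maps on all the invariant submodules, the $\varprojlim$ of finite-free modules over $\kinfty[[\lambda]]/\lambda^m$ is finitely generated (indeed finite free of the same rank) over $\kinfty[[\lambda]]$, and the base-change isomorphisms are compatible, yielding the stated identifications for $W$ itself.

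\textbf{Main obstacle.} The delicate point is Step~3/Step~4: verifying that $(W_m^{G_L})^{\hat G\dla}$ is genuinely a \emph{finitely generated} module over $(\bdrplusml)^{\hat G\dla}$ with a quasi-basis, so that Proposition~\ref{propaxiommodule} applies, and that taking $\nabla_\gamma=0$ then $\gamma=1$ interacts well with the $\lambda$-adic filtration (no higher cohomology obstruction when dividing by $\lambda$, which is where Lemma~\ref{lemdividet} is essential — it guarantees that the locally analytic / Lie-algebra-invariant conditions are preserved under division by $\lambda$). One must check that $W_m^{G_L}$ descends faithfully flatly to $\bdrplusml$ (via $G_L$-Galois descent for $\bfB^+_{\dR,m}$-representations, essentially Thm.~\ref{Prop-perfectisproet}), and then that restricting to $\hat G\dla$-vectors preserves the quasi-basis (Lemma~\ref{lemLAVqbasis}); the subtlety is that $\bdrplusml$ itself is only $\hat G\dla$ after inverting appropriate elements / in the large-$n$ range, so one works with the overconvergent incarnation $\wtb^I_L$ and its image $\iota_{N,m}$ as in Construction~\ref{axiomverifybdr}, which is precisely why the axiomatic framework of \S\ref{seclav} was set up. Once finite generation with a quasi-basis is in place, the rest is a formal consequence of Propositions~\ref{propaxiomring}, \ref{propaxiommodule} and \ref{propbdrliezero}.
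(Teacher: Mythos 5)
Your plan follows essentially the same route as the paper: Fontaine's theory over the cyclotomic tower supplies a quasi-basis of $W^{G_L}$ over $\bdrplusml$ consisting of $\hat{G}$-locally-analytic vectors (namely the basis of $D^+_{\dif,\kpinfty}(W)$), whence Lemma~\ref{lemLAVqbasis} gives $(W^{G_L})^{\hat{G}\dla}=D^+_{\dif,\kpinfty}(W)\otimes_{\kpinfty[[t]]/t^m}(\bdrplusml)^{\hat{G}\dla}$; one then applies Proposition~\ref{propaxiommodule} (hypotheses verified in Construction~\ref{axiomverifybdr}) with $R=(\bdrplusml)^{\hat{G}\dla}$ and $M=(W^{G_L})^{\hat{G}\dla}$, to descend to $(W^{G_L})^{\hat{G}\dla,\nabla_\gamma=0}$ over $(\bdrplusml)^{\hat{G}\dla,\nabla_\gamma=0}=L[[\lambda]]/\lambda^m$ (Proposition~\ref{propbdrliezero}(3)). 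A local correction: the parenthetical in your Step~3 misattributes the quasi-basis — it does not come from Thm.~\ref{Prop-perfectisproet}, and it is not true that ``$\bdrplusml$ is $\hat{G}\dla$ after localizing''; rather $(\bdrplusml)^{\hat{G}\dla}$ is a proper subring of $\bdrplusml$ and is precisely the ring to which Proposition~\ref{propaxiommodule} is applied. The quasi-basis that is needed is the one coming from $D^+_{\dif,\kpinfty}(W)$, which consists of $\Gamma_K$-locally-analytic (hence $\hat{G}$-locally-analytic) vectors.

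The genuine gap is at the end of your Step~4. Having established that $N:=(W^{G_L})^{\hat{G}\dla,\nabla_\gamma=0}$ is finitely generated with quasi-basis over $L[[\lambda]]/\lambda^m$, you simply ``take $\gamma=1$-invariants'' and cite $(\bdrplusml)^{\gamma=1,\tau\dla}=\kinfty[[\lambda]]/\lambda^m$. That ring-level identity does not by itself give either finite generation of $N^{\gal(L/\kinfty)}$ over $\kinfty[[\lambda]]/\lambda^m$ or the base-change isomorphism back to $(W^{G_L})^{\hat{G}\dla}$: $L[[\lambda]]/\lambda^m$ is an \emph{infinite} profinite extension of $\kinfty[[\lambda]]/\lambda^m$, and taking invariants under a profinite group acting on a module over such an extension is not a descent statement one can invoke for free. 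The paper closes this in two explicit stages which you should supply: first, the quasi-basis of $N$ produced by Proposition~\ref{propaxiommodule} consists of finitely many vectors, each fixed by an open subgroup of $\gal(L/\kinfty)$, so for $n\gg 0$ and $M=\kinfty(\mu_n)$ the module $(W^{G_L})^{\hat{G}\dla,\gal(L/M)=1}$ is already finitely generated over the \emph{finite} subextension $M[[\lambda]]/\lambda^m$; second, one applies honest finite (\'etale) Galois descent along $\kinfty[[\lambda]]/\lambda^m\to M[[\lambda]]/\lambda^m$ to descend to $\kinfty[[\lambda]]/\lambda^m$. Your ``Main obstacle'' paragraph shows you sensed this subtlety, but the intermediate finite extension and the appeal to finite Galois descent are the precise pieces that are missing.
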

\begin{proof}
By considering inverse limits, it suffices to treat torsion objects, hence we can suppose  $W\in \rep^\fg_\gk(\bdrplusm)$ with some $m <+\infty$ (and hence also the relevant spaces are Banach representations).
By Lem. \ref{lemLAVqbasis}, we know
  \begin{equation*}\label{eq316first}
  (W^{G_L})^{\hat{G}\dla} =D_{\dif, \kpinfty}(W) \otimes_\kpinftyt (\bdrplusml)^{\hat{G}\dla}.
  \end{equation*}
Hence we can apply Prop. \ref{propaxiommodule} ---as the axioms are verified in Construction \ref{axiomverifybdr}--- to see that
$(W^{G_L})^{\hat{G}\dla, \nabla_\gamma=0}$
  is finitely generated over $(\mathbf{B}_{\dR, m, L}^+)^{\hat{G}\dla, \nabla_\gamma=0 }$  together with an identification
  \[  (W^{G_L})^{\hat{G}\dla, \nabla_\gamma=0} \otimes_{(\mathbf{B}_{\dR, m, L}^+)^{\hat{G}\dla, \nabla_\gamma=0 }} (\bdrplusml)^{\hat{G}\dla} =   (W^{G_L})^{\hat{G}\dla} \]
  Recall by Prop. \ref{propbdrliezero},
  $(\mathbf{B}_{\dR, m, L}^+)^{\hat{G}\dla, \nabla_\gamma=0 } =  L[[\lambda]]/\lambda^m$.
By considering $M=\kinfty(\mu_n)$ with $n \gg 0$, one sees that $(W^{G_L})^{\hat{G}\dla, \gal(L/M)=1}$ is finitely generated over  $(L[[\lambda]]/\lambda^m)^{\gal(L/M)=1}=M[[\lambda]]/\lambda^m$.
 Finally one applies \'etale descent (to the \'etale map $\kinfty[[\lambda]]/\lambda^m \to M[[\lambda]]/\lambda^m$) to conclude that
 \[D^+_{\dif, \kinfty}(W) =((W^{G_L})^{\hat{G}\dla, \gal(L/M)=1})^{\gal(M/\kinfty)=1} \]
  satisfies all the desired properties.
\end{proof}

\begin{defn} \label{defndiffwtb}
 Define a differential operator
$$N_\nabla: (\bdrplusl)^{\hat{G}\dla} \to (\bdrplusl)^{\hat{G}\dla}$$
by setting
\begin{equation}\label{eqnnring:dr}
{N_\nabla:=}
\begin{cases}
\frac{1}{p\mathfrak{t}}\cdot \nabla_\tau, &  \text{if }  \Kinfty \cap \Kpinfty=K; \\
& \\
\frac{1}{p^2\mathfrak{t}}\cdot \nabla_\tau=\frac{1}{4\mathfrak{t}}\cdot \nabla_\tau, & \text{if }  \Kinfty \cap \Kpinfty=K(\pi_1), \text{ cf. Notation \ref{nota hatG}. }
\end{cases}
\end{equation}
This operator also acts on the $\hat{G}$-locally analytic representation $(W^{G_L})^{\hat{G}\dla}$ for $W\in \rep^\fg_\gk(\bdrplus)$, and hence in particular induces a map
\begin{equation}\label{eq322tau}
N_\nabla: D^+_{\dif, \kinfty}(W) \to (W^{G_L})^{\hat{G}\dla}
\end{equation}
\end{defn}

\begin{theorem}\label{thmkummersenop}
Let $W \in \rep^\fg_\gk(\bdrplus)$, then the image of the map   \eqref{eq322tau} falls inside $D^+_{\dif, \kinfty}(W)$.
Furthermore, the following linear  scaling
\begin{equation}\label{eqnnablanorm}
\frac{1}{u\lambda'}\cdot N_\nabla: D^+_{\dif, \kinfty}(W) \to D^+_{\dif, \kinfty}(W).
\end{equation}
defines  an object in $\mathrm{MIC}^\fg_\lambda(\kinftylambda)$.
We call the operator $\frac{1}{u\lambda'}\cdot N_\nabla$   the  \emph{Sen--Fontaine operator over the Kummer tower}.
 This  induces an exact tensor functor
$$  \rep^\fg_\gk(\bdrplus) \to  \mathrm{MIC}^\fg_\lambda(\kinftylambda).$$
\end{theorem}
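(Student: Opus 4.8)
\textbf{Proof strategy for Theorem \ref{thmkummersenop}.}
The plan is to proceed in three stages: first show the $\tau$-Lie operator $N_\nabla$ preserves $D^+_{\dif, \kinfty}(W)$; then check the normalization $\frac{1}{u\lambda'}\cdot N_\nabla$ satisfies the $\lambda$-Leibniz law; and finally verify functoriality. Throughout we reduce to the torsion case $W\in\rep^\fg_\gk(\bdrplusm)$ by passing to inverse limits, exactly as in the proof of Theorem \ref{thm331kummersenmod}.

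\emph{Step 1: $N_\nabla$ stabilizes $D^+_{\dif,\kinfty}(W)$.} By definition $D^+_{\dif,\kinfty}(W) = (W^{G_L})^{\tau\dla,\gamma=1}$, and we already know from \eqref{eq322tau} that $N_\nabla$ lands in $(W^{G_L})^{\hat G\dla}$. The point is that $N_\nabla$ commutes with $\nabla_\gamma$ up to a harmless scalar: since $\nabla_\tau$ and $\nabla_\gamma$ span $\Lie(\hat G)$ and $\hat G$ is (up to finite index) the semidirect product with $\gal(L/\kpinfty)$ normal, one has a bracket relation of the form $[\nabla_\gamma,\nabla_\tau]=c\,\nabla_\tau$ for a constant $c\in\zp$; combined with $\nabla_\gamma(\fkt)=$ (scalar)$\cdot\fkt$ this shows $N_\nabla=\frac{1}{p^?\fkt}\nabla_\tau$ sends $\nabla_\gamma$-fixed (equivalently, $\gal(L/\kpinfty(\pi_n))$-fixed for some $n$) vectors to $\nabla_\gamma$-fixed vectors. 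Hence the image of $N_\nabla$ on $D^+_{\dif,\kinfty}(W)$ lies in $(W^{G_L})^{\hat G\dla,\nabla_\gamma=0}$, and intersecting with the appropriate $\gal(M/\kinfty)$-invariants (the \'etale descent step from Theorem \ref{thm331kummersenmod}) places it back in $D^+_{\dif,\kinfty}(W)$.

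\emph{Step 2: the $\lambda$-Leibniz law.} We must show that for $f\in\kinftylambda$ and $x\in D^+_{\dif,\kinfty}(W)$,
\[
\tfrac{1}{u\lambda'}N_\nabla(fx) = f\cdot\tfrac{1}{u\lambda'}N_\nabla(x) + \lambda\tfrac{d}{d\lambda}(f)\cdot x.
\]
Since $N_\nabla$ is (a scalar multiple of) a Lie algebra derivation on the ring $(\bdrplusl)^{\hat G\dla}$ and its action on functions, this reduces to computing $N_\nabla$ on the scalar ring, i.e. to the identity $\tfrac{1}{u\lambda'}N_\nabla(\lambda)=\lambda$ as an element of (the relevant quotient of) $\kinftylambda$. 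Concretely, using $\lambda\in\bcrisplus$ and the explicit action $\tau(\pi_n)=\pi_n\mu_n$ (or the $p=2$ variant), one computes $\nabla_\tau(\lambda)$; dividing by $p\fkt$ (resp. $p^2\fkt$) and then by $u\lambda'$ should yield exactly $\lambda$ after using $\fkt=t/(p\lambda)$ and the chain rule relating $\frac{d}{d\lambda}$ to $\frac{d}{d(u-\pi)}$. This is the step where one must be careful with the two cases of Notation \ref{nota hatG} and with the choice of uniformizer; it parallels the corresponding computation in \cite{Gao22} for $\rep_\gk(C)$, which is why the normalization was chosen to match. One also notes finite generation of $D^+_{\dif,\kinfty}(W)$ over $\kinftylambda$ is already given by Theorem \ref{thm331kummersenmod}, so $(D^+_{\dif,\kinfty}(W),\frac{1}{u\lambda'}N_\nabla)$ is genuinely an object of $\mathrm{MIC}^\fg_\lambda(\kinftylambda)$.

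\emph{Step 3: exact tensor functoriality.} Functoriality in $W$ is automatic since all operations ($(-)^{G_L}$, locally analytic vectors, $\tau$-analytic $\gamma$-fixed vectors, and applying the fixed operator $\frac{1}{u\lambda'}N_\nabla$) are functorial. Exactness follows because $D^+_{\dif,\kinfty}(-)$ is obtained from $D^+_{\dif,\kpinfty}(-)$ (which is exact by Fontaine's theory) by the flat base change and \'etale descent of Theorem \ref{thm331kummersenmod}, and short exact sequences of representations give short exact sequences of the associated $\kinftylambda$-modules compatibly with the connection. Compatibility with tensor products reduces to the Leibniz rule for $N_\nabla$ on tensor products, which holds since $N_\nabla$ comes from a Lie algebra element acting diagonally.

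\emph{Main obstacle.} The genuine work is Step 2: verifying that the specific scalar $\frac{1}{u\lambda'}$ (together with the $p$ or $p^2$ in the definition of $N_\nabla$) is exactly the factor making the Leibniz law hold, i.e. pinning down $\nabla_\tau(\lambda)$ and $\nabla_\tau(\fkt)$ precisely. Step 1's bracket/commutation bookkeeping is also slightly delicate in the $p=2$ case where $\hat G$ is only virtually a semidirect product, but the argument of \cite[Lem. 3.2.4]{GP} and Lemma \ref{lemdividet} handle this; the $p=2$ subtleties never genuinely obstruct because one always works inside a pro-cyclic open subgroup.
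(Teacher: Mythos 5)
Your overall strategy matches the paper's: Step 2 (reduce the Leibniz law to the single computation $\frac{1}{u\lambda'}N_\nabla(\lambda)=\lambda$) and Step 3 are exactly what the paper does (the paper itself states Step 2 in one sentence and omits Step 3 as standard). The substantive issue is Step 1.

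The paper proves Step 1 by citing the \emph{group-level} commutation identity $g\,N_\nabla = N_\nabla\, g$ for every $g \in \gal(L/\kinfty)$ (from \cite[Eqn.~(4.2.5)]{Gaojems}). With this in hand, $x\in (W^{G_L})^{\tau\dla,\gamma=1}$ and $g(x)=x$ immediately give $g(N_\nabla x)=N_\nabla(g x)=N_\nabla(x)$, so $N_\nabla(x)$ is again $\gal(L/\kinfty)$-fixed; local analyticity of $N_\nabla(x)$ is automatic since $N_\nabla$ preserves $(W^{G_L})^{\hat G\dla}$. Your argument instead establishes only the \emph{Lie-algebra} commutation $[\nabla_\gamma, N_\nabla]=0$, which yields $g\,N_\nabla\,g^{-1}=N_\nabla$ only for $g$ in an open (pro-cyclic) subgroup of $\gal(L/\kinfty)$, not on the torsion part $\Delta$ (nontrivial for all $p$, not just $p=2$: e.g.\ $\Delta$ is typically the image of $\mu_{p-1}$). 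You try to close this with an ``intersecting with the appropriate $\gal(M/\kinfty)$-invariants'' step, but that move itself presupposes that $N_\nabla$ commutes with the finite group $\gal(M/\kinfty)$ — which is precisely the group-level statement you have not established. So as written Step 1 has a genuine gap. The fix is short and in the same spirit: for \emph{any} $g\in\gal(L/\kinfty)$ one has $g\tau g^{-1}=\tau^{\chi(g)}$ (normality of $\gal(L/\kpinfty)$), hence $g\nabla_\tau g^{-1}=\chi(g)\nabla_\tau$, and $g(\fkt)=\chi(g)\fkt$ (since $g(t)=\chi(g)t$ and $g(\lambda)=\lambda$ because $g$ fixes $u=[\pi^\flat]$); these combine to give $g N_\nabla g^{-1}=N_\nabla$ directly, including on $\Delta$. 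Also a small slip: ``$\nabla_\gamma=0$ iff fixed by $\gal(L/\kpinfty(\pi_n))$'' names a subgroup of $\gal(L/\kpinfty)$, not of $\gal(L/\kinfty)$; you want $\gal(L/\kinfty(\mu_n))$.
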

\begin{proof}
Note $N_\nabla$ commutes with $\gal(L/\kinfty)$, i.e., $gN_\nabla=N_\nabla g, \forall g\in \gal(L/\kinfty)$, cf. \cite[Eqn. (4.2.5)]{Gao23}; hence the image of \eqref{eq322tau} also satisfies $\gamma=1$, and hence falls inside $ (W^{G_L})^{\tau\dla, \gamma=1}= D^+_{\dif, \kinfty}(W).$
To see that $\frac{1}{u\lambda'}\cdot N_\nabla$ is a  log-$\lambda$-connection, it suffices to check that $$\frac{1}{u \lambda'}N_\nabla (\lambda) =\lambda.$$
\end{proof}

 \subsection{Property of the $\lambda$-connection}\label{subsec lambda conn}

\begin{prop}\label{proplambdaconniso}
Let $W \in \rep^\fg_\gk(\bdrplus)$. Let $(D_{\dif, \kpinfty}(W), \nabla_t)$ and $(D_{\dif, \kinfty}(W), \nabla_\lambda)$ be the corresponding log-$t$-connection and   log-$\lambda$-connection. Then $W^{G_K}$ is a finite dimensional $K$-vector space, and we have the following identifications
\begin{eqnarray}
\label{eqgkpinfty} W^{G_K}\otimes_K \kpinfty & \xrightarrow{\simeq} (D_{\dif, \kpinfty}(W))^{\nabla_t=0}. \\
\label{eqgkl} W^{G_K}\otimes_K L  & \xrightarrow{\simeq} (W^{G_L})^{\hat{G}\dla, \mathrm{Lie} \hat{G}=0}.\\
\label{eqgkinfty} W^{G_K}\otimes_K \kinfty   & \xrightarrow{\simeq} (D_{\dif, \kinfty}(W))^{\nabla_\lambda=0}.
\end{eqnarray}
\end{prop}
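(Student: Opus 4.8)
The plan is to deduce all three identifications from the single statement $W^{G_K} \otimes_K \BdRp = W$, which is the first identification in the cyclotomic Sen--Fontaine theory recalled above (the equivalence $\rep^\fg_\gammak(\kpinfty[[t]]) \simeq \rep^\fg_\gk(\bdrplus)$ shows in particular that $W$ is ``de Rham'' in the trivial sense that descent to $K$ holds; more precisely one first reduces, by taking inverse limits along $W/t^m W$, to the torsion case $W \in \rep^\fg_\gk(\bdrplusm)$ and argues there). First I would establish that $W^{G_K}$ is a finite-dimensional $K$-vector space: since $K = \BdRp/\xi = (\BdRp)^{G_K}$ (because $(\BdRp)^{G_K} = \BdRp \cap (\bdr)^{G_K}$ and the classical computation gives $(\bdr)^{G_K}=K$, or by dévissage from $C^{G_K}=K$), taking $G_K$-invariants is left exact, and from $W^{G_K} \otimes_K \BdRp = W$ (which I would cite from the cyclotomic theory) one gets $\dim_K W^{G_K} = \rk_{\BdRp} W < \infty$.

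Next, \eqref{eqgkpinfty}: the log-$t$-connection $\nabla_t$ on $D^+_{\dif,\kpinfty}(W) = (W^{G_\kpinfty})^{\gammak\dla}$ is the Lie algebra operator $\nabla_\gamma$, so $(D^+_{\dif,\kpinfty}(W))^{\nabla_t=0}$ consists of vectors killed by the Lie algebra action of $\Gamma_K$, i.e. fixed by an open subgroup of $\Gamma_K$; running Sen's ``decompletion/descent'' down the tower $\kpinfty$ (exactly as in Fontaine, using that $(\bdrplusml)^{\nabla_\gamma = 0} = \kpinfty[[t]]$-type statements — here $(\bdr^+_{m,L})^{\gamma=1,\tau\dla}$-style results from Prop.~\ref{propbdrliezero}, adapted to the cyclotomic side) shows $(D^+_{\dif,\kpinfty}(W))^{\nabla_t=0} = (W^{G_\kpinfty})^{\gammak=1} = W^{G_K} \otimes_K \kpinfty$, the last equality again by Galois/étale descent along $\kpinfty/K$ together with $\dim$-counting from the first paragraph. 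For \eqref{eqgkl}: by Lem.~\ref{lemLAVqbasis} and the identifications in Thm.~\ref{thm331kummersenmod}, $(W^{G_L})^{\hat G\dla} = D^+_{\dif,\kpinfty}(W) \otimes_{\kpinfty[[t]]} (\BdRp_L)^{\hat G\dla}$; imposing $\mathrm{Lie}\,\hat G = 0$ means imposing both $\nabla_\gamma = 0$ and $\nabla_\tau = 0$ (equivalently $N_\nabla = 0$). Killing $\nabla_\gamma$ on the right-hand factor gives, by Prop.~\ref{propbdrliezero}(3), $(\BdRp_L)^{\hat G\dla,\nabla_\gamma=0} = L[[\lambda]]$ and on the left factor reduces $D^+_{\dif,\kpinfty}(W)$ to $(D^+_{\dif,\kpinfty}(W))^{\nabla_t=0} = W^{G_K}\otimes_K\kpinfty$ by \eqref{eqgkpinfty}; then further killing $\nabla_\tau$ (equivalently $N_\nabla$) collapses $L[[\lambda]]$ down to $L$ and $\kpinfty$ up to... — more carefully, one checks directly that $(W^{G_L})^{\hat G\dla,\mathrm{Lie}\hat G=0}$ equals $(W^{G_L})^{\hat G}$ on a suitable open subgroup, which by descent and the invariance $(\BdRp_L)^{\hat G} $ (reducing mod $\lambda$ to $L^{\hat G}$-type computations, $= L$ up to finite, cf. the cyclotomic input) is $W^{G_K}\otimes_K L$. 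Finally \eqref{eqgkinfty} follows from \eqref{eqgkl} by taking $\gal(L/\kinfty)$-invariants: $D^+_{\dif,\kinfty}(W) = ((W^{G_L})^{\hat G\dla})^{\gamma=1}$, the Sen--Fontaine operator $\nabla_\lambda = \frac{1}{u\lambda'}N_\nabla$ is (a normalization of) $\nabla_\tau$, so $(D^+_{\dif,\kinfty}(W))^{\nabla_\lambda=0} = ((W^{G_L})^{\hat G\dla,\mathrm{Lie}\hat G=0})^{\gal(L/\kinfty)=1} = (W^{G_K}\otimes_K L)^{\gal(L/\kinfty)=1} = W^{G_K}\otimes_K \kinfty$, using that $\gal(L/\kinfty)$ acts on $W^{G_K}\otimes_K L$ through the second factor only and $L^{\gal(L/\kinfty)}=\kinfty$.

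The main obstacle I anticipate is the bookkeeping around the two non-commuting directions in $\hat G$ and the $p=2$ subtlety where $\kinfty \cap \kpinfty = K(\pi_1)$, which forces the awkward normalizations of $N_\nabla$ in Def.~\ref{defndiffwtb}; one must check that ``$\mathrm{Lie}\,\hat G = 0$'' really is equivalent to the conjunction of the two connection operators vanishing, and that the descent steps along $L/\kpinfty$, $L/\kinfty$ and along the finite extensions $\kinfty(\mu_n)/\kinfty$ are genuinely étale so that faithfully flat descent applies — this is exactly the kind of argument carried out in the proof of Thm.~\ref{thm331kummersenmod} and in \cite{GP,Gao22}, so I would lean on those. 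A secondary point is making the reduction to the torsion case $W/t^mW$ clean: one needs that $W^{G_K}$, $D^+_{\dif,\ast}(W)$ and the invariant-under-$\mathrm{Lie}\,\hat G$ functors all commute with the relevant inverse limits, which should follow from finiteness (Mittag-Leffler) once the torsion case is known, but deserves an explicit remark.
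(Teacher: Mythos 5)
Your opening premise is wrong, and this undermines most of what follows. You claim $W^{G_K}\otimes_K \bdrplus = W$ holds for every $W \in \rep^\fg_\gk(\bdrplus)$ and that this ``is the first identification in the cyclotomic Sen--Fontaine theory.'' It is not. The cyclotomic theory gives $D^+_{\dif,\kpinfty}(W) \otimes_{\kpinftyt} \bdrplus = W$, i.e.\ descent to the \emph{imperfect} coefficient ring $\kpinftyt$; descent all the way to $K$ is precisely the de Rham condition, which is a genuine restriction. Concretely, $W = \bdrplus(1)$ has $W^{G_K} = 0$ but rank $1$, so $W^{G_K}\otimes_K\bdrplus = 0 \neq W$. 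Consequently your claim ``$\dim_K W^{G_K} = \rk_{\bdrplus} W$'' is false, and the finite-dimensionality of $W^{G_K}$ (with $\dim_K W^{G_K} \le \rk W$, possibly strict) cannot be read off this way; the paper simply cites \cite[Prop.~3.8(i)]{Fon04}, which establishes finite-dimensionality and \eqref{eqgkpinfty} by passing to invariants under a cofinal system of open subgroups $\Gamma_n \subset \Gamma_K$ and then doing \'etale descent over the resulting finite layers.

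A related confusion appears in your derivation of \eqref{eqgkpinfty}: the chain ``$(D^+_{\dif,\kpinfty}(W))^{\nabla_t=0} = (W^{G_\kpinfty})^{\gammak=1} = W^{G_K}\otimes_K\kpinfty$'' cannot be right, since $(W^{G_\kpinfty})^{\gammak=1} = W^{G_K}$ is a $K$-vector space, not a $\kpinfty$-vector space; it coincides with $W^{G_K}\otimes_K \kpinfty$ only when $W^{G_K}=0$. For \eqref{eqgkl}, the tensor decomposition $(W^{G_L})^{\hat G\dla} = D^+_{\dif,\kpinfty}(W)\otimes_{\kpinftyt}(\bdrplusl)^{\hat G\dla}$ is fine, but you then try to ``kill $\nabla_\gamma$ on each factor separately''; this is illegitimate because the tensor is taken over $\kpinftyt$, which is itself not $\nabla_\gamma$-horizontal (already $\nabla_\gamma(t)=t \ne 0$), so horizontal sections of a tensor product do not split this way. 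You seem to notice the difficulty mid-sentence and retreat to the vaguer ``fixed by a suitable open subgroup plus descent'' idea, which is morally correct but not spelled out. The paper's actual proof of \eqref{eqgkl} makes this precise and avoids the tensor-product pitfall: it observes that $\mathrm{Lie}\,\hat G=0$ is equivalent to $\nabla_\gamma=0$ together with being fixed by $\gal(L/\kpinfty(\pi_n))$ for some $n$, identifies the resulting space with $\bigl(D_{\dif,\kpinfty(\pi_n)}(W|_{G_{K(\pi_n)}})\bigr)^{\nabla_t=0}$ over the intermediate base $K(\pi_n)$, applies \eqref{eqgkpinfty} to the restricted representation $W|_{G_{K(\pi_n)}}$, and finishes by Galois (Hilbert~90) descent along the finite extensions $K(\pi_n)/K$ before taking the union over $n$. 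This bootstrap through intermediate base fields is the key idea you are missing. Your final step, deducing \eqref{eqgkinfty} from \eqref{eqgkl} by taking $\gal(L/\kinfty)$-invariants, does agree with the paper.
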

\begin{proof}
Finite dimensionality of $W^{G_K}$ and \eqref{eqgkpinfty}  are proved in \cite[Prop. 3.8(i)]{Fon04}.
For \eqref{eqgkl},  we have
\begin{align*}
(W^{G_L})^{\hat{G}\dla, \mathrm{Lie} \hat{G}=0} &= \cup_{n\geq 1} (W^{G_L})^{\hat{G}\dla, \nabla_\gamma=0, \gal(L/\kpinfty(\pi_n))=1}\\
&= \cup_{n\geq 1} (D_{\dif, \kpinfty(\pi_n)}(W|_{G_{K(\pi_n)}}))^{\nabla_t=0} \\
&= \cup_{n\geq 1} W^{G_{K(\pi_n)}}\otimes_{K(\pi_n)} \kpinfty(\pi_n), \text{ by \eqref{eqgkpinfty} applied to $W|_{G_{K(\pi_n)}}$,}\\
&= \cup_{n\geq 1}  W^{G_K}\otimes_K \kpinfty(\pi_n), \text{ by Galois descent,} \\
&=  W^{G_K}\otimes_K L
\end{align*}
 Finally, \eqref{eqgkinfty} follows from \eqref{eqgkl} by taking invariants of $\gal(L/\kinfty)$.
\end{proof}

The following result  is standard consequence of above Prop. \ref{proplambdaconniso}.

\begin{prop}
Let $W_1, W_2 \in \rep^\fg_\gk(\bdrplus)$, the following are equivalent:
\begin{enumerate}
\item $W_1$ and $W_2$ are isomorphic as objects in $\rep^\fg_\gk(\bdrplus)$.
\item  $D_{\dif, \kpinfty}(W_1)$ and $D_{\dif, \kpinfty}(W_2)$ are isomorphic as objects in $\mathrm{MIC}^\fg_t(\kpinftyt)$.
\item   $D_{\dif, \kinfty}(W_1)$ and $D_{\dif, \kinfty}(W_2)$ are isomorphic as objects in  $\mathrm{MIC}^\fg_\lambda(\kinftylambda)$.
\end{enumerate}
\end{prop}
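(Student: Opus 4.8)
The equivalences $(1)\Leftrightarrow(2)$ and $(1)\Leftrightarrow(3)$ will be deduced formally from the equivalences of categories already available, together with the fixed-point description of $\Hom$-spaces coming from Prop.~\ref{proplambdaconniso}. Recall that by Fontaine's theorem (\cite[\S 3.3]{Fon04}, recalled above) base change gives an exact tensor equivalence $\rep^\fg_\gk(\bdrplus)\simeq\rep^\fg_\gammak(\kpinfty[[t]])$, and that the functor $W\mapsto (D^+_{\dif,\kpinfty}(W),\nabla_t)$ to $\mathrm{MIC}^\fg_t(\kpinftyt)$ is an exact tensor functor; similarly Thm.~\ref{thm331kummersenmod} and Thm.~\ref{thmkummersenop} give the exact tensor functor $W\mapsto (D^+_{\dif,\kinfty}(W),\nabla_\lambda)$ to $\mathrm{MIC}^\fg_\lambda(\kinftylambda)$. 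So the implications $(1)\Rightarrow(2)$ and $(1)\Rightarrow(3)$ are immediate: any functor sends isomorphic objects to isomorphic objects.

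\textbf{The converse directions.}
For $(2)\Rightarrow(1)$ and $(3)\Rightarrow(1)$ I would argue via internal homs. Given $W_1,W_2\in\rep^\fg_\gk(\bdrplus)$, the object $W:=W_1^\vee\otimes W_2\in\rep^\fg_\gk(\bdrplus)$ has the property that an isomorphism $W_1\xrightarrow{\sim}W_2$ is the same as an element of $W^{G_K}$ which is invertible (i.e.\ whose reduction mod $t$ is invertible, equivalently which is a unit in the relevant matrix algebra). Because the functors to $\mathrm{MIC}^\fg_t(\kpinftyt)$ and $\mathrm{MIC}^\fg_\lambda(\kinftylambda)$ are tensor functors compatible with duals, they carry $W_1^\vee\otimes W_2$ to $D_{\dif,\kpinfty}(W_1)^\vee\otimes D_{\dif,\kpinfty}(W_2)=:\underline{\Hom}_t$, resp.\ to $D_{\dif,\kinfty}(W_1)^\vee\otimes D_{\dif,\kinfty}(W_2)=:\underline{\Hom}_\lambda$. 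Now Prop.~\ref{proplambdaconniso} applied to $W$ gives
\[
W^{G_K}\otimes_K \kpinfty \;\xrightarrow{\simeq}\; (\underline{\Hom}_t)^{\nabla_t=0},\qquad
W^{G_K}\otimes_K \kinfty \;\xrightarrow{\simeq}\; (\underline{\Hom}_\lambda)^{\nabla_\lambda=0},
\]
and both right-hand sides are exactly the spaces of morphisms $D_{\dif,\ast}(W_1)\to D_{\dif,\ast}(W_2)$ in the respective categories $\mathrm{MIC}^\fg$. Thus an isomorphism in $\mathrm{MIC}^\fg_t(\kpinftyt)$ (resp.\ $\mathrm{MIC}^\fg_\lambda(\kinftylambda)$) produces, after descending the scalar extension $K\to\kpinfty$ (resp.\ $K\to\kinfty$), an element of $W^{G_K}$; one checks it is invertible because invertibility can be tested after the faithfully flat base change $K\to\kpinfty$ (resp.\ $\to\kinfty$), or alternatively mod $t$ (resp.\ mod $\lambda$), where it becomes the reduction of the given isomorphism of connections. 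Since $W=W_1^\vee\otimes W_2$ and applying the (fully faithful, indeed equivalence) functor back, such an invertible element of $W^{G_K}$ is precisely an isomorphism $W_1\xrightarrow{\sim}W_2$ in $\rep^\fg_\gk(\bdrplus)$.

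\textbf{Where the content sits.}
There is essentially no new difficulty here: the whole statement is a formal consequence of (a) the tensor-compatibility (including duals) of the functors built in the preceding subsections, and (b) the identification of horizontal sections with $W^{G_K}\otimes_K(\text{field})$ from Prop.~\ref{proplambdaconniso}. The one point that deserves a line of care is the descent of scalars: knowing a map becomes an isomorphism after $\otimes_K\kinfty$ (or $\otimes_K\kpinfty$) implies it was an isomorphism already, which is just faithful flatness of a field extension; and matching the two notions of $\Hom$ — "morphism of $\lambda$-connections" versus "$\nabla_\lambda$-horizontal element of the internal hom" — is immediate from the Leibniz rule. I would therefore present the proof as: reduce to $W=W_1^\vee\otimes W_2$, invoke Prop.~\ref{proplambdaconniso} for $W$ to get the horizontal-sections identification, observe that isomorphisms on either side correspond to invertible elements of $W^{G_K}$, and conclude. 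If one wishes to avoid internal homs one can instead argue directly that an isomorphism of connections over $\kinfty[[\lambda]]$ is $\gal(L/\kinfty)$-equivariant hence descends, but the internal-hom formulation is cleaner and makes the tensor-functoriality do all the work.
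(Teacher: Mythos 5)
Your plan has the right skeleton — reduce to $W = W_1^\vee\otimes W_2$, invoke Prop.~\ref{proplambdaconniso} to identify $W^{G_K}\otimes_K\kinfty$ with the $\Hom$-space in $\mathrm{MIC}^\fg_\lambda(\kinftylambda)$, and look for an invertible element of $W^{G_K}$. This is also how the paper frames the reduction. But the sentence ``an isomorphism in $\mathrm{MIC}^\fg_\lambda(\kinftylambda)$ produces, after descending the scalar extension $K\to\kinfty$, an element of $W^{G_K}$'' is precisely where the content of the proof lives, and as written it is a gap, not an argument. A given isomorphism $f$ sits in $W^{G_K}\otimes_K\kinfty$, a proper overspace of $W^{G_K}$; there is no mechanism that ``descends'' $f$ itself, because $\kinfty/K$ is \emph{not} a Galois extension. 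Your suggested alternative (``an isomorphism of connections over $\kinfty[[\lambda]]$ is $\gal(L/\kinfty)$-equivariant hence descends'') fails for the same reason: $\gal(L/\kinfty)$-invariance lands you back in $\kinfty$, not in $K$. Likewise ``invertibility can be tested after the faithfully flat base change $K\to\kinfty$'' is only useful once you already have a candidate morphism defined over $K$, which you have not yet produced.

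The missing idea — and what the paper's proof actually supplies — is a Zariski-density/determinant argument. Choose a basis $f_1,\dots,f_n$ of the finite-dimensional $K$-space $W^{G_K}$, write the given isomorphism as $f=\sum\lambda_i f_i$ with $\lambda_i\in\kinfty$, and consider the determinant polynomial $P(X_1,\dots,X_n)=\det(\sum X_i A_i)\in\kinftylambda[X_1,\dots,X_n]$ (where $A_i$ is the matrix of $f_i$ on chosen quasi-bases). Since $P(\lambda_1,\dots,\lambda_n)$ is a unit, the reduction $\bar P\bmod\lambda$ is a nonzero polynomial over $\kinfty$, and therefore over the infinite field $K$; so there exist $\mu_1,\dots,\mu_n\in K$ with $\bar P(\mu_1,\dots,\mu_n)\neq 0$. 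The morphism $f':=\sum\mu_i f_i$ is then $G_K$-equivariant (coefficients in $K$) and an isomorphism (determinant a unit). Note that $f'$ is a \emph{different} morphism from $f$ — one does not descend $f$, one constructs a new isomorphism from the $K$-rational structure on $W^{G_K}$. The paper explicitly flags that this step is where the Kummer tower differs from Fontaine's cyclotomic situation: over $\kpinfty$ one can work equivariantly throughout, but over $\kinfty$ one cannot, which is exactly why the density-of-$K$-points trick is needed. You should insert this determinant argument in place of the ``descent'' sentence; the rest of your reduction is correct.
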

\begin{proof}
The equivalence between the first two items follow from \cite[Prop. 3.8]{Fon04}.
Clearly Item (1) implies Item (3), it suffices to prove the converse. The argument is rather similar to \cite[Prop. 3.8]{Fon04}, but not completely the same: in \emph{loc. cit.}, one can completely work over modules over $\kpinfty[[t]]$; this is not the case for us as the Kummer tower is not Galois. We include a detailed argument for completeness.

Suppose now we are given an isomorphism in  $\mathrm{MIC}^\fg_\lambda(\kinftylambda)$:
\begin{equation}\label{eqgivenf}
f: D_{\dif, \kinfty}(W_1)\to D_{\dif, \kinfty}(W_2)
\end{equation}
Let $W=\hom_\bdrplus(W_1, W_2)$ denote the space of $\bdrplus$-linear homomorphisms, then it is an object in $\rep^\fg_\gk(\bdrplus)$, and   the space $\hom_\gk(W_1, W_2)$ of $G_K$-equivariant  morphisms is precisely $W^{\gk}$, which is a finite dimensional $K$-vector spaces (by Prop. \ref{proplambdaconniso}). In addition,  by \eqref{eqgkinfty} in Prop. \ref{proplambdaconniso}, we have
\begin{equation}\label{eqw1w2}
\Hom_\gk(W_1, W_2)\otimes_K \kinfty =\Hom_{\mathrm{MIC}_\lambda(\kinftylambda)}(D_{\dif, \kinfty}(W_1), D_{\dif, \kinfty}(W_2))
\end{equation}
Let $f_1, \cdots, f_n$ be a basis of the $K$-vector space $W^{\gk}$.
Let $y_1, \cdots, y_h$ (resp. $z_1, \cdots, z_h$) be a quasi-basis of the $\kinftylambda$-module $D_{\dif, \kinfty}^+(W_1)$ (resp. $D_{\dif, \kinfty}^+(W_2)$.
Recall each $f_j: W_1 \to W_2$ is a $G_K$-equivariant  morphism, and hence by functoriality  induces a $\nabla_\lambda$-horizontal morphism $D_{\dif, \kinfty}^+(W_1) \to  D_{\dif, \kinfty}^+(W_2)$. Thus in particular, each $f_j$ can be expressed as
$$f_j(y_1, \cdots, y_h) =(z_1, \cdots, z_h) A_j, \text{ with $A_j$ a matrix over $\kinftylambda$ }$$
Define a multi-variable polynomial
$$P(X_1, \cdots, X_n) =\mathrm{det}(X_1A_1+\cdots  +X_nA_n) \in \kinftylambda[X_1, \cdots, X_n]$$
and let $\bar{P}(X_1, \cdots, X_n) \in \kinfty[X_1, \cdots, X_n]$ be the reduction modulo $\lambda$.
Recall we are given $f$ in \eqref{eqgivenf}. By \eqref{eqw1w2}, we can express $f$ by:
$$f=\sum_{i=1}^n \lambda_i f_i, \text{ with $\lambda_i \in \kinfty$ }$$
 Since $f$ is an isomorphism, $P(\lambda_1, \cdots, \lambda_n) $ is a unit in $\kinftylambda$, and hence  $\bar P(\lambda_1, \cdots, \lambda_n) \neq 0 $, and hence in particular the polynomial $\bar{P}$ is not identically zero.
 Since $K$ is infinite, there exists $\mu_1, \cdots, \mu_n \in K$ such that $\bar P(\mu_1, \cdots, \mu_n) \neq 0$. Consider the morphism
$$f' :=\sum_i \mu_i f_i: W_1 \to W_2.$$
It is $G_K$-equivariant since $\mu_i \in K$.
The induced $\nabla_\lambda$-horizontal morphism
\[s': D_{\dif, \kinfty}^+(W_1) \to D_{\dif, \kinfty}^+(W_2)  \]
  is an isomorphism since its reduction modulo $\lambda$ (which is $\bar P(\mu_1, \cdots, \mu_n) $) is an isomorphism.
  Thus the   map $f' : W_1 \to W_2$, being the base change of the isomorphism $s'$ from $\kinftylambda$ to $\bdrplus$, is the desired isomorphism.
\end{proof}

\begin{rem}
\begin{enumerate}
\item
For an object $W\in \rep_\gk(\bdr)$, Fontaine's theory still works, cf. \cite[\S 3.5]{Fon04}.
Namely, if we let
\[ D_{\dif, \kpinfty}(W): =(W^{G_\kpinfty})^{\gammak\dpa} \]
then it is a finite dimensional vector space over $\kpinfty((t))$ with a natural identification
\[D_{\dif, \kpinfty}(W) \otimes_{\kpinfty((t))} \bdr =W. \]
Furthermore, $\nabla_\gamma$ induces a log-$t$-connection on $D_{\dif, \kpinfty}(W)$.

\item
In our Kummer tower setting, we  can similarly define
\[D_{\dif, \kinfty}(W):= (W^{G_L})^{\tau\dpa, \gamma=1};\]
then it is a finite dimensional vector space over $\kinfty((\lambda))$ with a natural identification
\[D_{\dif, \kinfty}(W) \otimes_{\kinfty((\lambda))} \bdr =W. \]
Then all the relevant results in this section hold analogously; we leave  the interested readers to formulate and prove them. (As mentioned in Rem. \ref{remnobdrtheory}, we do not know if the corresponding $\bbdr$-crystal theory works well).
\end{enumerate}
\end{rem}

\subsection{Sen--Fontaine theory and Galois cohomology}

\begin{theorem}\label{thm-SenFon coho Galois coho}
Let $W \in \rep^\fg_\gk(\bdrplus)$, then we have  a diagram of $K$-linear  quasi-isomorphisms
\begin{equation}\label{diagqisom}
\begin{tikzcd}
{\rgamma(G_K, W)} \arrow[rr, "\simeq"] &  & {\rgamma(\hat{G}, W^{G_L})} \arrow[rr, "\simeq"] \arrow[d, "\simeq"]                &  & {\rgamma(\gammak, W^{G_\kpinfty})} \arrow[d, "\simeq"]           \\
                                       &  & {\rgamma(\hat{G}, (W^{G_L})^{\hat{G}\dla})} \arrow[rr, "\simeq"] \arrow[d, "\simeq"] &  & {\rgamma(\gammak,  D^+_{\dif, \kpinfty}(W))} \arrow[d, "\simeq"] \\
                                       &  & {(\rgamma(\Lie \hat{G}, (W^{G_L})^{\hat{G}\dla}))^{\hat G}} \arrow[rr, "\simeq"]     &  & {(\rgamma(\Lie \gammak, D^+_{\dif, \kpinfty}(W)))^{\gammak}}
\end{tikzcd}
\end{equation}
In addition, we have $K$-linear quasi-isomorphisms
\begin{align}
\label{qisopinfty}\rgamma(G_K, W)\otimes_K \kpinfty & \simeq
[D^+_{\dif, \kpinfty}(W) \xrightarrow{\nabla_t} D^+_{\dif, \kpinfty}(W)] \\
\label{qisoL} \rgamma(G_K, W)\otimes_K L &\simeq \rgamma(\Lie \hat{G}, (W^{G_L})^{\hat{G}\dla}\\
\label{qisoinfty}\rgamma(G_K, W)\otimes_K \kinfty &\simeq
[D^+_{\dif, \kinfty}(W) \xrightarrow{\nabla_\lambda} D^+_{\dif, \kinfty}(W)]
\end{align}
\end{theorem}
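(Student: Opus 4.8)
\textbf{Proof plan for Theorem \ref{thm-SenFon coho Galois coho}.}

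The plan is to assemble the big commutative diagram \eqref{diagqisom} from three independent inputs and then read off \eqref{qisopinfty}--\eqref{qisoinfty} as corollaries. First I would handle the top row. The quasi-isomorphism $\rgamma(G_K, W) \simeq \rgamma(\hat G, W^{G_L})$ is the Hochschild--Serre spectral sequence for the normal subgroup $G_L \trianglelefteq G_K$ together with the vanishing of higher Galois cohomology of $W^{G_L}$-type coefficients under $G_L$; concretely, since $L/\kpinfty$ and $L$ are obtained by the standard ``decompletion'' picture, one invokes the computation that $\rgamma(G_L, W) \simeq W^{G_L}$ concentrated in degree $0$ (this is the $\bdrplus$-analogue of the Ax--Sen--Tate vanishing, provable by d\'evissage from the $C$-coefficient case $\rgamma(G_L, C) = \hat L$). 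The second quasi-isomorphism in the top row, $\rgamma(\hat G, W^{G_L}) \simeq \rgamma(\gammak, W^{G_\kpinfty})$, is again Hochschild--Serre, this time for $\gal(L/\kpinfty) = \langle \tau\rangle \trianglelefteq \hat G$, using the vanishing of higher cohomology of $\langle\tau\rangle \cong \zp$ acting on $W^{G_L}$ with the natural identification $(W^{G_L})^{\tau=1} = W^{G_\kpinfty}$.

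Next I would establish the vertical equivalences, i.e. the passage from genuine group cohomology to locally analytic cohomology and then to Lie algebra cohomology. The equivalence $\rgamma(\hat G, W^{G_L}) \simeq \rgamma(\hat G, (W^{G_L})^{\hat G\dla})$ (and its $\gammak$-analogue) is exactly the Lazard/Tamme-type comparison: for a compact $p$-adic Lie group acting on a Banach (then Fr\'echet, by inverse limit) representation, the inclusion of locally analytic vectors is a quasi-isomorphism on continuous cohomology. This is where I would cite the relevant statement from \cite{GP} or \cite{BC16}; by d\'evissage it suffices to treat $W \in \rep_\gk(\bdrplusm)$ for finite $m$, reducing to the Banach case. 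The second vertical step, $\rgamma(\hat G, (W^{G_L})^{\hat G\dla}) \simeq (\rgamma(\Lie \hat G, (W^{G_L})^{\hat G\dla}))^{\hat G}$, is the Lazard isomorphism between locally analytic group cohomology and Lie algebra cohomology (up to taking $\hat G$-invariants of the resulting complex, since $\hat G/\hat G_n$ is finite); similarly on the $\gammak$ side. The right-hand vertical map uses $\Lie\gammak$ being one-dimensional, so $\rgamma(\Lie\gammak, D^+_{\dif,\kpinfty}(W))$ is literally the two-term complex $[D^+_{\dif,\kpinfty}(W) \xrightarrow{\nabla_t} D^+_{\dif,\kpinfty}(W)]$, and the $\gammak$-invariants are harmless since $\gammak$ acts through a finite quotient on the (already decompleted) coefficients.

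Finally, to get \eqref{qisopinfty}--\eqref{qisoinfty} I would base change. Using Prop.~\ref{proplambdaconniso}, $W^{G_K}$ is finite-dimensional over $K$ and $D^+_{\dif,\kpinfty}(W)^{\nabla_t=0} = W^{G_K}\otimes_K\kpinfty$, etc. One computes $\rgamma(\gammak, D^+_{\dif,\kpinfty}(W)) = [D^+_{\dif,\kpinfty}(W)\xrightarrow{\nabla_t}D^+_{\dif,\kpinfty}(W)]$ directly (the $\gammak$-invariants of the two-term Lie complex are the same complex since the $\Lie\gammak$-action already records everything and the residual finite group action is split over the relevant field extension); tensoring the top-left term $\rgamma(G_K,W)$ up to $\kpinfty$ and comparing with faithfully flat descent along $K\to\kpinfty$ gives \eqref{qisopinfty}. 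The identity \eqref{qisoL} is the middle-bottom term of \eqref{diagqisom} after tensoring up to $L$, using that $(W^{G_L})^{\hat G\dla}\otimes_{(\bdrplusl)^{\hat G\dla}} L$-type identifications from Thm.~\ref{thm331kummersenmod} turn the Lie complex into $\rgamma(\Lie\hat G, -)$ of the full locally analytic module. For \eqref{qisoinfty}, I would run the entire argument above verbatim but with $\langle\tau\rangle$ replaced by $\gal(L/\kinfty)$ (so that $\hat G_n$ is now generated by $\tau^{p^n}$ and $(\gamma')^{p^n}$ in the other order): Hochschild--Serre for $\gal(L/\kinfty)\trianglelefteq \hat G$ gives $\rgamma(\hat G, W^{G_L})\simeq \rgamma(\langle\tau\rangle, (W^{G_L})^{\gamma=1})$, decompletion replaces $(W^{G_L})^{\gamma=1}$ by its $\tau$-locally analytic vectors which by Thm.~\ref{thm331kummersenmod} form $D^+_{\dif,\kinfty}(W)\otimes_{\kinftylambda}(\cdots)$, Lazard turns the $\langle\tau\rangle$-cohomology into the one-term Lie complex for $\Lie\langle\tau\rangle$, and the Sen--Fontaine operator $\frac{1}{u\lambda'}N_\nabla=\nabla_\lambda$ of Thm.~\ref{thmkummersenop} is precisely that Lie generator up to a unit; tensoring $\rgamma(G_K,W)$ up to $\kinfty$ and using $\nabla_\lambda$-descent (Prop.~\ref{proplambdaconniso}) gives \eqref{qisoinfty}.

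The main obstacle I anticipate is not any single step but the careful bookkeeping at $p=2$, where $\gal(L/\kinfty)$ need not be pro-cyclic and $\kinfty\cap\kpinfty$ may equal $K(\pi_1)$, so the normalizing constants ($p\fkt$ versus $p^2\fkt$ in Def.~\ref{defndiffwtb}) and the index-$2$ subtlety in Notation~\ref{nota hatG}(1) must be tracked through the Hochschild--Serre and Lazard comparisons; I expect this to be routine but notationally heavy, and it is where one must be most careful to make sure the claimed equality of complexes (not merely quasi-isomorphism up to a nonzero scalar) in \eqref{qisoinfty} is literally correct.
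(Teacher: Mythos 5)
Your plan reconstructs the diagram bottom-up from Hochschild--Serre spectral sequences and Lazard-type comparisons, whereas the paper takes a shorter route: d\'evissage (exactness of $W \mapsto W^{G_L}$, $W \mapsto D^+_{\dif,\kpinfty}(W)$, etc.) to reduce to $W \in \rep_\gk(C)$, then citing the recent formalism of Porat and Rodrigues Jacinto--Rodríguez Camargo, which verifies the Tate--Sen axioms (TS1)--(TS4) and packages exactly the comparisons you want to build by hand. Your top-row Hochschild--Serre steps (using $G_L \trianglelefteq G_K$ and $\gal(L/\kpinfty) \trianglelefteq \hat G$) are fine in principle, as is the $C$-coefficient reduction. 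The genuine difficulty is that your ``Lazard/Tamme-type'' step --- the claim that $\rgamma(\hat G, W^{G_L}) \simeq \rgamma(\hat G, (W^{G_L})^{\hat G\dla})$ --- is \emph{not} a classical result and you will not find it in \cite{GP} or \cite{BC16}, which only supply the $H^0$-level decompletion. What this step requires is precisely the vanishing of \emph{higher} locally analytic vectors (the (TS4) axiom in \cite{Poratlav}); this is the specific non-formal input and the reason the paper leans on \cite{Poratlav} and \cite{RJRC} instead of classical references. Your sketch silently assumes this and would stall when you look for the citation.

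There is a second, sharper error in your route to \eqref{qisoinfty}: you propose to run ``Hochschild--Serre for $\gal(L/\kinfty) \trianglelefteq \hat G$.'' But $\gal(L/\kinfty)$ is \emph{not} a normal subgroup of $\hat G = \gal(L/K)$, because the Kummer tower $\kinfty/K$ is not a Galois extension (the conjugates of $\pi_n$ differ by roots of unity, which do not lie in $\kinfty$). So there is no Hochschild--Serre spectral sequence for this pair, and the asserted quasi-isomorphism $\rgamma(\hat G, W^{G_L}) \simeq \rgamma(\langle\tau\rangle, (W^{G_L})^{\gamma=1})$ has no source. The paper avoids this entirely: \eqref{qisoinfty} is obtained \emph{not} by redoing the Hochschild--Serre argument through the Kummer tower, but simply by taking $\gal(L/\kinfty)$-invariants of the already-established quasi-isomorphism \eqref{qisoL} over $L$ --- a purely Galois-descent step along $L/\kinfty$ that makes no use of any normality in $\hat G$. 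You should replace your \eqref{qisoinfty} argument with this invariants-taking step (or descend from \eqref{qisoL} some other way that does not rely on $\gal(L/\kinfty)$ being normal).
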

\begin{proof}

Note that the functors passing from $W$ to $W^{G_L}$ resp. to $W^{G_\kpinfty}$ resp. to $D_{\dif, \kpinfty}(W)$ resp. to $D_{\dif, \kinfty}(W)$ etc. are all \emph{exact}; cf. discussion in the above subsections.
Thus by standard d\'evissage, it suffices to treat the torsion case, and hence indeed the case when $W\in \rep_\gk(C)$. This is treated in \cite[Thm. 7.17]{GMWHT}.
\end{proof}

\section{Cohomology of crystals II: vs. Galois cohomology}
\label{sec_compa_Galois}

 In this section, we compare (log-) prismatic cohomology of $\bbdrplus$-crystals with Galois cohomology.

\begin{notation} \label{notasecgalcoho}
Let $\ast \in \{ \emptyset, \log \}$.
Let $\bm \in \Vect((\calO_K)_{\Prism, \ast},\bB_{\dR,m}^+).$
\begin{enumerate}
\item Consider the evaluation 
\[ M=\bM((\gs, E, \ast)) \]
In addition, one can associate a log-$\lambda$-connection $(M, \nabla_{M,\lambda})$ by Thm. \ref{Thm-dRasLogconnection}.

\item Consider also
\[ W=\bM((\ainf, (\xi), \ast)) \]
 We have $W \in \rep_\gk(\bdrplus)$ and hence we can associate $(D_{\dif, \kinfty}^+(W), \nabla_\lambda) \in \mic_\lambda(\kinftylambda)$.
We shall consider $M$ as a subspace of $W$ via the identification $M\otimes_{K[[\lambda]]} \bdrplus =W$.

\item Recall we defined the scalar
\begin{equation*}
a=
\begin{cases}
  -E'(\pi), &  \text{if } \ast=\emptyset \\
 -\pi E'(\pi), &  \text{if } \ast=\log
\end{cases}
\end{equation*}
\end{enumerate}

\end{notation}

\begin{thm}\label{thm-compare-M-Ddif}
Use above notations.
\begin{enumerate}
\item We have
 \begin{equation}\label{eqMDsen}
  D_{\dif, \kinfty}^+(W) =M\otimes_{K[[\lambda]]}  \kinfty[[\lambda]],
  \end{equation}
  where both sides are regarded as subspaces of $W$.

 \item  Extend the log-$\lambda$-connection $(M, \nabla_{M, \lambda})$   to $\kinftylambda$ via
\[\nabla_{M, \lambda}\otimes 1+ 1\otimes \lambda\cdot \frac{d}{d\lambda}: M\otimes_{K[[\lambda]]}  \kinfty[[\lambda]] \to M\otimes_{K[[\lambda]]}  \kinfty[[\lambda]];\]
then this is the \emph{same} (via \eqref{eqMDsen}) as the  Sen--Fontaine operator over the Kummer tower
\begin{equation*}
\frac{1}{ u\lambda' }\cdot N_\nabla: D_{\dif, \kinfty}^+(W) \to D_{\dif, \kinfty}^+(W)
\end{equation*}
 defined in Thm. \ref{thmkummersenop}.
 \end{enumerate}
 \end{thm}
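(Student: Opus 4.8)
The strategy is to leverage the concrete description of the stratification on $\bM$ (evaluated on the Breuil--Kisin (log) prism) together with the explicit formula for the $G_K$-action on $W$ obtained in Proposition \ref{Prop-MatrixCocycle}, and to identify both the module $D_{\dif, \kinfty}^+(W)$ and the operator $N_\nabla$ acting on it. First I would set up the bridge between $M$ and $W$: by Proposition \ref{Prop-perfectisproet} (and the discussion in \S\ref{subsec_fullcrystal_rep}) we have $W = M\otimes_{\frakS_{\dR, m}^{+}}\bfB_{\dR, m}^+$, and using the uniformizer $\lambda$ (legitimate by Lem.~\ref{lemgsdeunif}) we obtain $W = M\otimes_{K[[\lambda]]/\lambda^m}\bdrplusm$ after reduction; working with the inverse limit over $m$ is harmless since all relevant functors are exact, so I would freely reduce to torsion $W$. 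The $G_K$-action on $M\otimes K[[\lambda]]$-generators is given by the series in \eqref{Equ-MatrixCocycle}, i.e. $g(x) = \sum_{n\geq 0} a^n\prod_{i=0}^{n-1}(\nabla_M - i)(x)\cdot\big(X_1(g)\big)^{[n]}$ where $X_1(g) = \frac{([\epsilon]^{c(g)}-1)[\pi^\flat]}{a([\pi^\flat]-\pi)}$.

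For part (1), I would compute $D_{\dif, \kinfty}^+(W) = (W^{G_L})^{\tau\dla, \gamma=1}$ directly from this formula. The key point is that $M$ itself lands in $W^{G_L}$ (since $G_L$ fixes $[\pi^\flat]$ and $\mu_n$, one checks $X_1(g)=0$ for $g\in G_L$) and that the $\gamma=1$ condition together with $\tau$-local analyticity cuts $W^{G_L}$ down, via Theorem \ref{thm331kummersenmod}, to a finitely generated $\kinfty[[\lambda]]$-module with $D_{\dif,\kinfty}^+(W)\otimes_{\kinfty[[\lambda]]}(\bdrplusl)^{\hat G\dla} = (W^{G_L})^{\hat G\dla}$. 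Since $M$ is a $K[[\lambda]]$-lattice in $W$, the obvious candidate $M\otimes_{K[[\lambda]]}\kinfty[[\lambda]]$ is a $\kinfty[[\lambda]]$-lattice in $W$ that visibly consists of $\tau$-locally analytic, $\gamma$-fixed vectors (the $\gamma$-action is trivial on $M$ because $X_1(g)$ involves only $[\epsilon]^{c(g)}$ and $c(g)=0$ for $g\in\gal(L/K_\infty)$; the $\tau$-action is given by a convergent power series in $X_1(\tau^{p^n})$, hence analytic). Both $M\otimes\kinfty[[\lambda]]$ and $D_{\dif,\kinfty}^+(W)$ are $\kinfty[[\lambda]]$-lattices in $W^{G_L}$ generating $W$ after base change; a rank/saturation comparison (using that $K[[\lambda]]\to\kinfty[[\lambda]]$ is faithfully flat and that $D_{\dif,\kinfty}^+(W)$ is the full space of such vectors) forces the inclusion $M\otimes\kinfty[[\lambda]]\subseteq D_{\dif,\kinfty}^+(W)$ to be an equality. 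One can also descend further to $M$ itself: taking $\hat G$-invariants (or $\gal(L/K_\infty)$-invariants after the $M=\kinfty(\mu_n)$ \'etale descent step used in Thm.~\ref{thm331kummersenmod}) recovers $M$ from $D_{\dif,\kinfty}^+(W)$ as the $K[[\lambda]]$-structure, confirming \eqref{eqMDsen}.

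For part (2), once the modules are identified I must match the operators. On $M$ the connection $\nabla_{M,\lambda}$ is characterized (Proposition \ref{prop: strat iff}, Construction \ref{Constr_strat_to_conn}) by $\phi_1 = a\nabla_{M,\lambda}$ being the linear coefficient of the stratification $\varepsilon(x)=\sum\phi_n(x)X_1^{[n]}$. On the other hand $N_\nabla = \frac{1}{p\fkt}\nabla_\tau$ (or $\frac{1}{4\fkt}\nabla_\tau$ in the $p=2$ exceptional case) and $\nabla_\tau = \lim \frac{\log\tau^{p^n}}{p^n}$ acts on $W^{G_L}$; applying $\nabla_\tau$ to the cocycle formula \eqref{Equ-MatrixCocycle} and differentiating the series in the group parameter at the identity, only the $n=1$ term survives, giving $\nabla_\tau(x) = a\nabla_M(x)\cdot \frac{d}{dt_g}X_1(\tau^{t_g})\big|_{t_g=0}$ up to the usual normalization; here $\frac{d}{d(\cdot)}X_1$ evaluates (via $\frac{d}{ds}([\epsilon]^s-1)|_{s=0}=t$ and $c(\tau)$-normalization) to $\frac{t[\pi^\flat]}{a([\pi^\flat]-\pi)}$, so that after dividing by $p\fkt = t/\lambda$ one gets $N_\nabla(x) = \frac{[\pi^\flat]\lambda}{[\pi^\flat]-\pi}\nabla_M(x) = u\lambda'\cdot\nabla_M(x)$ on $M$, using the explicit relation between the uniformizers $u-\pi$ and $\lambda$ (so that $\frac{u}{u-\pi}\cdot\lambda = u\lambda'$ in $K[[\lambda]]$). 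Therefore $\frac{1}{u\lambda'}N_\nabla$ restricted to $M$ is $\nabla_{M,\lambda}$, and since $N_\nabla$ is a derivation over $\kinfty[[\lambda]]$ with $\frac{1}{u\lambda'}N_\nabla(\lambda)=\lambda$ (Thm.~\ref{thmkummersenop}), it extends $\nabla_{M,\lambda}$ exactly by the Leibniz rule $\nabla_{M,\lambda}\otimes 1 + 1\otimes\lambda\frac{d}{d\lambda}$, which is the claim. \textbf{Main obstacle.} The delicate step is the operator-matching computation: correctly tracking all the normalization constants ($a=-E'(\pi)$ vs. $-\pi E'(\pi)$, the $p\fkt$ vs. $p^2\fkt$ dichotomy from Notation \ref{nota hatG}, the relation between $c(\tau)$, the cyclotomic character, and the $\frac{d}{ds}[\epsilon]^s$ derivative, and the change-of-uniformizer factor $u\lambda'$) so that they cancel precisely, and verifying convergence of the differentiated series in $\bdrplusm$ using Lemma \ref{lemma_converge}. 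The module identification in part (1) is comparatively routine given Theorems \ref{thm331kummersenmod} and \ref{Prop-perfectisproet}, but care is needed with the $p=2$ case where $K_\infty\cap K_{p^\infty} = K(\pi_1)$.
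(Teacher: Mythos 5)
Your overall approach matches the paper's: use the cocycle formula \eqref{Equ-MatrixCocycle} to see that $\tau^{p^i}$ acts on $x\in M$ by a series in $c(\tau^{p^i})$, conclude the action is in fact analytic, identify the two $\kinfty[[\lambda]]$-lattices, then differentiate the cocycle formula at the identity to compute $\nabla_\tau$ and renormalize. Part (1) is fine (your faithful-flatness remark should be about $\kinfty[[\lambda]]\to\bdrplus$ rather than $K[[\lambda]]\to\kinfty[[\lambda]]$, but the lattice-comparison idea is sound and is what the paper's terse ``This implies'' is hiding). Part (2), however, contains a genuine error.

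In Part (2) you write $N_\nabla(x) = \frac{u\lambda}{u-\pi}\nabla_M(x) = u\lambda'\cdot\nabla_M(x)$, justified by the parenthetical ``$\frac{u}{u-\pi}\cdot\lambda = u\lambda'$ in $K[[\lambda]]$''. That ring identity is false: writing $\lambda = (u-\pi)g(u)$ with $g(\pi)\neq 0$, one has $\frac{\lambda}{u-\pi} = g$ while $\lambda' = g + (u-\pi)g'$, so $\frac{\lambda}{u-\pi}\neq \lambda'$ unless $g' = 0$. Moreover, if the identity were true and $\nabla_M$ meant $\nabla_{M,u-\pi}$ throughout (which is what the differentiation of \eqref{Equ-MatrixCocycle} naturally produces), then the conclusion would be $\frac{1}{u\lambda'}N_\nabla|_M = \nabla_{M,u-\pi}$, not $\nabla_{M,\lambda}$. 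You have silently switched the meaning of $\nabla_M$ from the $(u-\pi)$-connection to the $\lambda$-connection, and this second error cancels the first, giving the correct final answer by accident. The correct step is: from $\nabla_\tau(x) = \frac{ut}{u-\pi}\nabla_{M,u-\pi}(x)$ and $N_\nabla = \frac{\lambda}{t}\nabla_\tau$, one gets $\frac{1}{u\lambda'}N_\nabla(x) = \frac{1}{\lambda'}\cdot\frac{\lambda}{u-\pi}\nabla_{M,u-\pi}(x)$, which equals $\nabla_{M,\lambda}(x)$ precisely by the change-of-uniformizer formula in Lemma \ref{lemchangeconn}. There is no ring-level simplification of $\frac{u\lambda}{u-\pi}$ to $u\lambda'$; the factor $\frac{\lambda}{\lambda'(u-\pi)}$ is exactly the change-of-variable factor that converts $\nabla_{M,u-\pi}$ into $\nabla_{M,\lambda}$, and you must invoke it as an operator identity, not a scalar one.
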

 \begin{proof}
For any $x\in M$, the $G_L$-action on $x$ is trivial, and hence $M$ is also a sub-$K[[\lambda]]$-space of $W^{G_L}$. By (\ref{Equ-MatrixCocycle}), for any $x\in M$, we have
\begin{equation}\label{eqtausec11}
\tau^{p^i} (x) = \sum_{n\geq 0}a^n\prod_{i=0}^{n-1}(\nabla_{M, u-\pi}-i)(x)(\frac{([\epsilon]^{c(\tau^{p^i})}-1)[\pi^{\flat}]}{a([\pi^{\flat}]-\pi)})^{[n]}.
\end{equation}
Here $\nabla_{M, u-\pi}$ is the log-$(u-\pi)$-connection associated to $\bM$. (This operator is related with $\nabla_{M, \lambda}$ via Lem. \ref{lemchangeconn}, which will be applied in the end of this proof).
Note that using Notation \ref{nota hatG},
\begin{equation*}
c(\tau^{p^i})=
\begin{cases}
p^i, & \text{ if } K_{\infty} \cap K_{p^\infty}=K\\
2p^i, & \text{ if } K_{\infty} \cap K_{p^\infty}=K(\pi_1)
\end{cases}
\end{equation*}
 Hence \eqref{eqtausec11} implies that the $\gal(L/\kpinfty)$-action on $x$ is \emph{analytic} (not just locally analytic!).
This implies that
\begin{equation}
  D_{\dif, \kinfty}^+(W) =M\otimes_{K[[\lambda]]}  \kinfty[[\lambda]].
  \end{equation}

  For Item (2), a simple computation using \eqref{eqtausec11} shows that
\begin{equation}
\nabla_\tau(x) =  \frac{ut}{u-\pi}\cdot \nabla_{M, u-\pi}(x) \quad \text{ resp. } =\frac{2ut}{u-\pi}\cdot \nabla_{M, u-\pi}(x),
\end{equation}
when $K_{\infty} \cap K_{p^\infty}$ equals to $K$ resp. $K(\pi_1)$,
where we use
\[ \lim_{i \to \infty} \frac{[\epsilon]^{p^i}-1}{p^i} =t. \]
Now,   using definition of $N_\nabla$ as in Def. \ref{defndiffwtb}, we see that
\begin{eqnarray*}
\frac{1}{ u\lambda' }\cdot N_\nabla (x)   &=&\frac{1}{ u\lambda' }\cdot \frac{u\lambda}{u-\pi}
\nabla_{M, u-\pi}(x) \\
&=& \nabla_{M,\lambda}(x),
\end{eqnarray*}
where the last equality follows from  Lem. \ref{lemchangeconn}.
Thus we can conclude.
 \end{proof}

\begin{theorem}\label{thm-pris coho and Galois coho} Use Notation \ref{notasecgalcoho}.
We have $K$-linear quasi-isomorphisms
\[\rgamma(\okprisast, \bm)
 \simeq  [M \xrightarrow{a\nabla_{M, \lambda}} M]
 \simeq \rgamma(\gk, W)
\]
which are functorial in $\bm$.
\end{theorem}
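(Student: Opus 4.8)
The first quasi-isomorphism $\rgamma(\okprisast,\bm)\simeq\mathrm{DR}(M,\nabla_M)$ is Theorem \ref{Thm-dRCohomology} applied to the log-$\lambda$-connection $(M,\nabla_M)$ of Notation \ref{notasecgalcoho} (together with Lemma \ref{lemchangeconn} to pass between the log-$(u-\pi)$- and log-$\lambda$-normalisations), so the real content is the second quasi-isomorphism $\mathrm{DR}(M,\nabla_M)\simeq\rgamma(\gk,W)$. The plan is to produce this as an explicit \emph{functorial morphism} already defined over $K$, and then to check that it becomes a quasi-isomorphism after the faithfully flat base change $K\hookrightarrow\kinfty$; faithfully flat descent of quasi-isomorphisms then finishes the proof.

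To build the morphism I would use the restriction functor from $\okprisast$ to the perfect site $(\calO_K)^{\perf}_{\Prism,\ast}$: it carries $\bm$ to $\bm^{\perf}$ and induces a map on cohomology $\rgamma(\okprisast,\bm)\to\rgamma((\calO_K)^{\perf}_{\Prism,\ast},\bm^{\perf})$. Running the argument of Theorem \ref{Prop-perfectisproet} --- namely Lemma \ref{Lem-Galois descent}, which identifies the \v Cech nerve of the Fontaine (log) prism cover with $\rC(\gk^\bullet,\bfB_{\dR,m}^+)$, together with the vanishing of the higher cohomology of $\bm^{\perf}$ on covers (proved exactly as in Lemma \ref{Lem-Cech-to-derived}, by d\'evissage to $m=1$ and perfectoid \v Cech-acyclicity) --- the target is computed by $\Tot\,\rC(\gk^\bullet,W)=\rgamma(\gk,W)$, where $W$ is the $\bdrplusm$-representation of Notation \ref{notasecgalcoho}(2). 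Composing with the inverse of the quasi-isomorphism of Theorem \ref{Thm-dRCohomology} yields a morphism $c_\bm\colon\mathrm{DR}(M,\nabla_M)\to\rgamma(\gk,W)$, manifestly functorial in $\bm$.

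Since $\kinfty=\bigcup_n K(\pi_n)$ is a filtered union of finite (hence faithfully flat) extensions of $K$, it is faithfully flat over $K$, so it suffices to show $c_\bm\otimes_K\kinfty$ is a quasi-isomorphism. Here Theorem \ref{thm-compare-M-Ddif} identifies the source: $\mathrm{DR}(M,\nabla_M)\otimes_K\kinfty\simeq[D^+_{\dif,\kinfty}(W)\xrightarrow{\nabla_\lambda}D^+_{\dif,\kinfty}(W)]$, using $M\otimes_{K[[\lambda]]}\kinfty[[\lambda]]=D^+_{\dif,\kinfty}(W)$ from part (1) and the matching of operators from part (2); and \eqref{qisoinfty} of Theorem \ref{thm-SenFon coho Galois coho} identifies the target: $\rgamma(\gk,W)\otimes_K\kinfty\simeq[D^+_{\dif,\kinfty}(W)\xrightarrow{\nabla_\lambda}D^+_{\dif,\kinfty}(W)]$. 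Both complexes are concentrated in degrees $[0,1]$ (Corollary \ref{Cor-CohoDim} for the left; for the right, combine \eqref{qisoinfty} with faithful flatness of $\kinfty/K$). One then checks that $c_\bm\otimes_K\kinfty$ is compatible with these two identifications --- i.e.\ that it corresponds to the identity map of $[D^+_{\dif,\kinfty}(W)\xrightarrow{\nabla_\lambda}D^+_{\dif,\kinfty}(W)]$ --- both sides being, after passing to the perfect/\'etale world, nothing but ``the de Rham complex of the Sen--Fontaine operator on $D^+_{\dif,\kinfty}(W)$''. With this compatibility in hand $c_\bm\otimes_K\kinfty$ is a quasi-isomorphism, as required.

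The main obstacle is precisely this last compatibility diagram: one must trace the morphism $c_\bm$, built from functoriality of prismatic cohomology under site restriction and the Galois-descent identification of Theorem \ref{Prop-perfectisproet}, through the locally-analytic-vector machinery of \S\ref{seclav}--\S\ref{sec_kummersen}, and verify that after $\otimes_K\kinfty$ it agrees with the explicit Sen--Fontaine identification of Theorem \ref{thm-compare-M-Ddif}. The genuinely new analytic input has all been supplied by Theorems \ref{thm-compare-M-Ddif} and \ref{thm-SenFon coho Galois coho}; on $H^0$ the compatibility amounts to the identification $W^{\gk}\otimes_K\kinfty\simeq(D^+_{\dif,\kinfty}(W))^{\nabla_\lambda=0}$ of Proposition \ref{proplambdaconniso}, and what remains is the bookkeeping checking that the prismatic comparison map and the Sen--Fontaine comparison map are one and the same at the level of complexes.
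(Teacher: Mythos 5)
Your proposal is correct and follows the same route as the paper's proof: the first quasi-isomorphism is Theorem \ref{Thm-dRCohomology}, and the second is verified after the faithfully flat base change $K\hookrightarrow\kinfty$ by matching both sides with the two-term complex $[D^+_{\dif,\kinfty}(W)\xrightarrow{\nabla_\lambda}D^+_{\dif,\kinfty}(W)]$, via Theorem \ref{thm-compare-M-Ddif}(2) on the de Rham side and \eqref{qisoinfty} of Theorem \ref{thm-SenFon coho Galois coho} on the Galois side. The difference is one of care rather than substance: the paper's proof merely exhibits the chain of isomorphisms after $\otimes_K\kinfty$ and does not explicitly construct a morphism over $K$, whereas you take the extra step of building a candidate comparison map $c_\bm$ from the restriction functor to the perfect site and the Galois-descent identification of Theorem \ref{Prop-perfectisproet}, and then invoke faithfully flat descent of quasi-isomorphisms. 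This extra step is exactly what is needed to make the claimed \emph{functoriality} in $\bm$ visible, since an a posteriori quasi-isomorphism obtained by rank-counting after base change is not by itself functorial. The one piece you flag but do not carry out --- checking that $c_\bm\otimes_K\kinfty$ coincides with the identity on $[D^+_{\dif,\kinfty}(W)\xrightarrow{\nabla_\lambda}D^+_{\dif,\kinfty}(W)]$ under the two identifications --- is a real verification, but it is the same compatibility that the paper leaves implicit; your observation that on $H^0$ it reduces to Proposition \ref{proplambdaconniso} is the right handle on it.
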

\begin{proof}
Thm. \ref{Thm-dRCohomology} already proves the first quasi-isomophism; it suffices to compare Sen-Fontaine cohomology with Galois cohomology.
Simply note
\begin{align*}
[M \xrightarrow{a\nabla_{M, \lambda}} M]\otimes_K \kinfty   & \quad \simeq \quad
 [M \xrightarrow{\nabla_\lambda}  M] \otimes_K \kinfty \\ \quad & \quad \simeq \quad [D_{\dif, \kinfty}(W) \xrightarrow{\nabla_\lambda} D_{\dif, \kinfty}(W)], \text{ by Thm. \ref{thm-compare-M-Ddif}(2) }\\
  & \quad \simeq \quad \rgamma(\gk, W)  \otimes_K \kinfty, \text{ by Thm. \ref{thm-SenFon coho Galois coho} }
\end{align*} 
\end{proof}

 \section{Log-nearly de Rham representations}\label{sec: ndR rep}
In this section, we classify $\bbdrplus$-crystals by (log)-nearly de Rham representations. The proof uses d\'evissage argument generalizing  known results in the Hodge--Tate case. However, as already noted in Rem. \ref{rem_intro_extension}, extension between \emph{crystals} (which are sheaves) is a subtle issue; in particular, it is not clear if ``fintiely generated crystals" is a well-behaved notion.
Hence it is crucial that in the first subsection, \S \ref{susec12.1}, we completely work on finitely generated $a$-nilpotent connections, which are concrete objects that can be \emph{d\'evissaged}.

Recall in Def. \ref{defnnht}, we say $W\in \rep_\gk(C)$ is \emph{nearly Hodge--Tate} (resp. \emph{log-nearly Hodge--Tate})  if all of its  Sen weights are in the subset
\[\mathbb{Z} + (E'(\pi))^{-1}\cdot \mathfrak{m}_{\O_{\overline{K}}}, \quad \text{ resp. } \mathbb{Z} +  (\pi\cdot E'(\pi))^{-1}\cdot \mathfrak{m}_{\O_{\overline{K}}}.  \]

\begin{defn}[generalizing Def. \ref{defnndR}]
Say $W\in \rep^\fg_\gk(\bdrplus)$ is \emph{nearly de Rham} (resp. \emph{log-nearly de Rham}) if the $C$-representation $W/tW$ is nearly Hodge--Tate (resp. log-nearly Hodge--Tate).
Use
$$\rep^{\fg, \mathrm{ndR}}_\gk(\bdrplus), \text{ resp. } \rep^{\fg, \mathrm{lndR}}_\gk(\bdrplus)$$
to  denote the subcategory of $\rep^\fg_\gk(\bdrplus)$ consisting of these objects.
 Use
$$\rep_\gk^{\mathrm{ndR}}(\bdrplusm) \quad \text{  resp. }  \rep_\gk^{\mathrm{lndR}}(\bdrplusm) $$
 to  denote the subcategory consisting of objects which are finite free over $\bdrplusm$.
\end{defn}

With this terminology, a (log-) nearly Hodge--Tate $C$-representation is also regarded as a (log-) nearly de Rham $\mathbf{B}_{\dR, 1}^+$-representation.

\subsection{$a$-nilpotent connections and Galois representations} \label{susec12.1}

\begin{construction}\label{constnilconntorep}
Let $a=  -E'(\pi)$ or $-\pi E'(\pi)$.
Let $(M, \nabla_{M, u-\pi}) \in \mic^{\fg, a}_{u-\pi}(K[[u-\pi]])$.
Let $W=M\otimes_{K[[u-\pi]]} \bdrplus$.
For $x\in M$ and $g\in \gk$, we copy formula \eqref{Equ-MatrixCocycle}:
 \begin{equation} \label{NEWEqu-MatrixCocycle}
        g(x) = \sum_{n\geq 0}a^n\prod_{i=0}^{n-1}(\nabla_{M, u-\pi}-i)(x)(\frac{([\epsilon]^{c(g)}-1)[\pi^{\flat}]}{a([\pi^{\flat}]-\pi)})^{[n]}.
    \end{equation}
     (Note that this formula converges for any $a\in K$ such that $v_p(a) \leq v_p(\pi  E'(\pi)).$
We claim that, after semi-linearly extended over $\bdrplus$, this formula induces a $G_K$-action on $W$.  Note in the situation of Prop. \ref{Prop-MatrixCocycle}, we  are   working with a prismatic crystal $\bm$ and its evaluation $W$, hence there is \emph{a priori} already a $G_K$-action on $W$. Whereas here, we are only starting with a log connection. Indeed, we need to check
\begin{equation} \label{g1g2}
g_1(g_2(x)) =(g_1g_2)(x), \quad \forall g_1, g_2 \in G_K, \forall x \in W.
\end{equation}
 A direct verification would be rather complicated. Instead, choose a quasi-basis $\underline{e}=(e_1, \cdots, e_l)$ of $M$ and write $\nabla_{M, u-\pi}(\underline{e}) =\underline{e} A$ with $A$  a matrix in $K[[u-\pi]]$.  The matrix $A$ is certainly in general not unique, but we fix one.
 Define a \emph{finite free} log-$(u-\pi)$-connection $(\hat{M}, \hat{\nabla})$ by requiring the matrix $\hat{\nabla}$ with respect to a basis is $A$.
  Since $a$-nilpotency of a log-$(u-\pi)$-connection can be checked by modulo $(u-\pi)$, $(\hat{M}, \hat{\nabla})$ is $a$-nilpotent.
 By Thm. \ref{Thm-dRasLogconnection}, this \emph{finite free} nilpotent log-connection induces a (log-) prismatic crystal $\hat{\bm}$ and hence a finite free $\bdrplus$-representation $\hat{W}$. The Galois action on $\hat{W}$ can be described, via Prop. \ref{Prop-MatrixCocycle}, by the formula
 \begin{equation}\label{HATEqu-MatrixCocycle}
  \forall x\in \hat{M}, \quad      g(\hat x) = \sum_{n\geq 0}a^n\prod_{i=0}^{n-1}(\hat{\nabla} -i)(\hat x)(\frac{([\epsilon]^{c(g)}-1)[\pi^{\flat}]}{a([\pi^{\flat}]-\pi)})^{[n]}.
    \end{equation}
In particular, the above formula (when semi-linearly extended to $\hat W$) satisfies
 \[g_1(g_2(\hat x)) =(g_1g_2)(\hat x),\quad  \forall g_1, g_2 \in G_K, \forall \hat{x} \in \hat W.\]
Thus \eqref{g1g2} also holds.

Thus in summary, we can define  a functor
\begin{equation}\label{functor_conn_rep}
\mic^{\fg, a}_{u-\pi}(K[[u-\pi]]) \to \rep^{\fg}_\gk(\bdrplus).
\end{equation}
\end{construction}


\begin{prop} \label{thm-comparegeneral}
Use   notations in Construction \ref{constnilconntorep}.
Namely, let $(M, \nabla_{M, u-\pi}) \in \mic^{\fg, a}_{u-\pi}(K[[u-\pi]])$, and let $W$ be the corresponding finitely generated $\bdrplus$-representation.
And use $(M, \nabla_{M, \lambda})$  to denote the corresponding log-$\lambda$-connection via Lem. \ref{lemchangeconn}.
\begin{enumerate}
\item We have
 \begin{equation}\label{neweqMDsen}
  D_{\dif, \kinfty}^+(W) =M\otimes_{K[[\lambda]]}  \kinfty[[\lambda]],
  \end{equation}
  where both sides are regarded as subspaces of $W$.

 \item  Extend the log-$\lambda$-connection $(M, \nabla_{M, \lambda})$   to $\kinftylambda$ via
\[\nabla_{M, \lambda}\otimes 1+ 1\otimes \lambda\cdot \frac{d}{d\lambda}: M\otimes_{K[[\lambda]]}  \kinfty[[\lambda]] \to M\otimes_{K[[\lambda]]}  \kinfty[[\lambda]];\]
then this is the \emph{same} (via \eqref{neweqMDsen}) as the  Sen--Fontaine operator over the Kummer tower
\begin{equation*}
\frac{1}{ u\lambda' }\cdot N_\nabla: D_{\dif, \kinfty}^+(W) \to D_{\dif, \kinfty}^+(W)
\end{equation*}
 defined in Thm. \ref{thmkummersenop}.
 \end{enumerate}
 \end{prop}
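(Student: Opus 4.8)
The plan is to bootstrap from Theorem~\ref{thm-compare-M-Ddif}, which already handles the finite free case, by resolving the finitely generated connection $(M,\nabla_M)$ by finite free ones --- precisely the device introduced in Construction~\ref{constnilconntorep}. First I would choose a minimal generating set $e_1,\dots,e_l$ of $M$ over $K[[u-\pi]]$ (equivalently the nonzero part of a quasi-basis; by Nakayama $\bar e_1,\dots,\bar e_l$ is then a $K$-basis of $M/(u-\pi)M$) and form, as in Construction~\ref{constnilconntorep}, the finite free log-$(u-\pi)$-connection $(\hat M,\hat\nabla)$ with $\hat M=\bigoplus_{i=1}^l K[[u-\pi]]\hat e_i$ whose connection matrix lifts that of $\nabla_M$. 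Since $a$-nilpotency is detected modulo $(u-\pi)$ and $\hat M/(u-\pi)\hat M\xrightarrow{\sim}M/(u-\pi)M$ intertwines the reduced connections, $(\hat M,\hat\nabla)$ is $a$-nilpotent; the surjection $\pi\colon\hat M\to M$, $\hat e_i\mapsto e_i$, is a morphism of log-connections, and its kernel $\hat N$ is $\hat\nabla$-stable, finite free (a submodule of a finite free module over the PID $K[[u-\pi]]$), and $a$-nilpotent (a sub-object of an $a$-nilpotent connection). This yields a short exact sequence $0\to\hat N\to\hat M\to M\to 0$ of log-connections with the two outer terms finite free and $a$-nilpotent.

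Next I would transport this to Galois representations. Applying $-\otimes_{K[[u-\pi]]}\bdrplus$ and the functor~\eqref{functor_conn_rep}, and using that $\Tor_1^{K[[u-\pi]]}(M,\bdrplus)=0$ ($u-\pi$ is a nonzerodivisor on $\bdrplus$) together with the cocycle formula~\eqref{NEWEqu-MatrixCocycle} for $\gk$-equivariance, one gets a short exact sequence $0\to W_N\to\hat W\to W\to 0$ in $\rep^{\fg}_{\gk}(\bdrplus)$ with $\hat W,W_N$ finite free and $W=M\otimes_{K[[u-\pi]]}\bdrplus$. Because $(\hat M,\hat\nabla)$ and $(\hat N,\hat\nabla)$ are finite free $a$-nilpotent connections, they come from de Rham crystals on $(\calO_K)_{\Prism,\ast}$ via Theorem~\ref{Thm-dRasLogconnection} whose evaluations at the Fontaine (log-)prism are $\hat W$ and $W_N$, so Theorem~\ref{thm-compare-M-Ddif} applies and gives $D^+_{\dif,\kinfty}(\hat W)=\hat M\otimes_{K[[\lambda]]}\kinftylambda$, $D^+_{\dif,\kinfty}(W_N)=\hat N\otimes_{K[[\lambda]]}\kinftylambda$ as subspaces, together with the identification of the scaled Sen--Fontaine operator with the log-$\lambda$-connections. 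Finally I would apply the exact functor $W\mapsto D^+_{\dif,\kinfty}(W)$ (exactness recalled in the proof of Theorem~\ref{thm-SenFon coho Galois coho}) to $0\to W_N\to\hat W\to W\to 0$ and compare the result with the base change along $K[[\lambda]]\hookrightarrow\kinftylambda$ of $0\to\hat N\to\hat M\to M\to 0$ (still exact since $\Tor_1^{K[[\lambda]]}(M,\kinftylambda)=0$): the first two terms agree, hence so do the cokernels, giving $D^+_{\dif,\kinfty}(W)=M\otimes_{K[[\lambda]]}\kinftylambda$ inside $W$, which is~(1); and chasing the operators through the same diagram, using that $\pi$ intertwines $\nabla_{\hat M,\lambda}$ with $\nabla_{M,\lambda}$ (Lemma~\ref{lemchangeconn} applied to $\pi$), gives~(2).

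The step I expect to be the main obstacle is not any single computation but the bookkeeping in the last paragraph: one must verify that the isomorphism $D^+_{\dif,\kinfty}(W)\cong M\otimes_{K[[\lambda]]}\kinftylambda$ obtained as a cokernel is compatible with the canonical inclusions of both sides into $W$, so that "as subspaces of $W$" is literally true rather than an abstract identification. This should follow from the commutativity of the diagram relating the two short exact sequences together with the naturality statements in Theorem~\ref{thm-compare-M-Ddif} and the functoriality of $D^+_{\dif,\kinfty}(-)\subseteq(-)$ and of $\tfrac{1}{u\lambda'}N_\nabla$ (Theorem~\ref{thmkummersenop}), but it requires writing these compatibilities out with care. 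A secondary point to be careful about is the claim that $\hat M$ and $\hat N$ are genuinely $a$-nilpotent (via reduction modulo $(u-\pi)$ and passage to sub-objects, respectively), since $a$-nilpotency is exactly the hypothesis under which Theorems~\ref{Thm-dRasLogconnection} and~\ref{thm-compare-M-Ddif} are available.
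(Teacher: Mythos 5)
Your proposal takes a genuinely different route from the paper's. The paper's entire proof of Prop.~\ref{thm-comparegeneral} is one sentence: the proof of Thm.~\ref{thm-compare-M-Ddif} ``works verbatim in the general case.'' The point that makes this work is that the proof of Thm.~\ref{thm-compare-M-Ddif} uses nothing about the crystal $\bM$ beyond the explicit cocycle formula for the $G_K$-action on $W=M\otimes_{K[[u-\pi]]}\bdrplus$, and that formula \eqref{NEWEqu-MatrixCocycle} is imposed \emph{by definition} on $W$ for an arbitrary finitely generated $a$-nilpotent $(M,\nabla_{M,u-\pi})$ in Construction~\ref{constnilconntorep}. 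So the three steps of that proof --- triviality of the $G_L$-action on $x\in M$ (since $c(g)=0$ for $g\in G_L$), genuine analyticity of the $\tau$-action on $M$ read off from \eqref{NEWEqu-MatrixCocycle}, and the Lie-algebra computation identifying $\frac{1}{u\lambda'}N_\nabla$ with $\nabla_{M,\lambda}$ --- go through word for word, with no free resolution needed.

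Your resolution approach is a valid alternative in principle, but it buys nothing here and imports several technical burdens that the direct argument avoids entirely. Besides the cokernel/inclusion bookkeeping you flag yourself, note that the $a$-nilpotency of the kernel $\hat N$ deserves its own justification: the ``check modulo $(u-\pi)$'' criterion used in Construction~\ref{constnilconntorep} does not apply directly to $\hat N$, since $\hat N/(u-\pi)\hat N\to\hat M/(u-\pi)\hat M$ need not be injective; what you actually need is that a finite free $K[[u-\pi]]$-submodule of a finite free module carries the subspace topology (true by elementary divisors), so that a convergent sequence of operators on $\hat M$ preserving $\hat N$ restricts to a convergent one on $\hat N$. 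Also, the exactness of $D^+_{\dif,\kinfty}$ that you cite from the proof of Thm.~\ref{thm-SenFon coho Galois coho} is asserted there but not proved. None of this is fatal, but once one observes that \eqref{NEWEqu-MatrixCocycle} is the sole input to Thm.~\ref{thm-compare-M-Ddif}, the d\'evissage is unnecessary.
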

 \begin{proof}
 This is generalization of Thm. \ref{thm-compare-M-Ddif} which treated the case when $M$ comes from a  (log-) prismatic crystal (i.e., when $M$ is finite free over   $\bdrplusm$ for some $m$). The proof there works verbatim in the general case.
 \end{proof}

\begin{thm} \label{thm124}
The functor \eqref{functor_conn_rep} induces a diagram where both vertical arrows are   equivalences
\begin{equation*}
\begin{tikzcd}
{\mic^{\fg, -E'(\pi)}_{u-\pi}(K[[u-\pi]])} \arrow[rr, hook] \arrow[d, "\simeq"] &  & {\mic^{\fg, -\pi E'(\pi)}_{u-\pi}(K[[u-\pi]])} \arrow[d, "\simeq"] \\
{\rep^{\fg, \ndR}_\gk(\bdrplus)} \arrow[rr, hook]                               &  & {\rep^{\fg, \mathrm{lndR}}_\gk(\bdrplus)}
\end{tikzcd}
\end{equation*}
Both equivalences are bi-exact.
\end{thm}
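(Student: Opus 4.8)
The plan is to assemble the diagram from three ingredients: the functor \eqref{functor_conn_rep} constructed in Construction \ref{constnilconntorep}, the comparison with Sen--Fontaine theory over the Kummer tower (Prop. \ref{thm-comparegeneral}), and a d\'evissage reducing everything to the Hodge--Tate ($m=1$) case already known from Thm. \ref{thmintroMWGao}. First I would check that the functor \eqref{functor_conn_rep} actually lands in the nearly de Rham (resp. log-nearly de Rham) subcategory, and that the square commutes with the two horizontal inclusions: this is immediate, since the bottom inclusion is ``forget the log structure'' at the level of crystals and hence at the level of representations, while on the connection side one merely reads off the relation $a = -E'(\pi)$ vs.\ $a=-\pi E'(\pi)$ from Thm.\ \ref{Thm-dRasLogconnection}/Prop.\ \ref{Prop-structure}; the condition ``nearly Hodge--Tate'' vs.\ ``log-nearly Hodge--Tate'' is precisely the reduction mod $(u-\pi)$ (equivalently mod $t$, via $W/tW$) of the respective $a$-nilpotency, as already observed in the $m=1$ remarks following Thm.\ \ref{Thm-dRasLogconnection}.

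Next, full faithfulness. By Prop.\ \ref{thm-comparegeneral} the functor sending $(M,\nabla_{M,\lambda})$ to $W$ factors (after base change to $\kinfty$) through the Sen--Fontaine log-$\lambda$-connection $D^+_{\dif,\kinfty}(W)= M\otimes_{K[[\lambda]]}\kinfty[[\lambda]]$, compatibly with connections. Thus a morphism $W_1\to W_2$ of $\bdrplus$-representations induces a $\nabla_\lambda$-horizontal $\kinfty[[\lambda]]$-linear map $M_1\otimes\kinfty[[\lambda]]\to M_2\otimes\kinfty[[\lambda]]$; by the descent argument already used in the proof of the proposition preceding Thm.\ \ref{thm-SenFon coho Galois coho} (working with $\Hom_{K[[\lambda]]}(M_1,M_2)$, computing $K$-dimensions via \eqref{eqgkinfty}, and using that $K$ is infinite), such a map descends to a $K[[\lambda]]$-linear horizontal map $M_1\to M_2$, which is the unique preimage. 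Conversely a horizontal $M_1\to M_2$ base-changes to a $\gk$-equivariant $W_1\to W_2$ by functoriality of Construction \ref{constnilconntorep} (or directly from formula \eqref{NEWEqu-MatrixCocycle}). Hence the vertical functors are fully faithful; the $\Hom$-computation is harmless because both sides are modules over the PID $K[[\lambda]]$ so quasi-bases behave well.

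Essential surjectivity is the main obstacle and is where d\'evissage enters. Given $W\in\rep_\gk^{\fg,\ndR}(\bdrplus)$, one filters $W$ by $t^mW$ (equivalently by $\lambda^mW$, since $t=p\fkt\lambda$ and $\fkt$ is a unit in the relevant rings). The successive quotients $t^mW/t^{m+1}W$ are nearly Hodge-Tate $C$-representations, hence by the $m=1$ case of Thm.\ \ref{thmintroMWGao} arise from $(-E'(\pi))$-nilpotent log-$(u-\pi)$-connections over $K$; one must then lift these connections inductively along the exact sequences $0\to t^mW/t^{m+1}W\to W/t^{m+1}W\to W/t^mW\to 0$ to an $a$-nilpotent connection on $M:=D^+_{\dif,\kinfty}(W)^{\gal(L/\kinfty[[\lambda]])\text{-descent}}$ over $K[[\lambda]]$ (finitely generated over $K[[\lambda]]$ by Thm.\ \ref{thm331kummersenmod}), whose base change recovers $W$; this is exactly the kind of extension problem that Rem.\ \ref{rem_intro_extension} flags as \emph{delicate for crystals} but \emph{tractable for connections}. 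Concretely: the obstruction to lifting lives in an $\Ext^1$ group of finitely generated $a$-nilpotent connections, and one shows it vanishes because such an $\Ext^1$ over $K[[\lambda]]$ is computed by a two-term complex (the de Rham complex of an internal Hom, cf.\ the $\DR$ functor of Def.\ \ref{defn_log_T_conn}) which, after inverting nothing and using $a$-nilpotency plus completeness in $\lambda$, splits degreewise; equivalently one solves the recurrence for the connection matrix $A$ modulo successive powers of $\lambda$, precisely the ``recurrence relations'' alluded to in Rem.\ \ref{rem_intro_extension} and handled abstractly by Lem.\ \ref{Key lemma}. Finally one checks the resulting $(M,\nabla)$ is $-E'(\pi)$-nilpotent (resp.\ $-\pi E'(\pi)$-nilpotent) by reduction mod $(u-\pi)$, using that $W/tW$ is (log-)nearly Hodge--Tate, and that its image under \eqref{functor_conn_rep} is $W$ by Prop.\ \ref{thm-comparegeneral}; compatibility with tensor products and duals is inherited from Thm.\ \ref{Thm-dRasLogconnection} and Thm.\ \ref{thmkummersenop}. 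This completes the proof.
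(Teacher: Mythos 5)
Your reduction to the torsion case and your full‑faithfulness argument (via Prop.\ \ref{thm-comparegeneral}, Sen--Fontaine descent over $\kinfty[[\lambda]]$, and the isomorphism \eqref{eqw1w2}) track the paper's proof closely and are correct. The problem is your essential‑surjectivity step, which is genuinely different from what the paper does and, as written, does not work. You assert that ``the obstruction to lifting lives in an $\Ext^1$ group of finitely generated $a$-nilpotent connections, and one shows it vanishes'' because the de~Rham complex of an internal Hom ``splits degreewise,'' and you invoke Lem.\ \ref{Key lemma} for this. That lemma is about solving recurrence relations that characterize \emph{stratifications on the cosimplicial ring} (it feeds into Thm.\ \ref{thm_strat_conn_axiom}), not about lifting connections across $\lambda$-filtrations; citing it here is a misattribution. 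More importantly, no $\Ext^1$ group vanishes here: for instance nontrivial extensions of log-connections over $K[[u-\pi]]$ certainly exist, and they must, since the target $\rep_\gk^{\fg,\ndR}(\bdrplusm)$ has nontrivial extensions.

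The paper's actual mechanism (modeled on Fontaine \cite[Thm.\ 3.6]{Fon04}) is not a vanishing statement but a \emph{comparison} of Ext groups. Given $X$ killed by $(u-\pi)^m$ with $X'=(u-\pi)^{m-1}X$ and $X''=X/X'$, one lifts a basis of $X''$ to a basis of $X$, lifts the connection matrix $A''$ of $(M'',f'')$ arbitrarily to $K[[u-\pi]]$, and uses formula \eqref{NEWEqu-MatrixCocycle} to produce an explicit $X_0\in\rep^{\fg,\ndR}_\gk(\bdrplusm)$ coming from a connection $(M_0,f_0)$, such that $[X]-[X_0]$ lies in the subgroup $\Ext^1_{\gk,0}(X'',X')$ of extensions that split as $\bdrplus$-modules. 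One then proves that the natural map $\Ext^1_0(M'',M')\to\Ext^1_{\gk,0}(X'',X')$ is an \emph{isomorphism}, which is done by observing both sides only depend on $X''/(u-\pi)X''$ resp.\ $M''/(u-\pi)M''$ and then invoking the $m=1$ (nearly Hodge--Tate) equivalence of \cite{Gao22, MW22log}. Your proposal is missing both the construction of the auxiliary lift $X_0$ and the identification of the two $\Ext^1_0$ groups; replacing ``the obstruction vanishes'' with ``the obstruction in $\Ext^1_{\gk,0}$ lifts uniquely to $\Ext^1_0$ of connections, by reduction to the known $m=1$ case'' would repair the argument.
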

\begin{proof}
By considering inverse limit, it suffices to treat the torsion case where all relevant objects are killed by some powers of $(u-\pi)$.
Namely, it suffices to prove equivalences in the following diagram
\begin{equation*}
\begin{tikzcd}
{\mic^{\fg, -E'(\pi)}_{u-\pi}(K[[u-\pi]]/(u-\pi)^m)} \arrow[rr, hook] \arrow[d, "\simeq"] &  & {\mic^{\fg, -\pi E'(\pi)}_{u-\pi}(K[[u-\pi]]/(u-\pi)^m)} \arrow[d, "\simeq"] \\
{\rep^{\fg, \ndR}_\gk(\bdrplusm)} \arrow[rr, hook]                               &  & {\rep^{\fg, \mathrm{lndR}}_\gk(\bdrplusm)}
\end{tikzcd}
\end{equation*}
When $m=1$, i.e., the case with (log-) nearly Hodge--Tate  representations, these are proved in  \cite{GMWHT}.
We now prove the general (torsion) case by a  d\'evissage  argument   inspired by \cite[Thm. 3.6]{Fon04}.
In the following, we only prove the nearly de Rham case (i.e., the left vertical arrow), as the log  case is similar.

For a general $m \geq 2$, first note the functor
 $\mic^{\fg, -E'(\pi)}_{u-\pi}(K[[u-\pi]]/(u-\pi)^m) \to  \mathrm{Rep}_\gk(\bdrplusm)$ factors through $\mathrm{Rep}_\gk^{\mathrm{ndR}}(\bdrplusm)$  as the $m=1$ case is known.
Thus, we have the desired functor
\[\mic^{\fg, -E'(\pi)}_{u-\pi}(K[[u-\pi]]/(u-\pi)^m) \to \mathrm{Rep}_\gk^{\fg,\mathrm{ndR}}(\bdrplusm).\]
We first prove the functor is fully faithful. Let $(M_1, \nabla_1), (M_2, \nabla_2)$ be two $a$-nilpotent log-$(u-\pi)$-connections, and let $W_1, W_2 \in \rep_\gk^{\fg, \mathrm{ndR}}(\bdrplusm)$ be the corresponding representations.
It suffices to show
  \[ \Hom_{\mathrm{MIC}_\lambda} ((M_1, \nabla_1), (M_2, \nabla_2))  \to \Hom_\gk(W_1, W_2).\]
  is a bijection.
Tensor the above map over $\kinfty$, and use Prop. \ref{thm-comparegeneral}, then we recover \eqref{eqw1w2} which is an isomorphism.

Now we prove essential surjectivity by induction on $m$.
Suppose the results are valid for objects killed by $(u-\pi)^{m-1}$.
Let $X \in \mathrm{Rep}_\gk^{\fg, \mathrm{ndR}}(\bdrplusm)$. (Here we are using the notation $X$ to imitate that in  \cite[Thm. 3.6]{Fon04}, as the following argument   resembles Fontaine's.)
Consider  the short exact sequence
\begin{equation*}
0 \to X'=(u-\pi)^{m-1}X \to X \to X'' =X/X' \to 0
\end{equation*}
Suppose $(M', f')$ and $(M'', f'')$ are the log-$(u-\pi)$-connections that map to $X'$ and $X''$ respectively. It suffices to construct some connection  $(M, f)$ that maps to $X$.

We first set up some notations. Let $\mathrm{Ext}^1_\gk(X'', X')$ be the Yoneda extension group in the (exact) category $\mathrm{Rep}_\gk^{\fg}(\bdrplus)$; namely, an object is an equivalence class of $\gk$-equivariant extensions  $0 \to X' \to Y \to X''\to 0$ where $Y$ is a  finitely generated $\bdrplus$-module with a continuous $\gk$-action.
It is an abelian group.
Let $\mathrm{Ext}_{\gk, 0}^1(X'', X')$ be the subgroup consisting of extensions where $Y \simeq X'\oplus X''$ as $\bdrplus$-modules (but not $\gk$-equivariantly).
Indeed, there is a left exact sequence of abelian groups
\[0 \to  \mathrm{Ext}_{\gk, 0}^1(X'', X') \to \mathrm{Ext}_\gk^1(X'', X') \to \mathrm{Ext}_{\bdrplus}^1(X'', X')\]
where $\mathrm{Ext}_{\bdrplus}^1(X'', X')$ denotes extensions of (finitely generated) $\bdrplus$-modules.
One can similarly define the groups $\Ext^1(M'', M')$ and $\Ext^1_0(M'', M')$, whose elements consist of extensions of (finitely generated) log-$(u-\pi)$-connections.

Let $x_1'', \cdots, x_l''$ be a  basis of $M''$, which is also a basis of $X''$.
Consider the matrix of $f''$ on $x_1'', \cdots, x_l''$, and lift it to a $l\times l$ matrix $A''$ in $K[[u-\pi]]$; note $A'' \pmod{(u-\pi)^{m-1}}$ is  uniquely determined.

Lift $x_1'', \cdots, x_l''$  to some $x_1, \cdots, x_l \in X$, which then is a basis for $X$. Let $M_0$ be the sub-$K[[u-\pi]]$-module of $X$ generated by $x_1, \cdots, x_l$.
Use the above chosen $A''$ to define a $(u-\pi)$-connection on  $(M_0, f_0)$ so that the matrix of $f_0$ with respect to  $x_1, \cdots, x_l$ is $A''$.
 As the connection $(M'', f'')$ is $E'(\pi)$-nilpotent, hence so is $(M_0, f_0)$; thus  $(M_0, f_0)$ induces a nearly de Rham representation $X_0$.
  Note that as  a $\bdrplus$-module, $X_0$ is isomorphic to $X$. In addition, one sees that $X_0 \in \mathrm{Ext}^1_\gk(X'', X')$; this is essentially because the Galois action on $X'$ is determined by the matrix $A''\pmod{u-\pi}$.

In particular, we have
\[
[X]-[X_0] \in \Ext^1_{\gk, 0}(X'', X')
\]
It now suffices to check
\[
\Ext^1_0(M'', M') \to \Ext^1_{\gk, 0}(X'', X')
\]
is a surjection. Indeed we will show it is an isomorphism.
Note we have
\[\Ext^1_{\gk, 0}(X'', X')= \Ext^1_{\gk, 0}(X''/(u-\pi) X'', X'),\]
essentially because $\hom_\bdrplus(X'', X') =\hom_\bdrplus(X''/(u-\pi) X'', X')$.
For similar reasons,
 \[\Ext^1_0(M'', M') = \Ext^1_0(M''/(u-\pi) M'', M').\]
 Finally, note
 \[\Ext^1_{\gk, 0}(X''/(u-\pi) X'', X') =\Ext^1_0(M''/(u-\pi) M'', M')\]
  as both sides compute extensions which are killed $u-\pi$, and hence are equal by the $m=1$ case.
\end{proof}

\subsection{$\bbdrplus$-crystals and Galois representations}

\begin{theorem} \label{thm_ndR_rep}
The evaluation functors  in Notation \ref{notasecgalcoho}(2)  induce bi-exact  equivalences of categories:
\begin{eqnarray*}
\Vect(\okpris, \bbdrplusm) & \xrightarrow{\simeq} & \mathrm{Rep}_\gk^{\mathrm{ndR}}(\bdrplusm)\\
\Vect(\okprislog, \bbdrplusm) &  \xrightarrow{\simeq} & \mathrm{Rep}_\gk^{\mathrm{lndR}}(\bdrplusm)
\end{eqnarray*}
\end{theorem}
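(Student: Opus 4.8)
The strategy is to assemble the equivalence from the two ``halves'' already built in the paper: the prismatic-to-log-connection equivalence of Theorem \ref{Thm-dRasLogconnection}, and the connection-to-representation equivalence of the theorem in \S\ref{susec12.1}. Concretely, recall that Theorem \ref{Thm-dRasLogconnection} gives, for each $1\le m\le\infty$, a tensor equivalence
\[
\Vect(\okpris,\bbdrplusm)\ \xrightarrow{\simeq}\ \MIC_{A(u)}^{-E'(\pi)}(\frakS_{\dR,m}^+)
\]
(and the log-analogue with scalar $-\pi E'(\pi)$), via evaluation on the Breuil--Kisin (log) prism; here $\frakS_{\dR,m}^+ = K[[u-\pi]]/(u-\pi)^m$ by Lemma \ref{lemgsdeunif}, so the target is precisely $\MIC_{u-\pi}^{-E'(\pi)}(K[[u-\pi]]/(u-\pi)^m)$, which for finite free objects is a full subcategory of $\mic_{u-\pi}^{\fg,-E'(\pi)}(K[[u-\pi]]/(u-\pi)^m)$. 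Then the theorem in \S\ref{susec12.1} supplies a tensor equivalence of this latter category (restricted to the finite free objects, i.e. to those mapping to finite free $\bdrplusm$-representations) with $\rep_\gk^{\ndR}(\bdrplusm)$. Composing, we obtain the claimed equivalence $\Vect(\okpris,\bbdrplusm)\xrightarrow{\simeq}\rep_\gk^{\ndR}(\bdrplusm)$, and similarly in the log case.

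\textbf{Key steps, in order.} First I would observe that composition of the two equivalences is a well-defined functor and identify it with the evaluation functor of Notation \ref{notasecgalcoho}(2): given $\bM\in\Vect(\okpris,\bbdrplusm)$, Theorem \ref{Thm-dRasLogconnection} produces $(M,\nabla_M)$ with $M=\bM((\frakS,E))$, and Construction \ref{constnilconntorep} (via the explicit cocycle formula \eqref{NEWEqu-MatrixCocycle}, which is literally \eqref{Equ-MatrixCocycle} of Proposition \ref{Prop-MatrixCocycle}) produces the $\bdrplus$-representation $W=M\otimes_{K[[u-\pi]]}\bdrplus$; this is by construction the same as $\bM((\ainf,(\xi)))$ because the stratification on $\bM^{\perf}$ is the base change of that on $\bM$ along $\frakS_{\dR,m}^{\bullet,+}\to\bfB_{\dR,m}^{\bullet,+}$ (this is exactly the content of Construction \ref{const_function_gk} and Lemma \ref{Lem-Xasfunction}). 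Second, I would check that the essential image lands in $\rep_\gk^{\ndR}(\bdrplusm)$ rather than some larger category: this is automatic since the composite factors through the \S\ref{susec12.1}-equivalence whose target is $\rep_\gk^{\fg,\ndR}(\bdrplus)$, intersected with finite free objects. Third, I would note that each functor in the composition is fully faithful and essentially surjective onto the appropriate subcategory, and that both preserve tensor products (stated in Theorem \ref{Thm-dRasLogconnection} and in the \S\ref{susec12.1}-theorem), hence so does the composite. Fourth, the log case is handled identically, replacing $(\frakS,(E))$ by $(\frakS,(E),M_\frakS)$, replacing $-E'(\pi)$ by $-\pi E'(\pi)$, and replacing $\ndR$ by $\lndR$ throughout; the compatibility of the horizontal inclusions in the two diagrams (the forgetful functor $\Vect(\okpris,-)\to\Vect(\okprislog,-)$ vs.\ the inclusion $\MIC^{-E'(\pi)}\hookrightarrow\MIC^{-\pi E'(\pi)}$ vs.\ $\rep^{\ndR}\hookrightarrow\rep^{\lndR}$) is already recorded in Theorem \ref{Thm-dRasLogconnection} and in the \S\ref{susec12.1}-theorem.

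\textbf{Main obstacle.} The genuinely substantive inputs are already done elsewhere in the paper (Theorem \ref{Thm-dRasLogconnection}, which required the delicate cosimplicial-ring computation of \S\ref{sec: cosimp rings} and the recurrence-relation Lemma \ref{Key lemma}; and the \S\ref{susec12.1}-theorem, whose essential surjectivity required the extension-group d\'evissage mimicking \cite[Thm.\ 3.6]{Fon04}). So the only real work left for this statement is the \emph{identification of the composite with the stated evaluation functor} — i.e.\ verifying that ``evaluate on Breuil--Kisin, pass to a connection, then build a $\bdrplus$-representation via the cocycle formula'' agrees on the nose with ``evaluate on the Fontaine prism''. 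I expect this bookkeeping to be the main point requiring care: one must track the two different uniformizers ($A(u)$ in Theorem \ref{Thm-dRasLogconnection} versus $u-\pi$ in Construction \ref{constnilconntorep}, reconciled by Lemma \ref{lemchangeconn}), confirm that the change-of-connection in Lemma \ref{lemchangeconn} does not disturb the resulting $G_K$-action, and confirm that the map $\frakS_{\dR,m}^{\bullet,+}\to\bfB_{\dR,m}^{\bullet,+}$ together with Lemma \ref{Lem-Galois descent} realizes $X_1$ as the function $g\mapsto\frac{([\epsilon]^{c(g)}-1)[\pi^\flat]}{a([\pi^\flat]-\pi)}$ used in \eqref{NEWEqu-MatrixCocycle}. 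Once these identifications are pinned down, the proof is a two-line composition of known equivalences; I would state it as: \emph{combine Theorem \ref{Thm-dRasLogconnection} with the equivalence of \S\ref{susec12.1}, noting by Notation \ref{notasecgalcoho}(2), Construction \ref{const_function_gk}, and Lemma \ref{Lem-Xasfunction} that the composite is the asserted evaluation functor.}
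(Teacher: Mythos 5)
Your proposal is correct and follows essentially the same route as the paper: the paper's own proof is a two-line citation that combines Theorem \ref{Thm-dRasLogconnection} (crystals as $a$-nilpotent log connections) with the equivalence of \S\ref{susec12.1} (connections as (log-)nearly de Rham representations), exactly as you propose. Your extra bookkeeping — identifying the composite with the evaluation functor via Construction \ref{const_function_gk}, Lemma \ref{Lem-Xasfunction}, and the uniformizer change of Lemma \ref{lemchangeconn}, and restricting the $\fg$ categories to the finite free objects — is what the paper implicitly relies on but leaves unsaid.
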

\begin{proof}
This follows from Thm. \ref{Thm-dRasLogconnection} and Thm. \ref{thm124}.
\end{proof}

We thus have proved all the vertical equivalences in the diagram of Thm. \ref{thmintrondr}. All arrows in the bottom row are obvious inclusions, and hence all the arrows in the top row are fully faithful (as all the squares are obviously commutative).

  \bibliographystyle{alpha}

\end{document}